\newcommand{ \E }{ \mathbb{E} }
\newcommand{\ignore}[1]{}
\newtheorem{proposition}{Proposition}
\newtheorem{theorem}{Theorem}
\newtheorem{remark}{Remark}
\newtheorem{lemma}{Lemma}
\newtheorem{corollary}{Corollary}
\newtheorem{approximation}{Approximation}
\title[Critical Drift-Diffusion Equation]{ A Critical Drift-Diffusion Equation: Intermittent Behavior via Geometric Brownian Motion on $ \textbf{SL}(n)$ }
\author[P. Morfe]{Peter S.~Morfe}
\author[F. Otto]{Felix Otto}
\author[C. Wagner]{Christian Wagner}
\begin{document}

\maketitle

\begin{abstract}
This paper concerns the so-called diffusion in the curl of the 2d Gaussian free field, and its generalization to higher dimensions $n \geq 2$, building on the scale-by-scale homogenization approach developed recently by Chatzigeorgiou, Morfe, Otto, and Wang \cite{CMOW}.  It begins by reformulating the approximation scheme of that work in terms of SDEs in the length scale $L$.  This exposes an unexpected connection with a certain geometric Brownian motion on the special linear group $\textbf{SL}(n)$.  The analysis of this process sheds light on the original problem, particularly as it pertains to intermittent behavior exhibited by the (averaged) Lagrangian coordinate.
\end{abstract}

\medskip

This preprint subsumes the previous preprints \cite{MOW24,OW24,MOW24Pt2} of the authors.


\subsection{The drift-diffusion process $X$ for a Gaussian ensemble of divergence-free
drifts $b$, and its expected position $u$}\label{ss:X}
The main result of this text connects two seemingly unrelated objects:
On the one hand, we consider the drift-diffusion process
\begin{align}\label{ao02}
dX=b(X_t)dt+\sqrt{2}dW
\end{align}
in the $ n $-dimensional Euclidean plane,
where $W$ denotes the standard Brownian motion\footnote{The factor of $\sqrt{2}$ in (\ref{ao02})
avoids a factor $\frac{1}{2}$ in (\ref{ao04}).}. We assume that the time-independent drift $b$, a tangent vector field, is divergence-free:
\begin{align}\label{ao24}
\nabla.b=0,
\end{align}
where the dot denotes the natural pairing between the differential $\nabla$,
which we consider a cotangent vector, and the tangent vector\footnote{In view of the upcoming tensor analysis and
elementary representation theory, we are
deliberately careful in distinguishing tangent and cotangent vectors.} $b$. Next to \eqref{ao24}, our main structural assumptions are that $ b $ is an isotropic\footnote{\label{footnoteisotropy}By isotropy we mean that for any rotation $ Q $ of tangent vectors, passing from the variables $ ( \widehat{x}, \widehat{b} ) $ with $ \widehat{b} = \widehat{b} ( \widehat{x} ) $ to the variables $ ( x ,  b ) := ( Q \widehat{ x } , Q \widehat{b} ) $ with $ b = b ( x ) $ does not change the law of the drift field, i.~e.~$ b =_{ \rm law } \widehat{b} $.}, stationary Gaussian field. Incidentally, these assumptions imply centeredness of $ b $.
Under these assumptions we seek to study the scaling critical situation that we are going to explain next.

\medskip

Can the process $ X $ share the parabolic scale invariance in law
of the Wiener process $W$, namely $\lambda W_t=_{law}W_{\lambda^2 t}$?
We note that formally we have\footnote{In this context, ``annealed law'' refers to the expectation
w.~r.~t.~both the environment $b$ and the thermal noise $W$.}
$\lambda X_t=_{annealed\;law}X_{\lambda^2 t}$
provided $b$ transforms in law like a velocity under this parabolic rescaling, meaning that
\begin{align}\label{ao02b}
	b(\lambda x) =_{law} \frac{\lambda}{\lambda^2} b(x) = \lambda^{-1} b(x) .
\end{align}
Since we work with a centered and stationary Gaussian ensemble, which is characterized by its covariance tensor $c(x-y)$ $=\mathbb{E}b(x)\otimes b(y)$ (an element of the tensor product $(\mbox{tangent space}) \otimes (\mbox{tangent space})$
and a function of the tangent vector $z=x-y$), the scaling criticality is conveniently expressed on the level of the positive-semidefinite Fourier transform
${\mathcal F}c(k)=(2\pi)^{-\frac{ n }{ 2 } }  \int dz e^{ -\iota k.z } c(z)$ of $ c $: $ \mathcal{F} c ~ { \rm is } ~ \text{(} 2 - n \text{)-homogeneous} $. Recall that here the wave vector $ k $ naturally acts as a cotangent vector. Thus our isotropy assumption may be expressed as\footnote{Here and henceforth we write $F^{\dagger}$ for the transpose of an endomorphism $F$.}
\begin{align}\label{ao02c}
	\mathcal{F} c ( Q^{ - \dagger } k ) =  ( Q \otimes Q ) \, \mathcal{F} c ( k ) 
	\quad { \rm for ~ any ~ rotation } ~ Q ~ { \rm on ~ tangent ~ space } ,
\end{align}
whilst \eqref{ao24} takes form
\begin{align}\label{ao24repeat}
	\mathcal{F}c(k) . k \otimes \xi = 0
	\quad { \rm for ~ all ~ cotangent ~ vectors } ~ \xi .
\end{align}

\medskip

Let us recall that the scaling assumption \eqref{ao02b} states that zooming in on small scales amounts to a rescaling of the amplitude, which is only compatible with Hölder regularity of order $ - 1 $. This is too rough to make sense of the SDE \eqref{ao02} as can be seen on the level of the generator $  \mathcal{L} u = b.\nabla u + \Delta u $: In view of the transport term $b.\nabla u$, a typical $ \mathcal{L} $-harmonic function $u$ will be H\"older continuous of exponent $ - 1 + 2 = 1 $ (where $2$ is the order of the equation
and $-1$ comes from the scale invariance of $ b $).
However, this regularity is not sufficient to give a sense to
$b.\nabla u$, even when rewriting it as $\nabla. ub$,
since the sum of the orders of regularity of $u$ and $b$ is (borderline) non-positive.
In fact, we establish that $u$ is Lipschitz continuous up to a logarithm,
see the discussion after Theorem \ref{thm:mr02}.

\medskip

To resolve this issue, we impose $ b $ to have a small-scale regularization/ultra-violet (UV) cut-off. W.~l.~o.~g.~we place this cut-off at frequency one. Of course this requirement, which may be phrased as
\begin{align}\label{ao02d}
	\mathcal{F} c ( k ) ~ { \rm is } ~ ( 2 - n ) { \rm \text{-}homogeneous  } ~ { \rm for } ~ | k | \leq 1  ,
\end{align}
is incompatible with the invariance \eqref{ao02b}.

\medskip

Elementary representation theory, see Corollary \ref{corRep01} in Appendix \ref{reptheory}, implies that \eqref{ao02c}, \eqref{ao24repeat} and \eqref{ao02d} determine $ \mathcal{F} c $ up to a constant. We fix this (positive) constant by requiring that\footnote{This definition of $\varepsilon$ ensures that the (approximate) effective diffusivity $ \tilde\lambda_L $ introduced in \eqref{lambda} is independent of the dimension $ n $.}
\begin{align}\label{cov01fo}
	\E | b |^2 = { \rm tr } \, c ( 0 ) =: \varepsilon^2  \frac{ n }{ 4 } ,
\end{align}
where $ 0 < \varepsilon \ll 1 $ plays the role of the Péclet number\footnote{The Péclet number
is the traditional measure of the relative strength of convection w.~r.~t.~diffusion.}: In view of the UV cut-off in \eqref{ao02d} the field $ b $ has acquired a characteristic
length scale, and thus we may associate a characteristic time scale to both diffusion and convection.
In view of the unit diffusivity, and the convection with a drift of order $ \varepsilon $, these characteristic
time scales are of the order $ 1 $ and $ \varepsilon^{-1} $, respectively. Hence the parameter $ \varepsilon $ plays the role of
the Péclet number, which is defined to be the ratio of the two time scales. 

\medskip

Corollary \ref{corRep01} also implies that $\mathcal{F}c(k)$ is explicitly given by\footnote{Here and in the following we use Einstein's summation convention in the strict interpretation that a summation is applied to every pair of upper and lower indices having the same label.}
\begin{align}\label{ao08}
	{\mathcal F}c(k) = { \rm constant } ( n ) \times \varepsilon^2 I(|k|\le 1) \Big( \delta^{ i j } e_{i} \otimes e_{j} - \frac{k^*\otimes k^*}{|k|^2} \Big) \frac{ 1 }{ | k |^{ n - 2 } } ,
\end{align}
where $\{ e_i \}$ denotes the Cartesian basis of tangent space and
$k^*$ is the tangent vector related to the cotangent wave vector $k$
by the Euclidean inner product. The reader may easily check that \eqref{ao08} satisfies \eqref{ao02c}, \eqref{ao24repeat} and \eqref{ao02d}. In particular, $ b $ is automatically $ \textbf{O} ( n ) $-invariant in law.

\medskip

In the same vein as our ensemble of drifts $ b $ is borderline critical from a regularity point of view, it does not fall into the realm of standard homogenization theory: This paper is motivated by previous work on the asymptotics of the mean-square displacement $\mathbb{E} |X_{t}|^{2}$, which confirmed physicists' predictions \cite{Bouchaud,FisherEtAl,KB}, based on renormalization group theory (see \cite[Appendix A]{CMOW} for heuristics in this vain), that it exhibits the superdiffusive scaling
	\begin{equation*}
		\mathbb{E} |X_{t}|^{2} \sim t \sqrt{\ln t}.
	\end{equation*}
The first mathematical results in this direction were contributed by T\'{o}th and Valk\'{o} \cite{TothValko}, who established (suboptimal) superdiffusive upper and lower bounds.  More recently, Cannizzaro, Haunschmid-Sibitz, and Toninelli \cite{CannizzaroHaunschmidSibitzToninelli} obtained the $t \sqrt{\ln t}$ asymptotics up to double logarithmic corrections.  Both these works were based on an intricate analysis of the resolvent of the environment seen by the particle in Fock space.  Our starting point is the work of Chatzigeorgiou, Morfe, Otto, and Wang \cite{CMOW}, which obtained a refined result using a homogenization approach.

\begin{theorem}[{\cite[Theorem 1.1]{CMOW}}] In dimension $ n = 2 $, for any $T \geq 0$,\footnote{Here and in the following we write $ a \approx b $ as $ \varepsilon \ll 1 $ to mean that for every $ 0 < C < \infty $, there exists an $ \varepsilon_0 \leq 1 $, such that  $ ( 1 - C ) b( \varepsilon)  \leq  a ( \varepsilon ) \leq ( 1 + C )  b( \varepsilon ) $ for $ \varepsilon \leq \varepsilon_0 $.  }
	\begin{equation} \label{E: logarithmic correction}
		\mathbb{E} |X_{T}|^{2} \approx  2n  \lambda(T) T
		\quad \text{as} \quad
		\varepsilon \ll 1,
	\end{equation}
where $\lambda(s)$ is defined by\footnote{for the purpose of this theorem, we could use $ \lambda(s)^2 = 1 + \frac{\varepsilon^{2}}{2} \ln (c + s) $ for any $ 0 < c < \infty $}
	\begin{equation} \label{E: definition of lambda}
		\lambda(s) = \sqrt{1 + \frac{\varepsilon^{2}}{2} \ln (1 + s)}.
	\end{equation}
\end{theorem}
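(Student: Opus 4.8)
\emph{Proof proposal.} I would prove this by the scale-by-scale homogenization of \cite{CMOW}, organized --- as in the body of this paper --- as a flow in the length scale $L$. The first step is to express the mean-square displacement in terms of an effective diffusivity. Applying It\^o's formula to $|X_t|^2$ in \eqref{ao02} and taking (annealed) expectations gives the exact identity
\begin{equation*}
  \tfrac{d}{dt}\,\mathbb{E}|X_t|^2 \;=\; 2n + 2\,\mathbb{E}\big[b(X_t)\cdot X_t\big],
\end{equation*}
so the task reduces to the drift--displacement correlation $\mathbb{E}[b(X_t)\cdot X_t]$, equivalently to the effective diffusion tensor $\bar a_L$ of the generator $\mathcal{L}=b\cdot\nabla+\Delta$ at length scale $L=\sqrt t$, through a homogenization statement of the type $n+\mathbb{E}[b(X_t)\cdot X_t]\approx\mathrm{tr}\,\bar a_{\sqrt t}$. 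One must keep in mind here that, since $\mathcal{F}c$ is $(2-n)$-homogeneous down to $k=0$, the drift has borderline non-integrable correlations ($c(z)\sim\varepsilon^2|z|^{-n}$): there is no homogenized limit and $\bar a_L$ genuinely grows with $L$. Accordingly $\bar a_L$ has to be defined through a corrector regularized at the running scale $L$ --- a massive corrector of mass $L^{-2}$, or the corrector of the field band-limited to $|k|\gtrsim L^{-1}$, in the spirit of the approximate diffusivity $\tilde\lambda_L$ of the paper. The target then becomes $\mathrm{tr}\,\bar a_{\sqrt T}\approx n\lambda(T)$; the mild self-consistency --- the natural scale being $\sqrt{\mathbb{E}|X_T|^2}$ rather than $\sqrt T$ --- only shifts the argument of $\lambda$ by an $O(1)$ amount inside the logarithm, which is harmless for ``$\approx$''.

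The core of the argument is the evolution of $\bar a_L$ --- rather, of the full, not necessarily isotropic, effective tensor --- as $L$ increases. Passing from scale $L$ to scale $2L$ means homogenizing the frequency shell $|k|\sim L^{-1}$ of $b$ against the already-built effective medium $\bar a_L$ of the coarser scales; after the parabolic rescaling $x\mapsto x/L$, $t\mapsto t/L^2$ that shell becomes a divergence-free drift of amplitude $\varepsilon$ at frequency $\sim1$ sitting on top of the constant tensor $\bar a_L$ --- and, crucially, the amplitude is the \emph{same} $\varepsilon$ at every scale, which is precisely the scaling-criticality and the reason a single small parameter controls all scales at once. Perturbation theory in this amplitude --- solving the scalar corrector problem $\nabla\cdot(\bar a_L\nabla\chi)=-b\cdot\nabla\chi-b\cdot e$ and reading off the $O(\varepsilon^2)$ enhancement $\bar a_L\langle\nabla\chi\otimes\nabla\chi\rangle$, which involves $\bar a_L^{-1}$ --- together with the explicit covariance \eqref{ao08} and the normalization \eqref{cov01fo}, yields after integration over the shell a flow of the schematic form
\begin{equation*}
  \tfrac{d}{d\ln L}\,\bar a_L \;=\; \varepsilon^2\,F(\bar a_L)+O(\varepsilon^3),
  \qquad
  \tfrac{d}{d\ln L}\,\mathrm{tr}\big(\bar a_L^2\big) \;=\; n\varepsilon^2+O(\varepsilon^3),
\end{equation*}
started from $\bar a_1\approx\mathrm{id}$. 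Integrating the trace relation gives $\mathrm{tr}(\bar a_L^2)\approx n(1+\varepsilon^2\ln L)$, hence for the near-isotropic solution $\bar a_L\approx\sqrt{1+\varepsilon^2\ln L}\,\mathrm{id}$, so that $\mathrm{tr}\,\bar a_{\sqrt T}\approx n\sqrt{1+\varepsilon^2\ln\sqrt T}=n\sqrt{1+\tfrac{\varepsilon^2}{2}\ln T}\approx n\lambda(T)$. What distinguishes this from a naive iteration is that $\bar a_L$ is itself random --- each shell multiplies the effective deformation gradient by $\mathrm{id}$ plus a small random matrix --- so that, after its determinant (the scalar part, governed by the deterministic flow above) is split off, this deformation gradient performs a geometric Brownian motion on $\textbf{SL}(n)$; the norm of the latter is log-normal, hence strongly intermittent, which is the source of the intermittent Lagrangian coordinate and whose analysis is the bulk of the paper.

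It remains to make the homogenization quantitative and to propagate it through the $\sim\tfrac12\ln T$ dyadic steps. Two ingredients are needed: (i) at each individual scale the error between the true rescaled equation and its effective-medium approximation is $O(\varepsilon)$ in relative terms, uniformly in $L$ --- this rests on quantitative homogenization and large-scale regularity for the slowly varying random coefficient field, the Péclet-number smallness providing both the uniform ellipticity and the smallness of the perturbation; and (ii) because the scheme is multiplicative (a flow, not a sum) these per-scale errors compound into a cumulative \emph{relative} error that is still $o(1)$ as $\varepsilon\ll1$, obtained by a Gronwall/telescoping estimate along $L\in[1,\sqrt T]$. The main obstacle is bound up with the $\textbf{SL}(n)$ picture: a priori the geometric Brownian motion could push $\bar a_L$ towards the boundary of the cone of positive tensors (degenerate anisotropy), which would destroy both the corrector bounds at the next scale and the regularity theory; one therefore needs a large-deviations-type control guaranteeing that $\bar a_L$ stays uniformly elliptic --- with constants depending only on $\varepsilon$ --- throughout the relevant range $\ln L\lesssim\tfrac12\ln T$. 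Granting this, combining (i)--(ii) with the flow of the preceding paragraph gives $n+\mathbb{E}[b(X_t)\cdot X_t]\approx\mathrm{tr}\,\bar a_{\sqrt t}\approx n\lambda(t)$, and inserting this into the It\^o identity yields $\mathbb{E}|X_T|^2\approx 2n\int_0^T\lambda(t)\,dt\approx 2n\lambda(T)\,T$, the last equivalence again an $O(\varepsilon^2)$-relative adjustment since $\lambda$ is slowly varying.
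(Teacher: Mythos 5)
Your proposal is recognizably in the same family as the scale-by-scale homogenization proof of \cite{CMOW}, and the key flow equation you write down is correct: the ODE $\tfrac{d}{d\ln L}\,\mathrm{tr}(\bar a_L^2) = n\varepsilon^2$ is exactly the trace of \eqref{ODElambda} once $\tilde a_L = \tilde\lambda_L I$ is substituted, and its integral $\tilde\lambda_{\sqrt{1+T}} = \lambda(T)$ is the right quantity. That said, there are two points where the proposal diverges from or glosses over the actual argument.

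First, the It\^{o} identity $\tfrac{d}{dt}\mathbb{E}|X_t|^2 = 2n + 2\mathbb{E}[b(X_t)\cdot X_t]$ is a valid reformulation, but the step $n + \mathbb{E}[b(X_t)\cdot X_t] \approx \mathrm{tr}\,\bar a_{\sqrt t}$ is not "a homogenization statement" one can quote — it \emph{is} the theorem. In \cite{CMOW} (and in this paper's Subsection~\ref{S:strategy_of_proof}) the entry point is different: one works directly with the expected position $u(\cdot,t)$ (Lagrangian coordinates), uses Approximation~\ref{approx01} to transfer from time $t\sim L^2$ to the elliptic problem for $u_L$, and the quantitative substance of that transfer is the parabolic estimate \eqref{E: CMOW bound} (i.e.\ \cite[Lemma 9.1 and (98)]{CMOW}). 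That estimate is not a consequence of standard quantitative homogenization and large-scale regularity for slowly varying coefficients; it is a bespoke energy estimate tied to the scale-by-scale decomposition. Framing it as a black box to be plugged in skips the hardest part of the proof.

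Second, you treat $\bar a_L$ as random — "each shell multiplies the effective deformation gradient by $\mathrm{id}$ plus a small random matrix" — and then raise the concern that it could drift toward the boundary of the positive cone, requiring a large-deviations control on its ellipticity. This conflates two distinct objects. In the construction of the paper (and of \cite{CMOW}), the effective diffusivity $\tilde a_L = \tilde\lambda_L I$ is \emph{deterministic}: it is defined through the quadratic covariation $d\tilde a\, e^i = d[\phi^i\, b]$ in \eqref{tao17y}, which is deterministic by the independence of increments and stationarity. What is random, and what performs a geometric Brownian motion on $\textbf{SL}(n)$, is the Jacobian $\mathrm{id}+\nabla\tilde\phi_L \approx F_L$ of the two-scale expansion — not the effective coefficient against which the next shell is homogenized. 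The degeneracy issue you anticipate therefore does not arise in the proof; the relevant smallness is entirely concentrated in the residuum $\tilde f_L$, controlled by Proposition~\ref{prop01}, and its proof is indeed a Gronwall argument in $\tau=\ln\tilde\lambda_L$, as you surmise.
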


The proof was originally carried out using a scale-by-scale homogenization argument in which the length scale $L$ was discrete. In Section \ref{sec:hom} below, we revisit the proof, this time using an SDE in the variable $L$, which is both simpler and also connects our drift-diffusion process to geometric Brownian motion on $\textbf{SL}(n)$. Since \cite{CMOW} first appeared, Armstrong, Bou-Rabee, and Kuusi \cite{ABK24} improved on this line of work, establishing, again via scale-by-scale homogenization, a quenched invariance principle for $X$. As in \cite{ABK24}, our homogenization result, see Section \ref{sec:hom}, applies in any dimension $ n \geq 2 $, thus extending the theorem also to higher dimensions.

\medskip

In this paper, we focus on the expected position $u(x,t)$
(with respect to the thermal noise $W$) of
the solution $X$ of (\ref{ao02}) with $X(t=0)=x$. It is characterized by the
drift-diffusion equation
\begin{align}\label{ao04}
\partial_tu- \mathcal{L} u = 0, ~ u(x, t=0)=x,
\quad \text{where} \quad
\mathcal{L} := b.\nabla u + \Delta u
\end{align}
denotes the generator of $ X $. The paring $ b. \nabla u $ in the definition of $ \mathcal{L} $ is canonical and corresponds to
the directional (or covariant) derivative of $ u $ in direction $ b $.

\medskip

Our main results on the intermittent behavior of $u$ will be proved by exploiting a connection between our scale-by-scale homogenization approach and geometric Brownian motion on the special linear group $\textbf{SL}(n)$. In view of the divergence condition (\ref{ao24}), a connection between
(\ref{ao02}) and a process on $\textbf{SL}(n)$ is not surprising per se. If for
a moment, we disregard diffusion in (\ref{ao02}) and (\ref{ao04}), we obtain
\begin{align}\label{ao30}
d \nabla u ( 0, t ) = \nabla u ( 0 , t ) \nabla b ( X_t ) dt ,
\quad { \rm where } \quad X_{ t } ~ { \rm starts ~ at ~ the ~ origin } .
\end{align}
A remark on our tensorial notation is in order: Like for $b$, we think of
increments of $u$ as tangent vectors; since we consider the differential $\nabla$
as a cotangent vector, $\nabla u$ is an element
of the tensor space $(\mbox{tangent space}) \otimes(\mbox{cotangent space})$,
which is canonically isomorphic to the endomorphism space ${\rm End}(\mbox{cotangent space})$.
Hence the product in (\ref{ao30}) is a product of endomorphisms. According to Jacobi's formula, (\ref{ao30}) implies
\begin{align}\label{ao31}
d{\rm det}\nabla u=({\rm det}\nabla u)({\rm tr}\nabla b)dt\quad\mbox{along}\quad x=X_{t}.
\end{align}
Because of ${\rm tr}\nabla b$ $=\nabla.b$, we learn from (\ref{ao24})
that $\nabla b(X_t)$ is an element of the Lie algebra $\mathfrak{sl}(n)$, the linear space
of trace-free endomorphisms. We then learn from (\ref{ao31})
that ${\rm det}\nabla u=1$ is preserved, i.~e.~$\nabla u(X_t,t)$ is an element of
$\textbf{SL}(n)$, the space of endomorphisms of determinant one.

\subsection{Geometric Brownian Motion $F$ on the Lie Group $\textbf{SL}(n)$.}\label{ss:F}

On the other hand, we consider geometric Brownian motion on the Lie group $\textbf{SL}(n)$, as determined by the Stratonovich equation
\begin{align}\label{ao01bis}
dF=F_{\tau} \circ \! dB,
\end{align}
where $B$ is a certain Brownian motion on the corresponding Lie algebra $\mathfrak{sl}(n)$.  The tensorial
structure of the r.~h.~s.~of (\ref{ao01bis}) is to be
understood as a product of endomorphisms (of cotangent space), and $\circ$ indicates that we interpret the product in the sense of Stratonovich integration. Equation \eqref{ao01bis} motivates our convention to use $\nabla u$ for the transpose of the ``usual definition''
of the derivative of a map as an element of ${\rm End}(\mbox{tangent space})$;
we opted in favor of it because the left multiplication order in (\ref{ao30}) is more standard
on the stochastic equation level (\ref{ao01bis}) compared to the right multiplication $ \circ d B \, F $.

\medskip

Recall that
the Stratonovich form of (\ref{ao01bis}) is compatible with the chain rule,
so that as in (\ref{ao31}) we have $d{\rm det} F =({\rm det}F)\circ \! d{\rm tr} B$ $=0$.
Hence (\ref{ao01bis})
indeed defines a time-stationary Markov process on $\textbf{SL}(n)$. Due to the left-invariance of the evolution \eqref{ao01bis}, there is no loss in generality assuming
\begin{align} \label{E: initial datum}
F_{ \tau = 0 } = { \rm id } .
\end{align}

\medskip

We specify the Brownian motion $B$ using the following result, which is proved in Section \ref{S: diffusion}.

	\begin{lemma} \label{L: properties of F} There is a unique Brownian motion $B$ on $\mathfrak{sl}(n)$ for which the solution $F$ of \eqref{ao01bis} and \eqref{E: initial datum} has the following properties:
		\begin{itemize}
			\item[(i)] $ \textbf{O} ( n ) $-invariance:
            \begin{align*}
                O F O^{-1} =_{\text{law}} F
                \quad \text{for~all} ~ O \in \textbf{O} ( n ) ,
            \end{align*}
			\item[(ii)] the It\^{o} and Stratonovich interpretations of the product \eqref{ao01bis} coincide:
            \begin{align*}
                F_{\tau} \circ \! dB = F_{\tau} dB ,
            \end{align*}
			\item[(iii)] normalization: $\mathbb{E} F_{\tau} F_{\tau}^{*}  = e^{\tau} \text{id} $.
		\end{itemize}
	\end{lemma}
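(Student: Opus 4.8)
The plan is to parametrize all candidate Brownian motions $B$ on $\mathfrak{sl}(n)$ by their covariance, use the $\mathbf{O}(n)$-invariance and the Itô--Stratonovich matching condition to cut this family down to a one-parameter family, and finally fix the remaining parameter by the normalization (iii). Concretely, a Brownian motion on $\mathfrak{sl}(n)$ is characterized by a symmetric positive-semidefinite bilinear form $\mathcal{Q}$ on $\mathfrak{sl}(n)$, i.e.\ $\mathbb{E}\, B_\tau \otimes B_\tau = \tau\, \mathcal{Q}$. Requiring (i) forces $\mathcal{Q}$ to be invariant under the adjoint action $A \mapsto O A O^{-1}$ of $\mathbf{O}(n)$ on $\mathfrak{sl}(n)$. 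Here is where the representation theory of the appendix (the same machinery invoked for \eqref{ao08}) enters: one decomposes $\mathfrak{sl}(n)$ under the $\mathbf{O}(n)$-action into irreducibles --- the trace-free symmetric endomorphisms and the antisymmetric endomorphisms $\mathfrak{so}(n)$ --- and by Schur's lemma an invariant quadratic form is a nonnegative linear combination of the two corresponding Casimir forms. Thus $\mathcal{Q}$ depends on two nonnegative parameters, say the weights $a$ on the symmetric part and $b$ on the antisymmetric part.

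Next I would impose condition (ii). The Itô--Stratonovich correction for \eqref{ao01bis} is the drift $\tfrac{1}{2}\,\mathbb{E}\big[(dB)(dB)\big]$ acting on $F_\tau$ from the right; writing this in coordinates, it equals $\tfrac{1}{2}\big(\mathcal{Q} \text{ contracted over the middle pair of indices}\big)$, i.e.\ the matrix $M$ with $M = \tfrac12 \sum_\alpha G_\alpha G_\alpha$ where $\{G_\alpha\}$ is any orthonormal (w.r.t.\ $\mathcal{Q}$) generating set. Demanding $M = 0$ is one scalar constraint (by $\mathbf{O}(n)$-invariance the correction matrix is automatically a multiple of the identity, and being in $\mathfrak{sl}(n)$-related products it is in fact a multiple of $\mathrm{id}$, so vanishing is a single equation). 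This linear relation between $a$ and $b$ reduces the family to a single ray. Computing the two contributions --- $\sum S_\alpha S_\alpha$ over an orthonormal basis of trace-free symmetric matrices equals a positive multiple of $\mathrm{id}$, while $\sum A_\alpha A_\alpha$ over an orthonormal basis of $\mathfrak{so}(n)$ equals a negative multiple of $\mathrm{id}$ --- shows the two signs are opposite, so there is a genuine nonzero solution $(a,b)$, unique up to overall scaling.

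Finally, the normalization (iii) fixes the overall scale. The cleanest route is to derive a closed ODE for $m(\tau) := \mathbb{E}\, F_\tau F_\tau^*$. Applying Itô's formula to $F_\tau F_\tau^*$ using \eqref{ao01bis} in its Itô form (legitimate by the already-established (ii)), one gets $d(F F^*) = F\,dB\,F^* + F\,dB^*\,F^* + F\,(dB\,dB^*)\,F^*$; taking expectations kills the martingale terms and leaves $\dot m = \mathbb{E}\big[F (dB\,dB^*/d\tau) F^*\big]$. By the $\mathbf{O}(n)$-invariance and the structure of $\mathcal{Q}$, the quadratic-variation matrix $dB\,dB^*/d\tau$ is a constant multiple $\kappa\,\mathrm{id}$ of the identity, whence $\dot m = \kappa\, m$, so $m(\tau) = e^{\kappa \tau}\mathrm{id}$ since $m(0)=\mathrm{id}$. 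Imposing $\kappa = 1$ pins down the scale and hence $B$ uniquely. Uniqueness is then automatic from the chain of reductions: (i) forces the two-parameter form, (ii) cuts it to one parameter, (iii) fixes that parameter.

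The main obstacle I anticipate is purely computational bookkeeping: correctly identifying the Itô--Stratonovich correction term as an endomorphism acting on the correct side and in the correct index positions (the paper's transpose conventions for $\nabla u$ versus $F$ make this delicate), and then carrying out the two trace-of-squares computations $\sum S_\alpha S_\alpha$ and $\sum A_\alpha A_\alpha$ with the right normalizations so that the ratio $a/b$ comes out correctly. None of this is deep, but it is exactly the place where sign or factor errors would propagate; everything else is a clean application of Schur's lemma plus a one-line linear ODE.
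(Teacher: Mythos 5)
Your overall strategy is the same as the paper's: reduce the conditions on $F$ to conditions on the covariance of $B$, decompose $\mathfrak{sl}(n)$ under the conjugation action of $\mathbf{O}(n)$ into the inequivalent irreducibles $\text{Sym}_{0}$ and $\text{Skew}$, invoke Schur's lemma to see that an invariant covariance is a two-parameter family (this is the paper's Lemma~\ref{L: characterize covariance}, built on Lemma~\ref{lemRep03}), and then use conditions (ii) and (iii) to pin down both parameters. Your observations that $\sum_\alpha S_\alpha^2$ is a positive multiple of $\text{id}$ and $\sum_\alpha A_\alpha^2$ a negative multiple are exactly the paper's \eqref{E: determining}, and your ODE $\dot m = \kappa m$ for $m = \mathbb{E}F F^*$ reproduces the paper's computation \eqref{E: need this quadratic variation}.

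There is, however, one genuine gap, and it is precisely where the paper spends the bulk of its effort. You write ``Requiring (i) forces $\mathcal{Q}$ to be invariant under the adjoint action,'' but you never argue this implication. The forward direction is cheap: if $B$ is $\mathbf{O}(n)$-invariant in law, then $d(OFO^{-1}) = (OFO^{-1}) \circ d(OBO^{-1})$ shows $F$ is as well. But uniqueness requires the converse --- that $OFO^{-1} =_{\text{law}} F$ for all $O$ forces $OBO^{-1} =_{\text{law}} B$ --- and this does not follow by inspection, since $B \mapsto F$ is a nonlinear (exponential) map and one has to rule out the possibility that two non-conjugate Brownian motions produce conjugate-in-law flows. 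The paper handles this by differentiating $\mathbb{E}\,F_\tau \otimes F_\tau$ at $\tau = 0$ to recover $\mathbb{E}\,B_1 \otimes B_1$ and comparing; you would need this (or an equivalent infinitesimal argument) to close your uniqueness claim. For condition (ii) your argument is essentially fine once you note that $F_\tau$ is almost surely invertible, so the correction $\tfrac12 F\,d[BB]$ vanishes iff $d[BB] = 0$; for (iii) your ODE argument correctly gives both directions. But without the (i)$\Rightarrow$(i') step, ``uniqueness is automatic from the chain of reductions'' is not yet a proof.
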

	
While the Stratonovich form \eqref{ao01bis} is convenient to introduce $ F $ as a $ \textbf{SL}( n ) $-valued process, the Itô form (ii) in the above lemma will be used to obtain estimates. Clearly, the point (iii) depends on the choice of norm. Within this paper, we will always use the Frobenius norm $ | F |^2 = { \rm tr } F F^* $ on $ F \in { \rm End } ( { \rm co tangent ~ space } ) $.\footnote{Here and henceforth $F^{*}$ denotes the adjoint of $F$ with respect to the Euclidean inner product.}
	
\medskip

Our main result shows that the increments of $u$ can be approximated using the flow \eqref{ao01bis}.  It is most conveniently stated in terms of the following class of initial value problems.
Denoting by $\tau$ the independent variable,
for $\tau_*,\tau\ge 0$, we define $F_{\tau_*,\tau}$ as
\begin{align}\label{ao29}
[\tau_*,\infty)\ni\tau\mapsto F_{\tau_*,\tau}
\quad\mbox{solves (\ref{ao01bis}) with initial data}\quad F_{\tau_*,\tau=\tau_*}={\rm id},\nonumber\\
\mbox{extended by}\quad F_{\tau_*,\tau}={\rm id}\quad\mbox{for}\;\tau\le \tau_*.
\end{align}
Equipped with this notation, we may formulate the next result, which shows that the effect of the quenched/environmental noise $b$
can be well-represented by a pseudo thermal noise $B$:

\begin{theorem}\label{thm:mr01}
There exists an explicit coupling of $b$ and $B$, cf.~Lemma~\ref{lemR01}, such that for all $(x,T)$ we have\footnote{Here and in the sequel, the statement $A \lesssim  B$ means that there exists a constant $ C = C(n) < \infty $ (which may depend on $ n $) such that $A \leq C B$, and $A \sim B$ means that both $A \lesssim  B$ and $B \lesssim A$ hold. Subscripts on the symbols $ \lesssim $, $ \sim $ and $ \gtrsim $ indicate a dependence of the constant on further variables.}
\begin{align}\label{ao03}
\mathbb{E}\fint_0^Tdt \,
\frac{1}{|x|^2}|u(x,t)-u(0,t)-F_{\tau(|x|^2),\tau(T)}^\dagger x|^2
\lesssim\varepsilon^2\mathbb{E}|F_{0,\tau(T)}|^2 
\quad \text{as} \quad \varepsilon^2 \ll 1 ,
\end{align}
where
\begin{align}\label{ao12}
\tau(s):=\ln \lambda(s).
\end{align}
\end{theorem}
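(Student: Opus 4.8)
The plan is to carry out the scale-by-scale homogenization of~\eqref{ao04}, recast as an SDE in the length scale $L$ (Section~\ref{sec:hom}), to couple its driving noise to the Brownian motion $B$ of~\eqref{ao01bis} (Lemma~\ref{lemR01}), and finally to control the homogenization error; here I sketch the main ideas. The heuristic behind the approximant $F_{\tau(|x|^2),\tau(T)}^\dagger x$ is that the increment $u(x,t)-u(0,t)$ ignores the fluctuations of $b$ on scales $L\ll|x|$, which are absorbed into the effective diffusivity tensor $\tilde\lambda_L\,\mathrm{id}$ and exert no net shear on an increment of size $|x|$, as well as those on scales $L\gg\sqrt t$, not yet reached by the diffusion by time $t$; the fluctuations on the intermediate range $|x|\lesssim L\lesssim\sqrt t$ shear the coarse-grained increment by the trace-free endomorphisms $\nabla b_L\in\mathfrak{sl}(n)$ (trace-free because of~\eqref{ao24}), and, after the time change $\tau=\ln\lambda(L^2)$, their cumulative effect is exactly the $\textbf{SL}(n)$-transport $F_{\tau(|x|^2),\tau(T)}$.

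To set up the homogenization SDE, following~\cite{CMOW} I split $b$ at each scale $L$ into a low-pass part, already homogenized into $\tilde\lambda_L\,\mathrm{id}$, and the scale-$L$ fluctuation; since~\eqref{ao24} forces $\nabla b_L\in\mathfrak{sl}(n)$, the coarse-grained gradient $G_L$ of $u$ transported along the effective diffusion stays in $\textbf{SL}(n)$ (cf.~\eqref{ao30}--\eqref{ao31}) and obeys $dG=G\,dN$ up to an It\^o drift, with $dN$ driven by the scale-$L$ fluctuation of $\nabla b$ seen along the path. The reparametrization $\tau=\ln\lambda(L^2)=\ln\tilde\lambda_L$ is dictated by the requirement that this equation become autonomous, which is possible because $\tfrac{d}{d\ln L}\tilde\lambda_L^2=\varepsilon^2+o(1)$; in the $\tau$-variable the equation for $G$ coincides with~\eqref{ao01bis} up to relative errors $O(\varepsilon^2)$, and $u(x,t)-u(0,t)$ enters this flow at scale $\tau=\tau(|x|^2)$ with coarse-grained value $\approx x$.

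For the coupling of Lemma~\ref{lemR01}, the Brownian motion $B$ on $\mathfrak{sl}(n)$ is, after rescaling by $\tau$, the martingale $N$ generated by $\nabla b$ along the effective diffusion, and one checks it is the unique Brownian motion for which $F$ satisfies (i)--(iii) of Lemma~\ref{L: properties of F}. Property~(i) descends from the isotropy~\eqref{ao02c} of $b$, a rotation of space conjugating $\nabla b$ and hence $G$; the normalization~(iii), $\mathbb{E}F_\tau F_\tau^{*}=e^{\tau}\,\mathrm{id}$, is forced by~\eqref{cov01fo} together with $\tau=\ln\lambda$ and $\lambda^2=1+\tfrac{\varepsilon^2}{2}\ln(1+s)$, and taking the trace gives $\mathbb{E}|F_\tau|^2=n\,e^{\tau}$; the coincidence of It\^o and Stratonovich in~(ii) is the vanishing of the Stratonovich correction to~\eqref{ao01bis}, which reduces to a trace identity for the covariance of $\nabla b$ holding by~\eqref{ao24repeat} and~\eqref{ao02c}.

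The main obstacle is the error estimate. I would split the error in~\eqref{ao03} into (a) the mean-square discrepancy at the matching scale $L\sim|x|$ between $u(x,t)-u(0,t)$ and the value $x$ at which the flow is initialized --- this has mean square $O(\varepsilon^2\,|x|^2)$, being the $L^2$-energy of the scale-$|x|$ corrector --- and (b) the discrepancy, accumulated over $\tau'\in[\tau(|x|^2),\tau(T)]$, between the true coarse-grained evolution of $G$ and the exact flow~\eqref{ao01bis}, controlled by a single-scale energy (Caccioppoli-type) estimate in which each step gains a power of the P\'eclet number and so accumulates to $O(\varepsilon^2)$ relative. Step~(b) is the delicate one: at criticality the elliptic gain is only logarithmic, so one must carry the slowly growing diffusivity $\tilde\lambda_L$ through the estimate, and it is here that $\varepsilon^2\ll1$ is used. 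Both errors are propagated to scale $\tau(T)$ by Duhamel's principle for the linear equation $dG=G\,dN$: an error present at scale $\tau'$ is afterwards transported by $F_{\tau',\tau(T)}$, independent of the flow on $[\tau(|x|^2),\tau']$, so by Cauchy--Schwarz and the normalization~(iii) in the form $\mathbb{E}|F_{a,b}|^2=n\,e^{b-a}$ the accumulated mean-square error is $\lesssim\varepsilon^2\,|x|^2\,(1+\tau(T)-\tau(|x|^2))\,e^{\tau(T)-\tau(|x|^2)}$; dividing by $|x|^2$ and using $\tau(|x|^2)\ge0$ and $\tau(T)\lesssim1$ (valid as $\varepsilon\ll1$) yields the claimed $\lesssim\varepsilon^2\,\mathbb{E}|F_{0,\tau(T)}|^2$. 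Finally, the time average $\fint_0^T\,dt$ is essential because for a fixed $t$ the sharp approximant is $F_{\tau(|x|^2),\tau(t)}^\dagger x$ rather than $F_{\tau(|x|^2),\tau(T)}^\dagger x$; replacing $\tau(t)$ by $\tau(T)$ costs, in mean square, $\sim\tau(T)-\tau(t)\sim\varepsilon^2\ln(T/t)$, and since $\fint_0^T\ln(T/t)\,dt=1$ this averages to $O(\varepsilon^2)$ --- so the $t$-average is precisely what tames the double logarithm hidden in $\tau$ and produces the right-hand side of~\eqref{ao03}.
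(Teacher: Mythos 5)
Your overall strategy correctly matches the paper's: recast the scale-by-scale homogenization as an SDE in $L$ (Section~\ref{sec:hom}), identify the driver $\nabla d\phi$ with a Brownian motion on $\mathfrak{sl}(n)$ under the time change $\tau=\ln\tilde\lambda_L$ (Lemma~\ref{lemR01}), approximate the true $u$ by the proxy $\tilde u_L$ with $L^{2}=T+1$ using the time-averaged energy estimate~\eqref{E: CMOW bound}, and then control the residuum $r_L=\tilde u_L(x)-\tilde u_L(0)-F^\dagger_{\tau_*,\tau}x$ through its SDE (Lemma~\ref{lem:mr03}).

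There is, however, a genuine gap in the accumulation step. You estimate the accumulated error by $\lesssim \varepsilon^{2}|x|^{2}\,(1+\tau(T)-\tau(|x|^{2}))\,e^{\tau(T)-\tau(|x|^{2})}$, i.e.\ an $O(\varepsilon^{2})$-per-unit-$\tau$ source transported exponentially. After dividing by $|x|^{2}$ and comparing with $\mathbb{E}|F_{0,\tau(T)}|^{2}=n\,e^{\tau(T)}$, this leaves the spurious factor $1+\tau(T)-\tau(|x|^{2})$, which you dispose of by asserting $\tau(T)\lesssim 1$ ``valid as $\varepsilon\ll 1$''. This is not valid: the theorem asserts the bound for all $(x,T)$, uniformly, for $\varepsilon$ small but fixed; e.g.\ for $T=\exp(\varepsilon^{-4})$ one has $\tau(T)=\ln\lambda(T)\sim\ln(1/\varepsilon)\to\infty$. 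The paper avoids this precisely because the forcing $dg$ in~\eqref{E: equation of mu}--\eqref{E: forcing} is not a constant $O(\varepsilon^{2})$ source per unit $\tau$; rather, the cross- and quadratic-variation estimates \eqref{E: covariation estimate 1}--\eqref{E: covariation estimate 2 b} carry decaying prefactors $1/L$ or $1/\tilde\lambda^{2}$, so that after dividing by the integrating factor $e^{\tau}=\tilde\lambda_L$ the source integrals \eqref{int02}--\eqref{int03} converge on $[\tau_*,\infty)$ and contribute only a bounded multiple of $\varepsilon^{2}/\tilde\lambda_*^{2}$. The resulting bound \eqref{pmr03} is $\lesssim\varepsilon^{2}\max\{1,\tilde\lambda_L/\tilde\lambda_*\}$ with no linear-in-$\tau$ factor, which is exactly what is needed to absorb into $\varepsilon^{2}\tilde\lambda_L=\varepsilon^{2}\,n^{-1}\mathbb{E}|F_{0,\tau(T)}|^{2}$ in \eqref{pmr06cont}. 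In short, your Gr\"onwall-type accounting with an $L$-independent source is too lossy at criticality; the decay of the source with scale, encoded in the spatial smoothing $-\Delta d\phi=L^{-2}d\phi$ and the convergence of the integrals in Appendix~\ref{AppIntegrals}, is the mechanism that is missing from your sketch.

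One smaller remark: your heuristic that the $t$-average compensates for the mismatch between $\tau(t)$ and $\tau(T)$, via $\fint_{0}^{T}\ln(T/t)\,dt=1$, is an appealing picture, but the paper's actual use of the time average is concentrated in the cited estimate~\eqref{E: CMOW bound}, which compares $\nabla u(0,t)$ to $\nabla\tilde u_{L}(0)$ for the single scale $L^{2}=T+1$ and produces the term $\varepsilon^{2}\tilde\lambda_{L}$ on the right of \eqref{pmr06cont}. Your Itô-vs-Stratonovich explanation in point~(ii) should invoke only the divergence-free condition~\eqref{ao24repeat} (via integration by parts, cf.\ \eqref{forFelix03}), not the isotropy~\eqref{ao02c}; isotropy only tells you a priori that $\mathbb{E}B_\tau^{2}$ is a multiple of the identity, while divergence-freeness forces that multiple to vanish.
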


\medskip

Note that the transposition $F^\dagger$ is a collateral damage of
our definition of the differential $ \nabla $, see the discussion in Subsection \ref{ss:X}.
Of course the same coupling holds for all space-time points $(x,t)$.
Letting $|x|\downarrow 0$ in (\ref{ao03}) it follows that\footnote{that is independently established in \cite[Equation (47)]{MOW24}. }
\begin{align}\label{ao07}
\mathbb{E}\fint_0^Tdt|\nabla u(0,t)-F_{0,\tau(T)}|^2
\lesssim\varepsilon^2\mathbb{E}|F_{0,\tau(T)}|^2,
\end{align}
so that $\nabla u(0,t)\approx F_{0,\tau(T)}$ for $\varepsilon\ll 1$ on average in $t\in(0,T)$.

\medskip

Upon passing to the limit $\varepsilon \downarrow 0$, Theorem \ref{thm:mr01} implies the following qualitative statement: To any $s,r\in (1,\infty)$ and $\hat x\in\mathbb{S}^{n-1}$ we associate $(T,x)$ via
\begin{align*}
T=\exp(\frac{1}{\varepsilon^{2}} ( s^{2} - 1) ) \quad\mbox{and}\quad
x=\exp(\frac{1}{\varepsilon^{2}} ( r^{2} - 1 ) )\hat x.
\end{align*}
Then the following distributional limit holds:
\begin{align} \label{distribution}
\lim_{\varepsilon\downarrow 0} \frac{1}{|x|} \fint_{0}^{T} (u(x,t)-u(0,t)) \, dt = F_{\ln r, \ln s}^\dagger\hat x \quad \text{in law.}
\end{align}
In fact, Theorem \ref{thm:mr01} implies convergence of all the finite-dimensional marginals indexed by $(s,r,\hat{x})$.

\medskip

Note that in view of (\ref{ao07}),  
(\ref{ao03}) seems to take the form of a Taylor remainder estimate.
However, because of the subtle dependence on the radial variable $|x|$ via $\tau(|x|^2)$,
this is only true w.~r.~t.~the angular dependence on $\frac{x}{|x|}$,
or on average over scales of a given order. Roughly speaking, this suggests a picture where, at very large scales, $u$ is well-approximated by an affine function, the slope of which, however, varies with the scale.  The same informal description has been posited in \cite{ABK24}; see the discussion preceding Conjecture E therein.

\medskip

Theorem \ref{thm:mr01} is proved in Section \ref{sec:mr} and \ref{sec:mr2} below by combining the results of \cite{CMOW} with the SDE approach developed here and certain fundamental properties of the geometric Brownian motion $F$.

\medskip

The reader may wonder how the process $ F $, or rather its driver $ B $, depends on the base point that we fixed to be the origin $ 0 $.  Because of the stationarity of the $b$ ensemble, for any blow-up sequence $x := \{x_{\varepsilon}\}_{\varepsilon \downarrow 0}$, we can associate
a Brownian motion $B(x) = \lim_{ \varepsilon \downarrow 0 } B(x_{\varepsilon})$.
In the limit $\varepsilon \downarrow 0$, the law of the resulting Gaussian process can be explicitly characterized.
\medskip

For two blow-up sequences $x$ and $ y := \{ y_\varepsilon \}_{ \varepsilon \downarrow 0 }$, if there is an $r \geq 1$ such that 
	\begin{equation} \label{def_of_r}
		\lim_{ \varepsilon \downarrow 0 } \sqrt{1+\frac{\varepsilon^2}{2}\ln(1+|x_\varepsilon-y_\varepsilon|^2)} = r,
	\end{equation}
then the corresponding Brownian motions $B(x)$ and $B(y)$ are related by 
    \begin{align*}
        \mathbb{E} B_{\tau}(x) \otimes B_{\tau}(y) = \max \{ \tau - \ln r, 0 \} \mathbb{E} B_{1}(0) \otimes B_{1}(0) .
    \end{align*}
In view of the Stratonovich SDE $dF=F\circ dB$, this extends to the corresponding geometric 
Brownian motions.

\medskip

Put slightly differently, the Brownian motions $B(x)$ and $B(y)$ are independent until time $\tau = \ln \sqrt{ 1 + d(x,y)^{2} }$ after which their increments are identical.  Here $d(x,y) = r^{2} - 1$ defines a metric on the space of blow-ups.  In fact, from the definition \eqref{def_of_r}, $d$ is an ultrametric, meaning that
$d(x,z)\le\max\{d(x,y),d(y,z)\}$.  This qualitative picture is a tensorial version of the one identified by Dunlap and Gu \cite{DunlapGu} in the context of the stochastic heat equation.

\subsection{Intermittent behavior of $F$}\label{ss:intermit} Theorem \ref{thm:mr01} asserts that the geometric Brownian motion $F$ determined by \eqref{ao01bis} approximates the increments of the expected position $u$.  Thus, properties of $F$ can be lifted to $u$.  Exploiting features of the SDE, we are able to say quite a bit about the intermittent character of $F$. Presently, we carry this out in $ n = 2 $, which is more explicit than the case $ n > 2 $ that will be addressed in a subsequent work.

\medskip

At the level of $F$, non-Gaussian long-time behavior is not surprising: The process $ F $ is the higher-dimensional analogue of a stochastic exponential/geometric Brownian motion in $ n = 1 $. Thus, it is natural to expect that the tensorial version \eqref{ao01bis} in $ n \geq 2 $ shows intermittent behavior.

\begin{lemma} \label{R: intermittent growth of F} In dimension $ n = 2 $, for any $p \geq 1$, we have
	\begin{equation*}
		\mathbb{E} |F_{\tau}|^{2p} \sim_{p} e^{ \frac{1}{2} p (p + 1) \tau } \quad \text{for all} \, \, \tau \geq 0.
	\end{equation*}
\end{lemma}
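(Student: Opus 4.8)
The plan is to reduce the computation of $\mathbb{E}|F_\tau|^{2p}$ to a closed system of ODEs for the moments of the entries of the $\mathbf{SL}(2)$-valued process $F$, exploiting the It\^{o} form (ii) from Lemma \ref{L: properties of F} together with the $\mathbf{O}(2)$-invariance (i) and the normalization (iii). First I would write $F_\tau \in \mathbf{SL}(2)$ and use the Itô SDE $dF = F\,dB$, where $B$ is Brownian motion on $\mathfrak{sl}(2)$. The key observation is that $|F_\tau|^2 = \operatorname{tr} F_\tau F_\tau^*$ is a natural quantity, and by the singular value decomposition for $2\times 2$ matrices of determinant one, one can write $|F_\tau|^2 = \sigma_\tau^2 + \sigma_\tau^{-2}$ where $\sigma_\tau \geq 1$ is the larger singular value. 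Because of the $\mathbf{O}(2)$-invariance, the process $|F_\tau|^2$ (or equivalently the scalar process $\sigma_\tau$, or $Y_\tau := \log \sigma_\tau$) should itself be a one-dimensional diffusion, and its generator can be read off from the generator of $F$. Concretely, I expect $G_\tau := |F_\tau|^2 - 2 = \sigma_\tau^2 + \sigma_\tau^{-2} - 2 \geq 0$ to satisfy an autonomous SDE of the form $dG = \alpha(G)\,d\tau + \beta(G)\,dW$ with drift and diffusion coefficients that are polynomial (after the change of variables, quadratic) in $G$, with the precise constants fixed by requiring consistency with (iii), i.e. $\mathbb{E}(G_\tau + 2) = 2e^\tau$, hence $\mathbb{E} G_\tau = 2(e^\tau - 1)$.

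Granting such an autonomous SDE, the main step is to compute $\mathbb{E}(G_\tau + 2)^p$, or rather to set up the linear ODE system for $m_q(\tau) := \mathbb{E}|F_\tau|^{2q}$ (for integer or half-integer $q$) obtained by applying the generator; I would expect $\tfrac{d}{d\tau} m_q$ to be a linear combination of $m_q$ and $m_{q-1}$ (and possibly $m_{q-1/2}$, but the $\mathbf{O}(2)$-symmetry together with the structure of $|F|^2 = \sigma^2 + \sigma^{-2}$ should kill half-integer contributions, since $|F|^{2q} = (\sigma^2+\sigma^{-2})^q$ expands in integer powers of $\sigma^2$ and $\sigma^{-2}$). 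Solving this triangular linear system by induction on $q$, the leading eigenvalue at level $q$ should be exactly $\tfrac{1}{2}q(q+1)$: this is the eigenvalue of the generator acting on the top-degree monomial $\sigma^{2q}$ of the SDE for $\sigma$, and it accounts for both the $q(q-1)/2$ from the It\^{o} correction (quadratic variation) and the linear-in-$q$ drift term. Thus $m_q(\tau) = c_q e^{\frac12 q(q+1)\tau} + (\text{lower-order exponentials})$, and since $G_\tau \geq 0$ forces $c_q > 0$, one gets $\mathbb{E}|F_\tau|^{2q} \sim_q e^{\frac12 q(q+1)\tau}$; specializing $q = p$ gives the claim. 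The lower bound $m_p(\tau) \gtrsim_p e^{\frac12 p(p+1)\tau}$ also follows directly once $c_p>0$ is known, or alternatively from Jensen applied to the reverse direction after establishing the exact leading asymptotics.

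The step I expect to be the main obstacle is pinning down the autonomous SDE for $|F_\tau|^2$ (equivalently the generator of the singular-value diffusion) with the correct constants. This requires either (a) an explicit parametrization of $\mathbf{SL}(2)$ — e.g. $F = k_1 \operatorname{diag}(\sigma,\sigma^{-1}) k_2$ with $k_1,k_2 \in \mathbf{SO}(2)$ — and pushing the SDE \eqref{ao01bis} through this change of coordinates, tracking the It\^{o}/Stratonovich correction terms carefully; or (b) identifying $B$ explicitly as the isotropic Brownian motion on $\mathfrak{sl}(2) = \{\text{trace-free } 2\times 2\}$ normalized via (iii), computing its covariance, and then applying It\^{o}'s formula to $\operatorname{tr} FF^*$ directly. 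Route (b) is cleaner: $\mathfrak{sl}(2)$ carries an $\mathbf{O}(2)$-action decomposing into irreducibles (the symmetric trace-free part, dimension $2$, and the antisymmetric part, dimension $1$), so by Schur the covariance of $B$ has at most two free parameters; one is fixed by (iii) and the other — the relative weight of the rotational versus shear components — is part of the specification of $B$ in Lemma \ref{L: properties of F}, so I would need to extract it from the statement or its proof in Section \ref{S: diffusion}. Once the covariance of $B$ is explicit, It\^{o}'s formula applied to $\operatorname{tr}(FF^*)$ and its powers gives the ODE system mechanically, and the $\tfrac12 p(p+1)$ exponent should drop out of the top-order coefficient after a short computation. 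A sanity check throughout is the case $p=1$, which must reproduce $\mathbb{E}|F_\tau|^2 = 2e^\tau$ from (iii), i.e. leading exponent $1 = \tfrac12\cdot 1\cdot 2$, consistent with the claimed formula.
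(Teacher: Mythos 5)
Your strategy matches the paper's in its main outline: reduce to a one-dimensional diffusion for $|F_\tau|^2$, exploiting $\mathbf{O}(2)$-invariance and the singular-value decomposition $|F|^2 = \sigma^2 + \sigma^{-2}$. Indeed, the paper works with $R_\tau = \frac12 |F_\tau|^2$ and shows (Lemma~\ref{L: R SDE and lyapunov exponent}, using Lemma~\ref{L: symmetric part} to pin down the covariance of the symmetric part of $B$) that $R$ solves the autonomous It\^o SDE $dR = R_\tau\,d\tau + \sqrt{R_\tau^2-1}\,dw$. Your upper bound argument via the moment ODE is exactly what the paper does: It\^o's formula gives $d\mathbb{E}R^p = \tfrac12 p(p+1)\mathbb{E}R_\tau^p\,d\tau - \tfrac12 p(p-1)\mathbb{E}R_\tau^{p-2}\,d\tau$, and dropping the negative term yields $\mathbb{E}R_\tau^p \leq e^{\frac12 p(p+1)\tau}$.

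The genuine gap is in your lower bound. You assert that ``$G_\tau \geq 0$ forces $c_p > 0$,'' but non-negativity only gives $c_p \geq 0$. Writing out the triangular system, $c_p = 1 - \tfrac12 p(p-1) \int_0^\infty e^{-\frac12 p(p+1)s}\,m_{p-2}(s)\,ds$, and the crude bound $m_{p-2}(s) \leq e^{\frac12(p-2)(p-1)s}$ yields $c_p \geq 1 - \frac{p(p-1)}{2(2p-1)}$, which goes negative already for $p \geq 5$. So the positivity of $c_p$ does not drop out of the moment recursion by elementary bookkeeping, and your alternative suggestion (``Jensen applied to the reverse direction'') does not supply the missing argument either. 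The paper sidesteps this entirely with a pathwise comparison: passing to the largest eigenvalue $S_\tau$ of $F_\tau F_\tau^*$ (so $2R_\tau \geq S_\tau$), the Stratonovich SDE $dS = \tfrac12 S_\tau \tfrac{S_\tau^2+1}{S_\tau^2-1}\,d\tau + S_\tau \circ dw$ has drift dominating that of scalar geometric Brownian motion $Q_\tau = e^{\frac12\tau + w_\tau}$, hence the Stratonovich comparison principle gives $S_\tau \geq Q_\tau$ almost surely, and then $\mathbb{E}R_\tau^p \geq 2^{-p}\mathbb{E}Q_\tau^p = 2^{-p}e^{\frac12 p(p+1)\tau}$ by the Gaussian moment-generating function. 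This pathwise domination is the idea your proposal is missing; without it the lower bound does not close.
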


In a subsequent work we will prove the following extension.

\begin{lemma}[Upcoming work with Şefika Kuzgun \cite{KMOW25}]\label{up01}In dimension $ n \geq 2 $, and any integer $ p \geq 1 $, we have
	\begin{equation*}
		\mathbb{E} |F_{\tau}|^{2p} \sim_{p} e^{ \frac{1}{2} p ( \frac{ 4 }{ n + 2 } p + \frac{ 2n }{ n + 2 }  ) \tau } \quad \text{for all} \, \, \tau \geq 0.
	\end{equation*} 
\end{lemma}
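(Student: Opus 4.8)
The plan is to compute the moments $\mathbb{E}|F_\tau|^{2p}$ for integer $p$ by setting up a closed system of ODEs for a finite family of $\textbf{O}(n)$-invariant polynomial expectations in the entries of $F_\tau$, using the Itô form of the SDE from Lemma~\ref{L: properties of F}(ii). Write $G_\tau := F_\tau F_\tau^*$, which is a symmetric positive-definite matrix of determinant one; then $|F_\tau|^{2p} = (\operatorname{tr} G_\tau)^p$, and by Lemma~\ref{L: properties of F}(i) the law of $G_\tau$ is $\textbf{O}(n)$-conjugation-invariant, so $\mathbb{E}(\operatorname{tr} G_\tau)^p$ is a linear combination of $\mathbb{E}\prod_j \operatorname{tr}(G_\tau^{m_j})$ over partitions $(m_j)$ of $p$. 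Since $\det G_\tau = 1$, Newton's identities let us express all power sums $\operatorname{tr}(G^m)$ for $m \ge n$ in terms of $\operatorname{tr} G, \ldots, \operatorname{tr}(G^{n-1})$ (the first $n-1$ elementary symmetric functions of the eigenvalues, with the $n$-th fixed to $1$). Hence for fixed $p$ the relevant expectations span a finite-dimensional space, and the generator of $F$ acting on these polynomials produces a linear system with constant coefficients.

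The key steps, in order: first, from Lemma~\ref{L: properties of F}(ii) and (iii) I would extract the precise Itô SDE, $dF = F\,dB$ with $\mathbb{E}\,dB_\tau \otimes dB_\tau = \Sigma\, d\tau$ for a specific $\textbf{O}(n)$-invariant covariance $\Sigma$ on $\mathfrak{sl}(n)$; the normalization (iii) fixes the overall scale of $\Sigma$, and $\textbf{O}(n)$-invariance on $\mathfrak{sl}(n)$ leaves a two-parameter family (the symmetric-traceless and antisymmetric irreducible pieces carry independent weights), with the trace-free constraint and the coupling to the original problem pinning these down; in $n=2$, $\mathfrak{sl}(2)$ decomposes into a $2$-dimensional symmetric-traceless part and a $1$-dimensional rotation part, so $\Sigma$ has two scalar parameters. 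Second, I would apply Itô's formula to $\operatorname{tr}(G_\tau^m)$ and to products thereof, obtaining $\frac{d}{d\tau}\mathbb{E}[\cdots] = (\text{linear combination of same-degree invariants})$; the drift terms come from the quadratic variation of $F$. Third, specializing to $n=2$: here $G_\tau \in \textbf{SL}(2)$ is symmetric positive-definite, so it has eigenvalues $e^{\pm\rho_\tau}$ for a scalar $\rho_\tau$, and $\operatorname{tr} G_\tau = 2\cosh\rho_\tau$. One then checks that $\rho_\tau$ itself satisfies a one-dimensional diffusion (after quotienting by the $\textbf{SO}(2)$ action on the eigenframe), with explicit drift and diffusion coefficients, and $\mathbb{E}(\operatorname{tr} G_\tau)^p = 2^p\,\mathbb{E}(\cosh\rho_\tau)^p = 2^{p-1}\sum_{k} \binom{p}{k}\mathbb{E}\cosh((p-2k)\rho_\tau)$ reduces the problem to computing $\mathbb{E}\, e^{m\rho_\tau}$ for integers $|m|\le p$. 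Fourth, $v_m(\tau) := \mathbb{E}\cosh(m\rho_\tau)$ (or $\mathbb{E} e^{m\rho_\tau}$) satisfies a linear ODE whose coefficient is a quadratic in $m$ (since $e^{m\rho}$ is an eigenfunction-type object for the generator $\tfrac{a}{2}\partial_\rho^2 + c\,\partial_\rho$, giving eigenvalue $\tfrac{a}{2}m^2 + cm$); matching the exponential rate of the dominant term $m = p$ to the claimed exponent $\tfrac12 p(\tfrac{4}{n+2}p + \tfrac{2n}{n+2})\tau$, which at $n=2$ is $\tfrac12 p(p+1)\tau$, fixes the normalization and confirms Lemma~\ref{R: intermittent growth of F}. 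The upper and lower bounds (the $\sim_p$) then follow because the finitely many exponential modes have strictly ordered rates, so the leading one dominates with a prefactor depending only on $p$.

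For general $n$, the same scheme runs but with the radial part living on the Weyl chamber of $\textbf{SL}(n)$ (eigenvalues $e^{\lambda_1} \ge \cdots \ge e^{\lambda_n}$ with $\sum \lambda_i = 0$), and the generator is a Laplace--Beltrami-type operator with a drift coming from the Jacobian of the $\textbf{O}(n)$-orbit decomposition (Weyl integration / the $\prod_{i<j}\sinh$ Vandermonde factor). The expectation $\mathbb{E}|F_\tau|^{2p} = \mathbb{E}(\sum_i e^{2\lambda_i})^p$ expands into exponentials $\mathbb{E}\exp(\sum_i 2n_i \lambda_i)$ over compositions $(n_i)$ of $p$; each such exponential is, up to lower-order corrections from the Vandermonde drift, an approximate eigenfunction of the radial generator, and the dominant rate comes from the maximally concentrated composition $(p,0,\ldots,0)$. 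The eigenvalue computation — a quadratic form in the exponents evaluated on this extreme weight — is what should produce $\tfrac12 p\bigl(\tfrac{4}{n+2}p + \tfrac{2n}{n+2}\bigr)$; the factor $\tfrac{2}{n+2}$ is recognizably the ratio tied to the dimension count $\dim\mathfrak{sl}(n) - 1$ versus the symmetric-traceless piece, consistent with how $\Sigma$'s two invariant weights enter.

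The main obstacle I expect is twofold. First, correctly pinning down the two-parameter covariance $\Sigma$ of $B$ from Lemma~\ref{L: properties of F}: conditions (i)--(iii) give $\textbf{O}(n)$-invariance, Itô--Stratonovich agreement, and one normalization, which in general leaves one free parameter that must be matched to the homogenization data coming from $\mathcal{F}c$ in \eqref{ao08} — getting the exact split between the symmetric and antisymmetric parts of $\Sigma$ is delicate and is precisely what makes the $n+2$ appear. Second, for $n > 2$, controlling the lower-order Vandermonde drift terms rigorously enough to conclude the two-sided $\sim_p$ bound (rather than just the exponential rate): near the walls of the Weyl chamber the $\coth(\lambda_i - \lambda_j)$ terms are singular, so one needs either a comparison argument (dominating by the extreme-weight mode and bounding below by restricting to an event where the eigenvalues are well-separated) or an exact solvability of the radial process, which is where the companion paper \cite{KMOW25} presumably does the careful work; in this excerpt I would only sketch the $n=2$ case fully and indicate that the general case follows the same blueprint with these additional technical estimates.
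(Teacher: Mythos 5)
The statement you are trying to prove is not proved in this paper at all: it is attributed to the forthcoming work \cite{KMOW25}, and only the special case $n = 2$ (Lemma \ref{R: intermittent growth of F}) is established here. So any substantive comparison has to be with the $n = 2$ proof.

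For $n = 2$, your outline has a concrete gap, and it is precisely the singularity you correctly flag for $n > 2$ but dismiss at $n = 2$. You assert that $v_m(\tau) = \mathbb{E}\,e^{m\rho_\tau}$ (equivalently $\mathbb{E}\cosh(m\rho_\tau)$) satisfies a constant-coefficient linear ODE because $e^{m\rho}$ is ``an eigenfunction-type object'' for a generator $\tfrac{a}{2}\partial_\rho^2 + c\,\partial_\rho$. But the radial drift is not constant: from \eqref{E: eigenvalue eqn}, $\rho = \ln S$ satisfies $d\rho = \tfrac12 \coth\rho\,d\tau + dw$, so the generator is $L = \tfrac12\coth\rho\,\partial_\rho + \tfrac12\partial_\rho^2$ and $L\,e^{m\rho} = \big(\tfrac{m}{2}\coth\rho + \tfrac{m^2}{2}\big)e^{m\rho}$ is \emph{not} an eigenrelation; $\tfrac{d}{d\tau}\mathbb{E}\,e^{m\rho_\tau}$ does not close. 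The paper circumvents this by choosing the observable in which the $\coth\rho$ singularity cancels identically: with $R = \tfrac12|F|^2 = \cosh\rho$, the chain rule gives $dR = R\,d\tau + \sqrt{R^2-1}\,dw$ (Lemma \ref{L: R SDE and lyapunov exponent}), an SDE with polynomial coefficients. It\^{o} on $R^p$ then yields the exact identity
\begin{equation*}
d\mathbb{E} R^p = \tfrac{1}{2}p(p+1)\,\mathbb{E}R^p\,d\tau - \tfrac{1}{2}p(p-1)\,\mathbb{E}R^{p-2}\,d\tau,
\end{equation*}
giving the upper bound by dropping the negative term, while the lower bound comes from the pathwise comparison $2R_\tau \geq Q_\tau$ with the scalar geometric Brownian motion $Q_\tau = e^{\tau/2 + w_\tau}$ from \eqref{E: compare with geometric BM}; both bounds hold for \emph{all} $\tau \geq 0$, not just asymptotically, which your rate-matching argument alone does not deliver. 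Separately, your stated obstacle of a ``free parameter'' in the covariance of $B$ that must be matched to the homogenization data does not actually arise: Lemma \ref{L: characterize covariance} shows that conditions (i)--(iii) determine the covariance uniquely (condition (ii), $\mathbb{E} B_\tau B_\tau = 0$, fixes the ratio of the symmetric-traceless and skew weights, and (iii) fixes the overall scale), so the split you worry about is already pinned down before any coupling to $\mathcal{F}c$ is invoked.
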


\medskip

	Lemma \ref{R: intermittent growth of F} \& \ref{up01} shows that $ | F_{\tau} |^2 $ is qualitatively similar to a stochastic exponential: In terms of scaling, its moments behave as if $ \ln | F_{\tau} |^2 $ were a Gaussian with mean $ \frac{n}{n+2} \tau $ and variance $ { \textstyle \frac{ 4 }{ n + 2 } } \tau $. In dimension $n = 2$, we make this statement precise in the proof of Lemma \ref{R: intermittent growth of F}, cf.~\eqref{E: Q formula} in Section \ref{S: intermittency}, by showing that the auxiliary process $ R_{ \tau } := \frac{ 1 }{ n } | F_{ \tau } |^2 $ solves an SDE in its own right that is reminiscent of a geometric Brownian motion.

\medskip

By comparing $R$ to one-dimensional geometric Brownian motion, we prove the following result, which will be used to compare the expected position $u$ to the geometric Brownian motion $F$.

\begin{lemma} \label{lem-weakinter} In dimension $ n = 2 $, for any $\tau \geq 0$,
\begin{align}\label{ao79}
\mathbb{E} | F_{ \tau } |^2 I( | F_{ \tau } |^2 \ge \frac{1}{2 \sqrt{2} }\mathbb{E}^\frac{3}{2} | F_{ \tau } |^2 )
\geq\frac{1}{4}\mathbb{E}  | F_{ \tau } |^2 .
\end{align}
\end{lemma}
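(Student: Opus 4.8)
The plan is to use a Paley–Zygmund–type argument based on the explicit moment asymptotics of Lemma \ref{R: intermittent growth of F}. Write $M := |F_\tau|^2$ and $m_k := \mathbb{E} M^k$. From Lemma \ref{R: intermittent growth of F} (in the form used to prove it, i.e.\ with the constants tracked rather than just the exponential rate), in $n=2$ the first few moments are computable essentially exactly: $m_1 = \mathbb{E}|F_\tau|^2 = 2 e^\tau$ by normalization (iii), and $m_2 = \mathbb{E}|F_\tau|^4$ is of the size $e^{3\tau}$ with a known constant. The key quantitative input I would isolate is the bound $m_2 \le C \, m_1^3$ for an explicit constant $C$ — this is exactly the $p=2$ case of Lemma \ref{R: intermittent growth of F}, and the threshold $\frac{1}{2\sqrt2}\mathbb{E}^{3/2}|F_\tau|^2$ in \eqref{ao79} is engineered so that $\frac{1}{2\sqrt2} m_1^{3/2} = \theta\, m_1^{1/2}\cdot m_1$ for a convenient $\theta$; squaring, the threshold level $a$ satisfies $a^2 \sim m_1^3$, putting it on the same scale as $m_2$.

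The core step is then the standard truncated second-moment inequality: for any level $a \ge 0$,
\begin{align*}
\mathbb{E}\, M\, I(M \ge a) \;=\; \mathbb{E} M - \mathbb{E}\, M\, I(M < a) \;\ge\; m_1 - a,
\end{align*}
which is too crude here; instead I would run the sharper Cauchy–Schwarz version,
\begin{align*}
\mathbb{E}\, M\, I(M \ge a) \;\ge\; \mathbb{E} M - \mathbb{E}\, M\, I(M < a),
\end{align*}
and bound $\mathbb{E} M\, I(M<a) \le a\,\mathbb{P}(M<a) \le a$, OR, better, use the Paley–Zygmund route: $\mathbb{E} M\, I(M \ge \theta m_1) \ge (1-\theta) m_1 \cdot \frac{m_1^2}{m_2}$ after a Cauchy–Schwarz estimate $ \big(\mathbb{E} M\,I(M\ge \theta m_1)\big) \cdot \big(\mathbb{E} M\, I(M \ge \theta m_1)\big) \ge (\mathbb{E}M\,I(M\ge\theta m_1))^2$ combined with $\mathbb{E}M I(M\ge \theta m_1) \ge \mathbb{E}M - \theta m_1$ and $\mathbb{E}(M I(M\ge\theta m_1))^2 \le m_2$. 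Since the threshold in \eqref{ao79} is $a = \frac{1}{2\sqrt2} m_1^{3/2}$, not a multiple of $m_1$, I would instead apply Cauchy–Schwarz directly to $\mathbb{E} M I(M < a) \le \sqrt{m_2}\sqrt{\mathbb{P}(M<a)} \le \sqrt{m_2}$ and check that the explicit constant from $m_2 \le C m_1^3$ forces $\sqrt{m_2} \le \frac{3}{4} m_1$ — wait, that scales like $m_1^{3/2}$, not $m_1$. So the correct bookkeeping is: $\mathbb{E} M I(M<a) \le a = \frac{1}{2\sqrt 2} m_1^{3/2}$, which is $\gg m_1$, so this also fails. The right move is $\mathbb{E} M\, I(M < a) \le a\,\mathbb{P}(M\ge \text{something})$...

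Reconsidering: the clean argument is Cauchy–Schwarz on the complementary event together with Markov. We have $\mathbb{E} M\, I(M < a) \le \sqrt{\mathbb{E} M^2}\,\sqrt{\mathbb{P}(M<a)}$. Now $\mathbb{P}(M \ge a) \le m_1/a = 2e^\tau/(\tfrac{1}{2\sqrt2}(2e^\tau)^{3/2}) = \tfrac{2\sqrt2 \cdot 2 e^\tau}{(2e^\tau)^{3/2}} = \tfrac{2\sqrt2}{(2e^\tau)^{1/2}} = 2 e^{-\tau/2}$, which tends to $0$, so $\mathbb{P}(M<a) \to 1$ and this gives nothing directly; but replacing the trivial bound $\mathbb{P}(M<a)\le 1$ with $\mathbb{E} M I(M<a) \le \sqrt{m_2}$ yields $\mathbb{E} M I(M\ge a) \ge m_1 - \sqrt{m_2}$, which is negative for large $\tau$. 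Hence the only viable path is the Paley–Zygmund inequality $\mathbb{E} M\, I(M \ge a) \ge m_1 - a$ is useless, so one must instead use $\mathbb{E} M I(M \ge a) \ge (\mathbb{E} M)^2/\mathbb{E} M^2 \cdot(\text{...})$ — precisely, the inequality $(\mathbb{E} M)^2 = (\mathbb{E} M I(M<a) + \mathbb{E} M I(M\ge a))^2 \le 2(\mathbb{E} M I(M<a))^2 + 2 \mathbb{E}M^2\, \mathbb{P}(M \ge a)$ combined with $\mathbb{E} M I(M<a)\le a$ gives $\mathbb{P}(M\ge a) \ge \tfrac{(\mathbb{E}M)^2 - 2a^2}{2 m_2}$, and then $\mathbb{E} M I(M\ge a) \ge ?$. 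The genuinely hard part — and where I expect to spend the real effort — is pinning down the sharp constants in $m_1$ and $m_2$ so that all the truncation error terms fit under the clean factor $\tfrac14$ on the right of \eqref{ao79}; the threshold constant $\tfrac{1}{2\sqrt2}$ has clearly been reverse-engineered from exactly this calculation (note $(\tfrac{1}{2\sqrt2})^2 = \tfrac18$ and $m_2 = \tfrac18 m_1^3$ would make $a^2 = m_2$), so once the relation $\mathbb{E}|F_\tau|^4 = \tfrac18(\mathbb{E}|F_\tau|^2)^3$ (or the appropriate analogue with the true constant, read off from \eqref{E: Q formula}) is established, the inequality should follow from a one-line Cauchy–Schwarz: $m_1 = \mathbb{E} M I(M\ge a) + \mathbb{E} M I(M < a) \le \sqrt{m_2}\sqrt{\mathbb{P}(M\ge a)} + a$, hence $\mathbb{E} M I(M\ge a) \ge \sqrt{m_2}\,\mathbb{P}(M\ge a) \ge (m_1 - a)^2/\sqrt{m_2} = (m_1-a)^2/a$, and plugging $a = \tfrac{1}{2\sqrt2}m_1^{3/2}$ with the explicit size of $m_1$ makes $(m_1-a)^2/a \ge \tfrac14 m_1$ provided $a$ is small relative to $m_1$, which holds for all $\tau\ge 0$ after checking the constant at $\tau=0$.
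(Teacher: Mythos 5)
Your Paley--Zygmund/second-moment strategy fails for a structural reason: the threshold $a := \frac{1}{2\sqrt2}\mathbb{E}^{3/2}|F_\tau|^2$ exceeds the mean $m_1 := \mathbb{E}|F_\tau|^2$ as soon as $\tau > 2\ln 2$. Indeed, in $n=2$ the normalization of Lemma \ref{L: properties of F} (iii) gives $m_1 = 2e^\tau$, hence $a = \frac{1}{2\sqrt 2}(2e^\tau)^{3/2} = e^{3\tau/2}$, so $a/m_1 = \frac{1}{2}e^{\tau/2} \to \infty$. Paley--Zygmund-type lower bounds on $\mathbb{P}(M \geq a)$ require $a < \mathbb{E} M$ to say anything, and your final estimate $\mathbb{E}\, M\, I(M\geq a) \geq (m_1-a)^2/a$ is the square of a negative quantity (hence vacuous) once $a > m_1$; it certainly does not yield $\frac{1}{4}m_1$, and your closing assertion that ``$a$ is small relative to $m_1$ \ldots\ for all $\tau \ge 0$'' is exactly backwards. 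Your posited relation $m_2 = \frac{1}{8}m_1^3$ (equivalently $a^2 = m_2$) is also false: solving $\frac{d}{d\tau}\mathbb{E} R^2 = 3\,\mathbb{E} R^2 - 1$ with $R_0 = 1$ gives $\mathbb{E} R_\tau^2 = \frac{1}{3} + \frac{2}{3}e^{3\tau}$, so $m_2 = 4\,\mathbb{E} R_\tau^2 = \frac{4}{3} + \frac{8}{3}e^{3\tau}$, whereas $\frac{1}{8}m_1^3 = e^{3\tau}$; the ratio tends to $\frac{8}{3}$, not $1$.

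The point of the lemma---and the reason a bare second-moment argument cannot reach it---is precisely this intermittency: a constant fraction of the mass of $\mathbb{E}|F_\tau|^2$ sits on an event whose threshold is exponentially larger than the mean. The paper's proof does not use second moments at all. It uses the pathwise comparison $R_\tau \geq \frac{1}{2}Q_\tau$ from \eqref{E: compare with geometric BM}, where $Q_\tau = e^{\tau/2 + w_\tau}$ is scalar geometric Brownian motion, together with the exact Gaussian identity $\mathbb{E}\, Q_\tau\, I(Q_\tau \geq \mathbb{E}^{3/2}Q_\tau) = \frac{1}{2}\,\mathbb{E} Q_\tau$. The latter is a tilting (complete-the-square) calculation: $\{Q_\tau \geq e^{3\tau/2}\} = \{w_\tau \geq \tau\}$, and under the $e^{w_\tau}$-tilted Gaussian this is a probability-$\frac{1}{2}$ event. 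Since $\mathbb{E} R_\tau = e^\tau = \mathbb{E} Q_\tau$ and $R_\tau \geq \frac{1}{2}Q_\tau$, the identity for $Q$ transfers to $R$ with the factor-of-two losses producing the constants $\frac{1}{2\sqrt 2}$ and $\frac{1}{4}$. That exact computation for the dominating geometric Brownian motion, not a moment inequality, is the ingredient missing from your argument.
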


In a subsequent work we will extend the lemma to\footnote{Clearly the constants in Lemma \ref{lem-weakinter} \& \ref{up02} do not match in two dimensions. This is a consequence of the different proof strategy in \cite{KMOW25}. We expect neither of them to be optimal. }
\begin{lemma}[Upcoming work with Şefika Kuzgun \cite{KMOW25}]\label{up02} In dimension $ n \geq 2 $
\begin{align*} 
\mathbb{E} | F_{ \tau } |^2 I( | F_{ \tau } |^2 \ge \frac{ 1 }{ e }  \, \mathbb{E}^{ \frac{ n + 4 }{ n + 2 } } | F_{ \tau } |^2 )
\geq \frac{ 1 }{ 4 } \mathbb{E}  | F_{ \tau } |^2 
\quad { \rm for } ~ \tau \gg_{ n } 1 .
\end{align*}

\end{lemma}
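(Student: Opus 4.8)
The goal is to establish the lower bound \eqref{ao79} in Lemma \ref{lem-weakinter}, a quantitative statement that the mass of the random variable $|F_\tau|^2$ concentrated on its upper tail — above a threshold comparable to $\mathbb{E}^{3/2}|F_\tau|^2$ — already carries a definite fraction (here $\tfrac14$) of the total first moment. This is precisely the kind of inequality that witnesses intermittency: most of the expectation is produced by rare, large excursions. My plan is to reduce everything to the one-dimensional auxiliary process $R_\tau := \tfrac{1}{n}|F_\tau|^2$ (with $n=2$), whose SDE — the ``$Q$ formula'' \eqref{E: Q formula} referenced in the statement of Lemma \ref{R: intermittent growth of F} — is close to that of a geometric Brownian motion, and then to exploit comparison with a genuine geometric Brownian motion $G_\tau$ for which the analogous concentration statement can be computed explicitly via the log-normal distribution.

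First I would record the explicit form of the SDE solved by $R_\tau$ in $n=2$: writing $dR_\tau = \mu(R_\tau)\,d\tau + \sigma(R_\tau)\,dW_\tau$ for appropriate scalar coefficients coming from the Itô form (ii) of Lemma \ref{L: properties of F} and the normalization (iii), one sees that $R_\tau$ has multiplicative noise $\sigma(R) \propto R$ and a drift that, after accounting for the $\mathbf{O}(n)$-invariance, is linear in $R$ up to a correction that only helps (pushes mass up). Concretely, in $n=2$ the moments $\mathbb{E}R_\tau^p = \tfrac{1}{n^p}\mathbb{E}|F_\tau|^{2p}$ are governed by $\mathbb{E}|F_\tau|^{2p} \sim_p e^{\frac12 p(p+1)\tau}$ from Lemma \ref{R: intermittent growth of F}, so in particular $\mathbb{E}R_\tau = \tfrac12 e^\tau$ (this is exactly (iii) with $n=2$) and $\mathbb{E}R_\tau^2 \sim e^{3\tau}$, whence $\mathbb{E}^{3/2}R_\tau \sim e^{3\tau/2}$ is of the same order as $\sqrt{\mathbb{E}R_\tau^2}$. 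This matching of exponents — the second moment is the square of the ``$3/2$ power of the first moment'' — is the structural fact that makes the threshold $\tfrac{1}{2\sqrt2}\mathbb{E}^{3/2}|F_\tau|^2$ the natural place to cut.

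The analytic core is then a Paley–Zygmund-type argument, but run on the \emph{first moment} rather than on probabilities. Let $M := \mathbb{E}|F_\tau|^2$ and let $A := \mathbb{E}|F_\tau|^2 I(|F_\tau|^2 \ge \theta)$ with $\theta := \tfrac{1}{2\sqrt2}M^{3/2}$. Then
\begin{align*}
M = \mathbb{E}|F_\tau|^2 I(|F_\tau|^2 < \theta) + A \le \theta + A \le \frac{1}{2\sqrt2} M^{3/2} + A,
\end{align*}
wait — this only gives something if $M^{3/2}$ is small relative to $M$, which fails for large $\tau$; so the naive truncation is too lossy and one must instead control the contribution of the bulk $\{|F_\tau|^2 < \theta\}$ using a \emph{second-moment} bound. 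The correct route: by Cauchy–Schwarz, $\mathbb{E}|F_\tau|^2 I(|F_\tau|^2 < \theta) \le \sqrt{\mathbb{E}|F_\tau|^4}\,\sqrt{\mathbb{P}(|F_\tau|^2 < \theta)}$, which is still not obviously small. The genuinely effective inequality is the reverse Paley–Zygmund bound applied to the nonnegative variable $Z := |F_\tau|^2$: one writes $M = \mathbb{E}Z = \mathbb{E}Z I(Z<\theta) + \mathbb{E}Z I(Z \ge \theta)$ and estimates the first term by $\mathbb{E}Z I(Z < \theta) \le \sqrt{\theta}\,\mathbb{E}\sqrt Z \cdot$ — no. Let me state what I will actually prove: with $\theta = \beta M^{3/2}$ and using $\mathbb{E}Z^2 \le C_2 M^3$ (which holds since $\mathbb{E}|F_\tau|^4 \sim e^{3\tau}$ and $M = \tfrac12 e^\tau$ in $n=2$, up to the constant $C_2$), the bulk contributes $\mathbb{E}Z I(Z<\theta)$, and by splitting $Z = Z I(Z<\theta) $ one has $\mathbb{E}Z^2 \ge \mathbb{E}Z^2 I(Z\ge\theta) \ge \theta\,\mathbb{E}Z I(Z\ge\theta) = \theta A$, hence also $A \le \mathbb{E}Z^2/\theta$; and on the other side $\mathbb{E}Z^2 = \mathbb{E}Z^2 I(Z<\theta) + \mathbb{E}Z^2 I(Z\ge\theta) \le \theta\, \mathbb{E}Z I(Z<\theta) + \mathbb{E}Z^2 I(Z\ge\theta) \le \theta(M-A) + \big(\mathbb{E}Z^4\big)^{1/2}\mathbb{P}(Z\ge\theta)^{1/2}$; combining with a lower bound $\mathbb{E}Z^2 \ge c_2 M^3$ (the $\sim$ in Lemma \ref{R: intermittent growth of F} gives matching lower bounds) and choosing $\beta = \tfrac{1}{2\sqrt2}$ together with $c_2/\beta \ge $ the relevant numerics forces $A \ge \tfrac14 M$. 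I expect the cleanest implementation to be exactly this two-sided second-moment pinch, feeding in both directions of the moment asymptotics from Lemma \ref{R: intermittent growth of F} together with a fourth-moment or higher-moment bound to control $\mathbb{P}(Z\ge\theta)$ via Markov.

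The main obstacle is \emph{tracking the constants} well enough to land the explicit $\tfrac14$ and $\tfrac{1}{2\sqrt2}$: Lemma \ref{R: intermittent growth of F} only gives $\mathbb{E}|F_\tau|^{2p} \sim_p e^{\frac12 p(p+1)\tau}$ with unspecified $p$-dependent constants, so to get sharp numerics one cannot simply cite it as a black box for $p=1,2$ — one needs the \emph{exact} values $\mathbb{E}|F_\tau|^2 = 2e^\tau$ (from normalization (iii) with $n=2$) and a sharp or near-sharp two-sided bound on $\mathbb{E}|F_\tau|^4$, the latter obtained by writing the closed ODE for $\mathbb{E}R_\tau^2$ from the $Q$-SDE and solving it, then controlling the error terms. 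A secondary subtlety is that $R_\tau$ is only \emph{reminiscent} of geometric Brownian motion, not equal to one: the drift correction and the possible degeneracy of the noise when $F_\tau$ is near a rotation must be shown to only push mass toward the tail (a comparison/coupling argument with a true GBM $G_\tau$ satisfying $dG = G\,d\tau + \sqrt2\, G\,dW$, say), so that the log-normal computation for $G$ — for which the desired concentration inequality is an elementary Gaussian tail estimate — transfers to $R_\tau$ with the stated constants. Once the comparison $R_\tau \succeq G_\tau$ (stochastic domination, or at least domination of the relevant truncated moments) is in hand, the final step is the explicit check that for a log-normal random variable with the matching mean and variance of its logarithm, the mass above $\tfrac{1}{2\sqrt2}$ times the $3/2$-power of its mean exceeds $\tfrac14$ of its mean — a one-line error-function inequality.
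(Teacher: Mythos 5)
You have misidentified which lemma you are being asked to prove. The statement is Lemma~\ref{up02}, which is for general dimension $n \geq 2$, with threshold $\frac{1}{e}\mathbb{E}^{\frac{n+4}{n+2}}|F_\tau|^2$ and validity only for $\tau \gg_n 1$; the paper explicitly states this lemma is \emph{not} proved in the present work but deferred to the forthcoming~\cite{KMOW25}. Your proposal instead targets Lemma~\ref{lem-weakinter}, the $n=2$ case with the different threshold $\frac{1}{2\sqrt{2}}\mathbb{E}^{3/2}|F_\tau|^2$, and you rely throughout on $n=2$-specific ingredients (the scalar SDE for $R_\tau = \frac{1}{2}|F_\tau|^2$, the moment formula $\mathbb{E}|F_\tau|^{2p}\sim_p e^{\frac12 p(p+1)\tau}$). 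These simply do not give the claimed statement: the exponent $\frac{n+4}{n+2}$ and the $\tau\gg_n 1$ regime require the higher-dimensional moment formula from Lemma~\ref{up01}, and the paper itself warns that the constants in Lemma~\ref{lem-weakinter} and~\ref{up02} do not match even at $n=2$, precisely because the strategy in~\cite{KMOW25} is different.

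Even judged as an attempt at Lemma~\ref{lem-weakinter}, your main proposed route is not viable as stated and is not what the paper does. Your ``two-sided second-moment pinch'' runs on moment ratios inherited from Lemma~\ref{R: intermittent growth of F}, but that lemma only gives $\mathbb{E}|F_\tau|^{2p}\sim_p e^{\frac{1}{2}p(p+1)\tau}$ with unspecified $p$-dependent constants; as you yourself concede, this is insufficient to land the explicit $\frac14$ and $\frac{1}{2\sqrt2}$. A Paley--Zygmund argument with second and fourth moments would, at best, give a qualitative version with non-explicit constants. The paper sidesteps this entirely: it establishes the pathwise comparison $\mathbb{P}\{2R_\tau \geq S_\tau \geq Q_\tau\;\text{for all}\;\tau\}=1$ where $Q_\tau = e^{\frac12\tau + w_\tau}$ is a genuine geometric Brownian motion (obtained by Stratonovich comparison of the SDEs for $S$ and $Q$), uses $\mathbb{E}R_\tau = e^\tau = \mathbb{E}Q_\tau$, and then reduces the claim to the \emph{exact} Gaussian identity $\mathbb{E}Q_\tau I(Q_\tau \geq \mathbb{E}^{3/2}Q_\tau) = \frac12\mathbb{E}Q_\tau$, a one-line change of variables in the Gaussian integral. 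You do gesture at ``stochastic domination'' by a true GBM near the end of your proposal, which is the right idea, but you present it as a secondary concern rather than the main argument, you do not supply the mechanism (comparison of the $S$-SDE~\eqref{E: eigenvalue eqn} with~\eqref{E: geometric brownian motion}, made possible by the explicit relation $R = \frac12(S+S^{-1})$), and the comparison GBM you propose, $dG = G\,d\tau + \sqrt2\,G\,dW$, has the wrong diffusion coefficient---matching $\mathbb{E}Q_\tau^p = e^{\frac12 p(p+1)\tau}$ forces unit diffusion, as in~\eqref{E: geometric brownian motion}.
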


\medskip

Using Lemmas \ref{lem-weakinter} \& \ref{up02} (the latter of which we do not prove within this work) in conjunction with the approximation of $u$ by $F$, we obtain
\begin{equation*}
{ \rm a ~ version ~ of ~ \eqref{ao79} ~ with } \quad | F_{ \tau }  |^2 \quad \text{replaced by} \quad \int_{ \mathbb{S}^{ n - 1 }  } d \omega \fint_0^Tdt\frac{1}{|x|^2}|u(| x | \omega ,t)-u(0,t)|^2 ,
\end{equation*}
see \eqref{meet01} for the precise statement.
Using this, we deduce the following result about the intermittent behavior of $u$ at the level of increments.


\begin{theorem}\label{thm:mr02}
In dimension $ n \geq 2 $, in the regime
\begin{align}  \label{vregime}
\varepsilon^2 \lambda(|x|^2)\ll 1,
\end{align}
we have, for any $(x,T)$ and every $1 \leq p < \infty$,
\begin{align}
\mathbb{E}\fint_0^Tdt\frac{1}{|x|^2}|u(x,t)-u(0,t)|^2 &\approx
\max\Big\{1,\frac{\lambda(T)}{\lambda(|x|^2)}\Big\} \label{ao06} \\
\mathbb{E} \Big(\fint_0^Tdt\frac{1}{|x|^2}|u(x,t)-u(0,t)|^2 \Big)^p
&\gtrsim_{ p } \max\Big\{1,\frac{\lambda(T)}{\lambda(|x|^2)}\Big\}^{ 1 + \frac{ n + 4 }{ n + 2 } ( p - 1 ) }. \label{ao77}
\end{align}
\end{theorem}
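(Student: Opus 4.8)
The plan is to derive both statements from the approximation $u \approx F$ furnished by Theorem \ref{thm:mr01} together with the moment information on the geometric Brownian motion $F$ collected in Lemmas \ref{R: intermittent growth of F}, \ref{up01}, \ref{lem-weakinter} and \ref{up02}. The key observation is that the stationary, isotropic structure of $F_{\tau_*,\tau}$ means that the relevant quantity is controlled by $F_{\tau(|x|^2),\tau(T)}$, whose law coincides (by the stationary increments / left-invariance of the SDE \eqref{ao01bis}) with that of $F_{0,\tau(T)-\tau(|x|^2)}$ when $\tau(T) \geq \tau(|x|^2)$, and is the identity otherwise. Writing $\Delta\tau := \max\{\tau(T) - \tau(|x|^2), 0\} = \ln \max\{1, \lambda(T)/\lambda(|x|^2)\}$, the target ratios in \eqref{ao06} and \eqref{ao77} are exactly $e^{\Delta\tau}$ and $e^{(1 + \frac{n+4}{n+2}(p-1))\Delta\tau}$.

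First I would establish \eqref{ao06}. Set $Q := \fint_0^T dt \, |x|^{-2} |u(x,t)-u(0,t)|^2$. By Theorem \ref{thm:mr01}, $\mathbb{E}\, Q$ equals $\mathbb{E}\, |x|^{-2}\fint_0^T dt\, |F_{\tau(|x|^2),\tau(T)}^\dagger x|^2$ up to an additive error $\lesssim \varepsilon^2 \mathbb{E}|F_{0,\tau(T)}|^2$. Averaging $|F^\dagger x|^2$ over the angular variable $x/|x|$ (which is harmless since only the ratio $\lambda(T)/\lambda(|x|^2)$ appears on the right, and the left side of Theorem \ref{thm:mr02} is not yet angularly averaged — here one uses $\textbf{O}(n)$-invariance of the law of $F$, which after taking $\mathbb{E}$ makes $\mathbb{E}|F^\dagger \hat x|^2$ independent of $\hat x$ and equal to $\frac{1}{n}\mathbb{E}|F|^2$), the main term becomes $\frac{1}{n}\mathbb{E}|F_{\tau(|x|^2),\tau(T)}|^2 = \frac{1}{n}\mathbb{E}|F_{0,\Delta\tau}|^2 = e^{\Delta\tau}$ by normalization (iii) of Lemma \ref{L: properties of F}. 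The error term is $\lesssim \varepsilon^2 e^{\tau(T)} = \varepsilon^2\lambda(T)$; one then checks that in the regime \eqref{vregime}, $\varepsilon^2 \lambda(T)$ is negligible compared to the main term $e^{\Delta\tau} = \max\{1,\lambda(T)/\lambda(|x|^2)\}$ — indeed $\varepsilon^2\lambda(T) = \varepsilon^2\lambda(|x|^2)\cdot(\lambda(T)/\lambda(|x|^2)) \ll e^{\Delta\tau}$ since $\varepsilon^2\lambda(|x|^2)\ll 1$ and $e^{\Delta\tau} \geq \lambda(T)/\lambda(|x|^2)$ when the latter exceeds $1$, and $e^{\Delta\tau}=1 \gg \varepsilon^2\lambda(T)$ when $\lambda(T)\le\lambda(|x|^2)$. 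This gives $\mathbb{E}\,Q \approx e^{\Delta\tau}$, which is \eqref{ao06} (with the understanding that the unaveraged left-hand side agrees with its angular average after applying $\mathbb{E}$).

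For the lower moment bound \eqref{ao77}, I would use the "restricted second moment" Lemmas \ref{lem-weakinter}/\ref{up02}. The point is that \eqref{meet01} (the version of \eqref{ao79} with $|F_\tau|^2$ replaced by the angular-and-time-averaged $\int_{\mathbb{S}^{n-1}} d\omega\, Q(\omega)$, obtained by transferring \eqref{ao79} through Theorem \ref{thm:mr01}) says this averaged quantity, call it $\bar Q$, satisfies $\mathbb{E}[\bar Q \, I(\bar Q \geq c\, \mathbb{E}^{\frac{n+4}{n+2}}\bar Q)] \gtrsim \mathbb{E}\bar Q$ for some constant $c$. By Chebyshev/layer-cake this forces $\mathbb{P}(\bar Q \geq c\,\mathbb{E}^{\frac{n+4}{n+2}}\bar Q) \gtrsim (\mathbb{E}\bar Q)^{1 - \frac{n+4}{n+2}} = (\mathbb{E}\bar Q)^{-\frac{2}{n+2}}$, and then $\mathbb{E}\bar Q^p \geq (c\,\mathbb{E}^{\frac{n+4}{n+2}}\bar Q)^p\, \mathbb{P}(\cdots) \gtrsim_p (\mathbb{E}\bar Q)^{\frac{n+4}{n+2}p - \frac{2}{n+2}} = (\mathbb{E}\bar Q)^{1 + \frac{n+4}{n+2}(p-1)}$. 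Since $\mathbb{E}\bar Q \sim e^{\Delta\tau}$ by the already-proven \eqref{ao06} (integrated over $\omega$), and $\bar Q \leq \int d\omega\, Q(\omega)$ so $\mathbb{E}Q^p \gtrsim \mathbb{E}\bar Q^p$ up to the $\textbf{O}(n)$-symmetry constant, this yields \eqref{ao77}.

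The main obstacle is the transfer step producing \eqref{meet01}: Theorem \ref{thm:mr01} controls $u$ by $F$ only in an averaged $L^2$ sense with a $\varepsilon^2\mathbb{E}|F_{0,\tau(T)}|^2$ error, whereas Lemma \ref{lem-weakinter} is a statement about a specific nonlinear (indicator-weighted) functional of $F$. Converting the $L^2$-closeness of $\bar Q$ to (the angular average of) $\frac{1}{n}|F^\dagger x|^2 / \text{(something)}$ into closeness of the truncated first moments requires controlling the probability that $\bar Q$ and its $F$-counterpart straddle the threshold $c\,\mathbb{E}^{\frac{n+4}{n+2}}$; this needs an a priori upper bound on high moments of $\bar Q$ (or at least a uniform integrability statement), which itself should follow from combining Theorem \ref{thm:mr01} with Lemma \ref{R: intermittent growth of F}/\ref{up01} plus Minkowski's inequality in $L^{2p}$ — but making the error term genuinely lower-order at the level of $p$-th moments, and not merely at the level of first moments, is where the regime \eqref{vregime} must be invoked carefully and is the delicate part of the argument.
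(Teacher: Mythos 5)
Your treatment of \eqref{ao06} is essentially the paper's: reduce via Theorem \ref{thm:mr01}, identify $\mathbb{E}|F^{\dagger}_{\tau(|x|^2),\tau(T)}\hat x|^2 = \frac{1}{n}\mathbb{E}|F_{0,\Delta\tau}|^2 = \max\{1,\lambda(T)/\lambda(|x|^2)\}$ by isotropy, stationary increments and the normalization (iii) of Lemma \ref{L: properties of F}, and check that $\varepsilon^2 n\lambda(T)$ is negligible under \eqref{vregime}. One nitpick: Theorem \ref{thm:mr01} gives the $L^2$ distance between the increments, not an additive error of order $\varepsilon^2\mathbb{E}|F|^2$ in $\mathbb{E}Q$ directly; one must expand $|a|^2-|b|^2$ and control the cross term via Cauchy--Schwarz, as the paper does with $|a^2-b^2|\leq(2|b|+|a-b|)|a-b|$, producing an error of order $\sqrt{\varepsilon^2\mathbb{E}|F|^2}\cdot\sqrt{\text{main}}$. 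This is still negligible, so the conclusion stands.

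For \eqref{ao77} there is a genuine gap, and it is not where you think it is. The transfer step you flag as the ``main obstacle'' is actually benign: the paper settles it with the pointwise inequality $X\,I(X<\tfrac{\theta}{2})\leq Y\,I(Y<\theta)+|X-Y|$ (valid because if $X<\tfrac{\theta}{2}$ and $Y\geq\theta$ then $|X-Y|>\tfrac{\theta}{2}>X$), so no ``straddling probability'' and no higher-moment control of $\bar Q$ is needed for \eqref{meet01}. The real gap is the step labelled ``Chebyshev/layer-cake.'' From $\mathbb{E}[\bar Q\,I(\bar Q\geq\theta)]\gtrsim\mathbb{E}\bar Q$ one cannot deduce $\mathbb{P}(\bar Q\geq\theta)\gtrsim(\mathbb{E}\bar Q)^{-2/(n+2)}$: Markov's inequality only bounds such probabilities from above, and a lower bound would require an a priori upper bound on a higher moment of $\bar Q$, which, as the paper remarks right after Theorem \ref{thm:mr02}, is unavailable precisely because the homogenization estimates (Theorem \ref{thm:mr01}, Proposition \ref{prop01}) are limited to $p=1$. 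Your suggested fix --- Minkowski in $L^{2p}$ combined with Lemma \ref{up01} --- therefore cannot close the gap. Moreover, even granting the optimistic guess $\mathbb{E}\bar Q^2\sim\mathbb{E}Y^2\sim r^{(2n+8)/(n+2)}$ from Lemma \ref{up01}, Cauchy--Schwarz would give $\mathbb{P}(\bar Q\geq\theta)\gtrsim r^{2}/\mathbb{E}\bar Q^2\sim r^{-4/(n+2)}$, not $r^{-2/(n+2)}$, so the resulting exponent in \eqref{ao77} would be strictly weaker than claimed.

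The missing idea is the paper's direct Markov-type lower bound on moments, which sidesteps probabilities entirely:
\begin{equation*}
\mathbb{E}X^{p} \;\geq\; \mathbb{E}\bigl[X^{p-1}\,X\,I(X\geq\theta)\bigr]\;\geq\;\theta^{p-1}\,\mathbb{E}\bigl[X\,I(X\geq\theta)\bigr].
\end{equation*}
This requires no upper moment control. Substituting $\theta\sim r^{\frac{n+4}{n+2}}$ and $\mathbb{E}[X\,I(X\geq\theta)]\gtrsim r$ from \eqref{meet01} yields $\mathbb{E}X^{p}\gtrsim_{p} r^{1+\frac{n+4}{n+2}(p-1)}$, and Jensen plus isotropy then transfers this from the angular average $X$ to the unaveraged quantity, giving \eqref{ao77}.
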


We expect that the exponent in \eqref{ao77} can be improved to match the one in Lemma \ref{R: intermittent growth of F}. Currently, we are only able to transmit the linearized exponent $1 + \frac{ n + 4 }{ n + 2 } ( p - 1 )$ (it is precisely the linearization of the quadratic $\frac{1}{2} p ( \frac{ 4 }{ n + 2 } p + \frac{ 2n }{ n + 2 }  )$ at $p = 1$) since the homogenization results in Section \ref{sec:hom} are limited to $ p = 1 $.

\medskip

Note that in the regime $ 1 \ll \lambda ( | x |^2 ) \ll \lambda ( T ) $ (which implies
$ 1 \ll | x |^2 \ll T $), the r.~h.~s.~of \eqref{ao06} is $ \approx \sqrt{ \frac{ \ln T }{ \ln | x |^2 } } $.
Hence the transformation $ x \mapsto u( t, x ) $ is borderline non-Lipschitz in a
wide range of $ | x | $. This is in line with the discussion in Subsection \ref{ss:X}.

\medskip

Let us now interpret Theorem \ref{thm:mr02} in terms of the process $ X $ in the geometrically intuitive case of dimension $n = 2$:
Since $b$ is divergence-free, we can write $ b $ as the orthogonal
gradient of a stream function $ \psi $, i.~e.~$ b = \nabla^{ \perp } \psi $. Due to
the white-noise character of $ b $, $ \psi $ is a Gaussian free field (GFF), and thus since $ n = 2 $ not stationary
(even in the presence of the UV cut-off).

\medskip

As was already present in \cite{FisherEtAl}, the superdiffusive behavior
\eqref{E: logarithmic correction} can be explained as follows:
Since a realization of the GFF $\psi$ with UV cut-off
has closed stream lines, the process \eqref{ao24} would be trapped in a finite region
if it were not for diffusion. (Incidentally, this
collection of loops is an object of active research
in view of its emergent conformal invariance.)
Diffusion allows the process $X$ to move from one level set to the next.
As $X$ explores larger and larger neighborhoods of its starting point,
it occasionally hits loops of large diameter, which introduces a ballistic element.
In the absence of an infra-red (IR) cut-off, there are loops of larger and larger diameter,
which is at the origin of superdiffusivity.

\medskip

Our Theorem \ref{thm:mr02} describes an intermittent behavior of the process:
Two particles that start at close points and whose motion is driven by the same Brownian
motion start exploring nearby level lines of the GFF.  In rare cases (i.e., with high probability, a small volume fraction of possible starting points in a large ball), the two particles lie in the basin attraction of two long level lines, which rapidly diverge, resulting in the particles being ripped apart with a burst-like character.  This manifests in the intermittent behavior of the increment of the expected position $u$, which has abnormally long, non-Gaussian tails due to these rare events.  Since trajectories only diverge on very large time scales (large enough that the diffusivity is large compared to the distance between particles), this behavior is in contrast (but not in contradiction) with the quenched invariance
principle for $X$ reported in \cite{ABK24}. Theorem \ref{thm:mr02}
seems to capture more of
the fine structure of the level sets of the GFF.

\medskip

Our results suggest that the intermittency exhibited by the drift-diffusion process is more a manifestation of geometry than topology.  On the one hand, at the level of topology, the stream lines of $b$ are closed if and only if $n = 2$, yet we show that the process exhibits intermittency regardless.  On the other hand, Lemma \ref{up01} suggests that intermittency plays less and less of a role as the dimension $n$ grows, which seems to be related to the fact that $\textbf{SL}(n)$ becomes less and less curved.

\medskip

The geometric Brownian motion $F$ provides a simplified description of the gradient $\nabla u$ of the Lagrangian coordinate, in which the spatial noise is replaced by a temporal one.  A conceptually similar reduction occurs in another scaling-critical system, namely, the 2d stochastic heat equation.  Specifically, in the weak coupling regime, Dunlap and Gu \cite{DunlapGu} showed that a certain forward-backward SDE describes the behavior of the solution on logarithmic time scales.

\subsection{Strategy of Proof}\label{S:strategy_of_proof} Recall our interest in analyzing the expected particle coordinates $ u $, which we think of as Lagrangian coordinates. This involves three approximations:

\medskip

As is explained in greater detail in \cite{CMOW}, the first approximation is motivated by a work of \cite{Fannjiang98} and is summarized next.

\begin{approximation}\label{approx01}
We shall replace the evolution of the time variable $ T $ by the evolution of a scale-variable $ L $. Introducing an artificial large-scale (infra-red) cut-off  on scale $ L $ ensemble $ b_L $ with associated generator $ \mathcal{L}_L $, we will argue below that we may approximate for $ t \sim L^2 $
\begin{align*}
	u ( \cdot , t ) \approx u_L ,
	\quad \text{where} ~ u_L ~ \text{solves} \quad
	\mathcal{L}_L u_L = 0 \quad \text{with} \quad \mathbb{E} \nabla u_{L} = \text{id}.
\end{align*}
\end{approximation}

Approximation \ref{approx01} has to be taken with a grain of salt. While it is instructive as a guiding principle for our approach, it is justified a posteriori, i.~e.~after having introduced the next approximation, see \eqref{E: CMOW bound} for the precise statement.

\medskip

Thinking of $ u_L $ as ($\mathcal{L}_L$-)harmonic coordinates, it is reasonable to view them as perturbations of the Euclidean coordinates $ x \mapsto x $ via the so-called corrector $ \phi_L $, namely
\begin{align*}
	u_L ( x ) = x + \phi_L ( x ) .
\end{align*}
Thus, in our geometric interpretation $ \phi_L $ is naturally seen as taking values in tangent space. Hence its components $ \phi_L^i $ will have upper indices in line with $ \phi_L = \phi_L^i e_i $.

\begin{approximation}\label{approx02}
We appeal to the scale-by-scale homogenization technique that was introduced in \cite{CMOW}: We introduce an approximation (for short proxy) $ \tilde\phi_L $ of $ \phi_L $ via an SDE that implements an incremental two-scale expansion, namely
\begin{align*}
	\nabla u_L \approx { \rm id } + \nabla \tilde\phi_L ,
	\quad \text{where} ~ \tilde\phi_L ~ \text{solves} \quad
	d \tilde\phi = ( 1 + \tilde\phi^i_L \partial_i ) \circ \! d \phi ,
	~ \tilde\phi_{ L = 1 } = 0 .
\end{align*}
The crucial ingredient here is the driver $ \nabla d \phi $ is obtained from the increments of $ \nabla \phi_L $ by applying two approximations: homogenization and linearization.
\end{approximation}

We will elaborate on Approximation \ref{approx02} in much greater detail in Section \ref{sec:hom} below.

\medskip

The next step is a reinterpretation of the previous approximation in probabilistic terms.

\begin{approximation}\label{approx03}
We introduce the geometric Brownian motion $ F $ that approximates $ { \rm id } + \nabla \tilde\phi_L  $, i.~e.
\begin{align} \label{E: coupling first instance}
	{ \rm id } + \nabla \tilde\phi_L \approx F_L,
	\quad \text{where} ~
	d F = F_L \circ \! \nabla d \phi , ~ F_{ L = 1 } = { \rm id } ,
\end{align}
where $ \nabla d \phi $ will be identified as a Brownian motion on $ \mathfrak{sl}(n) $.
\end{approximation}

\section{Scale-by-scale homogenization}\label{sec:hom}

We start by exploiting a connection with divergence-form elliptic equations.  Since $ \nabla . b = 0 $, the Poincar\'{e} Lemma implies there exists a stream matrix $ \Psi $, a skew-symmetric element of $ ( { \rm tangent ~ space } ) \otimes ( { \rm tangent ~ space } ) $, such that\footnote{Recall our convention $ \nabla . \Psi = \partial_j \Psi^{ i j } \, e_i $, which is consistent with $ \Psi \nabla u = \Psi^{ i j } \partial_i u \, e_j $ (recall that $ \Psi^{ i j } e_i \otimes e_j $ denotes the canonical isomorphism between cotangent space and tangent space induced by the metric $ \Psi $; thus $ \Psi \nabla u $ denotes the Riemannian $ \Psi $-gradient of $ u $) and leads to $ \nabla . \Psi \nabla u = \Psi . \nabla^2 u + ( \nabla . \Psi ) . \nabla u $.} $ \nabla . \Psi = b $.  Thus we can rewrite the generator of $ X $ via\footnote{We use the convention that the divergence $ \nabla . $ acts on all expressions to the right so that $ \nabla . a \nabla u = \nabla . ( a \nabla u ) $.}
\begin{align*}
	\mathcal{L} u
	= \nabla . a \nabla u ,
	\quad { \rm where } \quad
	a := I  + \Psi , ~ I := \delta^{ i j } \, e_i \otimes e_j  .
\end{align*}
Note that $I$ is the canonical isomorphism between cotangent space and tangent space induced by the Euclidean metric. If $ \Psi $ were stationary, $\mathcal{L}$ would fall into the standard homogenization framework of second-order elliptic operators in divergence form; as a consequence $ X $ would exhibit diffusive behavior.\footnote{Note that the skew-symmetric part $\Psi$ does not affect the ellipticity.}  As a matter of fact, for the critical ensemble \eqref{ao08}, the stream matrix $ \Psi $ is not stationary, which is at the heart of superdiffusive behavior. However, since $ \Psi $ is only borderline non-stationary, which will be made precise in \eqref{estPsi} below, it is reasonable to introduce an artificial small-scale/ infra-red (IR) cut-off on scale $ L $ and study the behavior as $L \to \infty$.

\subsection{Implementation of the infra-red cut-off}\label{ss:IR}The IR cut-off is conveniently implemented on the level of the Fourier transform: We define $ b_L $ as a Schwartz distribution\footnote{Its value on a smooth and compactly
supported 
$(0,\infty)\times\mathbb{R}^n\ni (L,x)\mapsto \zeta (L,x)\in\mathbb{R}^n $ is given by
$b$ applied to the inverse Fourier transform of
$\mathbb{R}^n\ni k\mapsto \int_{|k|^{-1}}^\infty dL{\mathcal F} \zeta (L,k)$, which
is a Schwartz function.} in the variables $ ( L , x ) \in ( 0, \infty ) \times \mathbb{R}^d $ such that
\begin{align}\label{cov04}
{\mathcal F}b_L=I(L^{-1} < |k|){\mathcal F}b
\quad { \rm in ~ the ~ sense ~ of ~ Schwartz ~ distributions }.
\end{align}
In view of the UV cut-off $ b $ it supported on $\{L\ge 1\}$. There are two advantages and one disadvantage
of this sharp cut-off in Fourier space.

\medskip

The first advantage is pathwise and concerns 
the distributional derivative\footnote{Here and in the following $ d $ is a differential (now distributional, later also in the Itô-sense) w.r.t.~the variable $ L $, while $ \partial_{ \cdot } $ and $ \nabla $ are (distributional derivatives) acting on the spatial variables.}  $db$ w.~r.~t.~$L$: It follows 
immediately from the
definition\footnote{Indeed, testing \eqref{cov04} against $ d \zeta $ for a test function $ \zeta = \zeta ( L , k ) $ we obtain
\begin{align*}
	\mathcal{F} db . \zeta
	= - \mathcal{F} b . d \zeta 
	\stackrel{(\ref{cov04})}{=} - \int_0^{ \infty } dL \, I ( L^{-1} < | k | ) \mathcal{F} b . d \zeta ( L , k )
	= - \mathcal{F} b . \zeta ( | k |^{-1} , k )  .
\end{align*}
} 
that
\begin{align}\label{forFelix07}
	{\mathcal F} db ~ { \rm is ~ supported ~ on } ~ \{ L | k | =1 \} .
\end{align}
and thus ${\mathcal F}b_L$ is supported in the closure of $\{L^{-1} <  |k|\le 1\}$.

\medskip

For later purpose we retain
that this implies
\begin{align}\label{fw01}
-\Delta db=L^{-2} db.
\end{align}
The second advantage is the independence of (finite) increments:
\begin{align}\label{cov02}
b_{L_+}-b_L\;\mbox{is independent~of}\;b_L.
\end{align}
Indeed, by the Gaussianity inherited from $b$ we just have to show that 
for any pair of Schwartz vector fields $\xi$ and $\xi'$, the corresponding covariances 
vanishes. By definition (\ref{cov04}) the latter is given by the covariance 
between $b$ applied to ${\mathcal F}^{-1}I(L_+^{-1} < |k| \leq L^{-1}){\mathcal F}\xi$
and $b$ applied to ${\mathcal F}^{-1}I(L^{-1} < |k|){\mathcal F}\xi'$.
As discussed in Subsection \ref{ss:X},
the stationarity of $b$ implies that latter covariance can be written
as the integral operator $( 2 \pi )^{ \frac{n}{2} }   \int dk \, {\mathcal F}c(k)$ applied to the product of 
the Fourier transforms, namely $I(L_+^{-1} < |k| \leq L^{-1}){\mathcal F}\xi$
and $I(L^{-1} < |k|){\mathcal F}\xi'$, which by construction vanishes.

\medskip

The only disadvantage of the sharp cut-off is the limited regularity of $b_L$ in $L$
despite its smoothness in $x$: It is local H\"older continuous of exponent $\alpha$ iff $\alpha<\frac{1}{2}$.\footnote{Indeed, this is easiest seen from the discussion in the next paragraph. From \eqref{cov02} \& \eqref{cov02fo} we learn that the annealed $ L^2 $ modulus of continuity is given by
\begin{align*}
	\E | b_{ L_{+} } - b_{ L } |^2 = \varepsilon^2 \frac{n}{4} \big( \frac{1}{L^2} - \frac{1}{L_{+}^2} \big) \approx  \varepsilon^2 \frac{n}{2 L^3} ( L_{ + } - L )
	\quad { \rm as } ~ L_{ + } \searrow L .
\end{align*}
In view of the Kolomogorov regularity criterion for Gaussian processes, this implies the claimed quenched regularity.}

\medskip

Letting $ L_+ \rightarrow \infty $ in \eqref{cov02}, we find that $ \E | b_L |^2 = \E | b |^2 - \E | b - b_{ L } |^2 $. Thus in order to determine $ \E | b_L |^2 $, it is enough to compute $ \E | b - b_{ L } |^2 $: Notice that in law, $ b - b_L $ may be identified with the ensemble of $ b $'s having UV cut-off at scale $ L $; its covariance is obtained by multiplying the covariance of $ b $ by $  I ( | k | \leq L^{ - 1 } )  $. Therefore, since the law of $ b - b_L $ is determined by its covariance, a scaling argument using the homogeneity assumption shows that $ b - b_L =_{ \rm law } \frac{ 1 }{ L } b ( \frac{ 1 }{ L } \cdot ) $. Thus we learn that
\begin{align}\label{cov02fo}
	\E | b_L |^2
	= \Big( 1 - \frac{ 1 }{ L^2 } \Big) \E | b |^2
	\stackrel{ (\ref{cov01fo}) }{ = }  \varepsilon^2  \frac{ n }{ 4 } \Big( 1 - \frac{ 1 }{ L^2 } \Big) .
\end{align}

\subsection{The stream tensor $ \Psi $} Clearly there is a choice of gauge to be made since the equation 
	\begin{equation}\label{psi3}
		 \nabla . \Psi_L = b_L
	\end{equation}
with $ \Psi_L $ skew does not determine $ \Psi_L $. We will pick the well-established choice\footnote{Note that in Fourier space, our convention $ \nabla . \Psi = \partial_j \Psi^{ i j } \, e_i $ amounts to $ \mathcal{F} \nabla . \Psi_L = \iota k_j \Psi_L^{ i j } e_i $, while the identity $ \nabla . b_{L} = 0 $ takes the form $k. \mathcal{F} b_{L} = 0$, hence the definition \eqref{gaugePsi} yields $ \mathcal{F} \nabla . \Psi_L = \mathcal{F} b_L $ and thus $ \nabla . \Psi_L = b_L $.}
\begin{align}\label{gaugePsi}
\mathcal{F} \Psi_L = \frac{ \iota }{ | k |^2 }  ( k^{ *  } \otimes \mathcal{F} b_L -  \mathcal{F} b_L \otimes k^* ) ,
\end{align}
see e.~g.~\cite[Section 11]{KLO}. In view of the discussion around \eqref{cov04} makes sense in a pathwise way: Since $ \mathcal{F} b_L $ is supported on the annulus $ \{ L^{-1} < | k | \leq 1 \} $ the products with $ k^{ *  } $ on the r.~h.~s.~of \eqref{gaugePsi} are well-defined Schwartz distributions. For $ n = 3 $ this corresponds to the Coulomb gauge. From \eqref{gaugePsi} and the homogeneity of \eqref{ao02d} we learn that the Fourier transform of the covariance tensor of $ \Psi $ is $ -n $-homogeneous on $ \{ L^{-1} < | k | \leq 1 \} $. Thus the normalization \eqref{cov02fo} implies
\begin{align}\label{estPsi}
	\E | \Psi_L |^2 \sim \varepsilon^2 \ln L .
\end{align}
The logarithmic divergence as $ L \rightarrow \infty $ shows the borderline non-stationarity of the (formal) limit. As mentioned already at the beginning of this section, this divergence is at the heart of superdiffusive behavior: That divergence-free drifts with stationary, $L_{\E}^{2}$ stream matrices (the so-called ``$H^{-1}$ condition") lead to diffusive asymptotics was originally discovered by Oelschl\"{a}ger \cite{Oelschlaeger88}.  For more on the diffusive setting, the reader can see the textbook of Komorowski, Olla, and Landim \cite{KLO} or the more recent papers of Kozma and T\'{o}th \cite{KozmaToth17}, Fehrman \cite{Fehrman23}, and T\'{o}th \cite{Toth18} for the state of the art.

\medskip

In the two-dimensional case the space of skew tensors $ ( { \rm tangent ~ space } ) \otimes ( { \rm tangent ~ space } ) $ is one dimensional; thus $ \Psi_{L} $ may be identified with a scalar (the stream function) by writing $ \Psi_{L} = \psi_{L} J $, where $ J = e_1 \otimes e_2 - e_2 \otimes e_1 $. Hence $ b_{L} = \nabla . \Psi_{L} = \partial_2 \psi_{L} \, e_1 - \partial_1 \psi_{L} \, e_2 = - \nabla^{ \perp } \psi_{L} $ as in \cite{CMOW}.  As already discussed after Theorem \ref{thm:mr02}, see Section \ref{ss:intermit}, $\psi_{L}$ is the GFF with IR and UV cut-offs.

\subsection{A primer on homogenization theory} \label{S: primer} Recall from Subsection \ref{S:strategy_of_proof} that our analysis begins with the harmonic coordinates $ u_L $ solving $ \mathcal{L}_L u_L = 0 $ with $\mathbb{E} \nabla u_{L} = \text{id}$. Thinking of $ u_L $ as a perturbation of the Euclidean coordinates, we write $ u_L ( x ) = x + \phi_L ( x ) $; the function $ \phi_L $ is the corrector taking values in tangent space. As in the discussion at the very beginning of this section, the equation for $ u_L $ may be recast into
\begin{align} \label{defn-a}
	\nabla . a_L(e^i+\nabla\phi_L^i) = 0, \quad \text{where} \quad a_{L} = I + \Psi_{L}, ~ \mathbb{E} \nabla \phi_{L}^{i} = 0,
\end{align}
or its more expanded form, the Helmholtz-type decomposition
\begin{align}\label{tao01}
	a_L ( e^i + \nabla\phi_L^i ) = \bar a_L e^i + \nabla.\sigma_L^i  
	\quad { \rm with } \quad \bar{a}_{L} ~ { \rm constant,} ~
	\sigma_L^i ~ { \rm skew } , ~ \mathbb{E} \nabla\sigma_{L}^{i} = 0.
\end{align}
The first equation can be recovered by applying the divergence $ \nabla . $ to the latter.  We do not discuss existence and uniqueness of the solution of \eqref{defn-a} in this paper since we never need it; however, the interested reader can see the discussion in \cite[Section 3]{CMOW}.

\medskip

The Helmholtz decomposition \eqref{tao01} gives rise to an intertwining between $ \mathcal{L}_L $ and a constant-coefficient operator: 
\begin{align}\label{intertwin}
	\nabla . a_L \nabla ( 1 + \phi_L^i \partial_i ) \bar u 
	= \nabla . \bar a_L \nabla  \bar u 
	+ \nabla . ( \phi_L^i a_L + \sigma_L^i ) \nabla \partial_i \bar u .
\end{align}
Indeed, expanding the term under the divergence by using Leibniz' rule and invoking the Helmholtz decomposition \eqref{tao01} yields
\begin{align*}
	a_L \nabla ( 1 + \phi_L^i \partial_i ) \bar u 
	= \partial_ i \bar u \, a_L ( e^i + \nabla \phi_L^i )  + \phi_L^i a_{L} \nabla \partial_i \bar u 
	\stackrel{ \eqref{tao01} }{ = } \bar a_L \nabla \bar u + \partial_i \bar u \, \nabla . \sigma_L^i  +  \phi_L^i a_L \nabla \partial_i \bar u.
\end{align*}
Using Leibniz' rule once more, this time in form of%
\footnote{\label{leibniz}The following means in terms of the $i$-th component
$\partial_j \zeta \sigma^{ij}$ $=\zeta \partial_j\sigma^{ij}+\sigma^{ij}\partial_j \zeta$, where the latter summand is equal to the contraction $ \sigma^{ * } \nabla \zeta = -  \sigma \nabla \zeta $.}
$\nabla.  \zeta \sigma = \zeta\nabla.\sigma - \sigma\nabla \zeta$
for scalar $ \zeta $ and skew $\sigma$ for $( \zeta, \sigma) = ( \tilde\phi_L^i, \sigma_L^i )$, this identity becomes
\begin{align*}
	a_L \nabla ( 1 + \phi_L^i \partial_i ) \bar u 
	= \bar a_L \nabla \bar u + \nabla . \partial_i \bar u \, \sigma_L^i  +  ( \phi_L^i a_L + \sigma_L^i ) \nabla \partial_i \bar u.
\end{align*}
Applying the divergence, and noting that $ \nabla .  \partial_i \bar u \, \sigma_L^i  $ is divergence-free since $ \partial_i \bar u \, \sigma_L^i  $ is skew, we obtain \eqref{intertwin}.

\medskip

The intertwining \eqref{intertwin} is based on what is known as the two-scale expansion, i.~e.~$ ( 1 + \phi_L^i \partial_i ) \bar u  $, and the merit of the skew-symmetric field $ \sigma_L $ is that it brings the error term in divergence form, which is convenient for energy estimates. Based on the ideas of \cite{CMOW}, we will work with proxies $ ( \tilde a_L, \tilde\phi_L^i , \tilde\sigma_L^i ) $ of $ ( \bar a_L , \phi_L^i , \sigma_L^i ) $ that implements the idea of an incremental, or~scale-by-scale, homogenization procedure, to the effect that \eqref{tao01} is only true up to a residuum.

\subsection{Heuristic argument for Approximation \ref{approx02}: scale-by-scale homogenization} \label{S: heuristics}

We will construct the proxies $ ( \tilde a_L , \tilde\phi_L^i , \tilde\sigma_L^i ) $ incrementally, and thus apply the derivative w.~r.~t.~$L$ to \eqref{tao01}. In view of the independence of increments \eqref{cov02} and the logarithmic behavior \eqref{estPsi}, $ da = d \Psi $ has a white-noise character in $ \ln L $. Thus the product rule within the framework of Itô calculus gives rise to the (infinitesimal) quadratic variation $d[a ( e^i + \nabla\phi^i ) ] = d[a\nabla\phi^i]$, so that we obtain
\begin{align*}
	d a \, ( e^i + \nabla \phi_L^i )
	+ a_L \nabla d \phi^i 
	+ d [ a \nabla \phi^i ] = d \bar a e^i + \nabla . d \sigma_L^i .
\end{align*}
On this (formal) identity, we will perform two approximations:
\begin{itemize}
	\item[\tiny $ \bullet $] homogenization, i.~e.~replacing $a_L\nabla d\phi^i$ by
$\bar a_L \nabla d\phi^i$, where $ \bar a_L $ is constant,
	\item[\tiny $ \bullet $] linearization, i.~e.~neglecting $da\nabla\phi_L^i$, which\footnote{As noted in \cite{CMOW}.} is formally a quadratic term in $ \varepsilon $.
\end{itemize}
Consequently, we arrive at the approximation 
\begin{align*}
	d a e^i
	+ \bar a_L \nabla d \phi^i 
	+ d [ a \nabla \phi^i ]
	\approx d \bar a e^i + \nabla . d \sigma_L^i .
\end{align*}
Our strategy is predicated on using this approximation as the definition of a first approximation $ ( \tilde a_L , d \phi^i , d \sigma^i ) $, which is not yet the proxy $ (\tilde\phi_L^i , \tilde\sigma_L^i ) $ announced earlier. For simplicity, we will refrain from introducing a new variable for the approximation $ ( d \phi^i , d \sigma^i ) $ as (infinitesimal) increments of the corrector will not be relevant in the subsequent steps. That said, we \emph{define} $ ( \tilde a_L, d \phi^i , d \sigma^i ) $ via 
\begin{align*}
da e^i + \tilde a_L\nabla d\phi^i+d[a \nabla\phi^i]= d \tilde a e^i+\nabla.d\sigma^i ,
\end{align*}
which, upon splitting into martingale and finite-variation parts becomes
\begin{align}\label{tao17}
dae^i+\tilde a_L \nabla d\phi^i=\nabla.d\sigma^i 
\quad { \rm with } \quad d \sigma^i ~ { \rm skew },
\quad\mbox{and}\quad
d\tilde a e^i=d[a \nabla\phi^i ].
\end{align}
Since there is no drift when $L = 1$, we know that $\bar{a}_{L = 1} = a_{L = 1} = I$, and thus \eqref{tao01} implies that $\nabla \phi^{i}_{L = 1} = 0$ and $\nabla.\sigma^{i}_{L = 1} = 0$.  This motivates us to supplement \eqref{tao17} with the initial conditions
\begin{equation} \label{E: initial conditions proxies}
	\phi^{i}_{L = 1} = 0, \quad \sigma^{i}_{L = 1} = 0, \quad \tilde{a}_{L = 1} = I.
\end{equation}

\medskip

The equations above uniquely determine\footnote{Here we abuse notation: Henceforth $\phi^{i}_{L}$ is not the actual corrector, i.e., the solution of \eqref{tao01}, but rather the integral of the increment $d\phi^{i}$ from \eqref{tao17}.} $\phi^{i}_{L}$ and $\tilde{a}_{L}$ in the classes of centered, stationary Gaussians with independent increments and constant-in-space, deterministic matrices, respectively, in a manner that is independent of the choice of gauge of $\Psi_{L}$. Since $ a $ and $ \phi $ have independent increments, see~\eqref{gaugePsi}, we obtain for their covariation $ [ \phi^i \, \nabla . a ]_{L} = \E \phi_{L}^i \nabla . a_{L} $ and $[a \nabla \phi^{i}]_{L} = \mathbb{E} a_{L} \nabla \phi^{i}_{L}$. Since by stationarity, we can integrate by parts to obtain $\mathbb{E}\phi_{L}^i \nabla . a_{L} = \mathbb{E} \phi_{L}^{i} \nabla.\Psi_{L} = \mathbb{E} \Psi_{L} \nabla \phi^{i}_{L} $ $= \mathbb{E} a_{L} \nabla \phi^{i}_{L}$, the second item of \eqref{tao17} becomes
\begin{align}\label{tao17y}
d\tilde a e^i = d[ \phi^i \, \nabla . a ]
\stackrel{ (\ref{defn-a}) }{ = }  d[ \phi^i \, b ] .
\end{align}
Moreover, upon postulating that $\tilde{a}_{L} = \tilde{\lambda}_{L} I$ for some deterministic constant $\tilde{\lambda}_{L}$ and applying the divergence to the first item of \eqref{tao17}, we obtain the equation
\begin{align}\label{tao17x}
- \tilde{\lambda}_{L} \Delta d \phi^{i} = - \nabla . \tilde a_L \nabla d\phi^i 
\stackrel{ \eqref{tao17} }{ = } \nabla . dae^i
\stackrel{ (\ref{defn-a}), (\ref{psi3}) }{ = } d b^i,
\end{align}
which by \eqref{fw01} has the solution
\begin{align}\label{E: eqn for dphi}
	d \phi^i = \tilde{\lambda}_{L}^{-1} L^{2} d b^{i} ,
	\quad \phi^i_{ L = 1 } = 0 .
\end{align}
in the class of centered, stationary Gaussians. Both \eqref{tao17x} and \eqref{tao17y}, resp.~\eqref{E: eqn for dphi}, solely depend on $ b_{L}$, and are thus independent of the gauge of $ \Psi_{L}$.

\begin{lemma} \label{L: construction of proxies} There is a unique centered, stationary Gaussian field $\phi^{i}_{L}$ with independent increments and a unique constant-in-space, deterministic isotropic matrix $\tilde{a}_{L} = \tilde{\lambda}_{L} I$ such that \eqref{tao17x}, \eqref{tao17y}, and \eqref{E: initial conditions proxies} hold.  The latter is determined by the initial value problem
	\begin{align}\label{ODElambda}
		d \tilde\lambda_L = \varepsilon^2 \frac{ d \ln L }{ 2 \tilde\lambda_L },
		~ \tilde\lambda_{ L = 1 } = 1,
	\end{align}
with the explicit solution
	\begin{align}\label{lambda}
		\tilde\lambda_L^2 = 1 + \varepsilon^2 \ln L .
	\end{align}
Further, $\phi^{i}_{L}$ and $\tilde{a}_{L}$ do not depend on the choice of gauge for $\Psi_{L}$.
\end{lemma}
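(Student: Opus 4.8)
The plan is to verify that the SDE \eqref{ODElambda} with its explicit solution \eqref{lambda} is precisely what the two constraints \eqref{tao17x} and \eqref{tao17y} force, and then to argue uniqueness in the stated classes. First I would record that \eqref{tao17x}, together with \eqref{fw01} (which applies since $\mathcal{F}db$ is supported on $\{L|k|=1\}$, so $-\Delta$ acts as $L^{-2}$), yields the closed formula $d\phi^i=\tilde\lambda_L^{-1}L^2\,db^i$ from \eqref{E: eqn for dphi}; this already determines $\phi^i_L$ uniquely in the class of centered, stationary Gaussians with independent increments, \emph{given} the deterministic scalar $\tilde\lambda_L$. So everything reduces to pinning down $\tilde\lambda_L$ via \eqref{tao17y}, i.e.\ $d\tilde a\,e^i=d[\phi^i\,b]$.

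Next I would compute the quadratic covariation $d[\phi^i\,b]$. Using the independence of increments of $b_L$ (hence of $\phi^i_L$), the bracket is the \emph{annealed} increment $d[\phi^i\,b]_L=\E\,(d\phi^i_L)\otimes(db_L)$ — more precisely $d\,\E\,\phi^i_L\, b_L$ restricted to the martingale (bracket) contribution. Substituting $d\phi^i=\tilde\lambda_L^{-1}L^2\,db^i$, this becomes $\tilde\lambda_L^{-1}L^2\,\E\,(db^i_L)\otimes(db_L)=\tilde\lambda_L^{-1}L^2\,d\big(\tfrac12\,\E\,b^i_L\otimes b_L\big)$ up to the usual factor, so the key input is the increment of the one-point covariance $\E\,b_L\otimes b_L$. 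From \eqref{cov02fo} we have $\E|b_L|^2=\varepsilon^2\tfrac n4(1-L^{-2})$, hence $d\,\E|b_L|^2=\varepsilon^2\tfrac n4\cdot 2L^{-3}\,dL=\varepsilon^2\tfrac n2 L^{-2}\,d\ln L$; by isotropy $\E\,b_L\otimes b_L=\tfrac1n\,\E|b_L|^2\,I$, so $d\,\E\,b_L\otimes b_L=\tfrac{\varepsilon^2}{2}L^{-2}\,d\ln L\,I$. Feeding this in, the $L^2$ cancels and one gets $d\tilde a\,e^i=\tilde\lambda_L^{-1}\cdot\tfrac{\varepsilon^2}{2}\,d\ln L\,e^i$, i.e.\ $d\tilde\lambda_L=\tfrac{\varepsilon^2}{2}\,\tilde\lambda_L^{-1}\,d\ln L$, which is exactly \eqref{ODElambda}. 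Integrating $\tilde\lambda_L\,d\tilde\lambda_L=\tfrac{\varepsilon^2}{2}\,d\ln L$ from $L=1$ with $\tilde\lambda_1=1$ gives $\tilde\lambda_L^2=1+\varepsilon^2\ln L$, which is \eqref{lambda}. The consistency of the ansatz $\tilde a_L=\tilde\lambda_L I$ with isotropy of the $b$-ensemble should be remarked: isotropy \eqref{ao02c} forces $\tilde a_L$ to commute with all rotations, hence to be a scalar multiple of $I$, and positivity of $\tilde\lambda_L$ is automatic from the ODE.

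For the gauge-independence claim, I would simply note that both \eqref{tao17x} and \eqref{tao17y}, in the reduced forms \eqref{E: eqn for dphi} and the computation above, involve only $b_L$ (and its covariance), never $\Psi_L$; the passage from $\E\,\phi^i_L\,\nabla.a_L$ to $\E\,\phi^i_L\,b_L$ via the stationarity integration by parts, already carried out in \eqref{tao17y}, removes $\Psi_L$ entirely. Uniqueness then follows because, in the prescribed classes, $\tilde\lambda_L$ solves a first-order ODE with a given initial condition, and $\phi^i_L$ is the unique stationary centered Gaussian solving the elliptic equation \eqref{tao17x} with independent increments and $\phi^i_{L=1}=0$. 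The main obstacle is the bookkeeping in the second step: making rigorous sense of $d[\phi^i\,b]$ as an Itô-type bracket in the length-scale variable $L$ (justifying that it equals the annealed increment, which rests on \eqref{cov02}), and correctly tracking the factor of $\tfrac12$ between the Stratonovich product and the Itô bracket so that the constant in \eqref{ODElambda} comes out as $\tfrac{\varepsilon^2}{2}$ rather than $\varepsilon^2$. Everything else is a short computation.
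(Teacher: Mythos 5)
Your proof follows essentially the same route as the paper's: derive \eqref{E: eqn for dphi} from \eqref{tao17x}, express $d[\phi^i\,b]$ via \eqref{E: eqn for dphi} in terms of $d[b\cdot b]$, and compute the latter from \eqref{cov02fo}. The only substantive difference is cosmetic — you invoke isotropy to write $\mathbb{E}\,b_L\otimes b_L=\frac1n\mathbb{E}|b_L|^2 I$ while the paper contracts \eqref{tao17y} with $\delta_{ij}e^j$ (i.e.\ takes the trace); both yield the same scalar ODE.

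One point of confusion worth flagging: your intermediate display $\mathbb{E}(db^i_L)\otimes(db_L)=d\bigl(\frac12\mathbb{E}\,b^i_L\otimes b_L\bigr)$ "up to the usual factor", and your closing worry about "tracking the factor of $\frac12$ between the Stratonovich product and the It\^o bracket", are red herrings. For centered Gaussian martingales with independent increments one has exactly $d[\phi^i\,b]=d\,\mathbb{E}\,\phi^i_L\,b_L$ (the cross terms $\mathbb{E}\,\phi^i_L\,db_L$ and $\mathbb{E}\,b_L\,d\phi^i_L$ vanish by the martingale property, so the It\^o product rule leaves only the bracket), with no factor of $\frac12$ anywhere. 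The $\frac12$ in \eqref{ODElambda} comes purely from elementary calculus combined with the normalization \eqref{cov01fo}: differentiating $\mathbb{E}|b_L|^2=\varepsilon^2\frac n4(1-L^{-2})$ gives $\varepsilon^2\frac n4\cdot 2L^{-2}\,d\ln L=\varepsilon^2\frac n2 L^{-2}\,d\ln L$, and the paper's constant $\frac14$ in \eqref{cov01fo} was deliberately chosen so that this $2$ turns it into $\frac12$. Happily, in the line where you "feed this in" you in fact use the correct identity $d[\phi^i\,b]=\tilde\lambda_L^{-1}L^2\,d\,\mathbb{E}\,b^i_L\,b_L$ without the spurious $\frac12$, so your final ODE and solution are correct.
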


\begin{proof}
To determine $\tilde{\lambda}$, we contract \eqref{tao17y} along $ \delta_{ i j } e^j $, which amounts to taking $ { \rm tr } \, \tilde a = n \, \tilde\lambda $, from which we learn that
\begin{align*}
	n \, d \tilde\lambda
	= \delta_{ i j } \, d [ \phi^i b^j ]  .
\end{align*}
After substituting \eqref{E: eqn for dphi}, this identity becomes
\begin{align*}
	n \, d \tilde\lambda = \delta_{ i j } \frac{ L^2 }{ \tilde\lambda_L } \, d [ b^i b^j ]  
	=  \frac{ L^2 }{ \tilde\lambda_L } \, d [ b \cdot b ] .
\end{align*}
Since $ b $ is a continuous martingale with independent increments we know that $ [ b \cdot b ]_{ L } = \E | b_L |^2  $. Further, since $ b $ is a stationary field, $  [ b \cdot b ]_L $ is constant in space. Taken together with \eqref{cov02fo}, this implies 
\begin{align}\label{forFelix01}
	d [ b \cdot b ] \stackrel{ (\ref{cov02fo}) }{ = } \varepsilon^2 \frac{ n }{ 2 } \frac{ d L }{ L^3 } = \varepsilon^2 \frac{ n }{ 2 } \frac{ d \ln L }{ L^2 } ,
\end{align}
and thus \eqref{ODElambda}.  Integration of this ODE yields the formula \eqref{lambda}. 
\end{proof}

Henceforth we take $\tilde{a}_{L}$ and $\phi^{i}_{L}$ to be the fields determined by \eqref{tao17x}, \eqref{tao17y}, and \eqref{E: initial conditions proxies}.  We define the skew-symmetric matrix field $\sigma^{i}_{L}$ using the explicit choice of the Coulomb gauge so that \eqref{tao17} holds.  As in the case of $\Psi_{L}$, we define $\sigma_{L}^{i}$ via the Coulomb gauge, given in Fourier space by the formula\footnote{This choice is consistent with \eqref{tao17}: Since $ \tilde a_L $ is homogeneous (in space) and isotropic we have $ k^* \otimes \mathcal{F} ( \tilde a_L \nabla d \phi^i ) - \mathcal{F} ( \tilde a_L \nabla d \phi^i ) \otimes k^* = 0 $. Therefore, by taking the derivative of \eqref{tao17gauge} and recalling that $ d a = d \Psi $, we recover
\begin{align*}
	\mathcal{F} d \sigma_{L}^i = \frac{ \iota }{ | k |^2 } \big( k^* \otimes \mathcal{F} ( d a e^i + \tilde a_L \nabla d \phi^i ) - \mathcal{F} ( d a e^i + \tilde a_L \nabla d \phi^i ) \otimes k^* \big).
\end{align*}
As in \eqref{gaugePsi}, the above equation and \eqref{tao17x} imply \eqref{tao17}.
}
	\begin{align} \label{tao17gauge}
	\mathcal{F} \sigma_{L}^i = \frac{ \iota }{ | k |^2 } \big( k^* \otimes \mathcal{F}  \Psi_{L} e^i  - (\mathcal{F}  \Psi_{L} e^i ) \otimes k^* \big).
	\end{align}
This defines a centered, stationary Gaussian field with independent increments. 

\medskip

Our heuristics also covers the case of a (bare) diffusion that is anisotropic
w.~r.~t.~the rotational symmetry (\ref{ao02c}) of the ensemble of drifts $b$.
The upcoming Remark \ref{rmk02} shows that in this situation
of conflicting symmetries the heuristics predicts that the drift prevails: Asymptotically, 
i.~e.~for large $L$ or rather $\tilde\lambda_L$, 
our proxy $\tilde a_L$ for the effective diffusivity becomes isotropic. 
This indicates a certain universality of our anomalous-diffusion effect, 
in the sense that $\tilde a_L$ 
becomes independent of the nature of the bare diffusion, just relying on its presence.

\begin{remark}[Universality]\label{rmk02}
We consider the generator ${\mathcal L}=b.\nabla+\nabla.\bar{a}\nabla$ 
for some deterministic and constant positive-definite symmetric bilinear form $\bar{a}$ 
(on cotangent space),
which means that the last item in (\ref{E: initial conditions proxies}) reads $\tilde a_{L=1}=\bar{a}$. 
Then (\ref{tao17}) yields a deterministic evolution for 
(the deterministic and constant positive-definite symmetric bilinear form) $\tilde a_L$
with the asymptotic property
\begin{align*}
\tilde a_L\approx\tilde\lambda_L { I } \quad\mbox{as soon as}\;\tilde\lambda_L\gg_{ \bar{a} } 1.
\end{align*}
In fact, this convergence is exponential in $\tau=\ln\tilde\lambda_L$
with a rate that only depends on $n$.
\end{remark}

\medskip

By definition, the driver $ d \phi^i $ may be seen as a homogenization approximation of the differential of the genuine corrector defined in \eqref{defn-a}. However, as in classical homogenization, we cannot expect $\nabla d \phi^{i}$ to be a good approximation of the differential of the gradient of the genuine corrector.
Instead, as in the intertwining property \eqref{intertwin} we will need to work with the two-scale expansion of $ d \phi^i $, 
which is also conveniently implemented in an incremental manner: Instead of integrating $ d \phi^i $, we apply the two-scale expansion, but this time computed w.~r.~t.~a proxy corrector $ \tilde\phi_L = \tilde\phi_L^i e_i $ defined by the evolution
\begin{align}\label{tao05b}
	d \tilde\phi^i = ( 1 + \tilde\phi_L^j \partial_j ) \circ \! d \phi^i ,
	\quad \tilde\phi^i_{ L = 1 } = 0 ,
\end{align}
which will serve as our ultimate proxy for the genuine corrector. Since the Stratonovich product is stable with respect to smooth approximation, we use it here rather than the It\^{o} one. This would become relevant if we used a smooth large-scale cut-off in the definition of $b_{L}$ instead of a sharp one, in which case $b_{L}$ would no longer have independent increments but we would expect our approach to apply nonetheless. As it turns out, in our current set-up, see Subsection \ref{S: rigorous proxy analysis}, the driver $ d \phi^i $ has the property that the Stratonovich interpretation agrees with the Itô interpretation. The latter, cf.~\eqref{phi-tilde}, is convenient to obtain well-posedness of \eqref{tao05b} since the nonlinearity\footnote{The reader may take two points of view here: On the one hand, evaluating the SDE at a single point $ x $, Lipschitz continuity of the nonlinearity implies well-posedness. On the other hand, the nonlinearity is Lipschitz continuous w.~r.~t.~an operator topology on suitable Sobolev spaces, which implies well-posedness when viewing $ \tilde\phi_L $ as a random field. The latter is convenient since later on we will take spatial derivatives of $ \tilde\phi_L $.} $ ( 1 + \tilde\phi_L^j \partial_j ) $ is an affine function of the unknown $ \tilde\phi_L $.

\medskip

\medskip

In the mathematical community,
the idea of a scale-by-scale homogenization for drift-diffusion processes with random drifts
seems to originate in \cite{BricmontKupiainen91}, a formal, renormalization-theoretic approach that was later made rigorous in \cite{SznitmanZeitouni06}.  
In the setting of anomalous diffusion for processes exhibiting superdiffusive asymptotics, 
this strategy was first implemented in \cite{CMOW} for the problem at hand.
Shortly afterwards, scale-by-scale homogenization has also been employed in the construction
of (time-dependent) drifts that produce anomalous dissipation 
\cite{ArmstrongVicol23,BurczakSzekelyhidiWu23}, and very recently has been refined
in \cite{ABK24} to a quenched invariance principle for the problem at hand.

\medskip

A continuum variance decomposition, i.~e.~a continuum decomposition of 
an underlying Gaussian field, here the tensorial Gaussian free field $\Psi$,
according to spatial scales $L$ is also used in quantum field theory
(where the underlying noise is thermal instead of environmental).
For instance, in case of the $\Phi^4_d$ model for $d=2,3$, \cite{BG} use it to control (exponential) moments of the
field $\phi$ with the help of suitable martingales in $\ln L$, analogous to $\tilde\phi$.
The variance decomposition also induces a
Hamilton-Jacobi-type evolution equation for the effective Hamiltonian in $\ln L$,
known as the flow equation. In \cite[Section 2.3]{GM}, to cite another recent
work, proxys for the solution of the flow equation are constructed by truncation
of an expansion (analogous to an expansion in $\varepsilon$ here),
and the evolution of the residuum is monitored (like $ \tilde f $ later on in Proposition \ref{prop01}).

\medskip

As already alluded to in the introduction, the approach used here contrasts with the one previously used in this context in \cite{CannizzaroHaunschmidSibitzToninelli}.  The strategy used there is motivated by a operator fixed-point equation that arises in the Wiener chaos expansion-based analysis of the resolvent.  It has since been refined in the very recent work \cite{cannizzaro_moulard_toninelli}
 on the 2d anisotropic stochastic Burgers equation, where not only the logarithmic growth of the diffusivity but also the Gaussian fluctuations of the solution are derived.  In both works, the proofs center around the construction of an ad hoc approximation of the operator-valued fixed point (which is not known to exist).  Similar strategies were employed in earlier works on the 2d anisotropic KPZ equation \cite{cannizzaro_erhard_toninelli}, the extension of \cite{CannizzaroHaunschmidSibitzToninelli} to certain time-dependent drifts \cite{DeLimaFeltesWeber24}, and the weak coupling limit of self-repelling Browian polymers \cite{cannizzaro_giles}.
All the aforementioned works use the fact that the Markov process in question (in our setting, the environment seen by the particle) has an explicitly known, tractable Gaussian stationary measure.  This structural condition has also been used in the study of subcritical singular SPDEs, such as the probabilistic approach to the 1d stochastic Burgers equation \cite{gubinelli-perkowski}.

\subsection{Approximation \ref{approx03}: Identification of $\nabla d \phi$ with a Brownian motion on $ \mathfrak{sl}(n) $}

As we already hinted at above, to pass from Approximation \ref{approx02} to \ref{approx03}, we approximate the evolution for $ { \rm id } + \nabla \tilde\phi_L $, which is obtained after applying $ \nabla $ to \eqref{tao05b}, by neglecting the term $ \tilde\phi_L ^j \nabla \partial_j \circ \! d \phi $. Therefore, the bridge from scale-by-scale homogenization to geometric Brownian motion on $ \textbf{SL} ( n ) $ is built upon the observation that --- up to a change of variables --- $ \nabla d \phi $ acts as a Brownian motion on $ \mathfrak{sl}( n ) $, which is characterized by its invariance properties and two extrinsic normalizations.

\begin{lemma}\label{lemR01}
	The process
	\begin{align}\label{forFelix06}
		B_{ \tau } :=  \int_{ 1 }^{ L } \frac{ \ell^2 }{ \tilde\lambda_{\ell} } \nabla d b ( 0 ),
		\quad \text{with the change of variables} \quad \tau = \ln \tilde\lambda_L
	\end{align}
	is the unique (in law) Brownian motion on $ \mathfrak{sl} ( n ) $ satisfying
	\begin{itemize}
		\item[(i)] $ O B O^{ - 1 } =_{\text{law}} B$ for all $O \in \textbf{O}( n )$,
		\item[(ii)] $\mathbb{E} B_{\tau} B_{ \tau } = 0$, and
		\item[(iii)] $\mathbb{E} B_{\tau} B_{\tau}^{*} = \tau \, { \rm id}$. 
    
	\end{itemize}
	Moreover, we have the explicit coupling between $ B $ and $ \nabla d \phi $ given by
	\begin{align}\label{E: coupling second instance}
		B_{ \tau } = \int_1^{ L } \nabla d \phi ( 0 ) 
	\end{align}
	and points (ii) and (iii) imply that 
	\begin{align}
		d [ \nabla \phi \, \nabla \phi ] &= 0 , \label{forFelix03} \\
		d [ \nabla \phi \, ( \nabla \phi )^* ] &= { \rm id } \, d \tau . \label{wr65bis}
	\end{align}
\end{lemma}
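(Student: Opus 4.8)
The plan is to read off the coupling from the explicit formula for the corrector increment, verify directly that $B$ is a Brownian motion on $\mathfrak{sl}(n)$ with properties (i)--(iii), invoke representation theory for uniqueness, and transport (ii)--(iii) through the coupling to obtain \eqref{forFelix03}--\eqref{wr65bis}. By \eqref{E: eqn for dphi}, $d\phi^i = \tilde\lambda_L^{-1}L^2\,db^i$, so $\nabla d\phi = \tilde\lambda_L^{-1}L^2\,\nabla db$ as matrices; integrating and substituting $\tau = \ln\tilde\lambda_L$ gives $\int_1^L\nabla d\phi(0) = \int_1^L\frac{\ell^2}{\tilde\lambda_\ell}\,\nabla db(0) = B_\tau$, which is \eqref{E: coupling second instance}. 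Since the Fourier cut-off in \eqref{cov04} commutes with $k.\,$, the field $b_L$ and hence its increment $db$ inherit the divergence-free condition \eqref{ao24}, so $\mathrm{tr}\,\nabla db = \nabla.\,db = 0$ and $B$ is $\mathfrak{sl}(n)$-valued; moreover, as a linear functional of the centered Gaussian field $b$, which has independent increments by \eqref{cov02}, $B$ is a continuous centered Gaussian process with independent increments and $B_0 = 0$.

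Next I would verify (i)--(iii). For (i): $b$ is $\textbf{O}(n)$-invariant in law [see after \eqref{ao08}], so for $O\in\textbf{O}(n)$ the field $O\,b(O^{-1}\cdot)$ has the law of $b$, and by the chain rule its gradient at the origin equals $O(\nabla b)(0)O^{-1}$; since $\tilde\lambda_\ell$ is the deterministic quantity \eqref{lambda}, this turns $B_\tau$ into $O B_\tau O^{-1}$, giving (i). For (iii): since $b$ has independent increments, $d\,\mathbb{E}|B_\tau|^2 = \mathbb{E}|dB|^2$, and
\begin{align*}
\mathbb{E}|dB|^2 \;=\; \frac{L^4}{\tilde\lambda_L^2}\,\mathbb{E}|\nabla db(0)|^2 \;=\; \frac{L^4}{\tilde\lambda_L^2}\,L^{-2}\,d\,\mathbb{E}|b_L(0)|^2 \;\stackrel{(\ref{cov02fo})}{=}\; \frac{L^4}{\tilde\lambda_L^2}\,L^{-2}\,\varepsilon^2\,\frac{n}{2}\,\frac{dL}{L^3} \;=\; n\,d\tau ,
\end{align*}
where the second equality combines integration by parts in $x$ with \eqref{fw01} (the increment $db$ is supported on $\{|k| = L^{-1}\}$, cf.~\eqref{forFelix07}) and the last uses $d\tau = \frac{\varepsilon^2}{2\tilde\lambda_L^2}\frac{dL}{L}$ from \eqref{ODElambda}; thus $\mathbb{E}|B_\tau|^2 = n\tau$, and since by (i) the symmetric matrix $\mathbb{E}B_\tau B_\tau^*$ is $\textbf{O}(n)$-equivariant under conjugation, Schur's lemma forces $\mathbb{E}B_\tau B_\tau^* = \frac1n\mathbb{E}|B_\tau|^2\,\mathrm{id} = \tau\,\mathrm{id}$, i.e.~(iii). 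For (ii): the same reduction leaves only $\mathbb{E}\,\mathrm{tr}(B_\tau B_\tau) = 0$, i.e.~$\mathbb{E}\,\mathrm{tr}\big((\nabla db)^2\big)(0) = 0$, which follows from the pointwise identity $\mathrm{tr}\big((\nabla db)^2\big) = \nabla.\big((db.\nabla)\,db\big)$ — valid since $db$ is divergence-free in $x$ — upon taking expectations and using that $(db.\nabla)\,db$ is stationary in $x$.

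A continuous centered Gaussian process with independent increments, vanishing at $\tau=0$ and with deterministic quadratic variation linear in $\tau$, is a Brownian motion, so $B$ is a Brownian motion on $\mathfrak{sl}(n)$ with covariance $\mathbb{E}\,B_\tau\otimes B_\tau = \tau\,Q$ for some fixed $Q$. Its law is determined by $Q$, and by elementary representation theory (cf.~Appendix \ref{reptheory}, Corollary \ref{corRep01}) $\mathfrak{sl}(n)$ splits into two non-isomorphic irreducible $\textbf{O}(n)$-modules (traceless-symmetric and skew), so (i) confines $Q$ to the two-dimensional space of $\textbf{O}(n)$-invariant bilinear forms; on it the two linear functionals extracted by (ii) and (iii) are independent and thus pin $Q$ down, giving uniqueness in law. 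Finally, differentiating the identities (ii) and (iii) in $\tau$ and transporting through the coupling \eqref{E: coupling second instance} — under which $dB_\tau = \nabla d\phi(0)$, hence by stationarity in $x$ equals $\nabla d\phi(x)$ for every $x$ — yields $d[\nabla\phi\,\nabla\phi] = 0$ and $d[\nabla\phi\,(\nabla\phi)^*] = \mathrm{id}\,d\tau$, which are \eqref{forFelix03}--\eqref{wr65bis}.

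I expect the main obstacle to be twofold. The computational heart is the bracket calculation, where the weight $\ell^2/\tilde\lambda_\ell$ and the time change $\tau = \ln\tilde\lambda_\ell$ in \eqref{forFelix06} must conspire, via \eqref{fw01}, \eqref{cov02fo} and \eqref{ODElambda}, to make the quadratic variation exactly linear in $\tau$. The subtler conceptual point is the passage from these scalar (trace) identities to the full matrix statements (ii)--(iii) and to the uniqueness claim, for which the $\textbf{O}(n)$-invariance together with the representation-theoretic fact that the $\textbf{O}(n)$-invariants in $\mathrm{Sym}^2(\mathfrak{sl}(n))$ form a two-dimensional space is essential.
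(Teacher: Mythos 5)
Your proposal is correct and follows essentially the same route as the paper: derive the coupling \eqref{E: coupling second instance} by applying $\nabla$ to \eqref{E: eqn for dphi}, reduce (ii) and (iii) to their traces via $\textbf{O}(n)$-invariance and Schur, evaluate the traces by integration by parts against stationarity together with \eqref{fw01}, \eqref{cov02fo}, \eqref{ODElambda}, and obtain uniqueness from the two-dimensional space of $\textbf{O}(n)$-invariant covariances on $\mathfrak{sl}(n)$. The only deviations are cosmetic (your divergence identity $\mathrm{tr}(\nabla db)^2 = \nabla.\big((db.\nabla)db\big)$ versus the paper's index-swap, and a misattribution to Corollary \ref{corRep01} where Lemma \ref{lemRep03} and Lemma \ref{L: characterize covariance} are the relevant statements).
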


\begin{proof}
	First, let us note that \eqref{ao24} shows that $ B $ takes values in $ \mathfrak{sl}(n) $. Furthermore, by \eqref{cov02} the centered Gaussian process $ L \mapsto B_L := \int_1^{L } \frac{ \ell^2 }{ \tilde\lambda_{\ell} } \nabla d b ( 0 ) $ has independent increments. Moreover, applying $ \nabla $ to \eqref{E: eqn for dphi} yields the representation \eqref{E: coupling second instance} from \eqref{forFelix06}. The rest of the proof is devoted to arguing that the change of variables in \eqref{forFelix06} implies the further claims.

	\medskip

Next, let us establish the points (i), (ii), and (iii).  From the $ \textbf{SO} (n) $-invariance of $ b $, which implies its $ \textbf{O} ( n ) $-invariance by point (i) of Lemma \ref{lemRep02} in Appendix \ref{reptheory}, it is evident by \eqref{forFelix06} that $ \nabla \phi $ is $ \textbf{O} ( n ) $-invariant in law under the natural action of $ \textbf{O} ( n ) $. Furthermore, since both maps $ ( E , F ) \mapsto E F $ and  $ ( E , F ) \mapsto E F^* $ commute with the action of $ \textbf{O}(n) $ we learn that
\begin{align*}
	\E B_{ \tau } B_{ \tau } \quad { \rm and } \quad \E B_{ \tau }  B_{ \tau }^*
	\quad { \rm are ~ isotropic ~ endomorphisms ~ of  } ~ { \rm co  tangent ~ space } ,
\end{align*}
and therefore a multiple of the identity, see point (ii) in Lemma \ref{lemRep02} in Appendix \ref{reptheory}. To identify these constants we apply the trace and claim that
\begin{align}\label{rep01next}
	{ \rm tr } \, \E B_{ \tau } B_{ \tau } = 0
	\quad { \rm and } \quad
	{ \rm tr } \, \E B_{ \tau }  B_{ \tau }^* = n \, \tau ,
\end{align}
which translates into properties (ii) and (iii) respectively. 

\medskip

Since $ B $ is a continuous martingale with independent increments, we have $ \E B^2 = [ B B ] $ and $ \E B B^*= [ B B^* ] $, which by virtue of definition \eqref{E: coupling second instance} implies that
\begin{align*}
	d \E B B
	= d [ B B ]
	= d [ \nabla \phi \, \nabla \phi ] 
	\quad { \rm and } \quad
	 d \E B B^* 
	=d [ B B^* ]
	= d [ \nabla \phi \, ( \nabla \phi )^* ] .
\end{align*}
Thus \eqref{rep01next} is equivalent to
\begin{align}\label{forFelix05}
	{ \rm tr } \, d [ \nabla \phi \, \nabla \phi ]  = 0 \, d \tau 
	\quad { \rm and } \quad
	{ \rm tr } \, d [ \nabla \phi \, ( \nabla \phi )^* ]  = n \, d \tau ,
\end{align}
and \eqref{forFelix03} and \eqref{wr65bis} hold once \eqref{forFelix05} is proven.

\medskip

For the first item in \eqref{forFelix05} we appeal to stationarity, which, as in the argument for \eqref{tao17y}, allows for an integration by parts:
\begin{align*}
	{ \rm tr } \, d  [ \nabla \phi \, \nabla \phi ]
	\stackrel{ (\ref{E: eqn for dphi}) }{ = } \frac{ L^4 }{ \tilde\lambda_L^2 } \delta_{ \ell }^{ i } d [ \partial_i b^j \, \partial_j b^\ell ]
	= \frac{ L^4 }{ \tilde\lambda_L^2 } \delta_{ \ell }^{ i } d [ \partial_j b^j \, \partial_i b^\ell ]
	= \frac{ L^4 }{ \tilde\lambda_L^2 } d [ \nabla . b  \,  \nabla . b ] .
\end{align*}
The latter vanishes by \eqref{ao24}. By the same logic, we can compute
\begin{align*}
	{ \rm tr } \, d  [ \nabla \phi \, ( \nabla \phi )^* ]
	= \frac{ L^4 }{ \tilde\lambda_L^2 } \delta^{ i j }  d [ \partial_i b  \cdot \partial_j b ]
	= \frac{ L^4 }{ \tilde\lambda_L^2 } \delta^{ i j }  d [ - \partial_i \partial_j b  \cdot b ]
	=  \frac{ L^4 }{ \tilde\lambda_L^2 } d [ ( - \Delta ) b  \cdot b ] .
\end{align*}
Inserting \eqref{fw01} in this identity, the above equation turns into
\begin{align}\label{dertrick}
	{ \rm tr } \,  d [ \nabla \phi \, ( \nabla \phi )^* ]
	= \frac{ L^2 }{ \tilde\lambda_L^2 } \, d  [ b \cdot b ] 
	\stackrel{ (\ref{forFelix01}) }{ = } \varepsilon^2 \frac{ n }{ 2 } \frac{ d \ln L }{ \tilde\lambda_L^2 } .
\end{align}
Thus, to conclude the second item in \eqref{forFelix05} (and hence \eqref{rep01next}), we need to choose $ \tau $ such that
\begin{align}\label{forFelix04}
	d \tau = \varepsilon^2 \frac{ d \ln L }{ 2 \tilde\lambda_L^2 } .
\end{align}
In view of \eqref{ODElambda}, this equation is solved by $ \tau = \ln \tilde\lambda $.

\medskip

Finally we argue that after the change of variables $ L \mapsto \tau $ the process $ B $ becomes a Brownian motion. Indeed, by Lemma \ref{L: characterize covariance} below, Gaussianity and the formerly established points (i), (ii) and (iii) imply that there exists a Gaussian random variable $ X $ such that $ B_{ \tau } =_{ \rm law } \tau^{ \frac{1}{2} } X $. Together with the independence of increments, which is preserved by the change of variables, this scaling property shows that $ B $ is a Brownian motion. En passant we learn that the law of $ B $ is unique: By Lemma \ref{L: characterize covariance}, $ X $ and thus, by independence of increments, also $ B $ is uniquely determined in law.
\end{proof}

We now argue that the Fourier symbol of the (tensorial) covariance characterizing
the driving (Gaussian) martingale $ \int_1^{\cdot} \nabla d\phi $ scales like $ ( |k|^n \ln \frac{1}{|k|} )^{-1} $ on large scales, which corresponds to a $  \log \log $-correlated field. Indeed, since the Fourier transform of $ d b $ is supported on the sphere $ \{ L | k | = 1 \} $, see \eqref{forFelix07}, we may use \eqref{fw01} and \eqref{lambda} to compute
\begin{align*}
	\mathcal{F} \nabla d \phi
	\stackrel{\eqref{E: eqn for dphi}}{=} \mathcal{F} d b \otimes \iota \tilde\lambda_L^{-1} L^2 k 
	\stackrel{\eqref{forFelix07} \& \eqref{lambda}}{ = } \mathcal{F} d b \otimes \frac{ \iota k }{ | k |^2 { \sqrt{ 1 + \varepsilon^2 \ln \frac{1}{ | k | } } }  } .
\end{align*}
The right hand side only depends on $ L $ through the driver $ db $. Thus by integrating the expression we learn that the covariance tensor of $ \int_1^{\cdot} \nabla d\phi $ has Fourier transform given by
\begin{align*}
	\frac{ \mathcal{F} c^{ i j } }{ | k |^4  ( 1 + \varepsilon^2 \ln \frac{1}{ | k | } ) }  \, ( e_i \otimes k )  \otimes ( e_j  \otimes k ),
	~ ~ { \rm where } ~
	c = c^{ i j } \, e_i \otimes e_j 
	~ { \rm is ~ the ~ covariance ~ tensor ~ of } ~ b .
\end{align*}
We can now use \eqref{ao08} and \eqref{cov01fo} to read off the scaling: since $\mathcal{F}c$ scales like $ \sim \varepsilon^{2} |k|^{2-n}$, the covariance of $\nabla \phi_{L = \infty}$ scales like $ \sim \frac{ 1 }{ | k |^n} $, as in the case of a logarithmically correlated field, up to the logarithmic discount $ \sim \varepsilon^2 ( 1 + \varepsilon^2 \ln \frac{ 1 }{ | k | } )^{ - 1 } $.
For very large scales, i.~e.~$\varepsilon^2\ln\frac{1}{|k|}\gg 1$, 
the prefactor is $\approx ( \ln \frac{1}{|k|} )^{-1}$
and thus independent of $\varepsilon$; in this range,
the overall scaling is $ ( |k|^n \ln \frac{1}{|k|} )^{-1} $, corresponding to a $ \log \log $-correlated field. Random fields that behave like the exponential of $ \log \log $-correlated fields also occur in the very critical regime of directed polymers, see \cite{CaravennaSunZygouras}. 

\subsection{Rigorous implementation of Approximation \ref{approx02}: scale-by-scale homogenization} \label{S: rigorous proxy analysis}Recall from Subsection \ref{S: heuristics} that $ ( \tilde a_L , d \phi^i , d \sigma^i ) $ are defined in such a way that\footnote{Recall that we write $ I = \delta^{ i j } e_i \otimes e_j $.}
\begin{align}\label{inchelm}
	da e^i+\tilde a_L\nabla d\phi^i=\nabla.d\sigma^i,
	\quad { \rm with } \quad
	\tilde a_L = \tilde\lambda_L \, I ,
	~ \tilde\lambda_L = \sqrt{ 1 + \varepsilon^2 \ln L } .
\end{align}

\medskip

Next, following the discussion in Subsection \ref{S: heuristics}, we define $ \tilde\phi^i $ as the solution to the Stratonovich SDE
\begin{align}\label{phi-tilde-s}
d\tilde\phi^i=(1+\tilde\phi_L^j\partial_j) \circ \! d\phi^i \quad \mbox{and} \quad \tilde\phi_{L=1}^i=0 .
\end{align}
As already mentioned in that discussion, as a corollary of \eqref{forFelix03} in Lemma \ref{lemR01}, we find that the Itô-Stratonvich correction vanishes. Indeed, note that
\begin{align*}
	\tilde\phi^j \partial_j \circ \! d \phi^i -\tilde\phi^j \partial_j d \phi^i 
	= \frac{1}{2} d [ \tilde\phi^j \, \partial_j \phi^i ]
	\stackrel{ (\ref{phi-tilde-s} ) }{ = } \frac{1}{2} d [ \phi^j \, \partial_j \phi^i ] + \frac{1}{2} \tilde\phi^k d [ \partial_k \phi^j \, \partial_j \phi^i ] .
\end{align*}
The first summand on the r.~h.~s.~vanishes by an integration by parts since $ \nabla . \phi = 0 $, see proof of Lemma \ref{lemR01}. The second summand vanishes by \eqref{forFelix03}.
Hence \eqref{phi-tilde} may also be formulated in the Itô sense
\begin{align}\label{phi-tilde}
	d \tilde\phi^i = ( 1 + \tilde\phi^j \partial_j ) d \phi^i 
	\quad \mbox{and} \quad
	\tilde\phi^i_{ L = 1 } = 0 ,
\end{align}
which we will exclusively use later on.

\medskip

The upgrade of $ d\sigma^i $ is less obvious, and motivated by Lemma \ref{lem01} below. In Stratonovich form it is given by
\begin{align}\label{sigma-tilde-s}
	d \tilde\sigma^i = d \sigma^i 
	+ (\partial_j \circ \! d \phi^i) \, \tilde\sigma_L^j
	+ \tilde\phi_L^i \circ \! d \Psi
	\quad \mbox{and} \quad
	\tilde\sigma_{L=1}^i=0 .
\end{align}
Since we will never use the Stratonovich form \eqref{sigma-tilde-s}, let us state without proof the Itô form 
\begin{align}\label{sigma-tilde}
	d \tilde\sigma^i = d \sigma^i 
	+ (\partial_j d \phi^i) \, \tilde\sigma_L^j
	+ \tilde\phi_L^i d \Psi
	- \tilde\phi_L ^j  d [ ( \partial_j \phi^i ) \, \Psi ] 
	\quad \mbox{and} \quad
	\tilde\sigma_{L=1}^i=0 ,
\end{align}
which we will use below. Note that \eqref{sigma-tilde} preserves the skewness of $ \tilde\sigma^i $.
Indeed, along the lines of the above argument, one may check that $ (\partial_j \circ \! d \phi^i) \, \tilde\sigma_L^j - (\partial_j d \phi^i) \, \tilde\sigma_L^j = \frac{ 1 }{ 2 } \tilde\phi_L^j d [ ( \partial_j  \phi^i ) \, \Psi ] = \tilde\phi_L^i \circ \! d \Psi - \tilde\phi_L^i d \Psi $.

\medskip

Finally, we monitor the deviation from \eqref{tao01} for $( \tilde a_L,\tilde\phi_L,\tilde\sigma_L)$
defined through \eqref{inchelm}, \eqref{phi-tilde}, and \eqref{sigma-tilde} by analyzing
the residuum, the tangent vector field $ \tilde f_L^i$ defined through
\begin{align}\label{helm-tilde}
	a_L (e^i + \nabla \tilde\phi_L^i) = \tilde a_L e^i + \nabla. \tilde\sigma_L^i + \tilde f_L^i.
\end{align}
Definition \eqref{helm-tilde} naturally makes $ \tilde f_L^i $ an element of tangent space, and we thus regard $ \tilde f_L $ as an element of $ { \rm Hom } ( { \rm co tangent ~ space } , { \rm tangent ~ space } ) $. From the equations for $ ( \tilde a_L , \tilde\phi_L , \tilde\sigma_L ) $ we can obtain a characterization of $ \tilde f_L $ in terms of an Itô SDE, which is analogous to the finite-difference equation obtained in \cite{CMOW}.

\begin{lemma}[{Continuum version of \cite[(50)]{CMOW}}]\label{lem03}
	We have
	\begin{align}\label{wr03}
		d \tilde f =  \tilde f_L \nabla d\phi + ( \tilde\phi_L ^i a + \tilde\sigma_L ^i )\nabla\partial_i d\phi
		+ \tilde\phi_L \otimes d b \quad { \rm and } \quad \tilde f_{ L = 1 }=0 .
	\end{align}
\end{lemma}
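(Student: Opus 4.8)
The plan is to differentiate the defining relation \eqref{helm-tilde} with respect to $L$ in the Itô sense and systematically cancel everything against the known evolution equations for the proxies, so that what remains is precisely the right-hand side of \eqref{wr03}. First I would apply $d$ to \eqref{helm-tilde}, namely to $a_L(e^i+\nabla\tilde\phi_L^i) = \tilde a_L e^i + \nabla.\tilde\sigma_L^i + \tilde f_L^i$. On the left, Itô's product rule gives $da\,(e^i+\nabla\tilde\phi_L^i) + a_L\nabla d\tilde\phi^i + d[a\,\nabla\tilde\phi^i]$; on the right it gives $d\tilde a\,e^i + \nabla.d\tilde\sigma^i + d\tilde f^i$. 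Now substitute $d\tilde\phi^i = (1+\tilde\phi^j\partial_j)d\phi^i$ from \eqref{phi-tilde}, so $\nabla d\tilde\phi^i = \nabla d\phi^i + \tilde\phi^j\nabla\partial_j d\phi^i + \partial_j\tilde\phi^i\,\nabla d\phi^j$ (here one must be careful with the Leibniz rule for $\nabla$ acting on the product $\tilde\phi^j\partial_j d\phi^i$), and substitute $d\tilde\sigma^i$ from \eqref{sigma-tilde}. The initial condition $\tilde f_{L=1}=0$ is immediate since at $L=1$ all proxies coincide with the Euclidean data and $a_1=I=\tilde a_1$, $\nabla\tilde\phi_1=0$, $\tilde\sigma_1=0$.

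The key intermediate step is to recognize and eliminate the ``linear-in-increment, order-$\varepsilon$'' terms using the incremental Helmholtz relation \eqref{inchelm}: $da\,e^i + \tilde a_L\nabla d\phi^i = \nabla.d\sigma^i$. After substituting the expansions, I expect the terms $da\,e^i$, the piece $\tilde a_L\nabla d\phi^i$ hidden inside $a_L\nabla d\phi^i = (\tilde a_L + (a_L-\tilde a_L))\nabla d\phi^i$ — wait, more precisely one writes $a_L\nabla d\phi^i$ and uses that $a_L = \tilde a_L + (\Psi_L - \tilde\sigma_L\text{-type corrections})$; the cleanest route is to add and subtract $\tilde a_L\nabla d\phi^i$ and use \eqref{inchelm} to replace $da\,e^i + \tilde a_L\nabla d\phi^i$ by $\nabla.d\sigma^i$, which then cancels the $\nabla.d\sigma^i$ coming from $d\tilde\sigma^i$ in \eqref{sigma-tilde}. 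Similarly $d\tilde a\,e^i$ on the right should cancel against $d[a\,\nabla\tilde\phi^i]$ up to corrections, using \eqref{tao17y}, i.e. $d\tilde a\,e^i = d[\phi^i\,b] = d[a\,\nabla\phi^i]$ (via the integration by parts from the proof of Lemma \ref{lemR01}); the discrepancy between $d[a\,\nabla\tilde\phi^i]$ and $d[a\,\nabla\phi^i]$ is handled by the two-scale correction, bringing in $\tilde\phi_L^i$, and this is where the $\tilde\phi_L\otimes db$ term and the bracket-correction term $-\tilde\phi_L^j d[(\partial_j\phi^i)\Psi]$ inside \eqref{sigma-tilde} get absorbed. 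The terms proportional to $\tilde\phi_L^i$ and $\tilde\sigma_L^i$ that survive assemble into $(\tilde\phi_L^i a + \tilde\sigma_L^i)\nabla\partial_i d\phi$, exactly as in the classical intertwining identity \eqref{intertwin}; and the term $\partial_j\tilde\phi^i\,\nabla d\phi^j$ combined with $da\,\nabla\tilde\phi^i$ and the quadratic variation $d[a\nabla\tilde\phi^i]$ reorganizes into $\tilde f_L\nabla d\phi$, which is the self-referential ``multiplicative'' term.

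The main obstacle will be the careful bookkeeping of the Itô quadratic-variation (covariation) terms: $da=d\Psi$ has white-noise character in $\ln L$, so every product of $a_L$ (or $\Psi_L$) with an increment of $\phi$, $\tilde\phi$, or $\sigma$ contributes a drift-type bracket term, and several of these are meant to cancel in pairs or to combine with the Stratonovich-to-Itô corrections already accounted for in \eqref{phi-tilde} and \eqref{sigma-tilde}. One has to use \eqref{forFelix03}, i.e. $d[\nabla\phi\,\nabla\phi]=0$, repeatedly to kill the brackets involving two gradient-of-$\phi$ factors, and the integration-by-parts/stationarity trick ($\nabla.\phi=0$, $\nabla.\Psi=b$) to rewrite brackets like $d[(\partial_j\phi^i)\Psi]$ in a form that matches \eqref{sigma-tilde}. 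The computation is the continuum analogue of \cite[(50)]{CMOW}, so the structure of the answer is known; the work is to verify that the Stratonovich-form definitions \eqref{phi-tilde-s} and \eqref{sigma-tilde-s} were chosen exactly so that all spurious drift terms cancel and only \eqref{wr03} remains. I would present the computation by grouping terms by their order in the proxies ($\tilde\phi$-free, linear in $\tilde\phi$/$\tilde\sigma$, and the $\tilde f$-linear piece) and checking each group separately.
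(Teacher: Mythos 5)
Your overall strategy matches the paper's proof closely: differentiate \eqref{helm-tilde} in the It\^{o} sense, substitute the evolution equations \eqref{phi-tilde}, \eqref{sigma-tilde}, the incremental Helmholtz relation \eqref{inchelm} and \eqref{tao17y}, and then recognize the factor $a_L(e^j+\nabla\tilde\phi_L^j) = \tilde a_L e^j + \nabla.\tilde\sigma_L^j + \tilde f_L^j$ inside $a_L\nabla d\tilde\phi^i$ to produce the self-referential term $\tilde f_L\nabla d\phi$. That is exactly how the paper proceeds, and your instinct to view \eqref{sigma-tilde} as chosen so that all divergence-form pieces consolidate into $\nabla.d\tilde\sigma^i$ is also correct.

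However, you misidentify the mechanism that kills the troublesome covariation term. After substituting \eqref{ttao10} into $d[a\,\nabla\tilde\phi^i]=d[\Psi\,\nabla\tilde\phi^i]$, the bracket splits into $d[\partial_j\phi^i\,\Psi](e^j+\nabla\tilde\phi_L^j)$ plus the piece $\tilde\phi_L^j\,d[\Psi\,\nabla\partial_j\phi^i]$, and the latter must vanish for \eqref{wr03} to close. You propose killing it with \eqref{forFelix03}, i.e.\ $d[\nabla\phi\,\nabla\phi]=0$, but that identity involves two factors of $\nabla\phi$ and does not apply here: the bracket in question is between $\Psi$ (degree one in $b$, even under $x\mapsto -x$) and $\nabla^2\phi$ (degree one in $b$, also even), and \eqref{forFelix03} does not say anything about it. What the paper actually uses is the separate \emph{parity} argument \eqref{parity}, namely $d[\Psi\otimes\nabla^2\phi]=0$, deduced from the fact that $\Psi_L(0)$ and $(\phi_L,\nabla^2\phi_L)(0)$ have opposite parity in law under $x\mapsto -x$ (see \eqref{parity2}--\eqref{parity3}). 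Without this ingredient your computation leaves a stray $\tilde\phi_L^j\,d[\Psi\,\nabla\partial_j\phi^i]$ term that does not appear in \eqref{wr03}. (As a smaller point, \eqref{forFelix03} is used instead in the derivation of the It\^o/Stratonovich equivalence \eqref{phi-tilde-s}$\Leftrightarrow$\eqref{phi-tilde}, so it is adjacent but not the right tool here; and the description of which pieces combine into $\tilde f_L\nabla d\phi$ is garbled — that term comes purely from substituting \eqref{helm-tilde} into $\partial_j d\phi^i\,a_L(e^j+\nabla\tilde\phi_L^j)$, not from $da\,\nabla\tilde\phi^i$ or the bracket term.)
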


The r.~h.~s.~contribution $ ( \tilde\phi_L^i a + \tilde\sigma_L^i ) \nabla \partial_i d \phi $ arises from homogenization, see \eqref{intertwin}, while $ \tilde\phi_L \otimes d b $ comes from linearization. Furthermore, note that \eqref{wr03} shows that the martingale $ \tilde f $ has vanishing expectation, 
which was part of its definition in \cite[(48)]{CMOW}. This implies the exact consistency
$\mathbb{E}a_L ({\rm id}+\nabla\tilde\phi_L )$ $=\tilde\lambda_L I $,
which in \cite{CMOW} only holds in the limit $M \searrow 1$, see \cite[(51)]{CMOW}.

\begin{proof}[Proof of Lemma \ref{lem03}]
	\phantom{.}
	
	\medskip
	
	\textit{Step 1 (Main argument).} The claimed identity \eqref{wr03} in $ { \rm Hom } ( { \rm cotangent ~ space } , { \rm  tangent ~ space } ) $ amounts in every coordinate direction $ e^i $ to the identity in tangent space
\begin{align}\label{ttao09bis}
	d \tilde f^{ i } 
	= \tilde f_L \nabla d \phi^{ i } 
	+ ( \tilde\phi_L ^j a_L + \tilde\sigma_L ^j ) \nabla \partial_j d \phi^{ i }
	+ \tilde\phi_L ^{ i } d b ,
\end{align}
which we will derive from \eqref{helm-tilde}.

\medskip

By Leibniz' rule within Itô's calculus, we obtain
\begin{align*}
d a ( e^{ i } + \nabla\tilde\phi^{ i } ) 
= da e^i+ da \nabla\tilde\phi_L^i
+a_L\nabla d\tilde\phi^i+d[a\nabla\tilde\phi^i], 
\end{align*}
into which we insert $ d a = d \Psi $ (which is immediate from \eqref{defn-a}) and \eqref{inchelm} to the effect of
\begin{align}\label{ttao12}
d a ( e^{ i } + \nabla\tilde\phi^{ i } ) 
=- \tilde a_L \nabla d\phi^i
+ \nabla . d\sigma^i 
+ d \Psi \nabla\tilde\phi_L^i
+ a_L\nabla d\tilde\phi^i
+d[ \Psi \, \nabla\tilde\phi^i].
\end{align}
In view of the two last terms in \eqref{ttao12}, we apply $ \nabla$ to \eqref{phi-tilde}, which yields by Leibniz' rule
\begin{align}\label{ttao10}
\nabla d\tilde\phi^i
= \partial_jd\phi^i \, (e^j+\nabla\tilde\phi_L^j)
+ \tilde\phi_L^j \nabla\partial_jd\phi^i.
\end{align}
Below, we will argue that
\begin{align}\label{parity}
	d [ \Psi \otimes \nabla^2 \phi ] = 0 
\end{align}
as a consequence of parity. Using \eqref{parity}, we obtain from (\ref{ttao10}) that
the last term in (\ref{ttao12}) can be written as
\begin{align*}
d[ \Psi \, \nabla\tilde\phi^i]
=d[ \partial_j\phi^i  \, \Psi ] (e^j+\nabla\tilde\phi_L^j)
= d[ \Psi \, \nabla \phi^i ]
+ d[\partial_j\phi^i \, \Psi ] \, \nabla\tilde\phi_L^j.
\end{align*}
Thus, using the first item in \eqref{tao17}, we learn from \eqref{ttao12} that
\begin{align*}
	d ( a ( e^{ i } + \nabla\tilde\phi^{ i } ) - \tilde a e^i ) 
	=- \tilde a_L \nabla d\phi^i
	+ \nabla . d\sigma^i 
	+ d \Psi \nabla\tilde\phi_L^i
	+ a_L\nabla d\tilde\phi^i
	+ d[\partial_j\phi^i \, \Psi ] \, \nabla\tilde\phi_L^j.
\end{align*}
Moreover, applying $a_L$ to identity (\ref{ttao10}), and using the definition of $ \tilde f_L $ 
to substitute $a_L(e^j+\nabla\tilde\phi_L^j)$, we obtain that the penultimate term of the previous equation is
\begin{align*}
a_L\nabla d\tilde\phi^i
= \partial_j d\phi^i \, ( \tilde a_L e^j +\nabla.\tilde\sigma_L^j + \tilde f_L^j)
+\tilde\phi_L^j a_L\nabla\partial_jd\phi^i.
\end{align*}
Combining the last two equations and using $ \partial_j d\phi^i \, \tilde a_L e^j = \tilde a_L \nabla d \phi^i $ yields
\begin{align*}
d ( a ( e^{ i } + \nabla\tilde\phi^{ i } ) - \tilde a e^i )
& =  \partial_j d\phi^i \, \tilde f_L^j
+ \tilde\phi_L^j a_L\nabla\partial_jd\phi^i \\
& \qquad + \nabla . d\sigma^i 
+ d \Psi \nabla\tilde\phi_L^i
+ \partial_j d\phi^i \,  \nabla.\tilde\sigma_L^j
+ d[\partial_j\phi^i \, \Psi ] \, \nabla\tilde\phi_L^j  \\
&= \partial_j d\phi^i \, \tilde f_L^j 
+ ( \tilde\phi_L^j a_L +  \tilde\sigma_L^j ) \nabla \partial_j d\phi^i
+ \tilde\phi_L^i \nabla . d \Psi \\
& \qquad + \nabla . ( d \sigma^i + \partial_j d\phi^i  \, \tilde\sigma_L^j  - \tilde\phi_L^i d \Psi  - \tilde\phi_L^j d[\partial_j\phi^i \, \Psi ]), 
\end{align*}
where in the last equality we used the Leibniz' rule from Footnote \ref{leibniz} for $( \zeta ,\sigma) = ( \tilde\phi_L^i, d\Psi )$, $ (\zeta,\sigma) =(\tilde\phi_L^j,d[\partial_j\phi^i \, \Psi ]) $ (for which $ \nabla \sigma $ vanishes) and $ ( \zeta , \sigma ) = (\partial_j \tilde\phi_L^i, \tilde\sigma_L^j ) $. As advertised earlier, the last equation serves as our motivation for definition \eqref{sigma-tilde}. Inserting this equation we thus learn the claim in its component-wise form \eqref{ttao09bis}.

\medskip

\textit{Step 2 (Argument for \eqref{parity}).}  To argue that \eqref{parity} holds we will establish that
\begin{align}\label{parity2}
	\Psi_L ( 0 ) ~ { \rm and } ~ ( \phi_L , \nabla^2 \phi_L ) ( 0 )
	~ { \rm have ~ opposite ~  parity ~ (in ~ law) ~ under ~ the ~ inversion } ~
	x \mapsto - x . 
\end{align}
Using the independence of increments to evaluate the covariation $ d [ \Psi \otimes  \nabla^2 \phi ]  = d \E \Psi \otimes \nabla^2 \phi $ and appealing to stationarity, we obtain \eqref{parity}.

\medskip

Here comes the argument for \eqref{parity2}: $ \Psi_L $ and $ \phi_L $ are defined in terms of $ b $ through divergence-form second-order differential operators, which by virtue of \eqref{fw01} take the form\footnotemark
\begin{align}\label{parity3}
	\frac{ \tilde\lambda_L  }{ L^2 } d \phi \stackrel{ \eqref{E: eqn for dphi} }{ = }  d b , \quad { \rm and } \quad
	\frac{ 1 }{ L^2 } d \Psi \stackrel{ \eqref{gaugePsi} }{ = }  \nabla^* \otimes d b - d b \otimes \nabla^*  .
\end{align}%
\footnotetext{Here, and only here, we use the symbol $ \nabla^* $ to denote the gradient vector $ \delta^{ i k } \partial_k \, e_i $; and, a bit formal, $ \nabla^* \otimes d b - d b \otimes \nabla^* $ denotes the tensor $ ( \delta^{ i p } \partial_{ p } d b^j - \delta^{ j q } \partial_{ q } d b^i ) e_i \otimes e_j $.}%
In particular, $ d \phi $, resp. $ d \Psi $, transforms like the r.~h.~s.~expression $ db $, resp.~$  \nabla^* \otimes d b - d b \otimes \nabla^*  $, under the change of variables $ x \mapsto - x $. Since the latter have opposite parity (in law) this implies \eqref{parity2}.
\end{proof}

\medskip

In order for \eqref{helm-tilde} to provide an effective approximation of \eqref{tao01} it is important that the residuum $ \tilde f_L $ is small compared to the size of the effective diffusion matrix $ \tilde a_L $. This control was already proven in the previous work \cite{CMOW} using the finite-difference analogue of \eqref{wr03}.

\begin{proposition}[{see \cite[(79)]{CMOW}}]\label{prop01}
	For $\varepsilon^2\ll 1$ it holds
	\begin{align*}
		\E | \tilde f_L |^2 \lesssim \varepsilon^2 \tilde\lambda_L .
	\end{align*}
\end{proposition}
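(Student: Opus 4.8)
The plan is to derive a closed differential inequality for the second moment $E|\tilde f_L|^2$ directly from the It\^o SDE \eqref{wr03}, and then to close it using energy estimates that control the corrector-type quantities $(\tilde\phi_L, \nabla\tilde\phi_L, \tilde\sigma_L)$ in terms of $\tilde\lambda_L$. Writing $\tau = \ln\tilde\lambda_L$ as the natural time variable (so that $d\tau = \tfrac{\varepsilon^2}{2}\,d\ln L/\tilde\lambda_L^2$ by \eqref{forFelix04}), the three terms on the right-hand side of \eqref{wr03} contribute to the drift of $E|\tilde f_L|^2$ via It\^o's formula: a bilinear term $2\,E\,\tilde f_L : d[\tilde f\,\tilde f]$-type contribution from $\tilde f_L\nabla d\phi$, which produces a term of the form $E|\tilde f_L|^2\,d\tau$ (using \eqref{wr65bis}); a cross term $2\,E\,\tilde f_L\cdot(\tilde\phi_L^i a + \tilde\sigma_L^i)\nabla\partial_i d\phi$; and the linearization term $2\,E\,\tilde f_L : d[\tilde f\,(\tilde\phi\otimes b)] + E\,|d(\tilde\phi\otimes b)|^2$-type contributions. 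By Cauchy--Schwarz one bounds all the cross and source terms by $C\,E|\tilde f_L|^2\,d\tau + (\text{source})\,d\tau$, leading to a Gr\"onwall-type inequality
\begin{align*}
	d\,E|\tilde f_L|^2 \lesssim E|\tilde f_L|^2\,d\tau + S_L\,d\tau,
\end{align*}
where $S_L$ collects the quadratic-variation contributions of $(\tilde\phi_L^i a + \tilde\sigma_L^i)\nabla\partial_i d\phi$ and $\tilde\phi_L\otimes db$.

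The heart of the matter is thus to show that the source $S_L$ is itself $\lesssim\varepsilon^2$ (dimensionally, per unit $\tau$ this is what is needed, since integrating $\varepsilon^2\,d\tau \cdot e^{\text{(something)}}$ against the Gr\"onwall kernel must yield $\varepsilon^2\tilde\lambda_L = \varepsilon^2 e^\tau$). Concretely, $S_L$ involves $E|\tilde\phi_L|^2$, $E|\nabla\tilde\phi_L|^2$, $E|\tilde\sigma_L|^2$, multiplied by the spectral weight coming from $\nabla\partial_i d\phi$ resp.\ $db$, which by \eqref{fw01}, \eqref{E: eqn for dphi} carries an extra factor $L^{-2}$ relative to $d\phi$, i.e.\ an extra $\varepsilon^2\tilde\lambda_L^{-2}\,d\ln L = \tfrac{2}{n}\cdot\text{(stuff)}\,d\tau$. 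So one needs a priori energy estimates of the schematic form $E|\tilde\phi_L|^2 + E|\tilde\sigma_L|^2 \lesssim \varepsilon^2\tilde\lambda_L^2 \sim \varepsilon^2 e^{2\tau}$ and $E|\nabla\tilde\phi_L|^2 \lesssim \varepsilon^2\tilde\lambda_L \sim \varepsilon^2 e^{\tau}$ (the gradient being one order better because of the elliptic gain); these follow by testing \eqref{phi-tilde}, \eqref{sigma-tilde} and using the explicit Fourier representations \eqref{gaugePsi}, \eqref{tao17gauge}, the support property \eqref{forFelix07}, and the covariance identities \eqref{forFelix03}, \eqref{wr65bis} from Lemma \ref{lemR01}. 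In other words, before attacking $\tilde f_L$ one establishes the subordinate bounds on $(\tilde\phi_L, \nabla\tilde\phi_L, \tilde\sigma_L)$, then feeds them into the Gr\"onwall estimate for $E|\tilde f_L|^2$.

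The main obstacle I expect is controlling the $\tilde\sigma_L$-contribution and, relatedly, the quadratic variation $d[(\partial_j\phi^i)\Psi]$ appearing in the It\^o form \eqref{sigma-tilde}: $\Psi_L$ grows like $\sqrt{\varepsilon^2\ln L}$ (logarithmically divergent, cf.\ \eqref{estPsi}), so products of $\tilde\phi_L$ or $\tilde\sigma_L$ with $d\Psi$ must be handled with care, exploiting the skew structure and the divergence-form/integration-by-parts cancellations (as in the proof that the It\^o--Stratonovich correction vanishes, and as in \eqref{parity}). This is precisely the point where the argument mirrors the finite-difference analysis of \cite[Section 5]{CMOW}: one cannot afford to lose a power of $\ln L$, so every estimate has to be done at the level of increments in $\tau$ with the cancellations \eqref{forFelix03}, \eqref{wr65bis} kept exact, and only the ``far-field'' pieces estimated crudely. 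A clean way to organize this is to run the Gr\"onwall argument simultaneously for the vector $(E|\tilde\phi_L|^2/\tilde\lambda_L^2,\ E|\nabla\tilde\phi_L|^2/\tilde\lambda_L,\ E|\tilde\sigma_L|^2/\tilde\lambda_L^2,\ E|\tilde f_L|^2/\tilde\lambda_L)$, showing each stays bounded by $C\varepsilon^2$ on all scales provided $\varepsilon^2\ll 1$; the smallness of $\varepsilon$ is used to absorb the nonlinear (in the corrector) feedback terms into the linear part of the system.
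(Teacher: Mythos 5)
Your overall architecture matches the paper's: an It\^o-based Gr\"onwall inequality for $\E|\tilde f_L|^2$ in the variable $\tau=\ln\tilde\lambda_L$, fed by subordinate energy estimates on the proxy corrector. However, there are two genuine gaps that would cause the argument to fail as written.

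First, you do not identify the cancellation that pins the coefficient of $\E|\tilde f_L|^2\,d\tau$ to exactly $1$. The cross variations between $\tilde f_L\nabla d\phi$ and the other two drivers in \eqref{wr03}, namely $d[\nabla\phi\,(\nabla\partial_i\phi)^*]$ and $d[\nabla\phi\,(e_i\otimes b)^*]$, vanish by a parity argument (this is \eqref{eqn04prev}, a sibling of \eqref{parity}, not a consequence of \eqref{forFelix03}--\eqref{wr65bis} which you cite). If one instead applies Cauchy--Schwarz/Young to those cross terms --- as your phrase ``one bounds all the cross and source terms by $C\,\E|\tilde f_L|^2\,d\tau+\dots$'' suggests --- the Gr\"onwall exponent becomes $1+\delta$ for some $\delta>0$ and the resulting bound is $\varepsilon^2\tilde\lambda_L^{1+\delta}$, which is strictly weaker than the claim since $\tilde\lambda_L\to\infty$. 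Keeping \eqref{forFelix03} and \eqref{wr65bis} ``exact'' is necessary but not sufficient: the parity-induced orthogonality of the $\tilde f_L\nabla d\phi$ channel to the other two channels is the decisive extra input.

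Second, your proposed subordinate energy estimates are false. You write $\E|\tilde\phi_L|^2+\E|\tilde\sigma_L|^2\lesssim\varepsilon^2\tilde\lambda_L^2\sim\varepsilon^2 e^{2\tau}$, and $\E|\nabla\tilde\phi_L|^2\lesssim\varepsilon^2\tilde\lambda_L$ via an ``elliptic gain.'' But since $\tilde\lambda_L^2=1+\varepsilon^2\ln L$, the length scale $L=e^{(\tilde\lambda_L^2-1)/\varepsilon^2}$ is \emph{exponentially} large in $\tilde\lambda_L$, and the correct scalings (Lemma~\ref{lem02}) are $\E|\tilde\phi_L|^4\lesssim\varepsilon^4L^4/\tilde\lambda_L^4$ and $\E|\tilde\sigma_L|^2\lesssim\varepsilon^2L^2$ --- polynomial in $L$, hence vastly larger than your $\varepsilon^2\tilde\lambda_L^2$. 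These weaker (but true) estimates still close the loop because the source in \eqref{add07} carries an explicit $L^{-2}$ weight coming from the extra Laplacian on $\nabla\partial_i d\phi$ resp.\ $db$ (via \eqref{fw01}), so $L^{-2}\cdot L^2\cdot(\dots)$ collapses to $\varepsilon^2\,d\tau$. Also note: (a) you need the \emph{fourth} moment of $\tilde\phi_L$, not the second, to handle the product $\tilde\phi_L\otimes a_L$ in the homogenization error; and (b) the bound on $\E|\nabla\tilde\phi_L|^2$ is neither needed here nor available a priori --- in the paper it is \emph{derived} from the bound on $\tilde f_L$ via the Helmholtz relation \eqref{helm-tilde}, so invoking it as an input would be circular.
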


Here, we equip the space $ { \rm Hom } ( { \rm co tangent ~ space } , { \rm tangent ~ space } )  $, to which $ \tilde f_L $ belongs, with the Frobenius norm $ | \tilde f_L |^2 = { \rm tr } \, \tilde f_L^* \tilde f_L $, where $ \tilde f_L^* \in { \rm Hom } ( { \rm  tangent ~ space } , { \rm co tangent ~ space } ) $ denotes the adjoint of $ \tilde f_L $.

\medskip

Within the continuum approach to scale-by-scale homogenization, we can provide a transparent proof of Proposition \ref{prop01} using the ODE for $ \E | \tilde f_L |^2 $, which is reminiscent of the iteration formulas in \cite{CMOW}, and follows as an elementary application of Itô calculus to equation \eqref{wr03}. We summarize this step in the next lemma.

\begin{lemma}[{Continuum version of \cite[(79)]{CMOW}}]\label{lem01}
	We have
	\begin{equation}\label{add07}
		d\mathbb{E}| \tilde f |^2 - \mathbb{E} | \tilde f_L |^2 d \tau 
		\lesssim  \E|\tilde\phi_L \otimes a_L  + \tilde\sigma_L |^2 \frac{ d \tau }{ L^2 } 
		+ \tilde\lambda_L^2 \E| \tilde\phi_L |^2 \frac{ d \tau }{L^2}.
	\end{equation}
\end{lemma}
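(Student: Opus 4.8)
The plan is to apply Itô's formula to the function $\tilde f \mapsto |\tilde f|^2 = \operatorname{tr}\tilde f^* \tilde f$ along the SDE \eqref{wr03}, extract the drift contribution coming from the quadratic variation, and then bound the cross terms by Cauchy--Schwarz so that they are absorbed into the two source terms on the right-hand side of \eqref{add07}. Writing \eqref{wr03} schematically as $d\tilde f = M_1 + M_2 + M_3$ with $M_1 = \tilde f_L\,\nabla d\phi$, $M_2 = (\tilde\phi_L^i a_L + \tilde\sigma_L^i)\,\nabla\partial_i d\phi$, and $M_3 = \tilde\phi_L\otimes db$, Itô's rule gives
\begin{align*}
	d\mathbb{E}|\tilde f|^2 = \mathbb{E}\,d[\,\tilde f\,\tilde f\,] = \sum_{p,q=1}^{3}\mathbb{E}\operatorname{tr}\, d[M_p\,M_q^*],
\end{align*}
where the covariations $d[M_p M_q^*]$ are computed from the covariation structure of the driver $\nabla d\phi$ and $db$, i.e.\ from \eqref{forFelix03}, \eqref{wr65bis}, and \eqref{forFelix01}. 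The key identity I expect to use is that, by \eqref{wr65bis}, $d[\nabla\phi\,(\nabla\phi)^*] = \mathrm{id}\,d\tau$, which is precisely what turns the diagonal term $\mathbb{E}\operatorname{tr}\,d[M_1 M_1^*]$ into $\mathbb{E}|\tilde f_L|^2\,d\tau$ — this is the term that appears with a coefficient of $1$ (not under a $\lesssim$) on the left of \eqref{add07}.

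The remaining terms must all be shown to be $\lesssim \bigl(\mathbb{E}|\tilde\phi_L\otimes a_L + \tilde\sigma_L|^2 + \tilde\lambda_L^2\,\mathbb{E}|\tilde\phi_L|^2\bigr)\,d\tau/L^2$. For the $M_2$--$M_2$ term one uses that $\nabla\partial_i d\phi$ carries an extra factor of $k$ in Fourier space relative to $\nabla d\phi$, and since $\mathcal{F}\,d\phi$ is supported on $\{L|k|=1\}$ by \eqref{forFelix07}, each spatial derivative costs a factor $L^{-1}$; hence $d[\nabla\partial_i\phi\,(\nabla\partial_j\phi)^*]\sim L^{-2}\,d\tau$ (after the change of variables $\tau = \ln\tilde\lambda_L$, using \eqref{forFelix04}), which produces the factor $\mathbb{E}|\tilde\phi_L^i a_L + \tilde\sigma_L^i|^2\,d\tau/L^2$. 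For the $M_3$--$M_3$ term, $d[db\,db]\sim \varepsilon^2 L^{-2}\,d\tau$ by \eqref{forFelix01} and \eqref{forFelix04} (recall $\varepsilon^2\,d\ln L/(2\tilde\lambda_L^2) = d\tau$, and $\varepsilon^2\le\tilde\lambda_L^2$), giving $\tilde\lambda_L^2\,\mathbb{E}|\tilde\phi_L|^2\,d\tau/L^2$; here one uses $\mathbb{E}|\tilde\phi_L\otimes db|^2$-type bounds. For the off-diagonal terms $M_1$--$M_2$, $M_1$--$M_3$, $M_2$--$M_3$, one applies Cauchy--Schwarz in the form $2\operatorname{tr}\,d[M_p M_q^*] \le \eta\,\operatorname{tr}\,d[M_p M_p^*] + \eta^{-1}\operatorname{tr}\,d[M_q M_q^*]$; the $M_1$--something terms threaten to reproduce $\mathbb{E}|\tilde f_L|^2\,d\tau$ with an uncontrolled constant, so the point is that one takes $\eta$ small enough that the $M_1 M_1^*$ piece from these cross terms is a small multiple of $\mathbb{E}|\tilde f_L|^2\,d\tau$ and can be moved to the left, at the cost of enlarging the (harmless) constants in front of the $M_2$ and $M_3$ contributions on the right.

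The main obstacle is keeping careful track of the covariation tensors and the change of variables $L\mapsto\tau$, in particular verifying that $d[\nabla d\phi\,\nabla d\phi] = 0$ (from \eqref{forFelix03}) so that there is no spurious $M_1$--$M_1$ contribution of the ``wrong'' type, and confirming that the cross-covariations between $\nabla d\phi$, $\nabla\partial_i d\phi = \partial_i(\nabla d\phi)$, and $db$ behave as claimed under the derivative-counting heuristic — these are all computable from the explicit Fourier representation \eqref{E: eqn for dphi}, \eqref{gaugePsi} together with the support property \eqref{forFelix07}, but the bookkeeping of upper/lower indices and of the $L^{-2}$ scalings is where the care is needed. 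A secondary point is that one must use \eqref{ao24} once more to see that the trace-of-covariation identities close correctly (this is what made the first item of \eqref{forFelix05} vanish), ensuring the drift really is of the stated size.
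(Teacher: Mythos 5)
Your overall plan---apply It\^{o}'s formula, read off the diagonal $M_1 M_1^*$ contribution as $\mathbb{E}|\tilde f_L|^2\,d\tau$ via \eqref{wr65bis}, scale the $M_2$ and $M_3$ contributions by derivative-counting on the support $\{L|k|=1\}$---is the right one, and matches the paper's in those respects. But there is a genuine gap in your treatment of the cross-variations with $M_1 = \tilde f_L\,\nabla d\phi$, and it is precisely the spot the paper's ``important structural remark'' before the lemma warns about.

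You propose to bound $d[M_1 M_2^*]$ and $d[M_1 M_3^*]$ by a weighted Cauchy--Schwarz $2\operatorname{tr} d[M_p M_q^*]\le\eta\operatorname{tr} d[M_p M_p^*]+\eta^{-1}\operatorname{tr} d[M_q M_q^*]$ with $\eta$ small, and then ``move the $\eta\,\mathbb{E}|\tilde f_L|^2\,d\tau$ piece to the left.'' This cannot produce \eqref{add07}: the term is nonnegative, so ``moving it to the left'' means the inequality you actually prove is
\[
d\mathbb{E}|\tilde f|^2-(1+\eta)\,\mathbb{E}|\tilde f_L|^2\,d\tau
\lesssim_\eta \mathbb{E}|\tilde\phi_L\otimes a_L+\tilde\sigma_L|^2\frac{d\tau}{L^2}+\tilde\lambda_L^2\,\mathbb{E}|\tilde\phi_L|^2\frac{d\tau}{L^2},
\]
with coefficient $1+\eta>1$. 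You cannot take $\eta\to0$ because the implied constant blows up. Feeding this weakened differential inequality into the ODE argument in the proof of Proposition~\ref{prop01} yields $\mathbb{E}|\tilde f_L|^2\lesssim_\eta\varepsilon^2 e^{(1+\eta)\tau}=\varepsilon^2\tilde\lambda_L^{1+\eta}$, which is strictly worse than the claimed $\varepsilon^2\tilde\lambda_L$ (recall $\tilde\lambda_L\to\infty$), and this loss propagates through the rest of the argument. The exactness of the coefficient $1$ is the entire point of the exponential structure.

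What is actually needed---and what the paper proves via the parity argument---is that these cross-variations vanish identically: $d[\nabla\phi\,(\nabla\partial_i\phi)^*]=0$ and $d[\nabla\phi\,(e_i\otimes b)^*]=0$. Both follow from the observation (same as for \eqref{parity}) that $\nabla\phi_L(0)$ and $(\nabla^2\phi_L,b_L)(0)$ have opposite parity in law under $x\mapsto -x$, since by \eqref{E: eqn for dphi}, \eqref{gaugePsi}, \eqref{fw01} they are built from $b$ via differential operators of opposite parity, and the covariation is a deterministic expectation by independence of increments. This makes the only surviving cross term $M_2$--$M_3$, which is harmless and absorbable by Cauchy--Schwarz as you do. With the $M_1$-cross terms killed, the diagonal $M_1M_1^*$ term appears with coefficient exactly one and the argument closes. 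So the missing idea in your proposal is the parity cancellation; Cauchy--Schwarz alone is not enough here.
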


Here is an important structural remark regarding the differential inequality \eqref{add07}: The exponential structure in $ d \tilde f = \tilde f \nabla d \phi + \hdots $ in \eqref{wr03} results in the exponential structure $ d\mathbb{E}| \tilde f |^2 \leq \mathbb{E} | \tilde f_L |^2 d \tau + \hdots $, which implies exponential growth $ e^{ \tau } = \tilde\lambda_L $. Thus, in the following proof it is important to capture the constant in front of $ \mathbb{E} | \tilde f_L |^2 d \tau $ by carefully treating all cross variations of \eqref{wr03} involving $ \tilde f_L \nabla d \phi $, while coarse estimates on the other terms are sufficient. In particular, the norm used on the term on the r.~h.~s.~of \eqref{add07} does not matter. The small prefactor in Proposition \ref{prop01} comes from the smallness of the r.~h.~s.~in \eqref{add07}, which is the subject of a later lemma.

\begin{proof}[Proof of Lemma \ref{lem01}]
Since by \eqref{wr03}, $ \tilde f $ is a martingale vanishing at zero, its quadratic variation determines the evolution of 
$\mathbb{E}| \tilde f |^2$, namely $ \E | \tilde f |^2 = \E [ { \rm tr } \tilde f^* \tilde f ] $. Turning to (\ref{wr03}), we note that the parity argument
used for \eqref{parity} yields
\begin{align}\label{eqn04prev}
	d [\nabla \phi \, (\nabla\partial_i \phi)^*]=0
	\quad\mbox{and}\quad
	d [\nabla \phi \, ( e_i \otimes b )^*]=0.
\end{align}
Hence the cross variations between the first and second and first and last terms on the r.~h.~s.~of \eqref{wr03} vanish. Thus, by applying Cauchy-Schwarz (for the inner product $ \E [ { \rm tr } \, f^* g ] $) to the cross variation of the second and third term, we learn that
\begin{align}
\E | \tilde f_{ \cdot } |^2
& \leq \E [ { \rm tr } \, ( { \textstyle \int_1^{ \cdot } } \tilde f_L \nabla d \phi )^* ( { \textstyle \int_1^{ \cdot } } \tilde f_L \nabla d \phi ) ] \nonumber \\
& + 2 \E [ { \rm tr } \, ( { \textstyle \int_1^{ \cdot } } (\tilde\phi_L^i a_L + \tilde\sigma_L^i) \nabla \partial_i d \phi )^* ( { \textstyle \int_1^{ \cdot } } (\tilde\phi_L^{j} a_L + \tilde\sigma_L^{j} ) \nabla \partial_{j} d \phi ) ] \nonumber \\
& + 2 \E [ { \rm tr } \, ( { \textstyle \int_1^{ \cdot } } \tilde\phi_L \otimes d b )^* ( { \textstyle \int_1^{ \cdot } } \tilde\phi_L \otimes db ) ]. \label{eqn04}
\end{align}
Recall that the factor one in front of the term $ \E [ { \rm tr } (  { \textstyle \int_1^{ \cdot } }  \tilde f_L \nabla d \phi )^* (  { \textstyle \int_1^{ \cdot } }  \tilde f_L \nabla d \phi ) ] $ is due to its vanishing cross variations, and is crucial to determine the exponent of $ \tilde\lambda_L $ in Proposition \ref{prop01}.

\medskip

Due to the two algebraic identities $  {\rm tr} \, (f g )^* f g ={\rm tr} \, f^*f g g ^* $ and $ { \rm tr } ( \dot x \otimes \xi )^* ( \dot x \otimes \xi ) = | \dot x |^2 | \xi |^2 $ inequality \eqref{eqn04}  may be rewritten as
\begin{align*}
d \E | \tilde f |^2
& \leq \E { \rm tr } \tilde f_L ^* \tilde f_L d  [ \nabla \phi ( \nabla \phi )^*  ] \\
& + 2 \E { \rm tr }  (\tilde\phi_L^i a_L + \tilde\sigma_L^i)^* (\tilde\phi_L^{j} a_L + \tilde\sigma_L^{j} )  d [ \nabla \partial_i \phi ( \nabla \partial_{j} \phi )^* ]
+ 2 \E | \tilde\phi_L |^2 d [ b  \cdot b ].
\end{align*}
As alluded to earlier, the constants in front of the last two terms are irrelevant, thus by another application of Cauchy-Schwarz, we obtain
\begin{align*}
d \E | \tilde f |^2 - \E { \rm tr } \tilde f_L ^* \tilde f_L d [ \nabla \phi ( \nabla \phi )^*  ]
\lesssim \E | \tilde\phi_L \otimes a_L + \tilde\sigma_L |^2 d [ \delta^{ i j } { \rm tr } \,  \nabla \partial_i \phi ( \nabla \partial_{j} \phi )^* ]
+ \E | \tilde\phi_L |^2 d [ b  \cdot b ].
\end{align*}
Thus to conclude \eqref{add07}, it remains to identify three quadratic variations.

\medskip

First, let us observe that by \eqref{wr65bis} in Lemma \ref{lemR01} the tensor $ d [\nabla \phi(\nabla \phi )^*] $ is equal to $ { \rm id } \, d \tau $ identifying the l.~h.~s.~of \eqref{add07}.

\medskip

Next, we identify the term $ [ \delta^{ i j } { \rm tr } \,  \nabla \partial_i \phi ( \nabla \partial_{j} \phi )^* ] $. To this end, we argue as in \eqref{dertrick}: Since the quadratic variation is deterministic and thus by stationarity constant in space, we can appeal to a partial integration (or Leibniz' rule) showing that $ [ \delta^{ i j } { \rm tr } \,  \nabla \partial_i \phi ( \nabla \partial_{j} \phi )^* ] = [ { \rm tr } \,  \nabla ( - \Delta ) \phi ( \nabla \phi )^* ]  $. By definition \eqref{E: eqn for dphi} and \eqref{fw01} the operator $  - \Delta  $ acts as $ L^{ - 2 } $ on the increment $ d \phi $, which shows
\begin{align}\label{dertrick2}
	d [ \delta^{ i j } { \rm tr } \, \nabla \partial_j \phi (\nabla \partial_i \phi )^* ]
	= d  [ { \rm tr } \, \nabla \phi (\nabla ( - \Delta ) \phi )^* ]
	= \frac{ 1 }{ L^2 } d  [ { \rm tr } \, \nabla \phi (\nabla \phi )^* ] 
	\stackrel{ (\ref{wr65bis}) }{ = } \frac{ { \rm tr } \, { \rm id } }{ L^2 } d \tau .
\end{align}
This identifies the first term on the r.~h.~s.~of \eqref{add07}.

\medskip

Finally, the term $ d [ b  \cdot b ] $ may be evaluated using \eqref{forFelix01} and recalling the ODE \eqref{forFelix04} satisfied by the variable $ \tau $.
\end{proof}

\medskip

In order to deduce Proposition \ref{prop01} from Lemma \ref{lem01}, we need to estimate the r.~h.~s.~of \eqref{add07}, which boils down to computing second moments of $ \tilde\sigma_L $ and fourth moments of $ \tilde\phi_L $. (The fourth moment becomes convenient when estimating the product $ \tilde\phi_L \otimes a_L $ coming from the homogenization error.) As in \cite{CMOW}, we have

\begin{lemma}[{Continuum version of \cite[(69) \& (70)]{CMOW}}]\label{lem02}
	It holds
	\begin{align*}
		\tilde\lambda_L \, \E^{ \frac{ 1 }{ 4 } } | \tilde\phi_L |^4 + \E^{ \frac{ 1 }{ 2 } } | \tilde\sigma_L |^2
		\lesssim \varepsilon L .
	\end{align*}
\end{lemma}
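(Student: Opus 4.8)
The plan is to derive closed evolution equations (differential inequalities in the variable $\tau$, or equivalently $\ln L$) for $\E|\tilde\phi_L|^4$ and $\E|\tilde\sigma_L|^2$ directly from the Itô SDEs \eqref{phi-tilde} and \eqref{sigma-tilde}, using the building blocks already established: the quadratic variations \eqref{forFelix03}, \eqref{wr65bis}, the derivative trick \eqref{dertrick2}, the small-scale identity \eqref{fw01}, and the moment bounds \eqref{cov02fo} and \eqref{estPsi}. The key observation, which makes the scheme close, is that every driver appearing on the right-hand side of \eqref{phi-tilde} and \eqref{sigma-tilde} is an increment of $b$ (or of a spatial derivative of $\phi$, which by \eqref{E: eqn for dphi} is again an increment of $b$ up to the deterministic factor $\tilde\lambda_L^{-1}L^2$), hence its quadratic variation carries the universal factor $\varepsilon^2\,d\ln L$ together with powers of $L$ governed by \eqref{fw01}. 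This is precisely the structure exploited in the proof of Lemma \ref{lem01}, and I would reuse the parity identities from \eqref{parity2}--\eqref{eqn04prev} to kill cross variations between $\tilde\phi_L$ (even parity) and $d\Psi$ (odd parity), and between $\nabla\phi$ and $\nabla^2\phi$.

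First I would treat $\tilde\sigma_L$. Applying Itô's formula to $\E|\tilde\sigma_L^i|^2={\rm tr}\,\E(\tilde\sigma_L^i)^*\tilde\sigma_L^i$ using \eqref{sigma-tilde}, one gets $d\E|\tilde\sigma|^2$ bounded by a sum of quadratic variations of $d\sigma^i$, of $(\partial_jd\phi^i)\tilde\sigma_L^j$, and of $\tilde\phi_L^i d\Psi$, plus mixed terms controlled by Cauchy--Schwarz. The term $d[\sigma\,\sigma^*]$ is deterministic and, via the Coulomb-gauge formula \eqref{tao17gauge} and the $-n$-homogeneity of $\mathcal F c$ on the annulus, produces $\sim\varepsilon^2\frac{dL}{L^3}$ (as in \eqref{estPsi} differentiated, so $\sim\varepsilon^2\frac{d\ln L}{L^2}$); the term involving $d\Psi$ contributes $\E|\tilde\phi_L|^2$ times the same factor $\sim\varepsilon^2\frac{d\ln L}{L^2}$ (using $d[\Psi\,\Psi^*]\sim\varepsilon^2\frac{d\ln L}{L^2}$ from \eqref{estPsi}, \eqref{fw01}); and the term $(\partial_jd\phi^i)\tilde\sigma_L^j$ contributes $\E|\tilde\sigma_L|^2$ times $d[\delta^{ij}{\rm tr}\,\nabla\partial_i\phi(\nabla\partial_j\phi)^*]=\frac{n}{L^2}d\tau$ by \eqref{dertrick2}. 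Since $d\tau=\varepsilon^2\frac{d\ln L}{2\tilde\lambda_L^2}\ll\frac{d\ln L}{L^2}$ for $L$ in the relevant range, the self-coupling term is a small perturbation, and a Grönwall argument in $\ln L$ gives $\E|\tilde\sigma_L|^2\lesssim\varepsilon^2\int_1^L\big(1+\E|\tilde\phi_\ell|^2\big)\frac{d\ell}{\ell}\cdot\frac1{\ell^2}\cdot\ell^2$; more carefully, integrating $d\E|\tilde\sigma|^2\lesssim\varepsilon^2(1+\E|\tilde\phi_L|^2)\frac{d\ln L}{L^2}\cdot L^2$ — wait, one must track the powers of $L$ honestly: the upshot is $\E|\tilde\sigma_L|^2\lesssim\varepsilon^2 L^2$ provided $\E|\tilde\phi_L|^2\lesssim 1$.

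For $\tilde\phi_L$ I would use \eqref{phi-tilde}: $d\tilde\phi^i=d\phi^i+\tilde\phi_L^j\partial_jd\phi^i$. Applying Itô to $\E|\tilde\phi_L|^2$ gives, using \eqref{forFelix03} to annihilate the cross variation $d[\nabla\phi\,\nabla\phi]$ between the two terms and \eqref{E: eqn for dphi} with \eqref{fw01} to evaluate $d[d\phi\,d\phi]=\tilde\lambda_L^{-2}L^4 d[b\cdot b]\sim\varepsilon^2\tilde\lambda_L^{-2}L^2 d\ln L$ and $d[\partial_j\phi^i\,\partial_j\phi^i]=\tilde\lambda_L^{-2}L^2\cdot\varepsilon^2\frac n2\frac{d\ln L}{\tilde\lambda_L^2}$, one finds $d\E|\tilde\phi_L|^2\lesssim\varepsilon^2\frac{L^2}{\tilde\lambda_L^2}d\ln L+\E|\tilde\phi_L|^2\frac{d\tau}{L^2}$; the second term is negligible and integration yields $\E|\tilde\phi_L|^2\lesssim\varepsilon^2\int_1^L\frac{\ell^2}{1+\varepsilon^2\ln\ell}\frac{d\ell}{\ell}\sim\varepsilon^2 L^2/\tilde\lambda_L^2$, i.e.\ $\tilde\lambda_L^2\E|\tilde\phi_L|^2\lesssim\varepsilon^2L^2$, matching the square of the claimed bound. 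For the fourth moment I would run the same argument on $\E|\tilde\phi_L|^4$ (or on $\E(|\tilde\phi_L|^2)^2$ via Itô), where the diffusive (bracket) contributions from the linear term $d\phi^i$ give $\E|\tilde\phi_L|^2$ times the second-moment increment, and the self-coupling $\tilde\phi_L^j\partial_jd\phi^i$ gives $\E|\tilde\phi_L|^4$ times $\frac{d\tau}{L^2}$ plus lower-order pieces; Gaussianity of $\tilde\phi_L$ (it is a centered Gaussian field, as it is built by a linear SDE driven by the Gaussian $db$) lets one bound $\E|\tilde\phi_L|^4\lesssim(\E|\tilde\phi_L|^2)^2$, closing the estimate to $\tilde\lambda_L^4\E|\tilde\phi_L|^4\lesssim\varepsilon^4L^4$.

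The main obstacle I anticipate is bookkeeping the powers of $L$ versus $\tilde\lambda_L$ consistently so that the Grönwall constants stay dimension-dependent only, and in particular verifying that all self-coupling terms (those producing $\frac{d\tau}{L^2}$ or $\frac{d\ln L}{L^2}$ rather than the "source" rate $\varepsilon^2\frac{d\ln L}{L^2}\cdot L^2/\tilde\lambda_L^2$) are genuinely subdominant over the whole range $L\in[1,\infty)$, which requires $d\tau\ll d\ln L$, true since $\tilde\lambda_L\ge1$. A secondary point is that $\tilde\sigma_L$ is not Gaussian-independent of $\tilde\phi_L$, so the mixed term $\E|\tilde\phi_L\otimes a_L+\tilde\sigma_L|^2$ appearing later must be handled by $\E|\tilde\phi_L\otimes a_L|^2\lesssim\tilde\lambda_L^2\E|\tilde\phi_L|^4$ (here the fourth moment is used, as flagged) plus $\E|\tilde\sigma_L|^2$, each already controlled. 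Assembling the two Grönwall estimates gives $\tilde\lambda_L\E^{1/4}|\tilde\phi_L|^4+\E^{1/2}|\tilde\sigma_L|^2\lesssim\varepsilon L$, as claimed.
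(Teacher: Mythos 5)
Your overall strategy---deriving closed differential inequalities in $\tau$ via It\^o's formula and then arguing by Gr\"onwall---is the same one the paper uses, and most of the bookkeeping (sources from $d[\phi\cdot\phi]$ and $d[\Psi^*\Psi]$, self-coupling from $d[\nabla\phi(\nabla\phi)^*]$) is in line with the paper's proof. However, there is one genuine gap and one recurring bookkeeping slip.

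The gap is the claim that $\tilde\phi_L$ is ``a centered Gaussian field, as it is built by a linear SDE driven by the Gaussian $db$.'' This is false. The SDE \eqref{phi-tilde} reads $d\tilde\phi^i = d\phi^i + \tilde\phi_L^j\,\partial_j d\phi^i$: the coefficient multiplying the unknown, $\partial_j d\phi^i$, is itself a martingale increment, so the equation has multiplicative noise. The solution is a stochastic-exponential--type object, precisely the structure that makes $F = \mathrm{id} + \nabla\tilde\phi_L$ a geometric Brownian motion (cf.\ \eqref{E: coupling first instance}, \eqref{ao01bis}) and is the very source of the intermittency the paper is after (Lemma \ref{R: intermittent growth of F}, Lemma \ref{up01}). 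Consequently $\tilde\phi_L$ is not Gaussian, and the bound $\E|\tilde\phi_L|^4\lesssim(\E|\tilde\phi_L|^2)^2$ that you invoke to ``close the estimate'' is unjustified. The paper handles this by running the It\^o-plus-Gr\"onwall argument \emph{directly} on $\E|\tilde\phi_L|^4$ (see \eqref{eqn4m}), using Jensen in the form $\E|\tilde\phi_L|^2\le\E^{1/2}|\tilde\phi_L|^4$ to close the resulting inequality purely at the fourth-moment level, and absorbing the resulting supremum with the integral estimate \eqref{int01}. The coincidence that the final bound $\E|\tilde\phi_L|^4\lesssim\varepsilon^4 L^4/\tilde\lambda_L^4$ happens to equal what Gaussianity would predict is an artifact of \eqref{int01}, not evidence of Gaussianity.

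The bookkeeping slip is the factor $\tfrac{d\tau}{L^2}$ you assign to the self-coupling terms $\tilde\phi_L^j\partial_j d\phi^i$. The quadratic variation $d[\nabla\phi(\nabla\phi)^*]=\mathrm{id}\,d\tau$ from \eqref{wr65bis} carries no $L^{-2}$; that extra factor only appears for \emph{second} derivatives via \eqref{dertrick2}. So the self-coupling contributes $\E|\tilde\phi_L|^2\,d\tau$ (resp.\ $\E|\tilde\phi_L|^4\,d\tau$) to the second (resp.\ fourth) moment inequality, not $\E|\tilde\phi_L|^2\,\tfrac{d\tau}{L^2}$. This term is not negligible---it is exactly the $\mathcal{O}(1)$ coefficient responsible for the exponential amplification factor $e^{c\tau}=\tilde\lambda_L^c$ in Gr\"onwall, and therefore for the need to invoke \eqref{int01} rather than a crude supremum bound. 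Relatedly, your stated inequality $d\tau = \varepsilon^2\frac{d\ln L}{2\tilde\lambda_L^2}\ll \frac{d\ln L}{L^2}$ fails for large $L$ (the comparison reverses as soon as $L^2\gg\varepsilon^{-2}+\ln L$). With these two corrections the argument aligns with the paper's.
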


Note that the lemma states that the approximation of the corrector $ \tilde\phi_L $ is $ \mathcal{O} ( \varepsilon \tilde\lambda_L^{ - 1 } ) $ times smaller than linear growth, so that in combination with Proposition \ref{prop01}, this approach amounts to a quantitative stochastic homogenization result of order $ \varepsilon \tilde\lambda_L^{ - \frac{1}{2} } $. The proof of Lemma \ref{lem02} follows the same spirit as the one of Proposition \ref{prop01}, and is based on the derivation of ODEs for the moments using Itô calculus. 

\begin{proof}[Proof of Lemma \ref{lem02}] \phantom{.}

\medskip

\textit{Step 1 (Estimate on $ \E | \tilde\phi_L |^4 $).} In this step, we derive\footnote{As opposed to the residuum $ \tilde f $ the exact constants do not matter here.}
\begin{equation}\label{eqn4m}
d \E | \tilde\phi |^4
\lesssim \E | \tilde\phi_L |^4 d \tau
+ \E | \tilde\phi_L |^2 L^2 d \tau ,
\end{equation}
and show that this differential inequality implies the claimed growth of $ \E | \tilde\phi_L |^4 $. We start with the latter.

\medskip

Dividing \eqref{eqn4m} by $ e^{ c \tau } $ yields
\begin{align*}
	\frac{ d }{ d \tau } \frac{ \E | \tilde\phi_L |^4 }{ e^{ c \tau } }
	=  \frac{ 1 }{ e^ { c \tau } } \Big( \frac{ d }{ d \tau } \E | \tilde\phi_L |^4 - c \E | \tilde\phi_L |^4 \Big) 
	\leq c \frac{ L^2 }{ e^{ c \tau } } \E | \tilde \phi_L |^2 ,
\end{align*}
for the implicit constant $ c $ in \eqref{eqn4m}, and thus
\begin{align*}
	\E | \tilde\phi_L |^4
	\lesssim e^{ c \tau ( L ) } \Big( \int_{ \tau ( 1 ) } ^{ \tau ( L ) } d \tau \frac{ L^2( \tau )  }{ e^{ c \tau } } \Big) \sup_{ \ell \leq L } \E | \tilde\phi_{ \ell } |^2 
	\leq e^{ c \tau ( L ) } \Big( \int_{ \tau ( 1 ) } ^{ \tau ( L ) } d \tau \frac{ L^2( \tau ) }{ e^{ c \tau } } \Big) \sup_{ \ell \leq L } \E^{ \frac{1}{2} } | \tilde\phi_{ \ell } |^4 ,
\end{align*}
where we used Jensen's inequality in the second step. To conclude, we observe that the integral times $ e^{ c \tau } $ is monotone in $ \tau $, so that
\begin{align*}
	\E | \tilde\phi_{ \ell } |^4
	\lesssim e^{ c \tau ( L ) } \Big( \int_{ 0 } ^{ \tau ( L ) } d \tau \frac{ L^2 ( \tau ) }{ e^{ c \tau } } \Big) \sup_{ \ell \leq L } \E^{ \frac{1}{2} } | \tilde\phi_{ \ell } |^4 
	\quad { \rm provided } \quad
	\ell \leq L .
\end{align*}
Therefore, we can absorb the supremum on the r.~h.~s.~and compute the remaining integral (which we defer to Appendix \ref{AppIntegrals}) to obtain 
\begin{align*}
	\sup_{ \ell \leq L } \E^{ \frac{ 1 }{ 2 } } | \tilde\phi_{ \ell } |^4
	\lesssim e^{ c \tau ( L ) } \int_{ 0 } ^{ \tau ( L ) } d\tau \frac{ L^2(\tau) }{ e^{ c \tau } } 
	\stackrel{ (\ref{int01}) }{\lesssim} \Big( \varepsilon \frac{ L }{ e^{ \tau (L )  } } \Big)^2 ,
\end{align*}
which by virtue of $ \tau( L ) = \ln \tilde\lambda_L $ translates into the claimed bound on $ \E | \tilde\phi_L |^4 $.

\medskip

Now comes the argument for \eqref{eqn4m}: Applying Itô's formula to the $ 4 $-linear expression $ | \tilde\phi_L |^4 = ( \tilde\phi_L^* . \tilde\phi_L ) (  \tilde\phi_L^* . \tilde\phi_L ) $, which is symmetric in the first two and last two indices, we learn that
\begin{align*}
	d | \tilde\phi |^4
	= 4 | \tilde\phi _L |^2 \tilde\phi_L \cdot d \tilde\phi
	+ 2 \big(  | \tilde\phi_L |^2 I^* + 2 \, \tilde\phi_L^{ * } \otimes \tilde\phi_L^{ * }  \big) . d [ \tilde \phi \otimes \tilde\phi ] .
\end{align*}
The first term $ 4 | \tilde\phi_L |^2 \tilde\phi_L \cdot d \tilde\phi $ has mean zero and thus is not relevant for us. The quadratic variation is contracted against the second derivative of a $ 4 $-linear expression, i.~e.~a $ 2 $-linear expression in $ \tilde\phi $.  We may use the same parity argument as \eqref{parity} to further expand the quadratic variation
\begin{align*}
d [ \tilde \phi \otimes \tilde\phi ] 
= d [ \phi \otimes \phi ] 
+ \tilde\phi^i \tilde\phi^j d [ \partial_i \phi \otimes \partial_j \phi ] . 
\end{align*}
Thus the SDE for $ \E | \tilde\phi |^4 $ consists of $ 2 $-linear terms in $ \tilde\phi $ driven by $ d [ \phi \otimes \phi ] $ and $ 4 $-linear terms in $ \tilde\phi $ driven by $ [ \nabla \phi \otimes \nabla \phi ] $. Since we can control the entries of (the tensor valued-measures) $ d [ \phi \otimes \phi ] $ and $ d [ \nabla \phi \otimes \nabla \phi ] $ by $ d [ \phi \cdot \phi ] $ and $ d [ { \rm tr } \nabla \phi ( \nabla \phi )^* ] $, we thus obtain 
\begin{align*}
	d \E | \tilde\phi |^4
	\lesssim \E | \tilde\phi |^2 d [ \phi \cdot \phi ] 
	+ \E | \tilde\phi |^4 d [ { \rm tr }  \, \nabla \phi ( \nabla \phi )^* ]  .
\end{align*}
To conclude it thus remains to insert the two estimates
\begin{align*}
	d [ \phi \cdot \phi ] \lesssim L^2 d \tau 
	\quad { \rm and } \quad
	d [ { \rm tr } \, \nabla \phi ( \nabla \phi )^* ] \stackrel{ (\ref{wr65bis}) }{ \lesssim } d \tau  .
\end{align*}
The latter follows from applying \eqref{wr65bis} in Lemma \ref{lemR01}. The first estimate follows from the second by arguing as in \eqref{dertrick}.

\medskip

\textit{Step 2 (Estimate on $ { \rm tr } \, d [ \Psi^* \Psi ] $).} As an auxiliary step, we need to argue that
\begin{align}\label{estPsi2}
	d [  { \rm tr } \, \Psi^* \Psi ]
	\lesssim \tilde\lambda_L^2 d \tau .
\end{align}

\medskip

To derive \eqref{estPsi2} we utilize the Coulomb-gauge definition of $ \Psi $ in form of \eqref{parity3} followed by an integration by parts to obtain
\begin{align*}
	d [ { \rm tr } \, \Psi^* \Psi ] 
	\stackrel{ \eqref{parity3} }{ \lesssim } L^4 \delta^{ i j } d [ \partial_i b \cdot \partial_j b ]
	= L^4 \delta^{ i j } d [ (-\partial_i \partial_{j}) b \cdot b ]
	=  L^4 d [ ( - \Delta )  b \cdot b ] \stackrel{ \eqref{fw01} }{ = } L^{2} d [ b \cdot b ].
\end{align*}
To conclude we proceed as in the proof of \eqref{dertrick}: We invoke \eqref{forFelix01} and \eqref{forFelix04} to obtain \eqref{estPsi2}.

\medskip

\textit{Step 3 (Estimate on $ \E | \tilde\sigma_L |^2 $).} We claim that
\begin{equation}\label{adda04}
d \E | \tilde\sigma |^2
\lesssim \E | \tilde\sigma_L |^2 d \tau
+ \tilde\lambda_L^2 L^2 d \tau
+ \tilde\lambda_L^2 \E | \tilde\phi_L |^2 d \tau .
\end{equation}
This equation implies the desired growth of $ \E | \tilde\sigma |^2 $. Indeed, due to the estimate on $ \E | \tilde \phi_L |^4 $ and Jensen's inequality we can absorb the last term of \eqref{adda04} in the penultimate one; thus \eqref{adda04} becomes
\begin{align*}
	d \E | \tilde\sigma |^2
	\lesssim \E | \tilde\sigma_L |^2 d \tau +  ( \tilde\lambda_L^2 L^2 + \varepsilon^2 L^2 ) d \tau
	\lesssim \E | \tilde\sigma_L |^2 d \tau + \tilde\lambda_L^2 L^2  d \tau ,
\end{align*}
since by \eqref{lambda} we have $ \varepsilon^2 \ll 1 \leq \tilde\lambda^2 $.
By virtue of \eqref{int01} in Appendix \ref{AppIntegrals} the same argument as for $ \E | \tilde\phi_L |^4 $ implies the claimed estimate on $  \E | \tilde\sigma_L |^2  $. 

\medskip

We now turn to the proof of \eqref{adda04}: By equivalence of norms we may prove \eqref{adda04} by estimating the single components $ d \E | \tilde\sigma^i |^2 $. Applying Itô's formula to the quadratic expression $ | \tilde\sigma_L^i |^2 = { \rm tr } \, ( \tilde\sigma_L^{ i } )^{ * } \tilde\sigma_L^i $, and then appealing to \eqref{sigma-tilde} yields
\begin{align*}
	d  | \tilde\sigma^i |^2 &= 2 { \rm tr } \, ( \tilde\sigma_L^i ) ^* d \tilde\sigma^i +  d [ { \rm tr } \,  ( \tilde\sigma^i )^* \tilde\sigma^i ] \\
	&= { \rm mean ~ zero } \,
	- 2 \tilde\phi_L ^k { \rm tr } \, ( \tilde\sigma_L^i )^* d [ ( \partial_k \phi^{ i } ) \, \Psi ] 
	+ d [ { \rm tr } \, ( \tilde\sigma^i )^* \tilde\sigma^i ] .
\end{align*}
Taking the expectation implies
\begin{align}\label{fosw02prev}
	d \E | \tilde\sigma^i |^2
	= -  2 \E \tilde\phi_L ^k { \rm tr } \, ( \tilde\sigma_L^i )^*  d [ ( \partial_k \phi^{ i } ) \, \Psi ] 
	+ \E d [ { \rm tr } \, ( \tilde\sigma^i )^* \tilde\sigma^i ].
\end{align}
Since constants in \eqref{adda04} we estimate all cross terms by using Kunita-Watanabe inequality, see e.~g.~\cite[Proposition 4.18]{LeGall}, and Young's inequality. On the first term in \eqref{fosw02prev} we obtain
\begin{align*} 
	- 2 \E \tilde\phi_L ^k { \rm tr } \, ( \tilde\sigma_L^i )^*  d [ ( \partial_k \phi^{ i } ) \, \Psi ] 
	\lesssim \E | \tilde\phi_L |^2 d [  { \rm tr } \, \Psi^* \Psi ] +  \E | \tilde\sigma_L |^2  d [ { \rm tr } \, ( \nabla \phi )^* \nabla \phi ]  ,
\end{align*}
while on the second term (using \eqref{sigma-tilde}) in \eqref{fosw02prev} we have
\begin{align*}
\E d [ { \rm tr } \,  ( \tilde\sigma^i )^* \tilde\sigma^i ]
&\lesssim d  [ { \rm tr } \, ( \sigma^i )^*  \sigma^i  ] 
+ \E | \tilde\phi_L |^2 d [ { \rm tr } \,  \Psi^* \Psi ]
+ \E  | \tilde\sigma_L |^2  d [ { \rm tr } \,  ( \nabla \phi )^* \nabla \phi  ] .
\end{align*}
Inserting the last two estimates into \eqref{fosw02prev} yields
\begin{align*}
d \E | \tilde\sigma^i |^2
&\lesssim d [ { \rm tr } \, ( \sigma^i )^*  \sigma^i  ] 
+ \E | \tilde\phi_L |^2 d [ { \rm tr } \, \Psi^* \Psi ]
+ \E  | \tilde\sigma_L |^2 d [  { \rm tr } \, ( \nabla \phi )^* \nabla \phi  ] .
\end{align*}
Below we will argue that the quadratic variation for $ \sigma $ is estimated by
\begin{equation}\label{fosw02}
	d [ { \rm tr } ( \sigma^i )^* \sigma^i ] \lesssim  \tilde\lambda_L^2 L^2 d \tau ,
\end{equation} 
which, in combination with \eqref{wr65bis} and \eqref{estPsi2}, implies \eqref{adda04}.

\medskip

Here comes the argument for \eqref{fosw02}: Observe that the Coulomb gauge \eqref{tao17gauge} for $ \sigma $ has the same structure as \eqref{gaugePsi}. Thus, arguing as for \eqref{estPsi2}, we obtain
\begin{align*}
	d [  { \rm tr } \,  ( \sigma^i )^* \, \sigma^i ]  
	\lesssim L^2  d [  { \rm tr } \, a^* a ]
	\stackrel{ (\ref{defn-a}) }{ = }  L^2  d [  { \rm tr } \, \Psi^* \Psi ]
	\stackrel{ \eqref{estPsi2} }{  \lesssim } \tilde\lambda_L^2 L^2 d \tau ,
\end{align*}
as claimed.
\end{proof}

Equipped with the preceding lemmas, we are now in the position to give a proof of Proposition \ref{prop01}.

\begin{proof}[Proof of Proposition \ref{prop01}]
	Before we apply Lemma \ref{lem02}, we split the homogenization error by using the triangle and Cauchy-Schwarz inequalities
	\begin{align*}
		\E | \tilde\phi_L \otimes a_L + \tilde\sigma_L |^2
		\lesssim \E | \tilde\phi_L |^2 + \E |  \tilde\phi_L \otimes \Psi_L |^2  + \E | \tilde\sigma_L |^2
		\lesssim ( 1 + \E^{ \frac{1}{2} } | \Psi_L |^4 ) \E^{ \frac{1}{2} } | \tilde\phi_L |^4 + \E | \tilde\sigma_L |^2 .
	\end{align*}
	The potential $ \Psi_L $ may be estimated as follows. By Gaussianity, we have $ \E^{ \frac{1}{2} } | \Psi_L |^4 \lesssim \E | \Psi_L |^2 $ by \eqref{estPsi} in combination with \eqref{lambda}, we learn that $ \E | \Psi_L |^2 \lesssim \tilde\lambda_L^2 $. Inserting this estimate into \eqref{add07} yields in combination with Lemma \ref{lem02} the differential inequality
	\begin{align*}
		d\mathbb{E} | \tilde f |^2 - \mathbb{E} | \tilde f_L |^2 d \tau
		\lesssim \varepsilon^2 \Big( \frac{ L^2 }{ \tilde\lambda_L^2 } ( 1 + \tilde\lambda_L ^2 ) + L^2  \Big)  \frac{ d \tau }{ L^2 } 
		+ \varepsilon^2 L^2  \frac{ d \tau }{ L^2}
		\lesssim \varepsilon^2 d \tau ,
	\end{align*}
	which may be recast as
	\begin{align*}
		e^{ \tau } \frac{ d }{ d \tau  } \frac{ \E | \tilde f_L |^2 }{ e^{ \tau } } \lesssim \varepsilon^2 .
	\end{align*}
	The claim follows from integrating this equation w.~r.~t.~$ \tau $, since $ e^{ - \tau } $ is integrable.
\end{proof}

 
 \section{Geometric Brownian Motion on $\textbf{SL}(n)$} \label{S: diffusion}
 
 \subsection{Properties of the Process} \label{S: diffusion definition} Recall from the introduction that we study what we call, by analogy with the scalar case, geometric Brownian motion on $\textbf{SL}(n)$, specifically the process $F$ determined by the SDE
 	\begin{equation} \label{E: sde again}
		dF = F_{\tau} \circ \! dB.
	\end{equation}
Here $B$ is a certain Brownian motion on the Lie algebra $\mathfrak{sl}(n)$ (interpreted as a subspace of the space of endomorphisms of cotangent space), which is characterized via the properties of $F$ in Lemma \ref{L: properties of F}, proved below in Section \ref{S: characterization}.

\medskip

It is possible to show that $F$ is a bonafide Brownian motion on $\textbf{SL}(n)$, see \cite[Theorem 7.2.2]{LeJan} \& \cite[Corollary 1]{Urakawa} for the details.  As a Markov process, its generator is equal to the Laplace-Beltrami operator associated to the left-invariant metric on $\textbf{SL}(n)$ induced by the covariance of $B$, though we will not need this fact in what follows.

\medskip

In fact, the Brownian character of $F$ is also encapsulated in the following multiplicative version of independent increments, which follows readily from the corresponding property of $B$: If $F_{\tau_{*},\tau}$ is defined as in the introduction (see \eqref{ao29}), then, for any $\tau_{*} < \tau_{**} < \tau$, we have $ F_{\tau_{*},\tau} = F_{\tau_{*},\tau_{**}} F_{\tau_{**},\tau} $ and the increments $F_{\tau_{*},\tau_{**}}$ and $F_{\tau_{**},\tau}$ are independent of each other.  This shows that $F$ is analogous to a product of i.i.d.\ random matrices, or, more generally, the matrix cocycles studied in ergodic theory, as in the seminal work of Furstenberg and Kesten \cite{furstenberg-kesten}.  Indeed, the main results of multiplicative ergodic theory apply to $F$, particularly Oseledets' Theorem \cite{Oseledets}.  Although we will not need this theorem later, we state it next due to its intrinsic interest.



	\begin{theorem}[\cite{Oseledets}]\label{Oseledets} There is a measurable flag variety $V_{1,\tau} \supsetneq V_{2,\tau} \supsetneq \cdots \supsetneq V_{n,\tau} \supsetneq V_{n + 1,\tau} = \{0\}$ consisting of linear subspaces $V_{1,\tau},\dots,V_{n,\tau}$ of tangent space and a deterministic sequence $\lambda_{1} > \lambda_{2} > \dots > \lambda_{n}$ of Lyapunov exponents such that, for any $\tau_{*} < \tau$ and $j \in \{1,2,\dots,n\}$,
		\begin{align*}
			F^{\dagger}_{\tau_{*},\tau} V_{j,\tau_{*}} = V_{j,\tau}
		\end{align*}
	and, for any $v \in V_{j,\tau_{*}} \setminus V_{j + 1,\tau_{*}}$,
		\begin{align*}
			\lim_{\tau \to \infty} \frac{1}{\tau} \ln |F^{\dagger}_{\tau_{*},\tau}v| = \lambda_{j}.
		\end{align*}
	\end{theorem}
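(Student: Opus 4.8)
The plan is to realize the transpose cocycle $\tau\mapsto F^{\dagger}_{0,\tau}$ as a linear cocycle over an ergodic measure-preserving flow and then invoke Oseledets' multiplicative ergodic theorem \cite{Oseledets}, supplying one additional input to obtain the \emph{strictly} decreasing spectrum. Let $(\Omega,\mathcal{F},\mathbb{P})$ be the canonical space carrying $B$ and let $\theta_{s}$ be the increment shift $\theta_{s}B_{\cdot}=B_{\cdot+s}-B_{s}$; since $B$ has stationary independent increments, $(\theta_{s})_{s\ge0}$ preserves $\mathbb{P}$ and is ergodic (a $\theta$-invariant event is measurable with respect to $\bigcap_{s}\sigma(B_{u}-B_{v}:u,v\ge s)$, which is $\mathbb{P}$-trivial by Kolmogorov's $0$--$1$ law). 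Because $F_{s,s+t}$ is built pathwise from the increments of $B$ over $[s,s+t]$, i.e.\ from the path of $\theta_{s}\omega$ over $[0,t]$, the multiplicative increment relation $F_{\tau_{*},\tau}=F_{\tau_{*},\tau_{**}}F_{\tau_{**},\tau}$ stated just above the theorem transposes to the cocycle identity $A_{s+t}(\omega)=A_{t}(\theta_{s}\omega)\,A_{s}(\omega)$ for $A_{\tau}:=F^{\dagger}_{0,\tau}$, with $A_{\tau}$ taking values in $\textbf{SL}(n)$ and $F^{\dagger}_{\tau_{*},\tau}(\omega)=A_{\tau-\tau_{*}}(\theta_{\tau_{*}}\omega)$.

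For Oseledets' theorem one needs the log-moment bounds $\sup_{0\le t\le1}\log^{+}\|A_{t}^{\pm1}\|\in L^{1}(\mathbb{P})$. From Lemma~\ref{L: properties of F}(iii) and the Frobenius norm, $\mathbb{E}|F_{\tau}|^{2}={\rm tr}\,\mathbb{E}F_{\tau}F_{\tau}^{*}=n\,e^{\tau}$, so Jensen's inequality gives $\mathbb{E}\log^{+}\|F_{1}\|<\infty$; since $F_{\tau}\in\textbf{SL}(n)$ its inverse is the adjugate, a polynomial of degree $n-1$ in the entries of $F_{\tau}$, whence $\|F_{\tau}^{-1}\|\lesssim_{n}\|F_{\tau}\|^{n-1}$ and $\mathbb{E}\log^{+}\|F_{1}^{-1}\|<\infty$ as well. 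The passage from $t=1$ to the supremum over $t\in[0,1]$ is a routine maximal-inequality argument (or one applies the discrete-time theorem to $A_{1}$ and interpolates using the same moment bound). Feeding the cocycle into the continuous-time Oseledets theorem produces deterministic values $\mu_{1}>\cdots>\mu_{p}$ with multiplicities $m_{1},\dots,m_{p}$, $\sum_{i}m_{i}=n$, and a measurable, $A_{\tau}$-equivariant descending filtration of tangent space along which $\tfrac1\tau\log|F^{\dagger}_{\tau_{*},\tau}v|$ converges to the relevant $\mu_{q}$; re-indexing the filtration to the base point $\theta_{\tau}\omega$ yields the asserted $V_{j,\tau}$, and $\sum_{i}m_{i}\mu_{i}=\lim\tfrac1\tau\log|\det F^{\dagger}_{0,\tau}|=0$ because $F\in\textbf{SL}(n)$.

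It remains to upgrade this to a \emph{full} flag $V_{1}\supsetneq\cdots\supsetneq V_{n}\supsetneq\{0\}$ with strictly decreasing $\lambda_{1}>\cdots>\lambda_{n}$, i.e.\ to prove simplicity of the Lyapunov spectrum ($p=n$, all $m_{i}=1$). Here I would use that, by Lemma~\ref{L: properties of F}(i), the left-invariant metric on $\textbf{SL}(n)$ induced by the covariance of $B$ is $\mathrm{Ad}(\textbf{SO}(n))$-invariant, hence descends to an $\textbf{SL}(n)$-invariant --- so, by irreducibility of the symmetric space, a scalar multiple of the canonical --- Riemannian metric on $X=\textbf{SL}(n)/\textbf{SO}(n)$, and the orbit map turns $F$ into a Brownian motion on $X$. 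By the theory of Brownian motion on symmetric spaces of noncompact type (Malliavin--Malliavin; Norris--Rogers--Williams; Babillot), the radial part $\rho_{\tau}$ of this Brownian motion in the Cartan decomposition satisfies $\tau^{-1}\rho_{\tau}\to v_{\infty}$ almost surely for a deterministic $v_{\infty}$ in the \emph{interior} of the positive Weyl chamber; reading off the singular-value (Cartan) decomposition of $F^{\dagger}_{0,\tau}$ identifies the ordered coordinates of $v_{\infty}$ with $\lambda_{1}\ge\cdots\ge\lambda_{n}$, and regularity of $v_{\infty}$ is precisely $\lambda_{1}>\cdots>\lambda_{n}$. Equivalently, since the covariance of $B$ is non-degenerate on $\mathfrak{sl}(n)$ --- as one checks by solving the constraints (ii)--(iii) of Lemma~\ref{L: properties of F} --- the Stroock--Varadhan support theorem gives $\mathrm{supp}(\mathrm{law}\,F_{0,\tau})=\textbf{SL}(n)$, and one applies the Guivarc'h--Raugi / Gol'dsheid--Margulis simplicity criterion for strongly irreducible proximal subgroups.

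The main obstacle is precisely this last step. The cocycle reformulation, the log-moment bound, and the bare Oseledets output are all bookkeeping, whereas the strict separation of the $n$ exponents --- equivalently, one-dimensionality of the Oseledets subspaces --- is genuine content: it is a rigidity statement about the asymptotic direction of the induced Brownian motion on $X$, and it is exactly here that the structural hypotheses ($\textbf{O}(n)$-invariance and non-degeneracy of $B$, which force strong irreducibility and proximality of the generated subgroup) enter. Were one content with a possibly degenerate spectrum, the first two paragraphs would already deliver a measurable filtration and a finite list of exponents, with no rigidity input needed.
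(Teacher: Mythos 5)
Your proposal is correct and follows essentially the same strategy as the paper: realize $F^{\dagger}$ as a linear cocycle over the ergodic increment shift on Wiener space, invoke the multiplicative ergodic theorem, and then argue separately that the Lyapunov spectrum is simple. The paper's proof is terser and passes through discrete time (the shift $\sigma(B)_{\tau}=B_{\tau+1}-B_{1}$ and the cocycle $\{F_{k,k+1}\}_{k\in\mathbb{N}}$, then interpolation via continuity and stationarity of increments), while you work directly with the continuous-time shift and spell out the $L^{1}$ log-moment bound (Jensen plus the adjugate estimate $\|F^{-1}\|\lesssim_{n}\|F\|^{n-1}$) that the paper takes for granted --- a worthwhile addition. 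Both versions agree that the bare Oseledets output is routine and that the genuine content is simplicity of the spectrum, and both trace it back to non-degeneracy of $B$ on $\mathfrak{sl}(n)$. For this step the paper just cites \cite[Example 9.3]{bougerol_lacroix}; you propose either the Guivarc'h--Raugi/Gol'dsheid--Margulis criterion (after establishing full support via Stroock--Varadhan), which is in spirit the same argument as the paper's cited example, or a more geometric route via the radial part of Brownian motion on the symmetric space $\textbf{SL}(n)/\textbf{SO}(n)$ converging to the interior of the positive Weyl chamber. Both are legitimate; the symmetric-space route is conceptually illuminating but uses heavier machinery, and if you go that way you should make explicit the two facts you are implicitly using: that the covariance of $B$ restricted to the symmetric trace-free summand $\mathfrak{p}$ is a multiple of the canonical inner product (this follows from $\mathrm{Ad}(\textbf{O}(n))$-invariance and Lemma~\ref{lemRep03}), and that the $\mathfrak{so}(n)$-part of the covariance drops out of the projected generator on $\textbf{SL}(n)/\textbf{SO}(n)$ (left-invariant vector fields in $\mathfrak{so}(n)$ annihilate right-$\textbf{SO}(n)$-invariant functions), so the projected process really is the standard Brownian motion on the symmetric space.
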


When $n = 2$, we show below that $\lambda_{1} = \frac{1}{4} = -\lambda_{2}$; see Remark \ref{R: lyapunov exponent} for the details.

	\begin{proof}[Proof of Theorem \ref{Oseledets}] Consider the shift transformation $\sigma$ sending the Brownian motion $B$ to $\sigma(B)_{\tau} = B_{\tau + 1} - B_{1}$.  This is an ergodic measure-preserving transformation on the canonical Wiener space.  By the discussion proceeding this proof, the process $\{F_{k , k + 1}\}_{n \in \mathbb{N}}$ forms a matrix cocycle, and Oseledets' Theorem applies (see \cite[Theorem 5.7]{Krengel} or \cite[Theorem 3.4.1]{Arnold}).  The fact that nothing is lost in passing from discrete time to continuous time is readily deduced from the continuity and stationarity of the increments $\{F_{\tau_{*},\tau}\}$.  
	
	\medskip
	
	What is remarkable here, and still requires justification, is the fact that there are exactly $n$ Lyapunov exponents.  As explained in \cite[Example 9.3]{bougerol_lacroix}, this follows from the fact that $B$ is a nondegenerate Brownian motion in $\mathfrak{sl}(n)$ (see the proof of Lemma \ref{L: properties of F} below). \end{proof}


\subsection{Characterization of the Brownian motion on the Lie algebra $ \mathfrak{sl}(n) $} \label{S: characterization} We characterize $B$ by first rephrasing the properties of $F$ in infinitesimal form.  Since $B$ is a Brownian motion, it suffices to characterize its statistics at a single time, say, $\tau = 1$, and thus to characterize a certain $\mathfrak{sl}(n)$-valued Gaussian endomorphism $X$.

	\begin{lemma} \label{L: characterize covariance} There is a unique $\mathfrak{sl}(n)$-valued Gaussian random endomorphism $X$ such that
		\begin{itemize}
			\item[(i)] $ O X O^{ - 1 }  =_{\text{law}} X$ for all $O \in \textbf{O}(n)$,
			\item[(ii)] $\mathbb{E}[ X X ] = 0$,
			\item[(iii)] $\mathbb{E}[ X X^{*} ] = \text{id}$. 
		\end{itemize} 
	\end{lemma}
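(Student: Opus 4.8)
The plan is to use that $X$ is a Gaussian vector in the finite-dimensional space $\mathfrak{sl}(n)$, so its law is determined by its mean and covariance, and then to invoke the representation theory of Appendix \ref{reptheory} to see that condition (i) confines the covariance to a two-parameter family, which (ii) and (iii) then pin down uniquely.

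First I would dispose of the mean: $\mathbb{E}X$ is an $\textbf{O}(n)$-invariant element of $\mathfrak{sl}(n)$, hence an isotropic endomorphism of cotangent space, hence a multiple of the identity by point (ii) of Lemma \ref{lemRep02}; being trace-free it must vanish, so $X$ is centered. Next I would decompose $X = X_S + X_A$ with $X_S = \tfrac{1}{2}(X+X^{*})$ and $X_A = \tfrac{1}{2}(X-X^{*})$, valued in the space $S$ of symmetric trace-free endomorphisms and in $A = \mathfrak{so}(n)$ respectively, and note that $X_S$ and $X_A$ inherit the $\textbf{O}(n)$-invariance and are jointly Gaussian. The representation-theoretic facts I would rely on — all in the spirit of, and I expect contained in, Appendix \ref{reptheory} — are that $\mathfrak{sl}(n) = S \oplus A$ with $S$ and $A$ inequivalent irreducible $\textbf{O}(n)$-modules, each with real (one-dimensional) endomorphism algebra; equivalently, the only $\textbf{O}(n)$-invariant tensor in $S\otimes A$ is $0$, and the $\textbf{O}(n)$-invariant centered Gaussian laws on $S$ (resp.\ on $A$) form a one-parameter family parametrised by the total variance $\mathbb{E}|X_S|^{2}$ (resp.\ $\mathbb{E}|X_A|^{2}$), where $|\cdot|$ denotes the Frobenius norm.

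The core computation is then short. Since $\mathbb{E}[X_S\otimes X_A]$ is an $\textbf{O}(n)$-invariant element of $S\otimes A$ it vanishes, so the jointly Gaussian pair $(X_S,X_A)$ is independent and all mixed terms below drop out; using $X_S^{*}=X_S$ and $X_A^{*}=-X_A$,
\begin{equation*}
\mathbb{E}[XX^{*}] = \mathbb{E}[X_S^{2}] + \mathbb{E}[X_AX_A^{*}], \qquad \mathbb{E}[XX] = \mathbb{E}[X_S^{2}] - \mathbb{E}[X_AX_A^{*}].
\end{equation*}
Each of $\mathbb{E}[X_S^{2}]=\mathbb{E}[X_SX_S^{*}]$ and $\mathbb{E}[X_AX_A^{*}]$ is positive-semidefinite (a pathwise square) and isotropic (by invariance, again via Lemma \ref{lemRep02}), hence a non-negative multiple of the identity, equal to $\tfrac{1}{n}\mathbb{E}|X_S|^{2}\,\mathrm{id}$, resp.\ $\tfrac{1}{n}\mathbb{E}|X_A|^{2}\,\mathrm{id}$, on taking traces. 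Imposing (ii) and (iii) then gives $\tfrac{1}{n}\mathbb{E}|X_S|^{2}+\tfrac{1}{n}\mathbb{E}|X_A|^{2}=1$ and $\tfrac{1}{n}\mathbb{E}|X_S|^{2}-\tfrac{1}{n}\mathbb{E}|X_A|^{2}=0$, so $\mathbb{E}|X_S|^{2}=\mathbb{E}|X_A|^{2}=\tfrac{n}{2}$; by the one-parameter description this determines the laws of $X_S$ and $X_A$, hence of $X$, which gives uniqueness. For existence I would take $X=X_S+X_A$ with $X_S$, $X_A$ independent $\textbf{O}(n)$-invariant centered Gaussians on $S$ and $A$ of total variance $\tfrac{n}{2}$, and read the two displayed identities backwards to obtain $\mathbb{E}[XX^{*}]=\mathrm{id}$ and $\mathbb{E}[XX]=0$.

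The one nontrivial ingredient — and the step I would expect to take the most care over — is the representation-theoretic claim that $S$ and $A$ are inequivalent irreducibles of $\textbf{O}(n)$ with $\mathrm{End}_{\textbf{O}(n)}=\mathbb{R}$. For $n\geq 3$, $n\neq 4$, this is standard, since the adjoint representation $\mathfrak{so}(n)=\Lambda^{2}$ and the traceless symmetric squares $\mathrm{Sym}^{2}_{0}$ are already irreducible for $\textbf{SO}(n)$; the cases $n=2$ (where $\mathrm{Sym}^{2}_{0}$ is a two-dimensional module of ``complex type'' for $\textbf{SO}(2)$) and $n=4$ (where $\Lambda^{2}$ splits for $\textbf{SO}(4)$) require one to use the orientation-reversing elements of $\textbf{O}(n)$ to rule out the extra endomorphisms and to recombine the pieces into a single irreducible. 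Everything else is elementary Gaussian calculus and linear algebra.
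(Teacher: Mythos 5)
Your argument is correct and takes essentially the same route as the paper: centeredness from invariance and tracelessness, decomposition of $X$ into its symmetric trace-free and skew-symmetric parts, independence of the two pieces from the absence of invariant tensors in $\text{Sym}_0 \otimes \text{Skew}$, and then solving the two-by-two linear system from (ii) and (iii) once isotropy reduces the second moments to scalar multiples of the identity. The paper phrases the decomposition at the level of the covariance bilinear form $\bullet$ and invokes its Lemma \ref{lemRep03} for the factorization and the one-dimensionality of the invariant forms on each irreducible piece, whereas you work directly with the random variable and appeal to Schur-type facts ($S$ and $A$ inequivalent irreducibles of real type) --- but this is the same representation-theoretic content, and your flag about the low-dimensional subtleties at $n=2$ and $n=4$ is exactly why the paper works with the full group $\textbf{O}(n)$ rather than $\textbf{SO}(n)$ in Appendix \ref{reptheory}.
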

	
	\begin{proof} Since $X$ is trace-free, (i) implies that $\mathbb{E} X$ is trace-free and isotropic, hence it vanishes.  It thus only remains to characterize the covariance of $X$.  Since $X$ takes values in endomorphisms of cotangent space, its covariance is a bilinear form on the dual space, namely, the endomorphisms of tangent space.  Let $\bullet$ be this covariance tensor, that is, for any two endomorphisms of tangent space $G,\tilde{G}$, we have
			\begin{equation*}
				\mathbb{E} (G.X) (\tilde{G}.X) = G \bullet \tilde{G}.
			\end{equation*}
		Assumption (i) implies that $\bullet$ is rotationally invariant:
			\begin{equation} \label{E: invariance}
				O G O^{-1} \bullet O \tilde{G} O^{-1} = G \bullet \tilde{G} \quad \text{for any} \, \, O \in \textbf{O}(n), \, \, G,\tilde{G} \in {\rm End}({\rm tangent ~ space}).
			\end{equation}
        Thus, we can appeal to the representation theory of $\mathbf{O}(n)$, as in Appendix \ref{reptheory}.

        \medskip
			
		 Since $X$ takes values in $\mathfrak{sl}(n)$, the identity $\text{id}$ is in the kernel of $\bullet$, that is, $G \bullet \text{id} = 0$ for any $G$.  Therefore, Lemma \ref{lemRep03} (i) in Appendix \ref{reptheory}, \eqref{E: invariance} implies that there are symmetric bilinear forms $\bullet_{\text{sym}}$ and $\bullet_{\text{skew}}$, respectively, such that 
		\begin{equation} \label{E: cov structure}
			G \bullet \tilde{G} = G_{\text{sym}} \bullet_{\text{sym}} \tilde{G}_{\text{sym}} + G_{\text{skew}} \bullet_{\text{skew}} \tilde{G}_{\text{skew}} \quad \text{for any} \, \, G,\tilde{G} \in \text{End}(\text{tangent ~ space}),
			\end{equation}
        where $G_{\text{sym}} = \frac{1}{2} ( G + G^{*} ) - n^{-1} ( {\rm tr} \, G ) \text{id}$ and $G_{\text{skew}} = \frac{1}{2} ( G - G^{*} )$.
		
		\medskip
		
	The factorization \eqref{E: cov structure} implies that $X$ can be decomposed in the form $X = X_{\text{sym}} + X_{\text{skew}}$, where $(X_{\text{sym}},X_{\text{skew}})$ is a pair of independent Gaussian endomorphisms, $X_{\text{sym}}$ taking values in the subspace $\text{Sym}_{0}$ of trace-free, symmetric endomorphisms with covariance $\bullet_{\text{sym}}$ and $X_{\text{skew}}$ taking values in the subspace $\text{Skew}$ of skew-symmetric endomorphisms with covariance $\bullet_{\text{skew}}$.  In particular, because of independence, assumptions (ii) and (iii) become
		\begin{equation*}
			0 = \mathbb{E}[ X^{2}_{\text{sym}}] + \mathbb{E}[ X^{2}_{\text{skew}} ], \quad \text{id} = \mathbb{E}[ X^{2}_{\text{sym}} ] - \mathbb{E}[ X^{2}_{\text{skew}} ],
		\end{equation*}
	and, therefore,
		\begin{equation} \label{E: determining}
			\mathbb{E}[ X_{\text{sym}}^{2} ] = - \mathbb{E}[ X_{\text{skew}}^{2} ] = \frac{1}{2}  \, \text{id}.
		\end{equation}
	
	\medskip
	
	Finally, note that $\bullet_{\text{sym}}$ and $\bullet_{\text{skew}}$ are both invariant under the action of $\textbf{O}(n)$ by conjugation.  That is, \eqref{E: invariance} still holds if we replace $\bullet$ by $\bullet_{\text{sym}}$ and $\bullet_{\text{skew}}$.  According to representation theory for the group $\textbf{O}(n)$, see Lemma \ref{lemRep03} (ii) \& (iii) in Appendix \ref{reptheory}, the symmetric bilinear forms on $\text{Sym}_{0}$ (respectively, $\text{Skew}$) invariant under this action form a one-dimensional linear space.  Thus, equation \eqref{E: determining}, being a nontrivial linear equation in $\bullet_{\text{sym}}$, uniquely determines $\bullet_{\text{sym}}$.  By the same logic, \eqref{E: determining} also completely determines $\bullet_{\text{skew}}$.
    
    \medskip
    
    Concerning existence, fix positive-definite, $\mathbf{O}(n)$-invariant bilinear forms $\bullet_{\text{sym}}$ and $\bullet_{\text{skew}}$, defined respectively on the spaces of trace-free symmetric and skew-symmetric endomorphisms (e.g., the restriction of the Frobenius inner product $(G,G') \mapsto {\rm tr} \, G^{*} G'$ to the respective spaces).  By $\mathbf{O}(n)$-invariance, if $X_{\text{sym}}$ and $X_{\text{skew}}$ are centered Gaussian random variables with covariances $\bullet_{\text{sym}}$ and $\bullet_{\text{skew}}$, respectively, then $\mathbb{E} X^{2}_{\text{sym}} = c_{\text{sym}} \text{id}$ and $-\mathbb{E} X^{2}_{\text{skew}} = c_{\text{skew}} \text{id}$ for some constants $c_{\text{sym}}, c_{\text{skew}} > 0$.  Up to multiplication by constants, there is no loss of generality assuming that $c_{\text{sym}} = c_{\text{skew}} = \frac{1}{2}$.  The desired random variable $X$ can then be taken to be the sum $X = X_{\text{sym}} + X_{\text{skew}}$ of independent realizations of $X_{\text{sym}}$ and $X_{\text{skew}}$, or, equivalently, by defining $X$ to be the centered Gaussian with covariance determined by \eqref{E: cov structure}. \end{proof}
	
Next, we use Lemma \ref{L: characterize covariance} and the construction in Lemma 7  to determine the Brownian motion $B$ driving the SDE $dF = F \circ \! dB$.

	\begin{proof}[Proof of Lemma \ref{L: properties of F}] Since (ii) implies that $F$ is a martingale, which implies that $\mathbb{E} [F F^{*}]_{\tau} = \mathbb{E} F_{\tau} F_{\tau}^{*}$, we can reformulate conditions (i)-(iii) in the following equivalent way:
    \begin{itemize}
        \item[(i)] $O F O^{-1} \overset{\text{law}}{=} F$ for any $O \in \textbf{O}(n)$.
        \item[(ii)] $F \circ \! dB = F dB$.
        \item[(iii)] $\frac{d}{d \tau} \mathbb{E}[F F^{*}]_{\tau} = \mathbb{E} F_{\tau} F_{\tau}^{*}$.
    \end{itemize}
    By Lemma 7, which itself is a consequence of Lemma 11, there is a unique Brownian motion $B$ on $\mathfrak{sl}(n)$ such that 
		\begin{itemize}
			\item[(i')] $O B O^{-1} \overset{\text{law}}{=} B$ for each $O \in \textbf{O}(n)$,
			\item[(ii')] $\mathbb{E} [ B_{\tau}^{2} ] = 0$, 
			\item[(iii')] $\mathbb{E} [ B_{\tau} B_{\tau}^{*} ] = \tau \, \text{id}$.
		\end{itemize}
	We will prove that each of the properties (i)--(iii) above are equivalent to the corresponding property of $B$ in (i')--(iii').
	
	\medskip
	
	We start with the equivalence of (i) and (i').  Given an $O \in \textbf{O}(n)$, observe that 
	\begin{align} \label{E: rot inv}
		d(O F O^{-1}) &= O dF O^{-1} = O (F \circ \! dB) O^{-1} = (O F O^{-1}) \circ \! d(O B O^{-1}).
	\end{align}
Thus, if $O B O^{-1} =_{\text{law}} B$, then $O F O^{-1} =_{\text{law}} F$, and hence (i') implies (i).

\medskip

Next, we prove the converse. To this end, we apply Itô's product rule to the bilinear map $ F \mapsto F \otimes F $: To identify the quadratic variation, let us note that for endomorphisms $ F $, $ B $, we have $ ( F \otimes F ) ( B \otimes B ) = F B \otimes F B $.   Since the martingale part of $dF$ equals $F_{\tau} dB$, we use this identity in conjunction with the Riemann sum approximation of $[F \otimes F]$ to find
\begin{align} \label{E: tensorial approach}
        d [ F \otimes F ]
        = ( F_{\tau} \otimes F_{\tau} ) d [ B \otimes B ] ,
\end{align}
which, since $ F_{ \tau = 0 } \otimes F_{ \tau = 0 } $ is equal to the identity and, by the properties of Brownian motion,
	\begin{equation} \label{E: covariance trick brownian motion}
		d [ B \otimes B ] = \mathbb{E} B_{\tau = 1} \otimes B_{\tau = 1} d \tau,
	\end{equation}
yields
\begin{align*}
        \frac{ d }{ d \tau } \Big|_{ \tau = 0 } \E F_{\tau} \otimes F_{\tau}
        = \E B_{ \tau = 1 }  \otimes B_{ \tau = 1 } .
\end{align*}
On the other hand, in view of \eqref{E: rot inv}, we can apply the same argument to $ O F O^{ -1 } $. Since $ d [ O F O^{ - 1} \otimes O F O^{ - 1} ] = ( O F_{\tau} O^{-1} \otimes O F_{\tau} O^{-1}) d [ O B O^{ - 1 } \otimes O B O^{ - 1 } ] $, this yields
\begin{align*}
        \frac{ d }{ d \tau } \Big|_{ \tau = 0 } \E O F_{\tau} O^{ -1 }  \otimes O F_{\tau} O^{ - 1 }
        = \mathbb{E} O B_{ \tau = 1 }  O^{ - 1 } \otimes O B_{ \tau = 1 } O^{ - 1 } .
\end{align*}
If $ O F O^{-1} =_{\text{law}} F $, then the last two displays are equal; thus showing that the covariance of $ B_{\tau = 1} $ is invariant under the action of conjugation by $O$.  By \eqref{E: covariance trick brownian motion} and the independent increments property of Brownian motion, it follows that the full covariance of $B$ is invariant under conjugation by $O$.  Therefore, (iii) implies (iii').
\medskip

Next, we prove the equivalence of (ii) and (ii').  Regarding (ii), from the asymptotic midpoint formula for the Stratonovich integral, we have $F_{\tau} \circ \! dB = F_{\tau} dB + \frac{1}{2} d[F B]$.  The martingale part of $dF$ is then certainly $F_{\tau}dB$, hence $d[FB] = F_{\tau} d[B B]$ by a noncommutative extension of standard manipulations.  We therefore arrive at
			\begin{equation*}
				F_{\tau} \circ \! dB = F_{\tau} dB + \frac{1}{2} F_{\tau} d [ B B ].
			\end{equation*}
At the same time, \eqref{E: covariance trick brownian motion} implies $d[ B B ] = \mathbb{E} B_{\tau =1}^{2}  d\tau$ and thus, $F_{\tau} \circ \! dB = F_{\tau} dB$ if and only if $\mathbb{E} B_{\tau = 1}^{2} = 0$.  Therefore, by another application of \eqref{E: covariance trick brownian motion}, (ii) and (ii') are equivalent.

\medskip

Next, we turn to property (iii).  
Applying the contraction $F \otimes \tilde{F} \mapsto F \tilde{F}^{*}$ for endomorphisms $F$ and $\tilde{F}$ to \eqref{E: tensorial approach}, 
we find
$d[F F^{*}] = F_{\tau} d [B B^{*}] F_{\tau}^{*}$,
which yields, by \eqref{E: covariance trick brownian motion},
	\begin{equation} \label{E: need this quadratic variation}
		d [F F^{*}] = F_{\tau} \left( \mathbb{E} B_{\tau = 1} B^{*}_{\tau = 1} \right) F_{\tau}^{*} d \tau.
	\end{equation}  Hence (iii') implies (iii).

\medskip

Conversely, if (iii) holds, then we invoke \eqref{E: need this quadratic variation} to find
	\begin{align*}
		\text{id} = \mathbb{E} F_{\tau=0} F_{\tau=0}^{*} = \frac{d}{d\tau} \Big|_{\tau = 0} \mathbb{E} [F F^{*}]_{\tau} = \mathbb{E} F_{\tau =0} \left( \mathbb{E} B_{\tau= 1} B_{\tau = 1}^{*} \right) F_{\tau =0}^{*} = \mathbb{E} B_{\tau = 1} B^{*}_{\tau = 1},
	\end{align*}
which gives (iii').
\end{proof}

\begin{remark} From the properties of $B_{\tau = 1}$ established in the proof of Lemma \ref{L: characterize covariance}, we deduce that $B$ can be decomposed in the form $B = B_{\text{sym}} + B_{\text{skew}}$, where the symmetric part $B_{\text{sym}}$ and the skew part $B_{\text{skew}}$ are independent Brownian motions, which are $\textbf{O}(n)$-invariant in law and determined by the normalization $\mathbb{E} B_{\text{sym},\tau}^{2} = - \mathbb{E} B_{\text{skew},\tau}^{2} = \frac{1}{2} \tau \text{id}$. \end{remark}


\subsection{The Symmetric Part in $n = 2$} We close this section with a characterization of the symmetric part of the covariance of $B$ in dimension $n = 2$, which is an essential ingredient in our analysis of intermittency in the next section.

	\begin{lemma} \label{L: symmetric part} If $B$ is the Brownian motion of Lemma \ref{L: properties of F} and $n = 2$, then, for any symmetric endomorphism $G \in \text{End}(\text{tangent ~ space})$, 
		\begin{equation*}
			\mathbb{E} (G.B_{\tau})^{2}  = \frac{1}{4} \tau \big( ( { \rm tr } \, G)^{2} - 4 \det \, G \big).
		\end{equation*}
	\end{lemma}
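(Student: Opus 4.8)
The plan is to reduce the claim to the normalization \eqref{E: determining} established in the proof of Lemma \ref{L: characterize covariance}, using the explicit algebra of $2\times 2$ endomorphisms. Writing $B_\tau = B_{\mathrm sym,\tau} + B_{\mathrm skew,\tau}$ with the two parts independent, and noting that for a \emph{symmetric} test endomorphism $G$ the pairing $G.B_{\mathrm skew,\tau}$ vanishes (the contraction of a symmetric and a skew tensor is zero), we get $\mathbb{E}(G.B_\tau)^2 = \mathbb{E}(G.B_{\mathrm sym,\tau})^2$. So only the symmetric part contributes, and by \eqref{E: determining} (scaled by $\tau$ since $B$ is Brownian) we know $\mathbb{E} B_{\mathrm sym,\tau}^2 = \tfrac{1}{2}\tau\,\mathrm{id}$ as an endomorphism of cotangent space.

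Next I would exploit that in $n=2$ the space of trace-free symmetric endomorphisms is two-dimensional, so the $\mathbf{O}(2)$-invariant quadratic form $G_{\mathrm sym}\mapsto \mathbb{E}(G.B_{\mathrm sym,\tau})^2$ is a multiple of the natural invariant form. Concretely, decompose $G = \tfrac{1}{2}(\mathrm{tr}\,G)\,\mathrm{id} + G_{\mathrm sym}$ where $G_{\mathrm sym}$ is trace-free symmetric; since $B_{\mathrm sym,\tau}\in\mathfrak{sl}(2)$, the identity part drops out of the pairing, so $G.B_{\mathrm sym,\tau} = G_{\mathrm sym}.B_{\mathrm sym,\tau}$. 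Then $\mathbb{E}(G_{\mathrm sym}.B_{\mathrm sym,\tau})^2 = G_{\mathrm sym}\bullet_{\mathrm sym}G_{\mathrm sym}$, and to pin down the constant I take $G_{\mathrm sym}$ to be an orthonormal element, e.g.\ $G_{\mathrm sym} = \mathrm{diag}(1,-1)$: a short computation using $\mathbb{E} B_{\mathrm sym,\tau}^2 = \tfrac{1}{2}\tau\,\mathrm{id}$ together with the $\mathbf{O}(2)$-invariance gives $\mathbb{E}(G_{\mathrm sym}.B_{\mathrm sym,\tau})^2 = \tfrac{1}{4}\tau\,|G_{\mathrm sym}|^2$ where $|G_{\mathrm sym}|^2 = \mathrm{tr}\,G_{\mathrm sym}^2$. (The factor comes from tracing: with $\bullet_{\mathrm sym} = c\,\langle\cdot,\cdot\rangle_{\mathrm Frob}$ on the $2$-dimensional space $\mathrm{Sym}_0$, taking the trace of the identity $\mathbb{E} B_{\mathrm sym,\tau}^2 = \tfrac12\tau\,\mathrm{id}$ against a basis of $\mathrm{Sym}_0$ yields $2c = \tfrac12\tau$, i.e.\ $c = \tfrac14\tau$.)

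Finally I would translate $|G_{\mathrm sym}|^2$ back into invariants of $G$. Since $G_{\mathrm sym} = G - \tfrac12(\mathrm{tr}\,G)\mathrm{id}$ and $G$ is symmetric, $\mathrm{tr}\,G_{\mathrm sym}^2 = \mathrm{tr}\,G^2 - \tfrac12(\mathrm{tr}\,G)^2$; and in $n=2$ the Cayley–Hamilton identity gives $\mathrm{tr}\,G^2 = (\mathrm{tr}\,G)^2 - 2\det G$, so $\mathrm{tr}\,G_{\mathrm sym}^2 = \tfrac12(\mathrm{tr}\,G)^2 - 2\det G = \tfrac12\big((\mathrm{tr}\,G)^2 - 4\det G\big)$. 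Combining, $\mathbb{E}(G.B_\tau)^2 = \tfrac14\tau\cdot\tfrac12\big((\mathrm{tr}\,G)^2-4\det G\big)$ — wait, this would give an extra factor $\tfrac12$, so I must be careful about the exact normalization of $\bullet_{\mathrm sym}$ relative to the Frobenius form; the correct bookkeeping (tracing the normalization \eqref{E: determining} against the \emph{right} invariant form on $\mathrm{Sym}_0$) produces the stated constant $\tfrac14$. The main obstacle, and the step to carry out with care, is precisely this constant-tracking: getting the dimensional factors right when passing from the operator identity $\mathbb{E} B_{\mathrm sym,\tau}^2 = \tfrac12\tau\,\mathrm{id}$ on cotangent space to the scalar quadratic form on the two-dimensional space of trace-free symmetric endomorphisms, and making sure the $\mathrm{id}$-part of $G$ is correctly handled. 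Everything else is routine $2\times 2$ linear algebra and the already-established independence and invariance of $B_{\mathrm sym}$ and $B_{\mathrm skew}$.
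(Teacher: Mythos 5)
Your strategy matches the paper's (split off the skew part, reduce to the trace-free symmetric part, use that the invariant form on $\mathrm{Sym}_0$ is one-dimensional to write $\bullet_{\mathrm{sym}}=c\,\langle\cdot,\cdot\rangle_{\rm Frob}$, pin down $c$ via \eqref{E: determining}, and convert via Cayley--Hamilton). However, the factor-of-$2$ discrepancy you flagged and then waved away is a concrete arithmetic slip that you need to actually fix, not a normalization subtlety: when you take the trace of $\mathbb{E}B_{\mathrm{sym},\tau}^2=\tfrac12\tau\,\mathrm{id}$, you must remember that $\mathrm{tr}\,\mathrm{id}=n=2$, so
\begin{equation*}
\mathbb{E}|B_{\mathrm{sym},\tau}|^2=\mathrm{tr}\,\mathbb{E}B_{\mathrm{sym},\tau}^2=\tfrac12\tau\cdot 2=\tau,
\end{equation*}
not $\tfrac12\tau$. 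Summing $\mathbb{E}(E_i.B_{\mathrm{sym},\tau})^2=c\,|E_i|^2$ over an orthonormal basis $\{E_1,E_2\}$ of $\mathrm{Sym}_0$ (and noting $\sum_i(E_i.B_{\mathrm{sym},\tau})^2=|B_{\mathrm{sym},\tau}|^2$ since $B_{\mathrm{sym},\tau}\in\mathrm{Sym}_0$) then gives $2c=\tau$, i.e.\ $c=\tfrac{\tau}{2}$, not $\tfrac{\tau}{4}$.

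With $c=\tfrac{\tau}{2}$, your Cayley--Hamilton computation $\mathrm{tr}\,G_{\mathrm{sym}}^2=\tfrac12\big((\mathrm{tr}\,G)^2-4\det G\big)$ is correct and yields $\mathbb{E}(G.B_\tau)^2=\tfrac{\tau}{2}\cdot\tfrac12\big((\mathrm{tr}\,G)^2-4\det G\big)=\tfrac{\tau}{4}\big((\mathrm{tr}\,G)^2-4\det G\big)$ as claimed, so the rest of your argument stands. As a minor remark on comparison with the paper: where you go directly via Cayley--Hamilton, the paper instead introduces the auxiliary invariant form $\tilde{\bullet}$ built out of determinants and identifies it against the Frobenius form by another representation-theoretic one-dimensionality argument (Lemma \ref{L: symmetric irreducibility}); the two routes are interchangeable, and your Cayley--Hamilton shortcut is arguably cleaner once you are committed to $n=2$.
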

	
	\begin{proof} Since $B$ is a Brownian motion, we only need to check the case $\tau = 1$. First, we recall that the covariance of the symmetric part of $B$ is invariant under the action of $\textbf{O}(2)$.  Therefore, since the space of symmetric $\textbf{O}(2)$-invariant bilinear forms on the trace-free symmetric endomorphisms $\text{Sym}_{0}$ is one-dimensional (Lemma \ref{lemRep03} (ii) in Appendix \ref{reptheory}), there is a constant $\lambda > 0$ such that, for any $G_{0}, \tilde{G}_{0} \in \text{Sym}_{0}$, 
		\begin{equation} \label{E: def of lambda}
			\mathbb{E} (G_{0}.B_{\tau = 1}) (\tilde{G}_{0}.B_{\tau = 1})  = \lambda { \rm tr } \, G_{0} \tilde{G}_{0} .
		\end{equation}
	If instead $G, \tilde{G} \in \text{Sym}$ (possibly with nonzero trace), then 
            \begin{align*}
                \mathbb{E} (G.B_{\tau = 1}) (\tilde{G}.B_{\tau = 1})  &= \mathbb{E}  ( G_{\text{sym}}.B_{\tau = 1} ) ( G_{\text{sym}}.B_{\tau = 1} )  = \lambda {\rm tr} \, G_{\text{sym}} \tilde{G}_{\text{sym}},
            \end{align*} 
        where as earlier we write $G_{\text{sym}} = G - \frac{1}{2} ( {\rm tr} \, G ) \text{id}$.
	
	\medskip
	
Next, we compute $\lambda$ using the identity \eqref{E: determining}.  Namely, if $B_{\text{sym}}$ is the symmetric part of $B$, then taking the trace of \eqref{E: determining} yields
	\begin{equation*}
		\mathbb{E}[ |B_{\text{sym},\tau = 1}|^{2}] = 1.
	\end{equation*}
On the other hand, after summing \eqref{E: def of lambda} over an orthonormal basis of $\text{Sym}_{0}$, we find
	\begin{equation*}
		\mathbb{E}[ |B_{\text{sym},\tau = 1}|^{2} ] =  2 \lambda, \quad \text{hence} \, \, \lambda = \frac{1}{2}.
	\end{equation*}
	
	Next, we can define a symmetric bilinear form $\tilde{\bullet}$ on $\text{Sym}$ by the rule
		\begin{equation*}
			G \tilde{\bullet} \tilde{G} = ({\rm tr} \, G)({\rm tr} \, \tilde{G}) - 2 \det ( e^{1} \otimes G e_{1} + e^{2} \otimes \tilde{G} e_{2}) - 2 \det (e^{1} \otimes \tilde{G}e_{1} + e^{2} \otimes G e_{2}).
		\end{equation*}
	Note that the associated quadratic form is $G \tilde{\bullet} G = (\text{tr} \, G)^{2} - 4 \det \, G$.  Further, $\tilde{\bullet}$ is invariant under the action of $\textbf{O}(2)$ on $\text{Sym}$.  Thus, there is a symmetric linear map $\mathbf{T}$ on $\text{Sym}$ such that $O ( \mathbf{T} G ) O^{-1} = \mathbf{T} ( O G O^{-1} )$ and $G \tilde{\bullet} G' = {\rm tr} \, G' (\mathbf{T} G)^{*}$ for each $G,G' \in \text{Sym}$ and $O \in \textbf{O}(n)$. Applying Lemma \ref{L: symmetric irreducibility} to $\mathbf{T}$, we deduce that there is a $\kappa \in \mathbb{R}$ and a $\mu \in \mathbb{R}$ such that
		\begin{equation*}
			G \tilde{\bullet} \tilde{G} = \kappa {\rm tr} \, G \, {\rm tr} \, \tilde{G} + \mu {\rm tr} \, G_{\text{sym}} \tilde{G}_{\text{sym}} \quad \text{for any} \, \, G,\tilde{G} \in \text{Sym}.
		\end{equation*}
	Substituting $G = \tilde{G} = \text{id}$ yields $\kappa = 0$, and the choice $G = \tilde{G} = e^{1} \otimes e_{1} - e^{2} \otimes e_{2}$ yields $\mu = 2$.  Therefore,
		\begin{equation*}
			\frac{1}{4} G \tilde{\bullet} G = \frac{1}{2} {\rm tr} \, G_{\text{sym}} G_{\text{sym}} = \mathbb{E} (G.B_{\tau = 1})^{2}. 
		\end{equation*}
	\end{proof}
 
\section{Intermittent behavior of the Bessel-type process $R$ in $n = 2$} \label{S: intermittency}

The remainder of our study of the geometric Brownian motion $F$ focuses on intermittency.  Here we exploit structure specific to dimension $n = 2$.  In particular, we will deduce information about $F$ by studying the process $R$ given by $R_{\tau} = \frac{1}{2} |F_{\tau}|^{2}$, where we recall $|\cdot|$ is the Frobenius norm.

\medskip

The analysis begins with the observation that $R$ satisfies an SDE in its own right.

	\begin{lemma} \label{L: R SDE and lyapunov exponent} If $R$ is the process $R_{\tau} = \frac{1}{2} |F_{\tau}|^{2}$, then $\mathbb{P}\{ R_{\tau} > 1 \, \, \text{for each} \, \, \tau > 0\} = 1$ and there is a one-dimensional Brownian motion $w$ adapted to $B$ such that 
		\begin{gather} 
			dR= \frac{1}{2} R_{\tau} d\tau + \sqrt{R^{2}_{\tau} - 1} \circ \! dw. \label{E: R SDE}
		\end{gather}
	\end{lemma}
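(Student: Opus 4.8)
The plan is to apply Itô's formula to $R_\tau = \frac{1}{2}|F_\tau|^2 = \frac{1}{2}\mathrm{tr}\, F_\tau F_\tau^*$ using the SDE \eqref{E: sde again} in its Itô form (item (ii) of Lemma \ref{L: properties of F}), compute the drift and the quadratic-variation term explicitly with the help of the covariance structure of $B$ derived in Lemmas \ref{L: characterize covariance} and \ref{L: symmetric part}, and recognize the resulting one-dimensional SDE. First I would record, as in \eqref{E: need this quadratic variation}, that $d[FF^*] = F_\tau(\mathbb{E}B_{\tau=1}B_{\tau=1}^*)F_\tau^*\, d\tau = F_\tau F_\tau^*\, d\tau$ by the normalization (iii'), so that the \emph{drift} of $R$ is $\frac{1}{2}\mathrm{tr}\, F_\tau F_\tau^*\, d\tau = R_\tau\, d\tau$... wait, I must be careful: $R$ is a genuine function of the martingale $F$, so $dR = \frac{1}{2}d\,\mathrm{tr}\,FF^*$, and since $\mathrm{tr}$ is linear the only drift comes from the quadratic variation of $F$, which is precisely $\frac{1}{2}\mathrm{tr}\, d[FF^*] = \frac{1}{2}|F_\tau|^2\, d\tau = R_\tau\, d\tau$. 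This already overshoots the claimed drift $\frac{1}{2}R_\tau\, d\tau$ by a factor of $2$, which signals that the martingale part of $R$ itself contributes; indeed $R$ is \emph{not} a martingale, and writing $dR = (\text{drift})\,d\tau + dM$ with $M$ the martingale part, the claimed Stratonovich form $\sqrt{R_\tau^2-1}\circ dw$ secretly hides an Itô correction. So the honest computation is: $dR_\tau = R_\tau\, d\tau + \frac{1}{2}\,\mathrm{tr}\,\bigl(dF_\tau\, F_\tau^* + F_\tau\, dF_\tau^*\bigr) = R_\tau\, d\tau + \mathrm{tr}\bigl(F_\tau\, dB_\tau\, F_\tau^*\bigr)$, where the last equality uses the Itô SDE $dF = F_\tau\, dB$ and symmetry of the trace; here the martingale part is $dM_\tau := \mathrm{tr}(F_\tau (dB_\tau)_{\mathrm{sym}} F_\tau^*)$, only the symmetric part of $dB$ surviving against the symmetric tensor $F_\tau^* F_\tau$.

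The key step is then to compute the quadratic variation $d[M]_\tau$. Writing $G_\tau := F_\tau^* F_\tau \in \mathrm{Sym}$ (an element of $\mathrm{End}(\text{tangent space})$ via the metric), we have $dM_\tau = G_\tau . \, dB_\tau$ in the pairing notation, so $d[M]_\tau = \mathbb{E}(G_\tau . B_{\tau=1})^2\, d\tau$ by \eqref{E: covariance trick brownian motion}. Invoking Lemma \ref{L: symmetric part} in $n=2$,
\begin{align*}
d[M]_\tau = \tfrac{1}{4}\bigl((\mathrm{tr}\, G_\tau)^2 - 4\det G_\tau\bigr)\, d\tau.
\end{align*}
Now $\mathrm{tr}\, G_\tau = \mathrm{tr}\, F_\tau^* F_\tau = |F_\tau|^2 = 2R_\tau$, and $\det G_\tau = \det(F_\tau^* F_\tau) = (\det F_\tau)^2 = 1$ since $F_\tau \in \textbf{SL}(2)$; hence $d[M]_\tau = \frac{1}{4}(4R_\tau^2 - 4)\, d\tau = (R_\tau^2 - 1)\, d\tau$. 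By the martingale representation theorem (Lévy's characterization after a time change) there is a one-dimensional Brownian motion $w$ adapted to the filtration of $B$ with $dM_\tau = \sqrt{R_\tau^2 - 1}\, dw_\tau$, so that in Itô form $dR_\tau = R_\tau\, d\tau + \sqrt{R_\tau^2 - 1}\, dw_\tau$. Converting to Stratonovich: the correction is $\frac{1}{2}d[\sqrt{R^2-1}, w]_\tau = \frac{1}{2}\cdot\frac{R_\tau}{\sqrt{R_\tau^2-1}}\cdot\sqrt{R_\tau^2-1}\, d\tau = \frac{1}{2}R_\tau\, d\tau$, so $\sqrt{R_\tau^2-1}\circ dw_\tau = \sqrt{R_\tau^2-1}\, dw_\tau + \frac{1}{2}R_\tau\, d\tau$, and therefore $dR_\tau = R_\tau\, d\tau + \sqrt{R_\tau^2-1}\circ dw_\tau - \frac{1}{2}R_\tau\, d\tau = \frac{1}{2}R_\tau\, d\tau + \sqrt{R_\tau^2-1}\circ dw_\tau$, which is exactly \eqref{E: R SDE}.

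It remains to address $\mathbb{P}\{R_\tau > 1 \text{ for all } \tau > 0\} = 1$. Since $R_{\tau=0} = \frac{1}{2}|\mathrm{id}|^2 = \frac{1}{2}\cdot 2 = 1$ and $R_\tau = \frac{1}{2}\mathrm{tr}\, F_\tau F_\tau^* \geq (\det F_\tau F_\tau^*)^{1/2} = 1$ by the AM–GM inequality for the two eigenvalues of $F_\tau F_\tau^*$ (whose product is $(\det F_\tau)^2 = 1$), we always have $R_\tau \geq 1$ deterministically, with equality iff $F_\tau F_\tau^* = \mathrm{id}$, i.e. iff $F_\tau \in \textbf{SO}(2)$. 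So the claim is that, almost surely, $F_\tau$ instantaneously leaves $\textbf{SO}(2)$ and never returns. One clean way is to study the boundary behavior of the SDE \eqref{E: R SDE} at $R = 1$: near $r = 1$ the diffusion coefficient $\sqrt{r^2-1}$ degenerates like $\sqrt{2(r-1)}$ while the drift $\frac{1}{2}r \approx \frac{1}{2} > 0$ pushes away from the boundary; a Feller-type boundary classification (or comparison with a squared-Bessel / CIR-type process, noting $2(R_\tau - 1)$ behaves near $0$ like a squared Bessel process of dimension $2\cdot\frac{1/2}{1} \cdot$ const $> 2$ pushed up by a strictly positive drift) shows $1$ is an entrance-not-exit boundary, so $R_\tau > 1$ for all $\tau > 0$ a.s. \textbf{The main obstacle} I anticipate is precisely this last point: making the boundary analysis at $R=1$ rigorous — in particular verifying weak existence/uniqueness of \eqref{E: R SDE} with the square-root degeneracy and justifying the non-attainment of the boundary — rather than the (essentially mechanical) Itô computation of the SDE itself; one must also make sure the time-change producing $w$ is non-degenerate, which again reduces to $R_\tau > 1$ for $\tau > 0$.
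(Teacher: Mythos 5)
Your Itô computation of the SDE is the same as the paper's: apply Itô to $R=\tfrac12\operatorname{tr}FF^{*}$, identify the drift $R_{\tau}\,d\tau$ from $d[FF^{*}]=F_{\tau}d[BB^{*}]F_{\tau}^{*}$, identify the martingale $dM=\operatorname{tr}(F^{*}F\,dB)$, compute $d[M]$ by pairing the symmetric tensor against the covariance of $B$ via Lemma~\ref{L: symmetric part} (your $G=F^{*}F$ versus the paper's $G=FF^{*}$ is immaterial since the quadratic form depends only on $\operatorname{tr}G$ and $\det G$), and use $\operatorname{tr}G=2R$, $\det G=(\det F)^2=1$ to get $d[M]=(R^{2}-1)\,d\tau$. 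The L\'evy-characterization step and the It\^{o}-to-Stratonovich conversion also match the paper. So for the SDE itself you have essentially reproduced the intended argument.

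Where your proposal falls short is exactly where you flag uncertainty: the non-attainment of the boundary $R=1$. You correctly observe that $R\geq1$ deterministically (AM--GM on the eigenvalues of $F^{*}F$, product one) with equality iff $F_{\tau}\in\textbf{SO}(2)$, and you correctly guess that a Feller-type boundary analysis or squared-Bessel comparison will show $1$ is not attained for $\tau>0$; your informal dimension count gives a local squared-Bessel dimension of exactly $2$, not $>2$ as you wrote, which is the borderline non-hitting case. But you do not carry this out. The paper does: it first notes that $R_{0}=1$ is left instantaneously (on the event $R\equiv1$ on $[0,\epsilon]$ the quadratic variation vanishes and the drift alone forces $R_{\epsilon}=1+\epsilon$, a contradiction), then, starting from $R_{0}>1$, solves the scale-function ODE $r u'+\tfrac12(r^{2}-1)u''=0$ to get $u(r)=\ln\frac{r-1}{r+1}$, and concludes from $u(1^{+})=-\infty$, $u(+\infty)<+\infty$ that $R$ escapes to infinity without ever hitting one. (The paper also offers a more conceptual alternative: $\textbf{SO}(2)$ is a compact codimension-two submanifold of $\textbf{SL}(2)$, hence of vanishing capacity for the nondegenerate diffusion $F$.) If you want to complete your proposal you need one of these arguments, and the scale-function route you were reaching for is the cleanest to make rigorous; a squared-Bessel comparison would also work but requires more care at the borderline dimension $2$.
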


\begin{proof}[Sketch of proof] To motivate the appearance of the SDE \eqref{E: R SDE}, notice that since $dF^{*} = dB^{*} F^{*}$ and ${\rm tr} \, F^{*} F dB^{*} = {\rm tr} \, dB F^{*} F = {\rm tr} \, F^{*} F dB$, an application of It\^{o}'s lemma to the product $F F^{*}$ yields
    \begin{equation} \label{E: decomposition}
        dR = {\rm tr} \, F_{\tau}^{*} F_{\tau} dB + \frac{1}{2} {\rm tr} \, F_{\tau} d [ B B^{*} ] F_{\tau}^{*} = {\rm tr} \, F_{\tau}^{*} F_{\tau} dB + R d \tau.
    \end{equation}
At the same time, applying Lemma \ref{L: symmetric part} with $G_{\tau} = F_{\tau} F_{\tau}^{*}$, writing $B'$ for an independent copy of $B$ with law $\mathbb{E}'$, and using the fact that $G$ is adapted to $B$, we see that the martingale $M$ with $dM = {\rm tr} \, F_{\tau}^{*} F_{\tau} dB = G_{\tau}^{\dagger}.dB$ has quadratic variation 
    \begin{align} \label{E: R quadratic variation}
        d [ M M ] = \mathbb{E}' (G_{\tau}^{\dagger}.B'_{\tau = 1})^{2} d \tau = \frac{1}{4} ( ( {\rm tr} \, G_{\tau} )^{2} - 4 \det G_{\tau} ) d \tau = ( R_{\tau}^{2} - 1 ) d \tau.
     \end{align}
This shows that $R$ solves the following SDE in law:
    \begin{equation} \label{E: R SDE Ito form}
        dR = R_{\tau} d \tau + \sqrt{R_{\tau}^{2} - 1} dw.
    \end{equation}
Since the solution of this SDE has $d[ \sqrt{R^{2} - 1} \, w] = \frac{R_{\tau}}{\sqrt{R_{\tau}^{2} - 1}} d [R w] = R_{\tau} d \tau$, the Stratonovich formulation of the equation is precisely \eqref{E: R SDE}.

\medskip

To fully justify that $R$ solves the SDE above, there is just one technicality, namely, the possibility that $R_{\tau} = 1$.  Indeed, since $F$ takes values in $\textbf{SL}(2)$, $R \geq 1$ holds globally, and $R_{\tau} = 1$ if and only if $F_{\tau} \in \textbf{SO}(2)$.  Yet $\textbf{SO}(2)$ is a compact, codimension-two submanifold of $\textbf{SL}(2)$, thus of vanishing capacity, hence the diffusion $F$ cannot hit it.  In Appendix \ref{A: stopping}, we give a more elementary, but less transparent, proof based on the Feller test, along with the argument for the existence of the Brownian motion $w$ in Lemma \ref{L: R SDE and lyapunov exponent}. \end{proof}

It turns out that the SDE solved by $R$ can be reformulated in terms of the largest eigenvalue $S$ of $F^{*} F$.  From the definition of $R$ and the fact that $\det F = 1$, the two processes are related by the equation
	\begin{equation} \label{E: S equation}
		R_{\tau} = \frac{1}{2} ( S_{\tau} + S_{\tau}^{-1} ).
	\end{equation}
This equation has the explicit solution $S = \exp \text{arccosh} \, R$, from which we deduce that
	\begin{equation} \label{E: eigenvalue eqn}
		dS = \frac{1}{2} S_{\tau} \frac{ S_{\tau}^{2} + 1 }{ S_{\tau}^{2} - 1 } d \tau + S_{\tau} \circ \! dw.
	\end{equation}
Here there is no ambiguity because $R$ (hence also $S$) never hits one.

\medskip

The SDE \eqref{E: eigenvalue eqn} is reminiscent of one-dimensional geometric Brownian motion.  Specifically, the process $Q$ defined by
\begin{equation} \label{E: Q formula}
Q_{\tau} = e^{  \frac{1}{2} \tau + w_{\tau} }
\end{equation}
solves the SDE
	\begin{align} \label{E: geometric brownian motion}
		dQ = \frac{1}{2} Q_{\tau} d \tau + Q_{\tau} \circ \! dw.
	\end{align}
We will see that the process $R$ is qualitatively similar to $Q$.  For instance, since $\frac{S^{2} + 1}{S^{2} - 1} > 1$ and $R \geq 2^{-1} S$ by definition, the comparison principle for Stratonovich SDEs implies that 
	\begin{equation} \label{E: compare with geometric BM}
		\mathbb{P} \{ 2 R_{\tau} \geq S_{\tau} \geq Q_{\tau} \, \, \text{for each} \, \, \tau \geq 0\} = 1.
	\end{equation}
Next, we show that a matching upper bound holds at the level of moments.

\begin{proof}[Proof of Lemma \ref{R: intermittent growth of F}] Applying It\^{o}'s formula to the SDE \eqref{E: R SDE Ito form}, we find, for any $p \geq 1$,
			\begin{align*}
				d\mathbb{E} R^{p}
				= \frac{1}{2} p(p + 1) \mathbb{E} R^{p}_{\tau} d \tau - \frac{1}{2} p (p - 1) \mathbb{E} R_{\tau}^{p - 2} d \tau
				\leq \frac{1}{2} p(p + 1) \mathbb{E} R_{\tau}^{p} d \tau.
			\end{align*}
		By the comparison principle, the inequality $\mathbb{E} R_{\tau}^{p} \leq e^{  \frac{1}{2} p (p + 1) \tau } $ follows.
		
	\medskip
	
		At the same time, from the formula \eqref{E: Q formula} and the classical formula for the moment generating function of a Gaussian, we know that $\mathbb{E} Q_{\tau}^{p} = e^{  \frac{1}{2} p (p + 1) \tau } $.  Thus, by \eqref{E: compare with geometric BM}, we have
			\begin{equation*}
				\mathbb{E} R_{\tau}^{p} \geq 2^{-p} e^{  \frac{1}{2} p(p + 1) \tau } .
			\end{equation*} 
	\end{proof}
	
The lower bound \eqref{E: compare with geometric BM} can also be used to compute the top Lyapunov exponent of $F$, as explained in the next remark.

\begin{remark} \label{R: lyapunov exponent} In the limit $\tau \to \infty$, we have
    \begin{equation} \label{E: top lyapunov exponent via R}
        \lim_{\tau \to \infty} \frac{1}{\tau} \ln R_{\tau} = \frac{1}{2} \quad \text{almost surely.}
    \end{equation}
In particular, the top Lyapunov exponent of $F$ equals $\frac{1}{4}$. \end{remark}

\begin{proof}[Proof of \eqref{E: top lyapunov exponent via R}] First, notice that \eqref{E: compare with geometric BM} implies that  $S_{\tau} \to \infty$ as $\tau \to \infty$ almost surely.  From this, we can use the SDEs satisfied by $\ln S$ and $\ln Q$ (readily computed from \eqref{E: eigenvalue eqn} and \eqref{E: geometric brownian motion}, respectively) to deduce that
	\begin{equation*}
		\lim_{\tau \to \infty} \frac{1}{\tau} ( \ln S_{\tau} - \ln Q_{\tau} ) = \lim_{\tau \to \infty} \frac{1}{2 \tau} \int_{0}^{\tau} d \tau' \big( \frac{ S^{2}_{\tau'} + 1 }{ S_{\tau'}^{2} - 1} - 1 \big)  = 0.
	\end{equation*}
Thus, since $R_{\tau} = \frac{1}{2} ( S_{\tau} + S_{\tau}^{-1} )$, we conclude
	\begin{equation*}
		\lim_{\tau \to \infty} \frac{1}{\tau} \ln R_{\tau} \overset{\eqref{E: S equation}}{=} \lim_{\tau \to \infty} \frac{1}{\tau} \ln S_{\tau} = \lim_{\tau \to \infty} \frac{1}{\tau} \ln Q_{\tau} \overset{\eqref{E: Q formula}}{=} \frac{1}{2} \quad \text{almost surely.}
	\end{equation*}
\end{proof}

We conclude this section with the proof of Lemma \ref{lem-weakinter}.

\begin{proof}[Proof of Lemma \ref{lem-weakinter}] We claim that it suffices to observe that 
		\begin{equation} \label{E: scaling identity for geometric BM}
			\mathbb{E} Q_{\tau} I(Q_{\tau} \geq \mathbb{E}^{\frac{3}{2}} Q_{\tau} ) = \frac{1}{2} \mathbb{E} Q_{\tau} .
		\end{equation}
	Indeed, since $R_{\tau} \geq \frac{1}{2} Q_{\tau}$, once we have established \eqref{E: scaling identity for geometric BM}, we can use the identity $\mathbb{E} R_{\tau} = e^{\tau} = \mathbb{E} Q_{\tau} $ (which follows directly from \eqref{E: R SDE Ito form}) to find the desired estimate
		\begin{align*}
			\mathbb{E} R_{\tau} I(R_{\tau} \geq \frac{1}{2} \mathbb{E}^{\frac{3}{2}} R_{\tau} )  \geq \frac{1}{4} \mathbb{E} R_{\tau} .
		\end{align*} 
		
	\medskip
		
	It only remains to prove \eqref{E: scaling identity for geometric BM}.  Since $Q_{\tau}$ is given by \eqref{E: Q formula} and $\mathbb{E} Q_{\tau} = e^{\tau}$, this identity is equivalent to
		\begin{equation*}
			e^{-\frac{\tau}{2}} \mathbb{E} e^{w_{\tau}} I(w_{\tau} \geq \tau) = \frac{1}{2}.
		\end{equation*}
	Since $w_{\tau}$ is a Gaussian with variance $\tau$, this in turn is equivalent to the elementary identity $\int_{\tau}^{\infty} dw e^{-\frac{\tau}{2} + w - \frac{w^{2}}{2\tau}} = \frac{1}{2} \int_{0}^{\infty} dw' e^{-\frac{w'^{2}}{2\tau}}$.
	\end{proof}


\section{Proofs of the Main Results}\label{sec:mr}

In this section we connect our results on the geometric Brownian motion $F$ to the intermittent behavior of the Lagrangian coordinates $u$: Theorem \ref{thm:mr01} and its consequence Theorem \ref{thm:mr02}.

\medskip

In line with $ u_L ( x ) = x + \phi_L ( x ) $ we will write 
\begin{equation} \label{E: definition of tilde u}
\tilde u_L ( x ) := x + \tilde\phi_L ( x ).
\end{equation}

In establishing Theorem \ref{thm:mr01}, we use the coupling of $b$ and $B$ provided by Lemma \ref{lemR01} (see also \eqref{E: coupling first instance}), which, by virtue of the equivalence of Itô and Stratonovich interpretation, cf.\ the discussion at the beginning of Subsection \ref{S: rigorous proxy analysis}, turns \eqref{ao01bis} \& \eqref{ao29} into
\begin{equation} \label{E: coupling equation for F}
    dF_{\tau_{*},\cdot} = F_{\tau_{*},\tau} \nabla d \phi(0) \, \, \text{for} \, \, \tau > \tau_{*}, \quad F_{\tau_{*},\tau} = \text{id} \, \, \text{if} \, \, \tau \leq \tau_{*}.
\end{equation}
We stress that the $\tau$ and $L$ variables are related by $\tau = \ln \tilde{\lambda}_{L}$.  The following lemma summarizes the key estimate towards Theorem \ref{thm:mr01}.

\begin{lemma}\label{lem:mr03}
	With $ \tau_* := \tau( | x|^2 ) $, $L_{*} = \sqrt{ |x|^{2} + 1}$, and $ \tilde\lambda_* := \tilde\lambda_{ L_{*} } $, we have, for any $L \geq 1$,
	\begin{align} \label{pmr03}
		\frac{1}{|x|^{2}} \E | \tilde u_L ( x ) - \tilde u_L ( 0 ) - F_{ \tau_*, \tau }^{ \dagger } x  |^2
		\lesssim \varepsilon^2 \max \{ 1 , \frac{ \tilde\lambda_L }{ \tilde\lambda_* } \} .
	\end{align}
\end{lemma}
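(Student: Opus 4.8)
The plan is to compare the two $\mathrm{End}(\text{tangent space})$-valued evolutions behind $\tilde u_L(x)-\tilde u_L(0)$ and $F_{\tau_*,\tau}^\dagger x$ by writing an Itô SDE in $L$ for the error $g_L:=\tilde u_L(x)-\tilde u_L(0)-F_{\tau_*,\tau}^\dagger x$ and running a Gronwall argument in $\tau=\ln\tilde\lambda_L$, in the spirit of Lemma~\ref{lem01}. First I would dispose of $1\le L\le L_*$, i.e. $\tau\le\tau_*$: there $F_{\tau_*,\tau}=\mathrm{id}$, so by \eqref{E: definition of tilde u} the left side of \eqref{pmr03} is $|x|^{-2}\mathbb{E}|\tilde\phi_L(x)-\tilde\phi_L(0)|^2$, which I would bound by $\varepsilon^2$ using Lemma~\ref{lem02} together with stationarity (the crude bound $\mathbb{E}|\tilde\phi_L(x)-\tilde\phi_L(0)|^2\le 4\,\mathbb{E}|\tilde\phi_L|^2\lesssim(\varepsilon L_*/\tilde\lambda_*)^2$) when $|x|\gtrsim1$, and using that $\tilde\phi_L$ is band-limited to $|k|\lesssim1$ — hence, by stationarity, $\mathbb{E}|\tilde\phi_L(x)-\tilde\phi_L(0)|^2\le|x|^2\,\mathbb{E}|\nabla\tilde\phi_L|^2$ — together with the a priori estimate $\mathbb{E}|\nabla\tilde\phi_L|^2\lesssim\mathbb{E}|\Psi_L|^2\lesssim\varepsilon^2\ln L$ (which follows by Cauchy--Schwarz from the consistency $\mathbb{E}a_L(\mathrm{id}+\nabla\tilde\phi_L)=\tilde\lambda_L I$ noted after Lemma~\ref{lem03} and \eqref{estPsi}) when $|x|\lesssim1$. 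In particular this also controls the initial datum $g_{L_*}=\tilde\phi_{L_*}(x)-\tilde\phi_{L_*}(0)$ for the evolution on $L\ge L_*$: $\mathbb{E}|g_{L_*}|^2\lesssim\varepsilon^2|x|^2$ with $\mathbb{E}g_{L_*}=0$.

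For $L\ge L_*$ I would then derive the Itô SDE for $g_L$. Differentiating \eqref{E: definition of tilde u}--\eqref{phi-tilde} at the two base points, using \eqref{E: coupling equation for F} for $F_{\tau_*,\cdot}^\dagger x$ (which, transposing $dF=F\nabla d\phi(0)$, evolves by the directional-derivative term $(F_{\tau_*,\tau}^\dagger x)^j\partial_j d\phi(0)$, the same shape as the $\tilde\phi_L^j(0)\partial_j d\phi(0)$ term in $d\tilde\phi_L(0)$), and rewriting the components $g_L^j=x^j+\tilde\phi_L^j(x)-\tilde\phi_L^j(0)-(F_{\tau_*,\tau}^\dagger x)^j$ to extract the matching directional-derivative term, one is led to
\begin{equation*}
dg_L^i=g_L^j\,\partial_j d\phi^i(0)+A^i+D^i,
\end{equation*}
where $A^i:=d\phi^i(x)-d\phi^i(0)-x^j\partial_j d\phi^i(0)$ is a second-order Taylor remainder of the increment $d\phi$ at $0$ in the direction $x$, and $D^i:=\tilde\phi_L^j(x)\bigl(\partial_j d\phi^i(x)-\partial_j d\phi^i(0)\bigr)$. (The Itô--Stratonovich correction is absent by \eqref{forFelix03}, so one may work throughout in the Itô picture.)

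Now comes the Gronwall step for $h_L:=\mathbb{E}|g_L|^2$. The merit of this decomposition is that the self-interaction term has coefficient exactly one: since $d[\nabla\phi\,(\nabla\phi)^{*}]=\mathrm{id}\,d\tau$ by \eqref{wr65bis} and the same $\mathbf{O}(n)$-isotropy-plus-trace argument gives $d[(\nabla\phi)^{*}\nabla\phi]=\mathrm{id}\,d\tau$, the term $g_L^j\partial_j d\phi(0)$ contributes $h_L\,d\tau$ to $dh_L$. Its cross term with $A$ vanishes, because $A$ is a scale-$L$ increment independent of $\mathcal F_L$ and Gaussian (so its covariation with $\partial_\cdot d\phi(0)$ is deterministic) while $\mathbb{E}g_L=0$; its cross term with $D$ has the leading, linear-in-$x$ piece $x^m\,d[\partial_j\phi^i(0),\partial_m\partial_j\phi^i(0)]$ killed by the $\nabla\phi$-versus-$\nabla^2\phi$ parity behind \eqref{eqn04prev}, the rest being quadratically small in $x$ and handled by Cauchy--Schwarz. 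For the source terms, using that $\mathcal F db$ is supported on $\{L|k|=1\}$ (see \eqref{forFelix07}) and $d\phi^i=\tilde\lambda_L^{-1}L^2 db^i$ by \eqref{E: eqn for dphi}, I would estimate $\mathbb{E}|A|^2\lesssim|x|^4L^{-2}\,d[b\cdot b]$ (a Taylor bound, legitimate since $L\ge L_*\ge|x|$), and by Lemma~\ref{lem02} and independence of $\tilde\phi_L$ from the scale-$L$ increment, $\mathbb{E}|D|^2\lesssim\mathbb{E}|\tilde\phi_L|^2\cdot|x|^2L^{-2}\,d[b\cdot b]\lesssim\varepsilon^2|x|^2\tilde\lambda_L^{-2}\,d\tau$, and likewise for the $D$-remainder. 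Using \eqref{ODElambda} and \eqref{forFelix01} to pass to the $\tau$-scale and integrating against $e^{-(\tau-\tau_*)}\,d\tau$, all of these accumulate to $\lesssim\varepsilon^2|x|^2$, \emph{uniformly in $L$}. Feeding this into the differential inequality for $q_L:=e^{-(\tau-\tau_*)}h_L$ — whose only remaining term involving the unknown is the harmless $e^{-(\tau-\tau_*)/2}q_L\,d\tau$ produced by Cauchy--Schwarz on the surviving $D$ cross term — Gronwall gives $q_L\lesssim\varepsilon^2|x|^2$, i.e. $h_L\lesssim\varepsilon^2|x|^2\,e^{\tau-\tau_*}=\varepsilon^2|x|^2\,\tilde\lambda_L/\tilde\lambda_*$, which is \eqref{pmr03}.

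The main obstacle is keeping the coefficient of $h_L\,d\tau$ equal to exactly one: any coefficient $1+\eta$ with a fixed $\eta>0$ would turn the target $\tilde\lambda_L/\tilde\lambda_*$ into $(\tilde\lambda_L/\tilde\lambda_*)^{1+\eta}$, which is false as $L\to\infty$. This is precisely what forces the three ingredients above: (i) the sharp identity $d[(\nabla\phi)^{*}\nabla\phi]=\mathrm{id}\,d\tau$ rather than a mere $\lesssim$; (ii) the $\mathbb{E}g_L=0$ observation for the $A$-cross term and the $\nabla\phi$-vs-$\nabla^2\phi$ parity cancellation for the leading $D$-cross term; and (iii) running the final Gronwall on $e^{-(\tau-\tau_*)}h_L$ so that the one genuinely surviving cross term enters with the contractive prefactor $e^{-(\tau-\tau_*)/2}$. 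The remaining pieces — the Taylor/Fourier estimates on $A$, $D$ and the $D$-remainder, the bookkeeping of the scale change, and the small-$|x|$ case of the initial datum — are routine, modulo the a priori bound $\mathbb{E}|\nabla\tilde\phi_L|^2\lesssim\varepsilon^2\ln L$.
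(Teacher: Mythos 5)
Your proposal follows the same overall route as the paper's proof: derive an It\^{o} SDE for the residuum $r_L = \tilde u_L(x)-\tilde u_L(0)-F_{\tau_*,\tau}^\dagger x$ (your $A$ and $D$ are exactly the two pieces of the paper's forcing $dg$ in \eqref{E: forcing}), use \eqref{wr65bis} to identify the coefficient of the $\E|r_L|^2\,d\tau$ term as exactly one, and then run a Gronwall argument after dividing by the integrating factor $\tilde\lambda_L = e^\tau$. The $L\le L_*$ case is also treated in the same spirit (triangle inequality plus a Taylor bound, split according to whether $|x|$ dominates $L$). So the structure matches; the differences are in execution, and two of them are genuine improvements: you observe that $\E r_L=0$ kills the cross variation with $A$ outright (the paper keeps this term and estimates it via \eqref{E: covariation estimate 2} and Cauchy--Schwarz), and you use the $\nabla\phi$-versus-$\nabla^2\phi$ parity from \eqref{eqn04prev} to knock the $D$-cross variation down from $O(|x|/L)$ to $O(|x|^2/L^2)$, which makes the final Gronwall absorption slightly cleaner than the paper's supremum-and-absorb argument in Step 4. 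Both observations are correct.

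Two places deserve a second look. First, the a priori bound $\E|\nabla\tilde\phi_L|^2\lesssim \varepsilon^2\ln L$ for the initial datum: the consistency relation $\E a_L(\mathrm{id}+\nabla\tilde\phi_L)=\tilde\lambda_L I$ only pins down $\E\Psi_L\nabla\tilde\phi_L=(\tilde\lambda_L-1)I$, and Cauchy--Schwarz applied to this gives a \emph{lower} bound on $\E|\nabla\tilde\phi_L|^2$, not an upper bound. The upper bound comes, as in the paper, from the energy identity obtained by contracting \eqref{helm-tilde} with $e^i+\nabla\tilde\phi_L^i$, and crucially uses the residuum bound $\E|\tilde f_L|^2\lesssim\varepsilon^2\tilde\lambda_L$ of Proposition~\ref{prop01} (giving $\E|\nabla\tilde\phi_L|^2\lesssim\varepsilon^2(\ln L+\tilde\lambda_L)$, which is $\lesssim\varepsilon^2$ in the relevant regime $L\le L_*\lesssim 1$). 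Second, a small arithmetic slip in the Taylor bounds: since $d\phi=\tilde\lambda_L^{-1}L^2\,db$ and $\mathcal F\,db$ is supported on $\{L|k|=1\}$, the second Taylor remainder of $d\phi$ picks up a factor $|x|^4 L^{-4}$ relative to $\E|d\phi|^2=\tilde\lambda_L^{-2}L^4\,d[b\cdot b]$, giving $\E|A|^2\lesssim|x|^4\tilde\lambda_L^{-2}\,d[b\cdot b]=|x|^4 L^{-2}\,d\tau$, not $|x|^4 L^{-2}\,d[b\cdot b]$ (and similarly for $D$). Your written expressions have $L^{-2}$ where $\tilde\lambda_L^{-2}$ belongs; this understates the source terms (so the final bound would still hold), but the intermediate identities are not what the stated Taylor/Fourier reasoning produces, and you should correct them before integrating against $e^{-(\tau-\tau_*)}\,d\tau$.
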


We will prove Lemma \ref{lem:mr03} in the spirit of Proposition \ref{prop01}: We will derive an SDE for the residuum
\begin{equation*}
	r_{L} := \tilde{u}_{L}(x) - \tilde{u}_{L}(0) - F_{ \tau_{*}, \tau }^{\dagger} x .
\end{equation*}
In line with the definitions of $\tau_{*}$, $L_{*}$, and $
\lambda_{*}$, we will henceforth write
\begin{equation*}
r_{*} := r_{L_{*}}.
\end{equation*}
As for $ \tilde f_L $ in Lemma \ref{lem03}, $ r_L $ will solve the SDE of a stochastic exponential
\begin{align}
	d r &= \nabla d \phi(0)^{\dagger} r_L + d g \quad \text{with initial data} \quad r_{*} \overset{\eqref{E: definition of tilde u},\eqref{E: coupling equation for F}}{=} \tilde{\phi}_{L_{*}}(x) - \tilde{\phi}_{L_{*}}(0)  \label{E: equation of mu}
\end{align}
and forcing term $g$ given by
\begin{equation}
	dg = \nabla ( d \phi(x) - d \phi(0) )^{\dagger} \tilde{\phi}_L (x) + d ( \phi(x) - \phi(0) - \nabla \phi(0)^{\dagger} x ) , \label{E: forcing}
\end{equation}
see Step 2 in the proof below. Hence the same strategy as in Lemma \ref{lem01} can be used to establish estimate \eqref{pmr03}: We rely on the observation that the driver $  \nabla d \phi(0)^{\dagger} r $ in \eqref{E: equation of mu} leads to the differential inequality $ d \E | r |^2 - \E | r_{L} |^2 d \tau \lesssim ... $ in the variable $ \tau = \ln \tilde\lambda $. By this exponential structure, the estimate on the initial data $ \frac{1}{|x|^{2}} \mathbb{E} | r_{*} |^{2} \lesssim \frac{\varepsilon^{2}}{\tilde{\lambda}_{*}^{2}} $ is amplified as stated in \eqref{pmr03}.

\medskip

\begin{proof}[Proof of Lemma \ref{lem:mr03}.]

We will prove the lemma by establishing
\begin{align}\label{pmr03b}
	\frac{1}{|x|^{2}} \E | \tilde u_L ( x ) - \tilde u_L ( 0 ) - F_{ \tau_*, \tau }^{ \dagger } x  |^2
	&\lesssim \left\{ \begin{aligned}
		\frac{ \varepsilon^{2} }{ \tilde{\lambda}^{2} }  & \quad { \rm provided } ~ L \leq L_{*}  , \\
		\frac{ \varepsilon^2 }{ \tilde\lambda_{ * }^2 } \Big( 1 + \frac{ \varepsilon^2 }{ \tilde\lambda_{ * }^2 } \Big) \frac{ \tilde\lambda_L }{ \tilde\lambda_{ * } } & \quad { \rm provided } ~ L_{*} \leq L .
	\end{aligned} \right.
\end{align}
The regime $ L \leq L_{*} $ is the content of Step 1. The rest of the proof is devoted to the opposite regime.

\medskip

\textit{Step 1 (Estimate of $ \mathbb{E} | r |^{2} $ in the regime $ L \leq L_{*} $).}  We claim that 
	\begin{align} \label{initial}
		|x|^{-2} \mathbb{E} | r_{L} |^{2} \lesssim \varepsilon^{2} \tilde{\lambda}^{-2} \quad { \rm provided } \quad L \leq L_{*}.
	\end{align}
In fact, it suffices to prove the apparently weaker estimate:
	\begin{align} \label{init}
		|x|^{-2} \mathbb{E} | r_{L} |^{2} \lesssim \left\{ \begin{array}{r l}
										\varepsilon^{2} \tilde{\lambda}^{-2}, & \text{if} \, \, L \leq \max\{ 2 |x|,L_{*}\}, \\
										\varepsilon^{2} (\ln L_{*} + \tilde{\lambda}_{*} ), & \text{if} \, \, 2 |x| < L \leq L_{*}.
									\end{array} \right.
	\end{align}
To see this, notice that the definition $L_{*}^{2} = |x|^{2} + 1$ implies that $L_{*} \leq 2$ if $2|x| \leq L_{*}$. Since $\varepsilon <1$, this shows that $\tilde{\lambda}_{L} \leq 2$ if $2|x| \leq L \leq L_{*}$, hence \eqref{init} actually implies \eqref{initial}.  

\medskip

Notice that by definition \eqref{E: coupling equation for F} we have
\begin{align*}
	r_L = \tilde\phi_{ L } ( x ) - \tilde\phi_L ( 0 )
	\quad { \rm provided } \quad L \leq L_{*} .
\end{align*}
Thus, consider two cases: If $L \leq 2|x|$, we use the triangle inequality, stationarity, and Lemma \ref{lem02} to obtain
\begin{align*}
	|x|^{-2} \E | r_{L} |^2 
	\leq 2 |x|^{-2} (  \E | \tilde \phi_{L}(x) |^2 + \E | \tilde \phi_{L} ( 0 ) |^2 ) 
	= 4|x|^{-2} \E | \tilde \phi_{L} |^2  \lesssim \varepsilon^{2} \tilde{\lambda}^{-2}.
\end{align*}
On the other hand, if $2 |x| < L$, we Taylor expand the increment to find, again by stationarity,
	\begin{align*}
		|x|^{-2} \E | \tilde r_{*} |^{2} \leq \int_{0}^{1} dt \, \mathbb{E} | \nabla \tilde{\phi}_{L_{*}}(tx) |^{2} = \mathbb{E} | \nabla \tilde{\phi}_{L_{*}} |^{2}.
	\end{align*}
It only remains to observe that $\mathbb{E}|\nabla\tilde{\phi}_{L}|^{2} \lesssim \varepsilon^{2} ( \ln L_{*} + \tilde{\lambda}_{*} )$ if $L \leq L_{*}$.  Indeed, as in \cite[Proof of Lemma 5.3]{CMOW}, we can multiply the identity \eqref{helm-tilde} by $ e^i + \nabla \tilde{\phi}_L^i $ to obtain
	\begin{align*}
		\frac{1}{n} \mathbb{E} | \nabla \tilde{\phi}_{L} |^{2} = \tilde{\lambda}_{L} - 1 + \frac{1}{n} \mathbb{E} \nabla \tilde{\phi}_{L} \cdot \tilde{f}_{L},
	\end{align*}
which implies, by Young's inequality, the estimate of $\tilde{f}_{L}$ from Proposition \ref{prop01}, the definition \eqref{lambda} of $\tilde{\lambda}_{L}$, and the elementary $ \sqrt{ 1 + \varepsilon^2 \ln L } - 1 \leq \frac{ 1 }{ 2 } \varepsilon^2 \ln L $,
	\begin{align*}
		\mathbb{E} | \nabla \tilde{\phi}_{L} |^{2} \lesssim \varepsilon^{2} ( \ln L + \tilde{\lambda}_{L} ) \leq \varepsilon^{2} ( \ln L_{*} + \tilde{\lambda}_{*} ) \quad { \rm provided } \quad L \leq L_{*}.
	\end{align*}

\medskip

\textit{Step 2 (Justification of SDE \eqref{E: equation of mu})} Equation \eqref{E: equation of mu} follows upon recalling the definition \eqref{E: definition of tilde u} of $\tilde{u}_{L}$ and then taking differences in \eqref{phi-tilde}:
\begin{align*}
	& d (\tilde{u}(x) - \tilde{u}(0)) \\ 
	&= d \phi(x) + \tilde{\phi}_L^{i}(x) \partial_{i} d \phi (x) - d \phi ( 0 )- \tilde{\phi}_L^{i} ( 0 ) \partial_{i} d \phi ( 0 ) \\
	&= ( x^i + \tilde{\phi}_L^i(x) - \tilde{\phi}_L^i(0) )  \partial_i d \phi(0)
	+ \tilde\phi_L^i(x) \partial_i ( d \phi(x) - d \phi(0) )
	+ d ( \phi(x) - \phi(0) - x^{i} \partial_{i}\phi(0) ) .
\end{align*}
Regrouping terms and noting that, in general, $f^{i} \partial_{i} g = \nabla g^{\dagger} f$, this says that
\begin{align} \label{E: equation for increment}
	d(\tilde{u}(x) - \tilde{u}(0)) &= \nabla d \phi(0)^{\dagger} ( \tilde{u}_L (x) - \tilde{u}_L (0) )  + d g,
\end{align}
which, in view of the equation \eqref{E: coupling equation for F}, implies \eqref{E: equation of mu}.

\medskip

\textit{Step 3 (Differential inequality for the residuum $ \E | r |^2 $).} We start with the claim
\begin{align}\label{pmr08}
		d\mathbb{E} | r |^{2} - \mathbb{E} | r_{ L } |^{2} d \tau
		&\lesssim \varepsilon |x| \mathbb{E}^{\frac{1}{2}} | r_L |^{2} \frac{ d \tau }{ \tilde\lambda }
		+ |x|^{2} \mathbb{E}^{\frac{1}{2}} | r_L |^{2} \frac{ d \tau }{ L }
		+ \varepsilon^{2} |x|^{2} \frac{ d \tau }{ \tilde{\lambda}^{2} }
		+ |x|^{4} \frac{ d \tau }{ L^{2} } .
	\end{align}
Since by \eqref{E: equation of mu} the residuum $ r $ is a martingale we have $  d \E | r |^2 = d \E [ r \cdot r ] $, so that we may derive \eqref{pmr08} by monitoring $ [ r \cdot r ] $.

\medskip

By using the SDE \eqref{E: equation of mu}, and stationarity of $ \nabla d \phi $, we can write
	\begin{align*}
		d [ r \cdot r] = r_L^{i} r_L^{j} d [ \partial_{i} \phi  \cdot \partial_{j} \phi ]
				+ 2 r_L^i d [ \partial_i \phi ( 0 ) \cdot g ]
				+ d [ g \cdot g ].
	\end{align*}
Upon invoking the second item of \eqref{wr65bis} in Lemma \ref{lemR01} and the definition \eqref{E: forcing} of the forcing $ dg $, this equation becomes
	\begin{align*}
		d [ r \cdot r ]
		= | r_L |^{2} d \tau
		&+2  r_L^i \tilde{\phi}_L^{j}(x) d [ \partial_{i} \phi (0) \cdot (\partial_{j} \phi (x) - \partial_{j} \phi (0) ) ] \\
		&+ 2 r_L^i d [ \partial_i \phi (0) \cdot ( \phi (x) - \phi (0) - \nabla \phi (0)^{\dagger}x ) ] + d [ g \cdot g ] .
	\end{align*}
Taking the estimates
\begin{align}
	| d [\partial_{i} \phi (0) \cdot (\partial_{j} \phi (x) - \partial_{j} \phi (0) )] | &\lesssim | x | \frac{d \tau }{L}, \label{E: covariation estimate 1} \\
	| d [\partial_{i} \phi (0) \cdot ( \phi (x) - \phi (0) - \nabla \phi (0)^{\dagger} x) ] | &\lesssim | x |^{2} \frac{d \tau}{L}, \label{E: covariation estimate 2}
\end{align}
for granted for now (we give the proof in Step 5 below), we bound the first two error terms
	\begin{align*}
		\mathbb{E} r_L^i \tilde{\phi}_L^j(x) d [ \partial_i \phi (0) \cdot (\partial_{j} \phi(x) - \partial_{j} \phi (0) ) ]
		&\lesssim |x| \big( \mathbb{E}| r_L |^{2} \mathbb{E} |\tilde{\phi}_L |^{2} \big)^{\frac{1}{2}} \frac{d \tau }{L}, \\
		\mathbb{E} r_L^i d [ \partial_{i} \phi (0) \cdot ( \phi (x) - \phi (0) - \nabla \phi (0)^{\dagger} x ) ]
		&\lesssim | x |^{2}  \mathbb{E}^{\frac{1}{2}} | r_L |^{2} \frac{d \tau}{L}.
	\end{align*}
The quadratic variation $ d [ g \cdot g ] $ can be estimated by first invoking the definition \eqref{E: forcing} of $ dg $ and then applying the estimates
\begin{align}
	| d [  (\partial_{i} \phi (x) - \partial_i \phi(0) ) \cdot (\partial_{j} \phi (x) - \partial_{j}d\phi (0) )]| &\lesssim | x |^2 \frac{d \tau}{L^2}, \label{E: covariation estimate 1 b}  \\
	d [ (\phi (x) - \phi (0) - \nabla \phi (0)^{\dagger} x) \cdot (\phi (x) - \phi (0) - \nabla \phi (0) ^{  \dagger } x)] &\lesssim | x |^{4} \frac{d \tau}{L^2}, \label{E: covariation estimate 2 b}
\end{align}
which we also also prove in the auxiliary Step 5. These last two bounds immediately yield the estimate
\begin{align*}
		\mathbb{E} d [ g \cdot g ] &\lesssim \mathbb{E}\tilde{\phi}_L^{i}(x) \tilde{\phi}_L^{j}(x) d [ (\partial_i \phi (x) - \partial_{i} \phi (0) ) \cdot (\partial_{j} \phi (x) - \partial_{j} \phi (0) ) ]  \phantom {  \frac{d \tilde{\lambda}^{2}}{L^2 \tilde{\lambda}^{2}} }
\\
			&+ d [( \phi (x) - \phi (0) - \nabla \phi (0)^{\dagger} x ) \cdot ( \phi (x) - \phi (0) -\nabla \phi (0)^{\dagger} x )] \\
			& \lesssim |x|^{2} \mathbb{E} |\tilde{\phi}_L |^{2} \frac{d \tau}{L^{2}}
			+ | x |^{4} \frac{d \tau}{L^2} .
	\end{align*}
Recalling that $ d \E | r |^2 = d \E [ r \cdot r ] $ and putting it all together, we find
	\begin{align*}
		d\mathbb{E} | r |^{2}  - \mathbb{E} | r_L |^{2} d \tau
		&\lesssim |x| \big( \mathbb{E}| r_L |^{2} \mathbb{E}| \tilde{\phi}_L |^{2} \big)^{\frac{1}{2}} \frac{d \tau}{L}
		+  |x|^{2} \mathbb{E}^{\frac{1}{2}} | r_L |^{2} \frac{d \tau}{L}
		+ |x|^{2} \E | \tilde\phi_L |^2 \frac{ d \tau }{ L^2 }
		+ |x|^{4} \frac{ d \tau }{L^{2}}.
	\end{align*}
Appealing again to the bound on $\tilde{\phi}_L $ in Lemma \ref{lem02}, we can rearrange this to obtain \eqref{pmr08}.

\medskip

\textit{Step 4 (Estimate of $ \mathbb{E} | r |^{2} $ in the regime $ L_{*} \leq L $).} Next, we use an ODE argument to infer
\begin{align}\label{pmr09}
	\frac{1}{ | x |^2 } \E | r_{L} |^2
	\lesssim \frac{ \tilde\lambda }{ \tilde\lambda_* } \bigg( \frac{1}{ | x |^2 } \E | r_* |^2
	+ \frac{ \varepsilon^2 } { \tilde\lambda_*^2 } \Big( 1
	+ \frac{ \varepsilon^2  }{ \tilde\lambda_*^{ 2 } } \Big) \bigg) \quad \text{provided} \, \, L_{*} \leq L.
\end{align}
By combining this estimate with \eqref{initial}, we obtain \eqref{pmr03b} in the regime $ | x | \leq L $.

\medskip

Upon dividing the differential inequality \eqref{pmr08} by the integrating factor $ \tilde\lambda_L^{ - 1 } = e^{ -  \tau } $, we obtain
\begin{align*}
\frac{ d }{ d \tau } \frac{ \E | r_L |^2 }{ \tilde\lambda_L }
= \frac{ 1 }{ \tilde\lambda_L } \frac{ d \E | r_L |^2 }{ d \tau } - \frac{ E | r_L |^2 }{ \tilde\lambda_L }
\lesssim \varepsilon \frac{ | x | \E^{ \frac{1}{2} } | r_L |^2 }{ \tilde\lambda_L^2 }
+ \frac{ | x |^2 \E^{ \frac{1}{2} } | r_L |^2 }{ L \tilde\lambda_L }
+ \varepsilon^2 \frac{ | x |^2 }{ \tilde\lambda_L^3 }
+ \frac{ | x |^4 }{ L^2 \tilde\lambda_L } ,
\end{align*}
which implies after integration that
\begin{align*}
	\frac{ \E | r_L |^2 }{ \tilde\lambda_L }  - \frac{ \E | r_* |^2 }{ \tilde\lambda_* }
	& \lesssim \Big( \varepsilon | x |  \int_{ \tau( L_* ) }^{ \tau ( L ) } d\tau \frac{1}{ \tilde\lambda_{ L ( \tau ) } ^{ \frac{3}{2 } } }
	+ \frac{ | x |^2 }{ \tilde\lambda_*^{ \frac{5}{2} } }  \int_{ \tau_* }^{ \tau ( L ) } d\tau \frac{ \tilde\lambda_{ L( \tau ) }^2 }{ L ( \tau )  } \Big) \sup_{ L_* \leq \ell \leq L } \Big( \frac{ \E | r_{ \ell } |^2 }{ \tilde\lambda_{ \ell }  } \Big)^{ \frac{1}{2} } \\
	&+ \varepsilon^2 | x |^2  \int_{ \tau_* }^{ \tau ( L ) } d\tau \frac{1}{ \tilde\lambda_{ L ( \tau ) } ^{ 3 } }
	+ \frac{ | x |^4 }{ \tilde\lambda_*^3 } \int_{ \tau_* }^{ \tau ( L ) } d\tau \frac{ \tilde\lambda_{ L(\tau) } ^2 }{ L( \tau )^2 } .
\end{align*}
To evaluate the integrals, we invoke \eqref{int02} and \eqref{int03} from Appendix \ref{AppIntegrals}
\begin{align*}
\int_{ \tau_* }^{ \tau ( L ) } d\tau\, \frac{ \tilde\lambda_{ L ( \tau ) } ^2 }{L( \tau )^p} \stackrel{ (\ref{int02} ) }{ \lesssim } \frac{ \varepsilon^2 }{ L_*^p }
\quad { \rm and } \quad
\int_{ \tau_* }^{ \tau ( L ) } d\tau \, \frac{1}{ \tilde\lambda_{ L ( \tau ) } ^p } \stackrel{ (\ref{int03} ) }{ \lesssim } \frac{ 1 }{ \tilde\lambda_*^{p} }
\quad { \rm for } ~ p \in \{ 1 , 2 \} ,
\end{align*}
so that we may post-process the above estimate to
\begin{align*}
	\frac{ \E | r_L |^2 }{ \tilde\lambda }  - \frac{ \E | r_* |^2 }{ \tilde\lambda_* }
	& \lesssim \varepsilon \Big( \frac{ | x | }{ \tilde\lambda_*^{ \frac{3}{2 } } }
	+ \varepsilon \frac{ | x |^2 }{ L_* \tilde\lambda_*^{ \frac{5}{2} } }
	\Big) \sup_{ L_* \leq \ell \leq L } \Big( \frac{ \E | r_{ \ell } |^2 }{ \tilde\lambda_{ \ell }  } \Big)^{ \frac{1}{2} }
	+ \varepsilon^2 \Big( \frac{  | x |^2 }{ \tilde\lambda_*^{ 3 } }
	+  \frac{ | x |^4 }{ L_*^2 \tilde\lambda_*^3 } \Big) .
\end{align*}
Using Young's inequality and the fact that $L_{*}^{2} = 1 + |x|^{2} $, we infer from this last estimate that \eqref{pmr09} holds.

\medskip

\textit{Step 5 (Estimates on quadratic variations)}
In this auxiliary step we prove the covariation estimates \eqref{E: covariation estimate 1},  \eqref{E: covariation estimate 1 b}, \eqref{E: covariation estimate 2} and \eqref{E: covariation estimate 2 b} that were needed above. We will first derive \eqref{E: covariation estimate 1 b}. It is then a routine computation that we include for the readers convenience to derive \eqref{E: covariation estimate 1}, \eqref{E: covariation estimate 2} and \eqref{E: covariation estimate 2 b}.

\medskip

\textit{Argument for \eqref{E: covariation estimate 1 b}.} Upon applying Cauchy-Schwarz-inequality, we may restrict to the case $ i = j $. To compute the quadratic variation, we first exploit the independence of the increments $ \nabla d \phi $, and then appeal to the mean-value inequality:
	\begin{align*}
		& \Big[ \int_{ \tau_- }^{ \tau_+  } (\partial_{i} d \phi (x) - \partial_{i} d \phi (0) ) \cdot \int_{ \tau_- }^{ \tau_+  } (\partial_{i} d \phi (x) - \partial_{i} d \phi ( 0 ) ) \Big] \\
		& = \mathbb{E} \Big| \int_{ \tau_- }^{ \tau_+  } (\partial_{i} d \phi (x) - \partial_{i} d \phi (0) ) \Big|^2 
		= \mathbb{E}  \Big| \int_{0}^{1} ds  \int_{ \tau_- }^{ \tau_+  } \nabla \partial_{i} d \phi (s x)^{\dagger} x \Big|^{2}
		\leq |x|^{2} \mathbb{E} \Big|  \int_{ \tau_- }^{ \tau_+  } \nabla \partial_{i} d \phi  \Big|^{2} .
	\end{align*}
The last integral we post-process using \eqref{dertrick2}, which yields
\begin{align*}
	|x|^{2} \mathbb{E} \Big|  \int_{ \tau_- }^{ \tau_+  } \nabla \partial_{i} d \phi  \Big|^{2}
		\lesssim |x|^{2}  \int_{ \tau_- }^{ \tau_+  }\frac{ d \tau }{ L^2 } .
\end{align*}
In view of the arbitrariness of the scales $ \tau_{-} $ and $ \tau_+ $ this implies \eqref{E: covariation estimate 1 b}.

\medskip

\textit{Argument for \eqref{E: covariation estimate 1}.} By the Kunita-Watanabe inequality we have
	\begin{align*}
		& \Big|  \int_{ \tau_- }^{ \tau_+  } d [ \partial_i \phi (0) \cdot (\partial_j \phi (x) - \partial_j \phi ( 0 ) )] \Big| \\
		&\leq \Big( \int_{ \tau_- }^{ \tau_+  } \frac{1}{L} d [ \partial_i \phi \cdot \partial_i \phi ] \Big)^{\frac{1}{2}}
		\Big(  \int_{ \tau_- }^{ \tau_+  } L d [ (\partial_{j} \phi (x) - \partial_{j} \phi (0) ) \cdot ( \partial_j \phi (x) - \partial_{j} \phi (0 )) ] \Big)^{\frac{1}{2}}
	\end{align*}
	for any two scales $ \tau_{-}, \tau_{+} $. Hence we may deduce from \eqref{E: covariation estimate 1 b} and the bound $d [ {\rm tr} \, \nabla \phi (\nabla \phi)^{*} ] \lesssim d \tau$ that
	\begin{align*}
		\Big|  \int_{ \tau_- }^{ \tau_+  }  d[ \partial_{j} \phi^{i} (0) \, (\partial_{\ell} \phi^{i}(x) - \partial_{\ell} \phi^{i}(0) )] \Big| \lesssim | x |  \int_{ \tau_- }^{ \tau_+  } \frac{ d\tau }{ L }.
	\end{align*}
Again, since the scales $ \tau_{-} $ and $ \tau_{+} $ are arbitrary this implies \eqref{E: covariation estimate 1}.

\medskip

\textit{Argument for \eqref{E: covariation estimate 2}.} Again, we proceed by Taylor, this time in the form of
\begin{align*}
\int_{ \tau_- }^{ \tau_+  } ( d \phi  (x) - d \phi (0) - \nabla \phi (0)^{\dagger} x )
=  \int_0^1 ds  \int_{ \tau_- }^{ \tau_+  } \nabla ( d \phi ( s x ) - d \phi (0) )^{\dagger} x .
\end{align*}
By virtue of Jensen's inequality, followed by \eqref{E: covariation estimate 1 b}, this identity implies
\begin{align*}
	& \Big[  \int_{ \tau_- }^{ \tau_+  } ( d \phi (x) - d \phi (0) - \nabla d \phi (0)^{\dagger} x ) \cdot  \int_{ \tau_- }^{ \tau_+  } ( d \phi (x) - d \phi (0) - \nabla d \phi (0)^{\dagger} x )  \Big] \\
	& \leq \int_0^1 ds \Big[  \int_{ \tau_- }^{ \tau_+  } \nabla ( d \phi ( s x ) - d \phi (0) )^{\dagger} x \cdot   \int_{ \tau_- }^{ \tau_+  } \nabla ( d \phi ( s x ) - d \phi (0) )^{\dagger} x \Big]
	\lesssim |x|^{4}  \int_{ \tau_- }^{ \tau_+  } \frac{ d \tau }{ L^{2} },
	\end{align*}
as claimed.

\medskip

\textit{Argument for \eqref{E: covariation estimate 2 b}.} Similar to the argument for \eqref{E: covariation estimate 1}, the Kunita-Watanabe inequality and \eqref{E: covariation estimate 2} imply \eqref{E: covariation estimate 2 b}.
\end{proof}

\subsection{Proof of the main theorems}\label{sec:mr2}

\begin{proof}[Proof of Theorem \ref{thm:mr01}.] With the notation of Lemma \ref{lem:mr03}, we claim that, if $ L^{2} = T + 1 $, then
\begin{align}\label{pmr06cont}
\E \fint_0^T dt \, | u ( x, t ) - u ( 0, t ) - F_{ \tau_*, \tau }^{ \dagger } x |^2
\lesssim \varepsilon^2 | x |^2 \Big(  \max \{ 1, \frac{ \tilde\lambda_L }{ \tilde\lambda_{ * } } \}  + \tilde\lambda_L \Big).
\end{align}
Since $\tilde{\lambda}_{L}, \tilde{\lambda}_{*} \geq 1$, the desired conclusion \eqref{ao03} then follows from Lemma \ref{L: properties of F} (iii).  To see that \eqref{pmr06cont} holds, we recall from \cite[Lemma 9.1 and (98)]{CMOW} that we have\footnote{Here the proof only requires the energy estimate, hence the fact that it is stated for dimension $n = 2$ is no issue.}
	\begin{equation} \label{E: CMOW bound}
		\E \fint_{0}^{T} dt \, | \nabla u(0,t) - \nabla \tilde{u}_L (0) |^{2} \lesssim \varepsilon^{2} \tilde{\lambda}_{L}.
	\end{equation}
Combining this with the result of Lemma \ref{lem:mr03} and the elementary estimate $\E |v(x) - v(0)|^{2}$ $\leq |x|^{2} \E |\nabla v(0)|^{2}$ for a stationary field $v$, we obtain \eqref{pmr06cont}.
\end{proof}

We now use Theorem \ref{thm:mr01} and the bound \eqref{E: CMOW bound} to establish intermittency of the expected position $u$.  To do so, we will utilize the non-equi-integrability result on the process $F$ from Section \ref{S: intermittency}, cf.~Lemma \ref{lem-weakinter}, and its higher-dimensional analogue Lemma \ref{up02}, which will be proved in the upcoming work \cite{KMOW25}.



\begin{proof}[Proof of Theorem \ref{thm:mr02}.]

The first statement is obtained as a corollary of  Theorem \ref{thm:mr01}: We postprocess \eqref{ao03} by using the elementary $ | a^2 - b^2 | \leq ( 2 | b | + | a - b |  ) | a - b | $ and Cauchy-Schwarz to obtain
\begin{align*}
	& \Big| \mathbb{E}\fint_0^Tdt \, \frac{1}{|x|^2}|u(x,t)-u(0,t) |^2 - \mathbb{E} \frac{ 1 }{|x|^2}| F_{\tau(|x|^2),\tau(T)}^\dagger x |^2 \Big| \\
	& \stackrel{ \eqref{ao03} }{ \lesssim } \Big( \sqrt{ \mathbb{E} \frac{ 1 }{|x|^2}| F_{\tau(|x|^2),\tau(T)}^\dagger x |^2 } + \sqrt{ \varepsilon^{2} \mathbb{E}  |F_{0,\tau(T)}|^2 } \, \Big) \sqrt{ \varepsilon^{2} \mathbb{E} |F_{0,\tau(T)}|^2 } .
\end{align*}
By isotropy the term $ \mathbb{E} | F_{\tau(|x|^2),\tau(T)}^\dagger x |^2 $ is independent of the direction $ \frac{ x }{ | x | } $ and thus equal to
\begin{align*}
	\mathbb{E} | F_{\tau(|x|^2),\tau(T)}^\dagger x |^2
	= | x |^2 \fint_{ \mathbb{S}^{ n-1 } } d \hat y \,  \mathbb{E} | F_{\tau(|x|^2),\tau(T)}^\dagger \hat y |^2
	= \frac{ 1 }{ n }  | x |^2 \mathbb{E} | F_{\tau(|x|^2),\tau(T)} |^2 .
\end{align*}
Therefore the normalization in Lemma \ref{L: properties of F} (iii) (recall that $ \tau = \ln \lambda $ by \eqref{ao12}) and the identity $  \mathbb{E} | F_{\tau(|x|^2),\tau(T)} |^2 $ $= \mathbb{E} | F_{ 0 , \tau(T) - \tau(|x|^2) } |^2 $ provided $ \tau(T) - \tau(|x|^2) \geq 0 $ (recall that $ F $ has stationary increments) imply
\begin{align*}
	\mathbb{E} \frac{1}{|x|^{2}}| F_{\tau(|x|^2),\tau(T)}^\dagger x |^2
	= \max \{ 1 ,  \frac{ \lambda ( T )}{ \lambda ( | x |^{2} )} \} .
\end{align*}
On the other hand, by the assumption on the regime \eqref{vregime}, 
	\begin{equation*}
		\varepsilon^{2} \mathbb{E} | F_{0,\tau(T)} |^{2} = \varepsilon^{2} n \lambda(T) \ll \max \{ 1, \frac{ \lambda(T) }{ \lambda(|x|^{2}) } \}
	\end{equation*} 
and thus \eqref{ao06} holds.

\medskip

\medskip

Next, we will use Lemma \ref{lem-weakinter} \& \ref{up02} in form of the statement
\begin{align}\label{inter01}
	\mathbb{E} | F_{ \tau } |^2 I( | F_{ \tau } |^2 \ge \frac{ 1 }{ c }  \, \mathbb{E}^{ \frac{ n + 4 }{ n + 2 } } | F_{ \tau } |^2 )
	\geq \frac{1}{c} \E | F_{ \tau } |^2 
	\quad { \rm provided } ~ \tau \gg_{ n } 1 ,
\end{align}
and for some constant $ c > 1 $
to prove the lower bound \eqref{ao77}. To this end let us write $ X = \fint_{ \mathbb{S}^{n-1}} d \hat{y} \, \fint_{0}^{T} dt \, \frac{1}{|x|^{2}} |u(|x| \hat{y} ,t) - u(0,t)|^{2}$ and $Y = \frac{ 1 }{ n }  |F_{\tau ( | x |^2 ) , \tau ( T ) }|^{2}$, where here $ d \hat{y} $ is the surface measure on the unit circle $ \mathbb{S}^{n-1}$.  It will be convenient to let $X(\hat{y}) =  \fint_{0}^{T} dt \, \frac{1}{|x|^{2}} |u(|x| \hat{y},t) - u(0,t)|^{2}$ and $Y(\hat{y}) =  | F_{\tau ( | x |^2 ) , \tau ( T ) }^{\dagger} \hat{y} |^{2} $ so that $X = \fint_{ \mathbb{S}^{n -1}} d \hat{y} \, X(\hat{y})$ and likewise $Y =  \fint_{ \mathbb{S}^{ n - 1 } } d\hat{y} \,  Y(\hat{y}) $.   Our first goal is to prove the inequality
\begin{align}\label{pmr02}
\E X I ( X < { \textstyle \frac{1}{2} \frac{1}{c} } r^{ \frac{n+4}{n+2} }  )
\leq \Big( n - \frac{2}{c} \Big) r
\quad { \rm with } \quad
r = \frac{ 1 }{ n } \frac{ \lambda( T ) }{ \lambda( | x |^2 ) } ,
\end{align}
whenever $ \varepsilon^2 \lambda( | x | ) \ll 1 $ and $\tau - \tau ( | x |^2 )  \gg 1 $. With the present notation, by isotropy, \eqref{ao06} says that asymptotically $ nr  \approx \E X $. Next, we post-process this statement into our claim.

\medskip

We start from the observation
\begin{align}\label{pmr02prev}
X I ( X < { \textstyle \frac{1}{2} \frac{ 1 }{ c } } r^{\frac{n+4}{n+2}} )
\leq Y I ( Y < { \textstyle \frac{1}{c} } r^{\frac{n+4}{n+2}} ) + | X - Y | .
\end{align}
On the error term $ | X - Y | $ we proceed as follows. We apply Jensen's inequality and the elementary inequality $ || a |^2 - | b |^2|  \leq | a - b | ( 2 | a | + | a - b | )  $ (see above) to obtain
\begin{align*}
&\E | X(\hat{y}) - Y(\hat{y}) |
 \lesssim \Big( \E \fint_0^T dt \, | F_{ \tau ( | x |^2 ), \tau ( T ) } ^{ \dagger } \hat{y}  - |x|^{-1} ( u( |x| \hat{y}, t ) - u ( 0, t ) ) |^2 \Big)^{ \frac{1}{2} } \\
&\qquad \times \bigg( \Big( \E \fint_0^T dt \,  | F_{ \tau ( | x |^2 ) , \tau ( T ) } |^2 \Big)^{ \frac{1}{2} } + \Big( \E \fint_0^T dt \,  |  F_{ \tau ( | x |^2 ), \tau ( T ) } ^{ \dagger } \hat{y} - |x|^{-1} ( u( |x| \hat{y} , t ) - u ( 0, t ) ) |^2 \Big)^{ \frac{1}{2} } \bigg).
\end{align*}
Applying Theorem \ref{thm:mr01}, \eqref{ao06}, and the bound $\mathbb{E}|X - Y| \leq \fint_{ \mathbb{S}^{n-1}} \, d \hat{y} \, \mathbb{E}|X( \hat{y} ) - Y( \hat{y} )|$, we find
\begin{align*}
\E | X - Y |
\lesssim \Big( \varepsilon^2 \frac{\lambda^2(T)}{\lambda(|x|^2)} \Big)^{ \frac{1}{2} } + \varepsilon^{2} \lambda(T)
\ll  \frac{1}{n} \frac{\lambda(T)}{\lambda(|x|^2)} = r
\quad { \rm provided} \quad
\varepsilon^2 \lambda( | x |^2 ) \ll 1 .
\end{align*}
Therefore, \eqref{pmr02prev} and \eqref{inter01} together imply
\begin{align*}
\E X I ( X < { \textstyle \frac{1}{2} \frac{1}{c} } r^{ \frac{ n + 4 }{ n + 2 } } )
\leq \E Y I ( Y < { \textstyle \frac{1}{c} } r^{ \frac{n + 4 }{ n + 2 } } ) + \E | X - Y |
\leq \Big( n - \frac{ 2 }{ c } \Big) r
\quad { \rm with } \quad
r =  \frac{1}{n} \frac{ \lambda( T ) }{ \lambda( | x |^2 ) } ,
\end{align*}
whenever $ \varepsilon^2 \lambda( | x | ) \ll 1 $ and $\tau - \tau ( | x |^2 ) \gg 1$, as claimed in \eqref{pmr02}.

\medskip

To conclude the proof in case $\tau - \tau( | x |^2 ) \gg 1$, let us note that \eqref{ao06} and \eqref{pmr02} together imply
\begin{align}\label{meet01}
\E X I ( X \geq { \textstyle \frac{1}{ 2 c }  } r^{ \frac{ n + 4 }{ n + 2 } } )
\geq \frac{2}{c} r ,
\end{align}
which by Markov's inequality (recall that $ n r \approx \E X $) shows
\begin{align*}
\E X^p
\geq \frac{2}{c} \Big( \frac{1}{ 2c } r^{ \frac{ n + 4 }{ n + 2 }  } \Big)^{p - 1} r.
\end{align*}
After applying Jensen's inequality and isotropy in law, this yields
	\begin{align*}
		\mathbb{E} \Big( \fint_{0}^{T} dt \, \frac{1}{|x|^{2}} |u(x,t) - u(0,t)|^{2} \Big)^{p} \geq \E X^{p} \gtrsim r^{1 + \frac{ n + 4 }{ n + 2 } (p - 1)}.
	\end{align*}
Recalling again that by \eqref{ao06} we have $ nr \approx \E \fint_{0}^{T} dt \, \frac{1}{|x|^{2}} |u(x,t) - u(0,t)|^{2}$, this implies the claim in the regime $\tau - \tau ( | x |^2 ) \gg 1 $.

\medskip

If $ \tau - \tau ( | x |^2 ) \gg 1 $ does not hold, we may apply Jensen's inequality to \eqref{ao06} to obtain
\begin{align*}
	1 \lesssim \Big( \mathbb{E}\fint_0^Tdt\frac{1}{|x|^2}|u(x,t)-u(0,t)|^2 \Big)^p
	\leq   \mathbb{E} \Big( \fint_0^Tdt\frac{1}{|x|^2}|u(x,t)-u(0,t)|^2 \Big)^p,
\end{align*}
so that the proposition holds trivially in this case.
\end{proof}


\section*{Acknowledgements}

We thank B\'{a}lint T\'{o}th, Fabio Toninelli, Giuseppe Cannizzaro, and Ofer Zeitouni for helpful discussions.  We also benefitted from conversations with Tomasz Komorowski.  Finally, we acknowledge Anna Wienhard, Corentin Le Bars, and \c{S}efika Kuzgun for discussions concerning random walks and diffusion on groups.

\appendix

\section{Representation theory}\label{reptheory}

Throughout this work, we need some classical statements from representation theory of $  \textbf{SO} ( n ) $ and $ \textbf{O} ( n ) $ that we summarize in this appendix. We start with the following lemma classifying invariant two-tensors.

\begin{lemma}\label{lemRep02}
    (i) We have
	\begin{align*}
		{ \rm dim } \big\{ \text{symmetric} ~ \textbf{SO} ( n ) \text{-invariant} ~ \text{bilinear} ~ \text{forms} ~ \text{on} ~ { \rm tangent ~ space } \big\} = 1 ;
	\end{align*}
	more precisely, this space is spanned by the tensor $ ( \dot x , \dot y ) \mapsto \dot x \cdot \dot y $. In particular, every $ \textbf{SO} ( n ) $-invariant symmetric bilinear form is also $ \textbf{O}(n) $-invariant.
	
    \medskip
    
    (ii) We have
	\begin{align*}
		{ \rm dim } \big\{ \textbf{O} ( n ) \text{-invariant} ~ \text{endomorphisms} ~ \text{of} ~ { \rm co tangent ~ space } \big\} = 1 ,
	\end{align*}
and consequently the space is spanned by the identity $ { \rm id } $.
\end{lemma}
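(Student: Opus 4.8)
\textbf{Proof proposal for Lemma \ref{lemRep02}.}

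The plan is to prove both parts by invoking the irreducibility of the standard (defining) representation of $\textbf{SO}(n)$ on tangent space, together with Schur's lemma, and then upgrading $\textbf{SO}(n)$-invariance to $\textbf{O}(n)$-invariance by a direct check. For part (ii), I would first observe that an $\textbf{O}(n)$-invariant endomorphism $F$ of cotangent space is in particular $\textbf{SO}(n)$-invariant, i.e.\ it commutes with the action of every $Q \in \textbf{SO}(n)$. Since $n \geq 2$, the standard representation of $\textbf{SO}(n)$ on cotangent space $\cong \mathbb{R}^n$ is irreducible (for $n = 2$ one uses that the only real subspace invariant under all planar rotations is $\{0\}$ or the whole space; for $n \geq 3$ this is classical). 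By Schur's lemma over $\mathbb{R}$, the commutant of an irreducible real representation is a division algebra over $\mathbb{R}$, so it is $\mathbb{R}$, $\mathbb{C}$, or $\mathbb{H}$. For $n \geq 3$ the representation is absolutely irreducible (it stays irreducible after complexification), forcing the commutant to be exactly $\mathbb{R} \cdot \text{id}$; for $n = 2$ the commutant is a priori $\mathbb{C}$ (rotations plus the complex structure $J$), but here the extra $\textbf{O}(2)$-invariance saves us: conjugating $J$ by a reflection $O$ gives $OJO^{-1} = -J \neq J$, so $J$ is not $\textbf{O}(2)$-invariant, and the $\textbf{O}(2)$-commutant is again $\mathbb{R}\cdot\text{id}$. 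Thus in all cases $\dim = 1$, spanned by $\text{id}$.

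For part (i), I would reduce it to part (ii) via the metric. A symmetric bilinear form $\mathfrak{b}$ on tangent space corresponds, through the Euclidean inner product, to a self-adjoint endomorphism $F$ of cotangent space via $\mathfrak{b}(\dot x, \dot y) = (F \dot x^\flat) . \dot y$, where $\flat$ denotes lowering an index; under this correspondence $\textbf{SO}(n)$-invariance of $\mathfrak{b}$ is equivalent to $\textbf{SO}(n)$-equivariance of $F$, i.e.\ $F$ commuting with the $\textbf{SO}(n)$-action. By the Schur-lemma argument just given, for $n \geq 3$ the $\textbf{SO}(n)$-commutant is already $\mathbb{R}\cdot\text{id}$, so $\mathfrak{b}$ is a multiple of the inner product. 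For $n = 2$ the $\textbf{SO}(2)$-commutant is $\{a\,\text{id} + bJ : a,b \in \mathbb{R}\}$, but the constraint that $F$ be self-adjoint (equivalently that $\mathfrak{b}$ be symmetric) kills the $J$-component since $J$ is skew, leaving only $\mathbb{R}\cdot\text{id}$; hence again $\dim = 1$, spanned by $(\dot x,\dot y)\mapsto \dot x . \dot y$. The final sentence of (i), that $\textbf{SO}(n)$-invariance automatically implies $\textbf{O}(n)$-invariance, is then immediate: the spanning tensor $(\dot x,\dot y)\mapsto \dot x . \dot y$ is manifestly invariant under all of $\textbf{O}(n)$ (reflections preserve the inner product), so every element of the one-dimensional space is.

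The main obstacle, and the only place requiring genuine care, is the dimension $n = 2$ case, where the naive "$\textbf{SO}(n)$ acts irreducibly, apply Schur, done'' argument fails because $\textbf{SO}(2)$ is abelian and its real commutant on $\mathbb{R}^2$ is two-dimensional (it is $\mathbb{C}$). In both parts this extra degree of freedom must be eliminated: for (i) by the symmetry/self-adjointness hypothesis, for (ii) by genuinely using reflections in $\textbf{O}(2)\setminus\textbf{SO}(2)$. I would make sure to state explicitly that $n \geq 2$ throughout (as the paper does) so that tangent space is at least two-dimensional and the irreducibility claims make sense, and I would phrase the $n = 2$ check concretely: writing a general commuting endomorphism as $a\,\text{id} + bJ$ and testing against a single reflection $O$ (e.g.\ $O = e^1 \otimes e_1 - e^2 \otimes e_2$) to see $OJO^{-1} = -J$. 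No other step poses difficulty; the existence half of each statement (that $\text{id}$, resp.\ the inner product, is indeed invariant) is trivial.
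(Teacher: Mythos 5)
Your proposal is correct, but it takes a genuinely different route from the paper. Where you reach for Schur's lemma over $\mathbb{R}$, the classification of real division algebras, and absolute irreducibility (necessitating a case split between $n=2$ and $n\ge3$), the paper works more elementarily and uniformly in $n$. For part (i), the paper also passes to a symmetric endomorphism $T$ with $f(\dot x,\dot y)=\dot x\cdot T\dot y$, but then simply diagonalizes $T$ by the spectral theorem, notes that each eigenspace is $\textbf{SO}(n)$-invariant, and invokes transitivity of $\textbf{SO}(n)$ on unit vectors to conclude each eigenspace is $\{0\}$ or everything; no appeal to irreducibility or commutant structure is needed, and $n=2$ requires no special treatment because self-adjointness is baked in from the start via the spectral theorem. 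For part (ii), the paper does not go through the $\textbf{SO}(n)$-commutant at all: given an $\textbf{O}(n)$-invariant $T$, it tests $T$ against the reflection fixing $e^\perp$ and flipping $e$, deduces directly that $Te\parallel e$ for every $e$, hence every vector is an eigenvector, and finishes again by transitivity. Your argument buys a conceptual link to the general theory (real Schur, $\mathbb{R}/\mathbb{C}/\mathbb{H}$), which is illuminating, but at the cost of the $n=2$ vs.\ $n\ge3$ split and the need to know that the complexified standard representation of $\textbf{SO}(n)$ is irreducible for $n\ge3$; the paper's reflection-and-transitivity argument avoids all of this and is uniform across dimensions. Both are valid proofs.
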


For the reader's convenience we provide a proof of Lemma \ref{lemRep02} further below. Of importance for us is the following corollary of point (i).

\begin{corollary}\label{corRep01}
	We have 
	\begin{align*}
		{ \rm dim } \, \big\{  \mathcal{F} c : \mathbb{S}^{ n - 1 } \rightarrow  { \rm tangent ~ space } \otimes_{ \rm sym }  { \rm tangent ~ space } ~ { \rm satisfying } ~ \eqref{ao02c} ~ \& ~ \eqref{ao24repeat} \} = 1 .
	\end{align*}
	More precisely, the space is spanned by the function $ k \mapsto \delta^{ i j } e_i \otimes e_j - k^* \otimes k^* $.
\end{corollary}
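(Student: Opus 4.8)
The plan is to deduce the corollary directly from Lemma \ref{lemRep02}(i) by a representation-theoretic argument on the fibre over a fixed point of the sphere, combined with the divergence constraint \eqref{ao24repeat}. First I would fix a unit cotangent vector $k$; by rescaling (homogeneity is already built into the statement, since $\mathcal{F}c$ is defined on $\mathbb{S}^{n-1}$ here) it suffices to understand the value $\mathcal{F}c(k)$ as an element of $\mathrm{tangent~space}\otimes_{\mathrm{sym}}\mathrm{tangent~space}$. The isotropy relation \eqref{ao02c} says that $Q\mapsto \mathcal{F}c(Q^{-\dagger}k)$ transforms by $Q\otimes Q$; restricting to the stabilizer subgroup $\textbf{SO}(n-1)\subset\textbf{SO}(n)$ of $k$ (more precisely, of the associated tangent vector $k^*$), the value $\mathcal{F}c(k)$ must be a symmetric bilinear form on cotangent space that is $\textbf{SO}(n-1)$-invariant.

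Next I would decompose tangent space as $\mathbb{R}k^*\oplus (k^*)^\perp$, which is an $\textbf{SO}(n-1)$-invariant splitting. A symmetric two-tensor invariant under $\textbf{SO}(n-1)$ acting trivially on $\mathbb{R}k^*$ and standardly on $(k^*)^\perp$ is, by Lemma \ref{lemRep02}(i) applied to the $(n-1)$-dimensional space $(k^*)^\perp$ (together with Schur-type vanishing of the off-diagonal $\mathbb{R}k^*\otimes(k^*)^\perp$ block, since the two summands are inequivalent irreducibles for $n\ge 3$, and a direct check for $n=2$), a linear combination of $k^*\otimes k^*$ and the restriction of the Euclidean metric $I_\perp$ to $(k^*)^\perp$. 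Writing $I_\perp=\delta^{ij}e_i\otimes e_j-k^*\otimes k^*$ (using $|k|=1$), this gives a two-parameter family $\alpha(\delta^{ij}e_i\otimes e_j-k^*\otimes k^*)+\beta\,k^*\otimes k^*$. Now I would impose the divergence condition \eqref{ao24repeat}: contracting $\mathcal{F}c(k)$ against $k\otimes\xi$ for all $\xi$ kills the $k^*\otimes k^*$ component (it produces a nonzero multiple of $k^*$), forcing $\beta=0$, while the first tensor is automatically annihilated since $(\delta^{ij}e_i\otimes e_j-k^*\otimes k^*).k=k^*-k^*=0$. This pins down $\mathcal{F}c(k)$ up to the single scalar $\alpha$, and isotropy \eqref{ao02c} then propagates this to all $k$ with the stated coefficient function, yielding the claimed one-dimensionality and the explicit spanning tensor $k\mapsto\delta^{ij}e_i\otimes e_j-k^*\otimes k^*$.

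The main obstacle I anticipate is the bookkeeping around the low-dimensional case $n=2$, where $\textbf{SO}(n-1)=\textbf{SO}(1)$ is trivial and the representation-theoretic rigidity degrades: one cannot invoke Schur's lemma to kill the cross term, so the $\textbf{SO}(2)$-invariance (not just the stabilizer invariance) has to be used more carefully, essentially tracking how $\mathcal{F}c(Q k)$ and $(Q\otimes Q)\mathcal{F}c(k)$ constrain the full $2\times 2\times 2\times 2$ tensor. A clean way around this is to run the whole argument with the \emph{full} group $\textbf{O}(n)$ from the start, reducing all invariant-tensor statements to Lemma \ref{lemRep02}, rather than passing to the pointwise stabilizer; that is the route I would actually take in writing it up, since Lemma \ref{lemRep02}(i) already records the needed $n$-independent fact and the paper only needs the dimension count and the explicit generator.
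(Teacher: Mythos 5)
Your overall strategy matches the paper's: fix $k$, reduce to the stabilizer of $k$, invoke Lemma~\ref{lemRep02}(i), and use \eqref{ao24repeat} to cut down to the final one-dimensional space. But the ordering you chose creates a genuine gap that your proposed fix does not close. Specifically, you first attempt to classify \emph{all} stabilizer-invariant symmetric two-tensors on the full tangent space, claim a two-parameter family, and only then impose \eqref{ao24repeat}. For $n=2$ the stabilizer of $k$ in $\textbf{SO}(2)$ is trivial, so stabilizer invariance yields no constraint at all: the relevant space is the full three-dimensional space of symmetric two-tensors, not a two-parameter family, and there is no ``Schur-type vanishing of the off-diagonal block'' to extract. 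You notice this and propose to ``run the whole argument with the full group $\textbf{O}(n)$,'' but this assumes more than the hypothesis allows: the isotropy condition \eqref{ao02c} concerns rotations, i.e.\ $\textbf{SO}(n)$, and the $\textbf{O}(n)$-invariance of $b$ is recorded in the paper only \emph{after} \eqref{ao08}, as a consequence of the corollary you are trying to prove. You cannot assume it. (Lemma~\ref{lemRep02}(i) upgrades $\textbf{SO}(n)$- to $\textbf{O}(n)$-invariance for symmetric bilinear forms on the full space, but that does not give you the reflection fixing $k^{*}$ and negating $(k^{*})^{\perp}$ as a symmetry of the ensemble.)

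The paper's ordering avoids this entirely, and so does a small reshuffling of your own argument: impose \eqref{ao24repeat} \emph{before} invoking any representation theory. Since $\mathcal{F}c(k)$ is symmetric, $\mathcal{F}c(k).\,k\otimes\xi=0$ for all $\xi$ kills not only the $k^{*}\otimes k^{*}$ component but also every cross term $k^{*}\otimes e_{\perp}^{*}+e_{\perp}^{*}\otimes k^{*}$; this is pure linear algebra, no Schur needed. What remains is a symmetric bilinear form supported on $(k^{*})^{\perp}$, which by stabilizer invariance and Lemma~\ref{lemRep02}(i) (applied to the $(n-1)$-dimensional subspace) is a multiple of the Euclidean metric there — a statement that is nontrivial for $n\ge 3$ and trivially true for $n=2$, since forms on a one-dimensional space are automatically multiples of any fixed nonzero one. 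From there your final step (full $\textbf{SO}(n)$-invariance makes the scalar $k$-independent) is correct. So the approach is salvageable, but as written the $n=2$ case is not handled and the $\textbf{O}(n)$ shortcut is not available.
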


\begin{proof}[Proof of Corollary \ref{corRep01}]
	Recall that we can identify the tensor space $ { \rm tangent ~ space } \otimes_{ \rm sym } { \rm tangent ~ space } $ with the symmetric bilinear forms on $ { \rm co tangent ~ space } $. In this vein, given $ k \in \mathbb{S}^{ n - 1 }$, \eqref{ao02c} states that
	\begin{align*}
		\mathcal{F} c ( k )
		~ { \rm is ~ an ~ isotropic ~ symmetric ~ bilinear ~ form ~ on ~ the ~ orthogonal ~ complement ~ of } ~ k .
	\end{align*}
	Thus by Lemma \ref{lemRep02} (i)\footnote{technically speaking applied to forms on a subspace of $ { \rm co tangent ~ space} $} the tensor $ \mathcal{F} c ( k ) $ is a multiple of the Euclidean inner product on the orthogonal complement of $ k $:
	\begin{align*}
		\mathcal{F} c ( k ) . \xi \otimes \xi'
		= { \rm constant } ( k ) \, \xi \cdot \xi'
		\quad { \rm provided } ~ \xi \cdot k = \xi' \cdot k = 0 .
	\end{align*}
	By \eqref{ao24repeat} we know that $ \mathcal{F} c ( k ) $ vanishes whenever $ \xi $ or $ \xi' $ are parallel to $ k $. In particular the form only depends on the orthogonal projection $ \xi - ( \xi \cdot k ) k $. Using the identity $ ( \xi - ( \xi \cdot k ) k ) \cdot ( \xi' - ( \xi' \cdot k ) k )= \xi \cdot \xi' - ( \xi \cdot k ) ( \xi' \cdot k ) $, we find
	\begin{align*}
		( \xi , \xi' ) \mapsto \mathcal{F} c ( k ) . \xi \otimes \xi' 
		\quad { \rm is ~ a ~ multiple } ~ { \rm constant } ( k ) ~ { \rm of } \quad
		( \xi , \xi' ) \mapsto \xi \cdot \xi' - ( \xi \cdot k ) ( \xi' \cdot k ) .
	\end{align*}
Since the latter form already shares the invariance \eqref{ao02c}, we learn that the potentially $ k $-dependent constant is isotropic and thus independent of $ k $. This proves the desired one-dimensionality.
\end{proof}

Next, we provide a proof of Lemma \ref{lemRep02}. Note that in point (ii) we do not assume symmetry; thus it does not immediately follow from point (i).

\begin{proof}[Proof of Lemma \ref{lemRep02}]
	The first part is a standard consequence of Schur's lemma, which we recall for the readers convenience: Given a bilinear form $f$ on tangent space, there is a symmetric endomorphism $T$ such that $f(\dot{x}, \dot{y}) = \dot{x} \cdot T \dot{y}$.  By the spectral theorem, we can (orthogonally) diagonalize $T$. By the $ \textbf{SO} ( n ) $-invariance we know that all the eigenspaces are $ \textbf{SO} ( n ) $-invariant. Since any two unit vectors may be transformed into each other by an element of $ \textbf{SO} ( n ) $, we learn that the eigenspaces of $ T $ are either trivial or equal to the full space. Therefore, $ T $ has only one eigenvalue and is thus a multiple of the identity, which proves $f$ is a multiple of the Euclidean inner product.

	\medskip
	
	For the second item, let $ T $ be an $ \textbf{O} ( n ) $-invariant endomorphism. For some vector $ e $ we consider the reflection sending $ e $ to $ - e $ and fixing orthogonal complement of $ e $. By the $ \textbf{O} ( n ) $-invariance of $ T $ we conclude $ v \cdot Te = 0 $ for any $ v $ orthogonal to $ e $. Thus $ Te $ is parallel to $ e $,~i.e.~$ e $ is an eigenvector. Since $ e $ was arbitrary, we learn that any vector is an eigenvector for $ T $. Since any two unit vectors are mapped into each other by a orthogonal transformation, $ T $ has exactly one eigenvalue and is thus a multiple of the identity.
\end{proof}

The next lemma deals with the classification of $ \textbf{O}(n) $-invariant symmetric bilinear forms on $ { \rm End } ( { \rm tangent ~ space } ) $.  By $\textbf{O}(n)$-invariant, we mean a form $\bullet$ such that 
	\begin{align*}
		(O E O^{-1}) \bullet ( O E' O^{-1} ) = E \bullet E' \quad \text{for each} \quad E \in \text{End}( { \rm tangent ~ space }), \, \, O \in \textbf{O}(n).
	\end{align*}

\begin{lemma}\label{lemRep03}

(i) Every $\textbf{O}(n)$-invariant symmetric bilinear form on $ { \rm End } ( { \rm tangent ~ space } ) $ factors $ { \rm End } ( { \rm tangent ~ space } ) $ orthogonally into the space of isotropic, trace-free and symmetric, and skew-symmetric endomorphisms.

\medskip
	
	(ii) We have
	\begin{align*}
		{ \rm dim } \big\{ \text{symmetric} ~\textbf{O} ( n ) \text{-invariant} ~ \text{bilinear} ~ \text{forms} ~ \text{on} ~ \text{symmetric} ~ \text{trace-free} ~ \text{endomorphisms} \big\} = 1.
	\end{align*}
	%
	
	(iii) We have
	\begin{align*}
		{ \rm dim } \big\{ \text{symmetric} ~\textbf{O} ( n ) \text{-invariant} ~ \text{bilinear} ~ \text{forms} ~ \text{on} ~ \text{skew-symmetric} ~ \text{endomorphisms} \big\} = 1.
	\end{align*}
	%

\end{lemma}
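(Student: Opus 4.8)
The plan is to reduce all three parts to a single representation-theoretic input: the space of $\mathbf{O}(n)$-invariant bilinear forms on $\mathrm{End}:=\mathrm{End}(\text{tangent space})$ (with the conjugation action) is spanned by the three forms
\[
	\beta_1(E,E')=\mathrm{tr}\,E\,\ \mathrm{tr}\,E',\qquad
	\beta_2(E,E')=\mathrm{tr}(EE'),\qquad
	\beta_3(E,E')=\mathrm{tr}(E^*E').
\]
Each of these is manifestly symmetric and $\mathbf{O}(n)$-invariant (for $\beta_3$ using $(OEO^{-1})^*=OE^*O^{-1}$ for orthogonal $O$), so the only issue is the upper bound on the dimension. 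This is the first fundamental theorem of invariant theory for $\mathbf{O}(n)$: identifying $\mathrm{End}\cong(\text{tangent space})\otimes(\text{tangent space})$ via the metric, an invariant bilinear form on $\mathrm{End}$ is an invariant element of $(\text{tangent space})^{\otimes 4}$, and for $\mathbf{O}(n)$ such elements are spanned by the three complete pairwise metric contractions, which are precisely $\beta_1,\beta_2,\beta_3$. (Alternatively one could argue by Schur's lemma from the fact that $\mathrm{End}=\mathbb{R}\,\mathrm{id}\oplus\mathrm{Sym}_0\oplus\mathrm{Skew}$ is a sum of pairwise non-isomorphic real-type irreducibles of $\mathbf{O}(n)$, but then the cases $n\in\{2,4\}$ need separate bookkeeping, which the contraction argument avoids.)

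Granting this, part (i) becomes a direct computation. Write $\mathbb{R}\,\mathrm{id}$, $\mathrm{Sym}_0$, $\mathrm{Skew}$ for the isotropic, the trace-free symmetric, and the skew endomorphisms; these are $\mathbf{O}(n)$-invariant complementary subspaces of $\mathrm{End}$, and one checks each $\beta_j$ kills every cross term. Indeed $\mathrm{tr}\,G=\mathrm{tr}\,H=0$ for $G\in\mathrm{Sym}_0$, $H\in\mathrm{Skew}$, one has $\mathrm{tr}(GH)=\mathrm{tr}((GH)^*)=\mathrm{tr}(H^*G^*)=-\mathrm{tr}(HG)=-\mathrm{tr}(GH)=0$, and $\mathrm{id}^*=\mathrm{id}$; hence $\beta_j(\mathbb{R}\,\mathrm{id},\mathrm{Sym}_0)=\beta_j(\mathbb{R}\,\mathrm{id},\mathrm{Skew})=\beta_j(\mathrm{Sym}_0,\mathrm{Skew})=0$ for $j=1,2,3$. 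By the spanning property the same vanishing holds for every $\mathbf{O}(n)$-invariant symmetric form, which is exactly the asserted orthogonal factorization.

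For (ii) and (iii) the key observation is that restriction is a surjection from the invariant forms on $\mathrm{End}$ onto the invariant forms on $\mathrm{Sym}_0$ (resp.\ $\mathrm{Skew}$): any invariant form on $\mathrm{Sym}_0$ extends to $\mathrm{End}$ by precomposing with the $\mathbf{O}(n)$-equivariant projection $E\mapsto\tfrac12(E+E^*)-\tfrac1n(\mathrm{tr}\,E)\mathrm{id}$ onto $\mathrm{Sym}_0$ along its invariant complement $\mathbb{R}\,\mathrm{id}\oplus\mathrm{Skew}$. Hence the invariant forms on $\mathrm{Sym}_0$ are spanned by $\beta_1|_{\mathrm{Sym}_0},\beta_2|_{\mathrm{Sym}_0},\beta_3|_{\mathrm{Sym}_0}$. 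On $\mathrm{Sym}_0$, however, $\beta_1=0$ by trace-freeness and, since $G^*=G$ there, $\beta_2|_{\mathrm{Sym}_0}=\beta_3|_{\mathrm{Sym}_0}=\big((G,G')\mapsto\mathrm{tr}(GG')\big)$, which is nonzero since $\mathrm{tr}(GG)=|G|^2>0$; so the space is exactly one-dimensional. Part (iii) is identical: on $\mathrm{Skew}$ we have $\beta_1=0$, $\beta_2|_{\mathrm{Skew}}=\big((H,H')\mapsto\mathrm{tr}(HH')\big)$, and $\beta_3|_{\mathrm{Skew}}=-\beta_2|_{\mathrm{Skew}}$ because $H^*=-H$, with $\mathrm{tr}(HH)=-|H|^2<0$ nonzero, so again the space is one-dimensional.

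The only nontrivial ingredient is the three-dimensionality of the invariant bilinear forms on $\mathrm{End}$; everything else is elementary trace algebra. I expect that to be the ``hard'' step only in the editorial sense of deciding how much of the first fundamental theorem to quote versus reprove inline — the bound is standard and uniform in $n\geq 2$, which is exactly why this route is preferable to a case-by-case irreducibility analysis.
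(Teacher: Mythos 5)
Your proof is correct, and it takes a genuinely different route from the paper's. The paper proves Lemma~\ref{lemRep03} self-containedly: it classifies equivariant self-adjoint endomorphisms on $\text{Sym}$ via the spectral theorem and explicit rotations/reflections (Lemma~\ref{L: symmetric irreducibility}), handles $\text{Skew}$ by a direct basis-and-permutations computation (Lemma~\ref{L: skew symmetric irreducibility}), deduces irreducibility of the three summands (Proposition~\ref{P: irreducibility}), and then gets part (i) from the dichotomy that $\pi_W \circ \mathbf{T} \circ \pi_V$ is either zero or an isomorphism (with a separate two-line argument to rule out the $n=2$ coincidence $\dim \text{span}\{\text{id}\} = \dim\text{Skew}$). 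You instead invoke the First Fundamental Theorem of invariant theory for $\mathbf{O}(n)$ to get the spanning set $\{\beta_1,\beta_2,\beta_3\}$ on all of $\mathrm{End}$, after which (i)--(iii) become short trace computations, with the surjectivity of restriction (via the equivariant projections onto $\text{Sym}_0$ and $\text{Skew}$) handling (ii) and (iii) cleanly and uniformly in $n$. What each approach buys: yours is shorter, avoids the $n=2$ and $n=4$ case distinctions entirely, and correctly notes that the real FFT is a standard available reference (which partly answers the paper's remark that it ``is not aware of a reference covering the real case''); the paper's approach is self-contained without quoting any classical invariant theory, and its intermediate Lemma~\ref{L: symmetric irreducibility} is not a throwaway --- it is used again directly in the proof of Lemma~\ref{L: symmetric part}, so the explicit equivariant-endomorphism classification does additional work later in the argument.
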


\subsection{Self-contained argument for Lemma \ref{lemRep03}} 

We refer to \cite[Theorem 5.3.3]{GoodmanWallach} for a proof of Lemma \ref{lemRep03} in the case of complex vector spaces. Since we are not aware of a reference covering the real case, we provide a self-contained argument in this subsection.

\medskip

The argument involves elementary representation theory.  As throughout the paper, we use the fact that the group $\textbf{O}(n)$ acts on $\text{End}( { \rm tangent ~ space })$ via conjugation $E \mapsto O E O^{-1}$.  Recall that an $\textbf{O}(n)$-invariant linear subspace $V$ of $\text{End}( { \rm tangent ~ space } )$ is called \emph{irreducible} if its only $\textbf{O}(n)$-invariant subspaces are the zero subspace $\{0\}$ and $V$ itself.  As stated in the next proposition, in this setting, the irreducible subspaces are known.

\medskip

	\begin{proposition} \label{P: irreducibility} The $\textbf{O}(n)$-invariant subspaces $\text{span}\{\text{id}\}$, $\text{Sym}_{0}$, and $\text{Skew}$ of $\text{End}( { \rm tangent ~ space } )$ are irreducible. \end{proposition}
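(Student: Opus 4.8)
The statement to prove is Proposition \ref{P: irreducibility}: the three subspaces $\mathrm{span}\{\mathrm{id}\}$, $\mathrm{Sym}_0$ (trace-free symmetric endomorphisms), and $\mathrm{Skew}$ (skew-symmetric endomorphisms) of $\mathrm{End}(\text{tangent space})$ are irreducible under the conjugation action of $\textbf{O}(n)$. The case of $\mathrm{span}\{\mathrm{id}\}$ is trivial: it is one-dimensional, so its only subspaces are $\{0\}$ and itself. So the work is in the two remaining cases, and the plan is to give a direct, hands-on argument in each by exhibiting, from any nonzero element of an invariant subspace, enough conjugates and linear combinations to generate the whole space.

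\textbf{Skew-symmetric case.} Let $W \subseteq \mathrm{Skew}$ be a nonzero $\textbf{O}(n)$-invariant subspace and pick $0 \neq A \in W$. First I would normalize: since $A$ is skew, it has a nonzero $2\times2$ block in some orthonormal basis, so after conjugating by a suitable $O \in \textbf{O}(n)$ we may assume the elementary skew generator $J_{12} := e^1 \otimes e_2 - e^2 \otimes e_1$ appears with nonzero coefficient in $A$. The key trick is to average over a finite subgroup to project onto $J_{12}$: conjugating $A$ by the reflection $e^i \mapsto -e^i$ for $i \notin \{1,2\}$ flips the sign of every entry $A^{ij}$ with exactly one index equal to $i$; averaging $A$ with these conjugates over all $i \geq 3$ kills all entries outside the $\{1,2\}$-block and leaves (a nonzero multiple of) $J_{12}$ in $W$. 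Then, since $\textbf{SO}(n)$ acts transitively on oriented orthonormal $2$-frames (for $n \geq 3$; for $n = 2$, $\mathrm{Skew}$ is already one-dimensional and the claim is immediate), conjugating $J_{12}$ runs over all $J_{ij}$, and these span $\mathrm{Skew}$; hence $W = \mathrm{Skew}$.

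\textbf{Trace-free symmetric case.} Let $V \subseteq \mathrm{Sym}_0$ be nonzero and $\textbf{O}(n)$-invariant, with $0 \neq S \in V$. Diagonalize $S$ by an orthogonal conjugation, so we may take $S = \mathrm{diag}(\mu_1, \dots, \mu_n)$ with $\sum_i \mu_i = 0$ and not all $\mu_i$ equal. Again use averaging over the reflection subgroup: conjugating the diagonal $S$ by any permutation matrix (an element of $\textbf{O}(n)$) permutes the $\mu_i$, so $V$ contains $S - \pi \cdot S$ for every permutation $\pi$; choosing $\pi$ to be a transposition $(i\,j)$ with $\mu_i \neq \mu_j$ (which exists since the $\mu_i$ are not all equal) produces a nonzero multiple of $H_{ij} := e^i \otimes e_i - e^j \otimes e_j \in V$. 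Then conjugating $H_{12}$ by permutations gives all $H_{ij}$, and conjugating by a rotation in the $(1,2)$-plane by angle $\pi/4$ turns $H_{12}$ into $e^1 \otimes e_2 + e^2 \otimes e_1$ (up to scalars), so $V$ contains all the off-diagonal symmetric generators as well; together the $H_{ij}$ and the symmetrized $e^i \otimes e_j + e^j \otimes e_i$ span $\mathrm{Sym}_0$. Hence $V = \mathrm{Sym}_0$.

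\textbf{Main obstacle.} The conceptual content is small, but the bookkeeping in the symmetric case is the delicate part: one must be careful that the averaging/reflection argument genuinely produces a \emph{nonzero} element (this is where the hypothesis that $S$ is not a multiple of the identity, i.e. genuinely in $\mathrm{Sym}_0$, is used — otherwise every $S - \pi\cdot S$ vanishes), and one must check the low-dimensional cases $n = 2$ separately, where $\mathrm{Skew}$ is one-dimensional and $\mathrm{Sym}_0$ is two-dimensional, to confirm that the rotation argument still fills out the space. I would also remark that, once irreducibility is established, Lemma \ref{lemRep03} (ii) and (iii) follow from Schur's lemma over $\mathbb{R}$ combined with the observation that these representations are of real type (they admit an invariant inner product and no invariant complex structure), so the space of invariant symmetric bilinear forms on each is one-dimensional; and (i) follows since the three subspaces are pairwise non-isomorphic (different dimensions) and orthogonal, so any invariant form is block-diagonal with respect to the decomposition $\mathrm{End} = \mathrm{span}\{\mathrm{id}\} \oplus \mathrm{Sym}_0 \oplus \mathrm{Skew}$.
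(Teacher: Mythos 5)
Your proof is correct, and it takes a genuinely different route from the paper's. The paper's proof of Proposition~\ref{P: irreducibility} is a corollary of two classification statements it establishes first: Lemma~\ref{L: symmetric irreducibility}, which classifies $\textbf{O}(n)$-equivariant self-adjoint maps on $\text{Sym}$, and Lemma~\ref{L: skew symmetric irreducibility}, which classifies $\textbf{O}(n)$-invariant symmetric bilinear forms on $\text{Skew}$. Given an invariant subspace $V$, the paper observes that the orthogonal projection $\pi_V$ is itself equivariant (respectively, that $E\bullet E'={\rm tr}\,(\pi_V E)^*(\pi_V E')$ is an invariant form), and these classification lemmas then force $\pi_V$ to be $0$ or the identity. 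You instead prove irreducibility directly by the standard ``cyclic'' argument: starting from a nonzero element, you average over the finite reflection subgroup $\{\pm1\}^{n-2}$ (respectively, take differences under permutation conjugates) to land on an explicit generator $J_{12}$ or $H_{ij}$, then sweep out the full space with $\textbf{SO}(n)$-orbits and a $\pi/4$-rotation. Both routes are sound; the paper's choice is economical because it needs Lemma~\ref{L: symmetric irreducibility} elsewhere anyway (it reappears verbatim in the proof of Lemma~\ref{L: symmetric part}), whereas your direct argument is more self-contained and arguably the more standard textbook route, reserving Schur's lemma for afterwards. One small caveat on your closing remark: invoking Schur's lemma over $\mathbb{R}$ to get one-dimensionality of the space of invariant forms requires you to actually check that the commutant of each representation is $\mathbb{R}$ (real type) rather than $\mathbb{C}$ or $\mathbb{H}$; you assert this but do not prove it, and it is not entirely free (e.g., the $\textbf{SO}(2)$-restriction of $\text{Sym}_0$ for $n=2$ does admit an invariant complex structure, and it is the extra reflection in $\textbf{O}(2)$ that rules it out). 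The paper sidesteps this issue entirely by proving the explicit classification directly. Since this caveat concerns Lemma~\ref{lemRep03} rather than the proposition itself, it does not affect the correctness of your argument for the statement at hand.
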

	
We will show that items (i)-(iii) of Lemma \ref{lemRep03} are all consequences of irreducibility.  In fact, (ii) and (iii) are equivalent to the irreducibility of $\text{span}\{\text{id}\}$ and $\text{Sym}_{0}$, respectively.  We start with the proof of (i) next.
	
	
		\begin{proof}[Proof of Lemma \ref{lemRep03} (i)] Give a symmetric bilinear form $\bullet$ on the endomorphisms of tangent space, there is a symmetric linear map $\mathbf{T}$ such that $E \bullet E' = {\rm tr} \, E' (\mathbf{T} E)$ for any pair of endomorphisms $E,E'$.  Assume that $\bullet$ is $\textbf{O}(n)$-invariant.  It then follows that $\mathbf{T}$ commutes with the action of $\textbf{O}(n)$, that is, $O (\mathbf{T} E) O^{-1} = \mathbf{T}(O E O^{-1})$ for any $O \in \textbf{O}(n)$ and endomorphism $E$.  To see that $\text{span}\{\text{id}\}$, $\text{Sym}_{0}$, and $\text{Skew}$ are orthogonal under $\bullet$, it only remains to show that if $V, W \in \{\text{span}\{\text{id}\},\text{Sym}_{0},\text{Skew}\}$ and $\pi_{V}$ and $\pi_{W}$ denote the orthogonal projections onto $V$ and $W$, respectively, then $\pi_{W} \circ \mathbf{T} \circ \pi_{V} = 0$ whenever $V \neq W$.
		 		
		\medskip
		
	To see this, note that $\pi_{W} \circ \mathbf{T} \circ \pi_{V}$ commutes with the action of $\textbf{O}(n)$ since $\mathbf{T}$, $\pi_{W}$, and $\pi_{V}$ all do.  Thus, the kernel and range of $\pi_{W} \circ \mathbf{T} \circ \pi_{V}$ are $\textbf{O}(n)$-invariant subspaces of $V$ and $W$, respectively.  Therefore, by irreducibility (Proposition \ref{P: irreducibility}), either $\pi_{W} \circ \mathbf{T} \circ \pi_{V} = 0$ or $\pi_{W} \circ \mathbf{T} \circ \pi_{V}$ is an isomorphism between $V$ and $W$.  However, this second option is impossible: If $n > 2$, then the dimensions of $\text{span}\{\text{id}\}$, $\text{Sym}_{0}$, and $\text{Skew}$ all differ so they cannot be isomorphic.  If $n = 2$, then this argument fails since $\text{span}\{\text{id}\}$ and $\text{Skew}$ are both one-dimensional.  Nonetheless, an isomorphism between them cannot commute with the $\mathbf{O}(n)$ action.  This is because while every $O \in \mathbf{O}(n)$ acts as the identity transformation on $\text{span}\{\text{id}\}$, this is not true of the action of $\mathbf{O}(n)$ on $\text{Skew}$ (e.g., by Lemma \ref{lemRep02} (ii)). \end{proof}

In the proof of Proposition \ref{P: irreducibility}, we proceed by treating the symmetric and skew-symmetric subspaces separately.

	\begin{lemma} \label{L: symmetric irreducibility} Let $\text{Sym} = \text{span} \{\text{id}\} \oplus \text{Sym}_{0}$ denote the space of symmetric endomorphisms of tangent space.  Let $\pi_{\text{span}\{\text{id}\}}$ and $\pi_{\text{Sym}_{0}}$ denote the orthogonal projections of $\text{Sym}$ onto $\text{span}\{\text{id}\}$ and $\text{Sym}_{0}$, respectively.  If $\mathbf{T}$ is a symmetric  linear map on $\text{Sym}$ that commutes with the action of $\textbf{O}(n)$, then there are constants $\lambda_{\text{id}}, \lambda_{0} \in \mathbb{R}$ such that 
		\begin{equation*}
			\mathbf{T} = \lambda_{\text{id}} \pi_{\text{span}\{\text{id}\}} + \lambda_{0} \pi_{\text{Sym}_{0}}.
		\end{equation*}
	\end{lemma}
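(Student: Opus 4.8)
The strategy is a standard Schur-type argument, decomposing $\text{Sym}$ into the two $\textbf{O}(n)$-invariant subspaces $\text{span}\{\text{id}\}$ and $\text{Sym}_{0}$ and showing that $\mathbf{T}$ cannot mix them and must act as a scalar on each. First I would record that both $\text{span}\{\text{id}\}$ and $\text{Sym}_{0}$ are $\textbf{O}(n)$-invariant under conjugation, and that they are orthogonal with respect to the Frobenius inner product, so $\text{Sym} = \text{span}\{\text{id}\} \oplus \text{Sym}_{0}$ as $\textbf{O}(n)$-representations. Then the projections $\pi_{\text{span}\{\text{id}\}}$ and $\pi_{\text{Sym}_{0}}$ also commute with the $\textbf{O}(n)$-action.

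Next I would show $\mathbf{T}$ maps each summand to itself. Consider the off-diagonal block $\pi_{\text{Sym}_{0}} \circ \mathbf{T} \circ \pi_{\text{span}\{\text{id}\}}$: since $\mathbf{T}$, $\pi_{\text{Sym}_{0}}$, $\pi_{\text{span}\{\text{id}\}}$ all commute with conjugation, so does this composite; its image is an $\textbf{O}(n)$-invariant subspace of $\text{Sym}_{0}$ and its kernel an $\textbf{O}(n)$-invariant subspace of $\text{span}\{\text{id}\}$. Since $\text{span}\{\text{id}\}$ is one-dimensional, the kernel is either all of it (in which case the block vanishes) or trivial; but if the block were injective on the $1$-dimensional $\text{span}\{\text{id}\}$ its image would be a $1$-dimensional $\textbf{O}(n)$-invariant subspace of $\text{Sym}_{0}$, contradicting the irreducibility of $\text{Sym}_{0}$ from Proposition~\ref{P: irreducibility} for $n\ge 2$ (since $\dim \text{Sym}_{0} = \tfrac{(n-1)(n+2)}{2} \ge 2$). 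Hence $\pi_{\text{Sym}_{0}} \circ \mathbf{T} \circ \pi_{\text{span}\{\text{id}\}} = 0$. The symmetry of $\mathbf{T}$ (self-adjointness with respect to the Frobenius inner product) then forces $\pi_{\text{span}\{\text{id}\}} \circ \mathbf{T} \circ \pi_{\text{Sym}_{0}} = 0$ as well, so $\mathbf{T}$ is block diagonal: $\mathbf{T} = \mathbf{T} \pi_{\text{span}\{\text{id}\}} + \mathbf{T} \pi_{\text{Sym}_{0}}$ with each piece preserving its subspace.

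It remains to see that $\mathbf{T}$ acts by a scalar on each summand. On $\text{span}\{\text{id}\}$ this is automatic since the space is one-dimensional; call the scalar $\lambda_{\text{id}}$. On $\text{Sym}_{0}$, the restriction $\mathbf{T}|_{\text{Sym}_{0}}$ is a symmetric endomorphism commuting with the $\textbf{O}(n)$-action; by the spectral theorem its eigenspaces are $\textbf{O}(n)$-invariant subspaces of $\text{Sym}_{0}$, hence by irreducibility each is either $\{0\}$ or all of $\text{Sym}_{0}$. So there is a single eigenvalue $\lambda_{0}$ and $\mathbf{T}|_{\text{Sym}_{0}} = \lambda_{0} \, \text{id}_{\text{Sym}_{0}}$. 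Assembling the two blocks gives $\mathbf{T} = \lambda_{\text{id}} \pi_{\text{span}\{\text{id}\}} + \lambda_{0} \pi_{\text{Sym}_{0}}$, as claimed.

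\textbf{Main obstacle.} The only genuine input is the irreducibility of $\text{Sym}_{0}$ (Proposition~\ref{P: irreducibility}), which I am taking as given; everything else is Schur's lemma plus bookkeeping with orthogonal projections. The one point that requires a little care is ruling out the off-diagonal blocks when $n=2$, where $\text{span}\{\text{id}\}$ is one-dimensional; here I rely on $\dim\text{Sym}_{0} = 2 > 1$ so that $\text{Sym}_{0}$ still cannot contain a $1$-dimensional invariant subspace, together with the self-adjointness of $\mathbf{T}$ to get the companion block for free. (One should also note in passing that the statement would fail without the symmetry hypothesis on $\mathbf{T}$ only if the two summands were isomorphic as representations, which they are not, so in fact block-diagonality already follows from irreducibility and dimension count alone; the symmetry hypothesis is used just to keep the argument uniform.)
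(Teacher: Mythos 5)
Your argument is internally coherent as a Schur-type derivation, but it is circular relative to the structure of the paper. You write that ``the only genuine input is the irreducibility of $\text{Sym}_{0}$ (Proposition~\ref{P: irreducibility}), which I am taking as given.'' In the paper, however, the irreducibility of $\text{Sym}_{0}$ is itself \emph{derived from} the very lemma you are asked to prove: the proof of Proposition~\ref{P: irreducibility} takes an $\mathbf{O}(n)$-invariant subspace $V \subseteq \text{Sym}_{0}$, notes that its orthogonal projection $\pi_{V}$ commutes with the $\mathbf{O}(n)$-action, and then invokes Lemma~\ref{L: symmetric irreducibility} to conclude $\pi_{V} \in \{0, \pi_{\text{Sym}_{0}}\}$. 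So the dependency arrow points in the opposite direction, and your proof creates a cycle. The paper's Appendix~\ref{reptheory} is explicitly written to be self-contained (``Since we are not aware of a reference covering the real case, we provide a self-contained argument''), so the irreducibility of $\text{Sym}_{0}$ cannot be pulled off the shelf either.

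The paper's proof avoids this by going directly: by the spectral theorem (applied to the argument of $\mathbf{T}$, not to $\mathbf{T}$ itself) and linearity, it suffices to determine $\mathbf{T}(\dot{x}^{*} \otimes \dot{x})$ for a unit tangent vector $\dot{x}$; by $\mathbf{O}(n)$-equivariance and transitivity on the sphere one may take $\dot{x} = e_{1}$; and since $\mathbf{T}(e^{1} \otimes e_{1})$ is fixed by the stabilizer of $e_{1}$ in $\mathbf{O}(n)$, its eigenspaces are constrained to lie in $\{\{0\}, \text{span}\{e_{1}\}, \{e_{1}\}^{\perp}, \text{tangent space}\}$, forcing $\mathbf{T}(e^{1} \otimes e_{1}) = \lambda\, \text{id} + \mu\, e^{1} \otimes e_{1}$. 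Extending by linearity gives $\mathbf{T} A = \lambda(\text{tr}\,A)\,\text{id} + \mu A$, which is the claimed form. This little-group (stabilizer) computation is the real content; it is what simultaneously yields both the lemma and, downstream, the irreducibility of $\text{Sym}_{0}$. If you want to keep your Schur-style framing you would first have to supply an independent proof of irreducibility, which in practice amounts to reproducing the same stabilizer argument, so nothing is saved.

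A minor aside: your parenthetical remark that the symmetry hypothesis on $\mathbf{T}$ is used ``just to keep the argument uniform'' undersells it slightly. For the diagonal block on $\text{Sym}_{0}$ you invoke the spectral theorem for $\mathbf{T}|_{\text{Sym}_{0}}$; without symmetry you would have to argue separately that the commutant of the $\mathbf{O}(n)$-action on the real irreducible $\text{Sym}_{0}$ is $\mathbb{R}$ rather than $\mathbb{C}$ or $\mathbb{H}$. The hypothesis is doing real work there, not merely cosmetic work.
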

	
		\begin{proof} By the spectral theorem and linearity, it suffices to prove that there are constants $\lambda,\mu \in \mathbb{R}$ such that $\mathbf{T} \dot{x}^{*} \otimes \dot{x} = \lambda \text{id} + \mu \dot{x}^{*} \otimes \dot{x}^{*}$ for any unit vector $\dot{x}$ in tangent space.  Since $\mathbf{T}$ commutes with the action of $\textbf{O}(n)$, it suffices to consider the case when $\dot{x} = e_{1}$.  
		
%
		\medskip
		
		Again, by $\textbf{O}(n)$-invariance, $\mathbf{T} e^{1} \otimes e_{1}$ is invariant under conjugation by elements of $\textbf{O}(n)$ that fix $e_{1}$ itself.  That is, if $O \in \textbf{O}(n)$ and $Oe_{1} = e_{1}$, then $O (\mathbf{T} e^{1} \otimes e_{1})O^{-1} = \mathbf{T} e^{1} \otimes e_{1}$.  From this, we deduce that any eigenspace of $\mathbf{T} e^{1} \otimes e_{1}$ is invariant under the action of such an $O$, from which it follows that eigenspaces belong to the set $\{\{0\}, \text{span} \{e_{1}\}, \{e_{1}\}^{\perp}, { \rm tangent ~ space } \}$.  Since the eigenspaces of $\mathbf{T} e^{1} \otimes e_{1}$ span ${ \rm tangent ~ space }$,  we conclude that $\mathbf{T} e^{1} \otimes e_{1} = \lambda \text{id} + \mu e^{1} \otimes e_{1}$ for some $\lambda, \mu \in \mathbb{R}$. \end{proof}

    Due to the equivalence between symmetric bilinear forms and self-adjoint endomorphisms, the irreducibility of $\text{Sym}_{0}$ is equivalent to statement (ii) in Lemma \ref{lemRep03}, as shown next.

    \begin{proof}[Proof of Lemma \ref{lemRep03} (ii)] If $\bullet$ is a symmetric bilinear form on $\text{Sym}_{0}$, then there is a symmetric $\mathbf{T} \in \text{End}(\text{Sym}_{0})$ such that $E \bullet E' = {\rm tr} \, E^{*} (\mathbf{T}E')$ for any $E,E' \in \text{Sym}_{0}$.  If $\bullet$ is $\textbf{O}(n)$-invariant, then $\mathbf{T}$ commutes with the action of $\textbf{O}(n)$.  Extending $\mathbf{T}$ to the entire space of symmetric endomorphisms by setting $\mathbf{T} \text{id} = 0$, we invoke Lemma \ref{L: symmetric irreducibility} to deduce that $\mathbf{T}$ is equal to some constant $\lambda$ times the orthogonal projection onto $\text{Sym}_{0}$.  This implies that $E \bullet E' = \lambda {\rm tr} \, E^{*} E'$.  \end{proof}

    It only remains to treat $\text{Skew}$.  Here it is convenient to work with bilinear forms $\bullet$ rather than maps $\mathbf{T}$.
		
	\begin{lemma} \label{L: skew symmetric irreducibility} If $\bullet$ is an $\textbf{O}(n)$-invariant symmetric bilinear form on $\text{Skew}$, then there is a constant $\lambda \in \mathbb{R}$ such that $E \bullet E' = \lambda {\rm tr} \, E^{*} E'$. \end{lemma}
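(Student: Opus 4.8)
The plan is to mirror the proof of Lemma \ref{L: symmetric irreducibility}: represent the bilinear form by a self-adjoint operator on $\text{Skew}$ that commutes with the conjugation action, and then pin down its value on a single rank-two skew endomorphism by exploiting that such an endomorphism has a large stabilizer in $\textbf{O}(n)$. Concretely, I would equip the space $\text{Skew}$ of skew-symmetric endomorphisms of tangent space with the Frobenius inner product $\langle E , E' \rangle = {\rm tr} \, E^{*} E'$, which is itself $\textbf{O}(n)$-invariant under conjugation because $(OEO^{-1})^{*} = OE^{*}O^{-1}$. Writing $E \bullet E' = {\rm tr} \, E^{*} ( \mathbf{T} E' )$ for a linear map $\mathbf{T} \colon \text{Skew} \to \text{Skew}$, symmetry of $\bullet$ makes $\mathbf{T}$ self-adjoint, and $\textbf{O}(n)$-invariance of $\bullet$ translates into the equivariance $\mathbf{T}( OEO^{-1} ) = O ( \mathbf{T} E ) O^{-1}$ for all $O \in \textbf{O}(n)$. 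If $n \le 2$ the space $\text{Skew}$ has dimension at most one and the claim is immediate, so I would assume $n \ge 3$.

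First I would fix a rank-two skew endomorphism $J$ — for instance the one with $J e_{2} = e_{1}$, $J e_{1} = - e_{2}$ and $J e_{k} = 0$ for $k \ge 3$ — and set $P := {\rm im} \, J$, a $2$-plane, and $W := \ker J = P^{\perp}$, of dimension $n - 2$. An element $O \in \textbf{O}(n)$ fixes $J$ under conjugation if and only if it commutes with $J$; since $J$ is invertible on $P$ and vanishes on $W$, any such $O$ preserves both $P$ and $W$, acts on $P$ in the centralizer of the quarter-turn $J|_{P}$ (which inside $\textbf{O}(2)$ is $\textbf{SO}(2)$), and acts arbitrarily on $W$. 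Hence the stabilizer of $J$ is exactly $\textbf{SO}(2) \times \textbf{O}(W)$ relative to the splitting $P \oplus W$.

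The key step is then to check that the only skew endomorphisms fixed by $\textbf{SO}(2) \times \textbf{O}(W)$ are the multiples of $J$. Write an arbitrary $A \in \text{Skew}$ in block form along $P \oplus W$ as a $2 \times 2$ skew block $A_{PP}$, a skew block $A_{WW}$ on $W$, and an off-diagonal part $A_{PW} \colon W \to P$ with $A_{WP} = - A_{PW}^{*}$. Conjugation by $\rho \in \textbf{SO}(2)$ (acting on $P$, trivially on $W$) sends $A_{PW}$ to $\rho A_{PW}$, and since a nontrivial plane rotation has no nonzero fixed vector, invariance forces $A_{PW} = 0$. Conjugation by $O'' \in \textbf{O}(W)$ fixes $A_{WW}$, so $A_{WW}$ is an $\textbf{O}(n-2)$-invariant endomorphism of $W$, hence a scalar by Lemma \ref{lemRep02}(ii), hence zero because it is skew. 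Finally $A_{PP}$ is a $2 \times 2$ skew matrix, automatically a multiple of $J|_{P}$ and automatically $\textbf{SO}(2)$-invariant. Thus $A = c J$ for some $c \in \mathbb{R}$. Applying this with $A = \mathbf{T} J$ gives $\mathbf{T} J = \lambda_{J} J$ for a scalar $\lambda_{J}$; by equivariance of $\mathbf{T}$, any two rank-two skew endomorphisms — being $\textbf{O}(n)$-conjugate — carry the same scalar, and since such endomorphisms span $\text{Skew}$ (e.g.\ the basis $\{ e_{i} \otimes e^{j} - e_{j} \otimes e^{i} \}_{i < j}$), we get $\mathbf{T} = \lambda \, {\rm id}$, that is $E \bullet E' = \lambda \, {\rm tr} \, E^{*} E'$.

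I expect the only delicate points to be the routine block-decomposition bookkeeping and, in particular, the degenerate small-dimensional cases $\dim W \in \{ 0 , 1 \}$ (where the steps concerning $A_{WW}$ and $A_{PW}$ are vacuous or trivial) together with the verification that the stabilizer of $J$ is genuinely all of $\textbf{SO}(2) \times \textbf{O}(W)$ and not a proper subgroup. None of this is a real obstacle: the argument runs entirely parallel to the symmetric case already treated in the proof of Lemma \ref{L: symmetric irreducibility}, with the quarter-turn $J|_{P}$ playing the role that the rank-one projection $e^{1} \otimes e_{1}$ played there.
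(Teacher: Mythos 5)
Your argument is correct, but it is organized differently from the paper's. The paper's proof of Lemma~\ref{L: skew symmetric irreducibility} stays at the level of the bilinear form: it computes the Gram matrix of $\bullet$ in the explicit basis $\{e_i \otimes_{\text{skew}} e_j\}_{i<j}$, shows the diagonal entries agree by applying permutation matrices $O_\sigma$, and kills the off-diagonal entries by applying the sign-flip reflection $e_m \mapsto -e_m$ (for an index $m$ appearing in one pair but not the other), which picks up a minus sign. Your proof instead passes to the associated self-adjoint, $\textbf{O}(n)$-equivariant operator $\mathbf{T}$ on $\text{Skew}$, identifies the stabilizer of a rank-two generator $J$ as $\textbf{SO}(2)\times\textbf{O}(W)$, and shows by a block computation that the fixed subspace of that stabilizer inside $\text{Skew}$ is $\mathbb{R}J$, so $\mathbf{T}J = \lambda J$; transitivity of the $\textbf{O}(n)$-action on the rank-two basis elements then forces $\mathbf{T}=\lambda\,\text{id}$. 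Your route is the one the paper actually takes for the symmetric case (Lemma~\ref{L: symmetric irreducibility} is phrased in terms of $\mathbf{T}$ and the stabilizer of $e^1\otimes e_1$), so it has the virtue of treating $\text{Sym}_0$ and $\text{Skew}$ by a uniform scheme, with $J$ playing the role the rank-one projector played there. The paper's argument for $\text{Skew}$ is shorter and more computational — two explicit one-line group elements do all the work — but the two proofs are at a similar level of rigor and both are complete once the small-dimension cases are handled, which you do. One cosmetic remark: you do not actually need self-adjointness of $\mathbf{T}$ anywhere; equivariance alone pins it down, and the self-adjointness is automatic once $\mathbf{T}=\lambda\,\text{id}$ is established.
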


    In particular, this lemma directly implies Lemma \ref{lemRep03} (iii).
	
		\begin{proof}  Let $\bullet$ be an $\textbf{O}(n)$-invariant symmetric bilinear form on $\text{Skew}$.  Given tangent vectors $\dot{x},\dot{y}$, denote by $\dot{x} \otimes_{\text{skew}} \dot{y}$ the antisymmetric tensor product 
			\begin{equation*}
				\dot{x} \otimes_{\text{skew}} \dot{y} = \dot{x}^{*} \otimes \dot{y} - \dot{y}^{*} \otimes \dot{x}.
			\end{equation*}
		Since the endomorphisms $e_{i} \otimes_{\text{skew}} e_{j}$ with $1 \leq i < j \leq n$ form a basis of $\text{Skew}$, we only need to prove that there is a constant $\lambda \in \mathbb{R}$ such that 
        \begin{equation*}
            ( e_{i} \otimes_{\text{skew}} e_{j} ) \bullet ( e_{m} \otimes_{\text{skew}} e_{k} ) = \lambda \delta_{i m} \delta_{j k} \quad \text{for any} \, \, i < j, m < k.
        \end{equation*}
        This is straightforward: We use the $\textbf{O}(n)$-invariance of $\bullet$ together with the fact that any permutation of $\sigma$ of $\{1,2,\dots,n\}$ gives rise to an element $O_{\sigma} \in \textbf{O}(n)$ with $O_{\sigma} e_{\ell} = e_{\sigma(\ell)}$ for all $\ell$.  Since any pair $i < j$ can be mapped to another pair $m < k$ by some permutation $\sigma$, the diagonal entry $( e_{i} \otimes_{\text{skew}} e_{j}) \bullet (e_{i} \otimes_{\text{skew}} e_{j})$ does not depend on $\{i,j\}$.  Concerning the off-diagonal entries, if $\{i,j\} \neq \{m,k\}$, then, assuming without loss of generality that $m \notin \{i,j\}$, we fix the $O \in \textbf{O}(n)$ such that $O e_{m} = - e_{m}$ and $O e_{\ell} = e_{\ell}$ for $\ell \neq m$ to find
            \begin{equation*}
                ( e_{i} \otimes_{\text{skew}} e_{j} ) \bullet ( e_{m} \otimes_{\text{skew}} e_{k} ) = ( Oe_{i} \otimes_{\text{skew}} Oe_{j} ) \bullet ( Oe_{m} \otimes_{\text{skew}} Oe_{k} ) = -  ( e_{i} \otimes_{\text{skew}} e_{j} ) \bullet ( e_{m} \otimes_{\text{skew}} e_{k} ).
            \end{equation*}
		This proves that the off-diagonal entries all vanish.\end{proof}
        
        \begin{proof}[Proof of Proposition \ref{P: irreducibility}] The irreducibility of $\text{Sym}_{0}$ follows from Lemma \ref{L: symmetric irreducibility}:  If $V \subseteq \text{Sym}_{0}$ is a linear subspace invariant under the action of $\textbf{O}(n)$, then the orthogonal projection $\pi_{V}$ of $\text{Sym}$ onto $V$ commutes with the action of $\textbf{O}(n)$.  Thus, Lemma \ref{L: symmetric irreducibility} implies that $\pi_{V} = \pi_{\text{Sym}_{0}}$ or $\pi_{V} = 0$, that is, either $V = \text{Sym}_{0}$ or $V = \{0\}$.

        \medskip
        
         The irreducibility of $\text{Skew}$ follows from an analogous argument: Given an $\textbf{O}(n)$-invariant subspace $V \subseteq \text{Skew}$, consider the orthogonal projection $\pi_{V}$ of $\text{Skew}$ onto $V$.  If we define the symmetric bilinear form $\bullet$ by $E \bullet E' = {\rm tr} \, (\pi_{V} E)^{*} (\pi_{V} E')$, then $\bullet$ is $\textbf{O}(n)$-invariant since $V$ is.  Therefore, Lemma \ref{L: skew symmetric irreducibility} implies $E \bullet E' = \lambda {\rm tr} \, E^{*} E'$ for some $\lambda \in \mathbb{R}$, which can only hold if $V = \{0\}$ or $V =\text{Skew}$.
         
         \medskip
         
         Finally, we observe that $\text{span}\{\text{id}\}$ is irreducible since it is one-dimensional. \end{proof}

\section{Proof of Remark \ref{rmk02} -- anisotropic case}\label{pfrmk02}

\medskip

We start from (\ref{tao17y}) in the form of
\begin{align}\label{tao17z}
d\tilde a=d[\phi\otimes b],
\end{align}
where (\ref{tao17x}) is replaced by $-\nabla.\tilde a_L\nabla d\phi=db$, which is solved by
\begin{align*}
{\mathcal F}d\phi=\frac{1}{k.\tilde a_Lk}{\mathcal F}db;
\end{align*}
note that the latter makes sense as a pathwise definition since $\frac{1}{k.\tilde a_Lk}$ is
smooth on the support $\{L|k|=1\}$ of ${\mathcal F}b$, see the remarks after definition
(\ref{cov04}). Hence $\phi_L$ inherits from $b_L$ the properties of being a stationary
centered Gaussian with independent increments. In terms of the covariance tensor
$c$ of $b$, cf.~Subsection \ref{ss:X}, and definition (\ref{cov04}), we have
\begin{align*}
\mathbb{E}(\phi_{L_+}-\phi_L)\otimes(b_{L_+}-b_L)
= \frac{ 1 }{ ( 2 \pi )^{ \frac{ n }{ 2 } } } \int_{\mathbb{R}^n}dk \, I(L_+^{-1}\le|k|<L^{-1})(k.\tilde a_Lk)^{-1}
{\mathcal F}c(k).
\end{align*}
In view of (\ref{ao02d}), the integrand $(k.\tilde a_Lk)^{-1}
{\mathcal F}c(k)$  has homogeneity $-n$ in $|k|$, so that we obtain for
the quadratic variation
\begin{align*}
d[\phi\otimes b]
= \frac{ 1 }{ ( 2 \pi )^{ \frac{ n }{ 2 } } } \big(\int_{\mathbb{S}^{n-1}}dk \, (k.\tilde a_Lk)^{-1} {\mathcal F}c(k)\big)\frac{dL}{L}.
\end{align*}
Hence in view of the form (\ref{ao08}) of ${\mathcal F}c$ as a covariance tensor
an isotropic ensemble of divergence-free fields, and in view of
the definition (\ref{lambda}) of $\tilde\lambda$, (\ref{tao17z}) turns into the 
deterministic evolution equation
\begin{align}\label{blaao16}
d\tilde a=f(\tilde a_L)\tilde\lambda_Ld\tilde\lambda\quad\mbox{where}\quad
f(a) := c ( n ) \int_{\mathbb{S}^{n-1}}dk \, (k.ak)^{-1}( I-k^*\otimes k^* )
\end{align}
for some positive constant $c(n)$ only depending on $n$.
Since the defining vector field $f=f(a)$ on positive-definite symmetric bilinear forms $a$ 
obviously satisfies the homogeneity
\begin{align}\label{blaao02bis}
f(sa)=s^{-1}f(a)\quad\mbox{for all}\;s\in(0,\infty),
\end{align}
we obtain in terms of the new variables
\begin{align*}
\hat a_L=\frac{\tilde a_L}{\tilde\lambda_L}\quad\mbox{and}\quad\tau=\ln\tilde\lambda_L,
\end{align*}
cf.~(\ref{forFelix06}), the autonomous dynamical system
\begin{align}\label{blaao05}
\frac{d\hat a}{d\tau}=f(\hat a)-\hat a.
\end{align}
Since in view of Lemma \ref{L: construction of proxies}, $\hat a\equiv I $ must be a solution of (\ref{blaao05}),
we learn that the normalization of $f$ is such that
\begin{align}\label{blaao03}
f( I )= I,
\end{align}
which we argue to be the global attractor, see Lemma \ref{blaL:aniso}

\medskip

Before doing so, we show that the stationary point (\ref{blaao03}) is stable. More precisely,
we will show that ${\rm id}-Df(I)$ is strictly monotone\footnote{with respect to the standard
inner product given by ${\rm tr}a^*a$}, 
where $Df(I)$ denotes the Fr\'echet derivative
of $f$ in $I$, which is an endomorphism on the space of symmetric bilinear forms $\dot a$. 
This is a consequence of its explicit identification
\begin{align}\label{lo01}
Df(I)\dot a=\frac{2}{(n-1)(n+2)}\dot a-\frac{n+1}{(n-1)(n+2)}(I^*.\dot a)I,
\end{align}
noting that $\frac{2}{(n-1)(n+2)}<1$. 

\medskip

It thus remains to establish (\ref{lo01}).
Appealing to\footnote{the l.~h.~s.~defines a symmetric form that is invariant
under the action of $\textbf{O}(n)$ and thus is a multiple of $I$;
the constant is determined by contraction with $I^*$} 
$\int_{\mathbb{S}^{n-1}}dk \, k^*\otimes k^*$ $=\frac{1}{n}I$, 
we learn from the normalization (\ref{blaao03}) that the implicit constant 
in definition (\ref{blaao16}) is given by $c(n)=\frac{n}{n-1}$.
Hence we obtain from deriving (\ref{blaao16}) at $I$ in direction of $\dot a$ 
\begin{align*}
Df(I)\dot a=-\frac{n}{n-1}\int_{\mathbb{S}^{n-1}}dk \, k.\dot a k(I-k^*\otimes k^*)
=\frac{n}{n-1}\big(\int_{\mathbb{S}^{n-1}}dk \, (k.\dot a k)k^*\otimes k^*
-\frac{1}{n}(I^*.\dot a)I\big).
\end{align*}
It thus remains to identify the quartic integral. Indeed, since the space of
totally symmetric $\textbf{O}(n)$-invariant quartic forms on tangent space
is one-dimensional, we must have
%
\begin{align*}
\int_{\mathbb{S}^{n-1}}dk\prod_{i=1}^4(k.\dot x_i)
=c(n)\sum_{\text{permutations}\;\sigma}(\dot x_{\sigma(1)}\cdot\dot x_{\sigma(2)})
(\dot x_{\sigma(3)}\cdot\dot x_{\sigma(4)})
\end{align*}
for some (new) constant $c(n)$ only depending on dimension $n$.
This specifies to
\begin{align*}
\int_{\mathbb{S}^{n-1}}dk \, (k.\dot x)^2(k.\dot y)^2
=c(n)\big(2(\dot x\cdot \dot y)^2+|\dot x|^2|\dot y|^2\big),
\end{align*}
from which we learn that $c(n)=\frac{1}{n(n+2)}$ by taking $\dot x$, $\dot y$ from
an two copies of an orthonormal basis and summing over both. Since any $\dot a$
can be written as a linear combination of $\dot x\otimes\dot x$'s, this translates into
the desired
\begin{align*}
\int_{\mathbb{S}^{n-1}}dk \, (k.\dot a k)k^*\otimes k^*
=\frac{1}{n(n+2)}\big(2\dot a+(I^*.\dot a)I\big).
\end{align*}

\begin{lemma}\label{blaL:aniso}
For any solution $\hat a$ of (\ref{blaao05}) we have
\begin{align}\label{blaao17}
\lim_{\tau\uparrow\infty}\hat a = I .
\end{align}
\end{lemma}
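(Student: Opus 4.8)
The plan is to combine the local asymptotic stability of the equilibrium $\hat a\equiv I$, which is already in hand from the explicit identification \eqref{lo01} of $Df(I)$, with a \emph{global} monotonicity argument for the condition number of $\hat a$. The structural input is that, beyond the homogeneity \eqref{blaao02bis} and the normalisation \eqref{blaao03}, the vector field $f$ in \eqref{blaao16} is \emph{order-reversing} for the Loewner order: since $a\mapsto k.ak$ is linear and increasing, $(k.ak)^{-1}$ is decreasing in $a$. Hence, writing $m(\tau)=\lambda_{\min}(\hat a(\tau))$ and $M(\tau)=\lambda_{\max}(\hat a(\tau))$, the sandwich $m(\tau)I\le\hat a(\tau)\le M(\tau)I$ together with \eqref{blaao02bis}, \eqref{blaao03} yields the two-sided bound $M^{-1}I\le f(\hat a)\le m^{-1}I$. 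Plugging this into \eqref{blaao05} and using the variational characterisation of the extreme eigenvalues (so that along the trajectory $\frac{d^+}{d\tau}M=\langle v,(f(\hat a)-\hat a)v\rangle$ for a unit top eigenvector $v$, and symmetrically for $m$) gives the Dini-derivative inequalities
\[
\dot M\le m^{-1}-M,\qquad \dot m\ge M^{-1}-m ,
\]
valid for a.e. $\tau$; here I use $\langle v,\hat a v\rangle=M$, $\langle w,\hat a w\rangle=m$ and $\langle v,f(\hat a)v\rangle\le m^{-1}$, $\langle w,f(\hat a)w\rangle\ge M^{-1}$, all immediate from $M^{-1}I\le f(\hat a)\le m^{-1}I$.

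From these I would deduce that the condition number $\kappa(\tau):=M(\tau)/m(\tau)\ge 1$ is non-increasing: at a.e. $\tau$,
\[
\tfrac{d}{d\tau}\bigl(\tfrac{M}{m}\bigr)=\tfrac{\dot M\,m-M\,\dot m}{m^{2}}\le\tfrac{(m^{-1}-M)m-M(M^{-1}-m)}{m^{2}}=0 .
\]
This has two consequences. First, $M(\tau)\le\kappa(0)\,m(\tau)$ for all $\tau$, so the inequalities above sharpen to $\dot M\le\kappa(0)M^{-1}-M$ and $\dot m\ge(\kappa(0)m)^{-1}-m$, and an elementary comparison with scalar ODEs of the form $\dot s=cs^{-1}-s$ shows the eigenvalues of $\hat a(\tau)$ remain in a fixed compact subinterval of $(0,\infty)$; hence the trajectory is global and precompact in the open cone of positive-definite forms. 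Second, $\kappa(\tau)$ decreases to some $\kappa_\infty\ge 1$, and $\kappa$ is constant, equal to $\kappa_\infty$, on the nonempty, compact, flow-invariant $\omega$-limit set $\Omega$ of the trajectory.

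The crux is to upgrade $\kappa_\infty\ge1$ to $\kappa_\infty=1$. Any orbit contained in $\Omega$ has constant condition number, hence $\frac{d}{d\tau}(M/m)=0$ a.e., which forces equality in $\dot M\le m^{-1}-M$, i.e. $\langle v,f(\hat a)v\rangle=\lambda_{\min}(\hat a)^{-1}$ for a unit top eigenvector $v$. But for \emph{any} unit vector $v$,
\[
\langle v,f(a)v\rangle=c(n)\int_{\mathbb S^{n-1}}dk\,(k.ak)^{-1}\bigl(1-(k\cdot v)^{2}\bigr)\le\lambda_{\min}(a)^{-1}\,c(n)\int_{\mathbb S^{n-1}}dk\,\bigl(1-(k\cdot v)^{2}\bigr)=\lambda_{\min}(a)^{-1},
\]
where the final equality is $\langle v,f(I)v\rangle=1$ from \eqref{blaao03}; equality can hold only if $k.ak\equiv\lambda_{\min}(a)$ on $\mathbb S^{n-1}$, i.e. only if $a$ is a multiple of $I$. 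Thus $\Omega$ consists of scalar forms $sI$, and on those \eqref{blaao05} together with \eqref{blaao02bis}, \eqref{blaao03} reduces to the scalar equation $\dot s=s^{-1}-s$, whose only bounded orbit is the fixed point $s=1$. Since $\Omega$ is a nonempty compact invariant set contained in $\{sI\}$, this forces $\Omega=\{I\}$, and precompactness of the trajectory then gives $\hat a(\tau)\to I$, which is \eqref{blaao17}.

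I expect the main obstacle to be locating and justifying the right monotone quantity — the condition number $M/m$ — and, within that, proving the \emph{sharp} form of $f(a)\le\lambda_{\min}(a)^{-1}I$ with equality exactly on scalar forms, since this is precisely what converts monotonicity into convergence rather than mere boundedness. The accompanying non-smoothness of $\lambda_{\min},\lambda_{\max}$ along the flow is handled routinely (Dini derivatives, and the fact that the Lipschitz function $\kappa(\tau)$ is differentiable a.e. with integral of its derivative equal to its increment). The local linear analysis via \eqref{lo01} — whose eigenvalues are $\tfrac{2}{(n-1)(n+2)}<1$ on trace-free directions and $-1$ on $\operatorname{span}\{I\}$, so that $\mathrm{id}-Df(I)$ is strictly monotone — confirms the picture but is not actually needed once the argument above is in place.
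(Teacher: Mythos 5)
Your argument is correct but takes a genuinely different route from the paper's. The paper uses $\textbf{SO}(n)$-equivariance to observe that $\hat a$ and $f(\hat a)$ share principal axes, reduces \eqref{blaao05} to the ODE system \eqref{blaao06} for the unordered eigenvalues $\mu_1,\dots,\mu_n$, derives from the strict monotonicity \eqref{blaao04} that the ratios $\mu_j/\mu_i$ move monotonically toward one (equation \eqref{blaao09}), obtains compactness \eqref{blaao11} by scalar comparison, and then upgrades the merely qualitative ratio monotonicity to $\theta^*\to1$ by a compactness-and-continuity argument. You bypass the diagonalization reduction entirely, working directly with $m=\lambda_{\min}(\hat a)$, $M=\lambda_{\max}(\hat a)$ via Dini-derivative inequalities derived from the same order-reversing property of $f$ combined with \eqref{blaao02bis}, \eqref{blaao03}; your monotone quantity is the condition number $\kappa=M/m$, and you close by a LaSalle/$\omega$-limit argument, pinning $\Omega$ to the scalar ray via the sharp inequality $f(a)\le\lambda_{\min}(a)^{-1}I$ with equality iff $a$ is scalar, and then to $\{I\}$ because the scalar flow $\dot s=s^{-1}-s$ has no bounded complete orbit other than the fixed point. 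Your packaging is the more standard dynamical-systems one and cleanly replaces the paper's upgrading step with LaSalle invariance; the price is that you must justify the Dini-derivative estimates and the passage to the limit set, which the paper's diagonal-slice reduction avoids. Two minor precisions to add: the upper Dini derivative $D^{+}M$ is in general only $\le\max_v\langle v,\dot{\hat a}v\rangle$ over unit top eigenvectors (with equality a.e.\ in $\tau$), but since your bound $\langle v,f(\hat a)v\rangle\le m^{-1}$ holds for \emph{every} unit $v$ the inequality $D^{+}M\le m^{-1}-M$ stands, and similarly for $D_{-}m$; and the passage from ``$\hat a=sI$ a.e.\ along orbits in $\Omega$'' to ``$\Omega\subseteq\{sI\}$'' uses that the ray $\{sI\}$ is closed, so continuity upgrades a.e.\ membership to everywhere.
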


\begin{proof} 
We start by collecting structural properties of the vector field $f$ on
the space of symmetric and positive-definite bilinear form $a$ (of cotangent space), cf.~
(\ref{blaao16}). On the one hand, as a consequence of the isotropy of the ensemble
of $b$'s in the sense of (\ref{ao02c}), $f$ must be $\textbf{SO}(n)$-equivariant in the sense that, for a given $O \in \textbf{SO}(n)$,
	\begin{align} \label{blaao02}
		\text{if} \quad \tilde{a} ( \xi, \xi' ) = a ( O^{\dagger} \xi, O^{\dagger} \xi' ), \quad \text{then} \quad 
		f(\tilde{a}) (\xi, \xi') = f(a) (O^{\dagger} \xi, O^{\dagger} \xi').
	\end{align}

The equivariance (\ref{blaao02}) can also be directly derived from the representation
(\ref{blaao16}) of $f$.
On the other hand, we claim that
$f$ is strict monotone decreasing w.~r.~t.~the ordering on symmetric forms:
\begin{align}\label{blaao04}
a'\le a\;\mbox{and}\;a'\not=a\quad\Longrightarrow\quad
f(a')>f(a),
\end{align}
where the latter means that $f(a)-f(a')$ is positive definite. The strictness can
be seen as follows: Suppose that there exists a non-zero $\xi$ such that
$\xi.(f(a')-f(a))\xi=0$; by the representation (\ref{blaao16}) of $f$ this implies
\begin{align*}
0=\int_{\mathbb{S}^{n-1}}dk \, \big((k.a k)^{-1}-(k.a'k)^{-1}\big)\,\xi.( I -k^*\otimes k^*)\xi.
\end{align*}
Since the second factor of the integrand is positive for almost every $ k \in \mathbb{S}^{ n - 1 } $ (w.r.t.~the uniform measure on $ \mathbb{S}^{ n - 1 } $); and the first factor is non-positive for all $k$,
the latter must be zero for almost every -- and by continuity every -- $k$,
which by homogeneity implies $a'=a$.

\medskip

In view of (\ref{blaao02}),
we may restrict to forms $a$ that are diagonal with respect to the same orthonormal basis.
To this end, let us start by arguing that in view of (\ref{blaao02}), the symmetric bilinear forms
$a$ and $f(a)$ have the same (orthogonal) principal axes: If (the cotangent vectors)
$e^1,e^2$ are two distinct principal axes of $a$, we claim that $f(a)(e^1,e^2)=0$.
This can be seen by considering the reflection $O$ with $Oe^2=-e^2$ (and thus $Oe^1=e^1$), 
which since $e^2$ is a principal axes leaves $a$ invariant, 
so that by (\ref{blaao02}) it also leaves $f(a)$ invariant,
which by linearity implies $f(a)(e^1,e^2)$ $=f(a)(Oe^1,Oe^2)$ $=-f(a)(e^1,e^2)$.

\medskip

We translate this insight from bilinear forms on cotangent space
to elements of the tensor product of the tangent space: For any orthonormal basis
$\{e_1, \hdots, e_n\}$ of tangent space,
\begin{align*}
a=\sum_{i=1}^n\mu_ie_i\otimes e_i\quad\Longrightarrow\quad
f(a)=\sum_{i=1}^nf(\mu_i,\{\mu_j\}_{j\not=i})e_i\otimes e_i,
\end{align*}
where by an abuse of notation we use the same symbol $f$ to denote the equivariant
vector field $f=f(a)$ and its representation as a function $f=f(\mu_1,\mu_2,\hdots,\mu_n)$ which, 
as the above notation is meant to suggest,
is invariant under permuting the last $n-1$ entries.
Then (\ref{blaao05}) reduces
to an evolution of the unordered set $\{\mu_1,\hdots,\mu_n\}$, which is of the form 
\begin{align}\label{blaao06}
\frac{d\mu_i}{d\tau}=f(\mu_i,\{\mu_j\}_{j\not=i})-\mu_i.
\end{align}
The strict monotonicity (\ref{blaao04}) translates into
\begin{align}\label{blaao07}
\mu_i '\le\mu_i \;\mbox{for all}\;i \;\;\mbox{and}\;\;\mu_i '\not=\mu_i \;\mbox{for some}\; i
\quad\Longrightarrow\quad
f(\mu_i ',\{\mu_j '\}_{j \not= i })>f(\mu_i ,\{\mu_j \}_{j \not = i }).
\end{align}

\medskip

We start by establishing that
\begin{align}\label{blaao09}
\frac{\mu_j }{\mu_i }\left\{\begin{array}{c}>\\<\end{array}\right\}1\quad\Longrightarrow\quad
\frac{d}{d\tau}\frac{\mu_j }{\mu_i }\left\{\begin{array}{c}<\\>\end{array}\right\}0.
\end{align}
Indeed, for notational simplicity let us replace $(j , i )$ with $ j \neq i $ by $( i ,1)$ with $ i \neq 1 $, treat
$\mu_1$ as the num\'eraire, and consider the ratio $\sigma_i :=\frac{\mu_i }{\mu_1}$:
%
\begin{align*}
\mu_1^2\frac{d\sigma_i}{d\tau}=\mu_1\frac{d\mu_i}{d\tau}-\mu_i \frac{d\mu_1}{d\tau}
\stackrel{(\ref{blaao06})}{=}\mu_1f(\mu_i ,\{\mu_j\}_{j\not=i})-\mu_i f(\mu_1,\{\mu_j\}_{j\not=1}).
\end{align*}
By homogeneity, cf.~(\ref{blaao02bis}), we obtain
\begin{align}\label{blaao08bis}
\mu_1^2\frac{d\sigma_i}{d\tau}
=f(\sigma_i,\{\sigma_j\}_{j\not=1,i}\cup\{1\})
-f(\frac{1}{\sigma_i},\{\frac{\sigma_j}{\sigma_i}\}_{j\not=1,i}\cup\{1\}).
\end{align}
It follows from (\ref{blaao07}) that
$\sigma_i \mapsto f(\sigma_i ,\{\sigma_j\}_{j\not=1,i}\cup\{1\})$ is strictly monotone decreasing,
and that $\sigma_i \mapsto f(\frac{1}{\sigma_i },\{\frac{\sigma_j}{\sigma_i }\}_{j\not=1,i }\cup\{1\})$
is strictly monotone increasing. Hence the r.~h.~s.~of (\ref{blaao08bis}) is strictly
monotone decreasing; it obviously vanishes for $\sigma_i=1$. This establishes (\ref{blaao09}).

\medskip

We now establish the qualitative 
\begin{align}\label{blaao11}
\mu:=(\mu_1,\hdots,\mu_n)\quad\mbox{remains in a compact subset of}\;(0,\infty)^n.
\end{align}
Let $\theta\in(0,1)$ be such that the initial data are contained in
the set $\{\mu|\mu_j \ge\theta\mu_\ell \;\mbox{for all}\; j , \ell \}$.
It follows from (\ref{blaao09}) that this set 
is forward invariant under the dynamical system (\ref{blaao06}) for any fixed $\theta\in(0,1)$:
\begin{align}\label{blaao10}
\mu_j \ge\theta\mu_\ell \quad\mbox{for all}\; j,\ell .
\end{align}
Let us now consider 
\begin{align*}
\mu_*=\min_{j=1,\hdots,n}\mu_j \quad\mbox{so that by (\ref{blaao10})}\quad\mu_j\le\frac{1}{\theta}\mu_*.
\end{align*}
Hence it follows from (\ref{blaao10}) in conjunction with the non-strict form of 
monotonicity (\ref{blaao07}) 
\begin{align*}
f(\mu_i ,\{\mu_j \}_{j\not=i })\ge f(\frac{\mu_*}{\theta},\{\frac{\mu_*}{\theta}\}).
\end{align*}
By the homogeneity (\ref{blaao02bis}) and the normalization (\ref{blaao03}) this implies
\begin{align*}
f(\mu_i ,\{\mu_j\}_{j \not=i })\ge \frac{\theta}{\mu_*},
\end{align*}
so that by (\ref{blaao06}) 
\begin{align}\label{blaao12}
\frac{d\mu_*}{d\tau}\ge\frac{\theta}{\mu_*}-\mu_*.
\end{align}
Likewise, 
\begin{align}\label{blaao13}
\mu^*:=\max_{ j =1,\hdots,n}\mu_j \quad\mbox{satisfies}\quad
\frac{d\mu^*}{d\tau}\le\frac{1}{\theta\mu^*}-\mu^*.
\end{align}

Both differential inequalities combine to (\ref{blaao11}).

\medskip

We now are in a position to establish (\ref{blaao17}) in the form of
\begin{align}\label{blaao14}
\lim_{\tau\uparrow\infty}\mu_i =1.
\end{align}
Define $\theta^{*}$ by 
\begin{align*}
\theta^*(\tau):=\max\{\theta ~ | ~ \mu_j (\tau)\ge\theta\mu_\ell (\tau)\;\mbox{for all}\;j,\ell\}.
\end{align*}
We claim that $\theta^{*}(\tau) \to 1$ as $\tau \to \infty$.  To see this, for any indices $i,j$ such that $\mu_{j}(\tau = 0 ) < \mu_{i}(\tau = 0)$ and any $\theta \in (0,1)$, it is convenient to define $T_{ji}(\theta)$ by
\begin{equation*}
T_{ji}(\theta) = \sup \{ \tau > 0 \, \mid \, \mu_{j}(\tau) \leq \theta \mu_{i}(\tau) \}.
    \end{equation*}
Since $\mu$ solves the ODE \eqref{blaao06}, there is a continuous function $g_{ji}$ in $(0,\infty)^{n}$ such that
\begin{align*}
\frac{d}{d\tau} \frac{\mu_{j}}{\mu_{i}} = g_{ji}(\mu_{1},\dots,\mu_{n}) ,
    \end{align*}
     see also \eqref{blaao08bis}.
Thus, by compactness (see \eqref{blaao11}) and continuity, the merely qualitative positivity in \eqref{blaao09} can be upgraded to the existence of a constant $c_{ji}(\theta) > 0$ such that
\begin{align*}
\frac{d}{d\tau} \frac{\mu_{j}}{\mu_{i}} \geq c_{ji}(\theta) \quad \text{in} \, \, (0,T_{ji}(\theta)).
\end{align*}
From this, it follows that, for any given $\theta \in (0,1)$, $T_{ji}(\theta) < \infty$ holds, hence
\begin{align*}
\limsup_{\tau \to \infty} \theta^{*} \geq \theta.
\end{align*}
This proves $\theta^{*}(\tau) \to 1$ as claimed.

\medskip

%
%

By the same argument as for (\ref{blaao12}) \& (\ref{blaao13}) we obtain
\begin{align*}
\frac{d\mu_*}{d\tau}\ge\frac{\theta^*}{\mu_*}-\mu_*\quad\mbox{and}\quad
\frac{d\mu^*}{d\tau}\le\frac{1}{\theta^*\mu^*}-\mu^*
\end{align*}
Rewriting the above equations in the form $\frac{d}{d\tau} e^{2 \tau} \mu_{*}^{2} \geq 2\theta^{*} e^{2 \tau}$ and $\frac{d}{d \tau} e^{2 \tau} \mu^{*2} \leq \frac{2}{\theta^{*}} e^{2 \tau}$, we can integrate from an arbitrarily large time $\tau_{0}$ to time $\tau = +\infty$ to deduce that $\liminf_{\tau\uparrow\infty}\mu_*\ge 1\ge\limsup_{\tau\uparrow\infty}\mu^*$,
which amounts to (\ref{blaao14}).
\end{proof}

\section{Computing Integrals}\label{AppIntegrals}

Recall from Lemma \ref{lemR01} \& \eqref{lambda} that the three variables $ \tau $, $ \tilde\lambda $ and $ L $ are related by $ \tau = \ln \tilde\lambda $, $ \tilde\lambda^2 = 1 + \varepsilon^2 \ln L $. For our analysis, it is important to know the asymptotics of the following integrals: 
\begin{align}
	e^{ p \tau_* } \int_{ 0 } ^{ \tau_* }  d \tau \, \frac{ L^2 ( \tau ) }{ e^{ p \tau } } 
	& \lesssim_{ p } \varepsilon^2  \Big( \frac{  L_{ * }  }{ \tilde\lambda_{ * } } \Big)^ 2
	\quad { \rm for ~ any } ~ 0 < \varepsilon \leq 1 , ~ p > 0 ,  \label{int01} \\
	\int_{ \tau_* }^{ \infty } d\tau\, \frac{ \tilde\lambda_{ L( \tau ) } ^2 }{ L^p ( \tau ) }
	& \lesssim_{ p } \frac{ \varepsilon^2 }{ L_{ * }^{ p } } 
	\quad { \rm for ~ any } ~ p > 0 , \label{int02} \\
	\int_{ \tau_* }^{ \infty } d\tau \, \frac{1}{ \tilde\lambda^p_{ L ( \tau ) } } &\lesssim_{ p } \frac{ 1 }{ \tilde\lambda_*^{p} }
	\quad { \rm for ~ any } ~ p > 0 , \label{int03}
\end{align}
or equivalently due to the relation $  d \tau = \frac{ 1 }{ 2 } \frac{ d \tilde\lambda ^2 } { \tilde\lambda^2 }  $
\begin{align}
	\tilde\lambda_{ * }^{ p } \int_{ 1 } ^{ \tilde\lambda_*^{2} }  d \tilde\lambda ^2 \, \frac{ L^2 ( \tilde\lambda ) }{ \tilde\lambda^{ p + 2 } } 
	& \lesssim_{ p } \varepsilon^2  \Big( \frac{  L_{ * }  }{ \tilde\lambda_{ * } } \Big)^ 2
	\quad { \rm for ~ any } ~ 0 < \varepsilon \leq 1 , p > 0 ,  \label{int01a} \\
	\int_{ \tilde\lambda_*^2 }^{ \infty } d\tilde\lambda^2 \, \frac{ 1 }{ L^p ( \tilde\lambda ) }
	& \lesssim_{ p } \frac{ \varepsilon^2 }{ L_{ * }^{ p } } 
	\quad { \rm for ~ any } ~ p > 0 , \label{int02a} \\
	\int_{ \tilde\lambda_*^2 }^{ \infty } d\tilde\lambda^2 \, \frac{1}{ \tilde\lambda^{ p + 2 } } &\lesssim_{ p } \frac{ 1 }{ \tilde\lambda_*^{p} }
	\quad { \rm for ~ any } ~ p > 0 . \label{int03a}
\end{align}

\medskip

Integral \eqref{int03a} (and thus \eqref{int03}) follows from $ \frac{ 2 }{ p } \frac{ d }{ d \tilde\lambda^2 } \frac{ 1 }{ \tilde\lambda^p } =  - \frac{ 1 }{ \tilde\lambda^{ p + 2 } } $. In the other integrals it is crucial to capture the constant of order $ \varepsilon^2 $. In \eqref{int02a} (and hence \eqref{int02}) this follows from the relation $ L^{ - p } ( \tilde\lambda )= e^{ - \frac{ p }{ \varepsilon^2 } ( \tilde\lambda^2 - 1 ) } $, which implies $ - \frac{ d }{ d \tilde\lambda^2 }  \frac{ \varepsilon^2 }{ 2 p }  L^{ - p } ( \tilde\lambda ) =  L^{ - p } ( \tilde\lambda ) $. We use the same relation on \eqref{int01a} to integrate by parts: Since the integrand is positive, we have
\begin{align*}
	\int_{ 1 } ^{ \tilde\lambda_*^{2} }  d \tilde\lambda ^2 \, \frac{ L^2 ( \tilde\lambda ) }{ \tilde\lambda^{ p + 2 } } 
	&= \frac{ \varepsilon^2 }{ 2 } \int_{ 1 } ^{ \tilde\lambda_*^{2} }  d \tilde\lambda ^2 \, \frac{ 1 }{ \tilde\lambda^{ p + 2 } } \frac{ d }{ d \tilde\lambda^2 } L^2 ( \tilde\lambda ) \\
	&\leq \frac{ \varepsilon^2 }{ 2 } \frac{ L^2 ( \tilde\lambda_* ) }{ \tilde\lambda_*^{ p + 2 } } 
	+ ( \frac{ p }{ 2 } + 1 ) \frac{ \varepsilon^2 }{ 2 }   \int_{ 1 } ^{ \tilde\lambda_*^{2} }  d \tilde\lambda^2 \, \frac{  1  }{ \tilde\lambda^{ 2 } } 
 \frac{  L^2 ( \tilde\lambda )  }{ \tilde\lambda^{ p + 2 } }  .
\end{align*}
For $ \varepsilon^2  \leq ( \frac{ p }{ 2 } + 1 )^{ - 1 }  $ we can absorb the latter integral to the l.~h.~s., which implies the claim. In the opposite regime $ ( \frac{ p }{ 2 } + 1 )^{ - 1 } \leq \varepsilon^2  \leq 1 $ we argue as follows: We split the latter integral at $ \tilde\lambda^2 = \frac{p}{2} + 1 $. On the interval $ \frac{ p }{ 2 } + 1 \leq \tilde\lambda^2 \leq \tilde\lambda_{*}^2 $ we can use the additional factor $ \frac{ 1 }{ \tilde\lambda^2 } \leq ( \frac{p}{2} + 1 )^{ - 1 } $ to bound the latter integral by $ \leq \frac{ 1 }{ 2 } \int_{ 1 } ^{ \tilde\lambda_*^{2} }  d \tilde\lambda^2 \,  \frac{  L^2 ( \tilde\lambda )  }{ \tilde\lambda^{ p + 2 } }  $. On the remaining interval $ 1 \leq \tilde\lambda^2 \leq \frac{p}{2} + 1 $, due to the lower bound on $ \varepsilon $, the function $ \tilde\lambda \mapsto \frac{ L^2 ( \tilde\lambda ) }{ \tilde\lambda^{ p + 2 } } $ may be estimated solely in terms of $ p $. The same holds for its integral, which thus implies for the union of the intervals
\begin{align*}
( \frac{ p }{ 2 } + 1 )  \int_{ 1 } ^{ \tilde\lambda_*^{2} }  d \tilde\lambda^2 \, \frac{  1  }{ \tilde\lambda^{ 2 } }  \frac{  L^2 ( \tilde\lambda )  }{ \tilde\lambda^{ p + 2 } } -
\frac{ 1 }{ 2 } \int_{ 1 } ^{ \tilde\lambda_*^{2} }  d \tilde\lambda^2 \,  \frac{  L^2 ( \tilde\lambda )  }{ \tilde\lambda^{ p + 2 } } 
\lesssim_{ p } 1
\lesssim_{ p } \frac{  L^2 ( \tilde\lambda_* )  }{ \tilde\lambda_*^{ p + 2 } } ,
\end{align*}
where all the constants are uniform in $ ( \frac{ p }{ 2 } + 1 )^{ - 1 } \leq \varepsilon^2  \leq 1 $ but depend on $ p $.

\section{Proof of Lemma \ref{L: R SDE and lyapunov exponent}} \label{A: stopping}


\begin{proof}[Proof of Lemma \ref{L: R SDE and lyapunov exponent}] We only need to prove the first statement, namely, that $R_{\tau} > 1$ for all $\tau > 0$ with probability one.  Indeed, once this is established, the identity \eqref{E: R quadratic variation} implies that we can define a Brownian motion $w$ via $dw = (R_{\tau}^{2} - 1)^{-\frac{1}{2}} {\rm tr} \, F_{\tau}^{*} F_{\tau} dB$, and then $dR = R_{\tau} d \tau + \sqrt{R^{2}_{\tau} - 1} dw$ follows.  As noted already in the discussion in Section \ref{S: intermittency}, the Stratonovich form of this SDE is precisely \eqref{E: R SDE}.

\medskip

It remains to show that $R_{\tau} > 1$ for all $\tau > 0$. 
 To this end, we claim that there is no loss of generality in assuming that the initial condition $R_{0} > 1$.  Otherwise, if $R_{0} = 1$, then $R$ immediately takes excursions away from one.  Indeed, given an arbitrary $\epsilon > 0$, on the event that $R_{\tau} = 1$ for each $\tau \in [0,\epsilon]$, we know that the quadratic variation $[R R] \equiv 0$ in $[0,\epsilon]$, hence \eqref{E: decomposition} implies $1 = R_{\epsilon} = 1 + \int_{0}^{\epsilon} d \tau R_{\tau} = 1 + \epsilon$, which is absurd.  This proves that $R$ instantaneously ventures away from one.  Therefore, up to suitably stopping $F$ and invoking the strong Markov property, we can assume that $R_{0} > 1$.

 \medskip

 Finally, fix an initial condition $R_{0} > 1$.  Following Feller (see \cite[Section 5.5.C]{karatzas-shreve}), we work with the so-called scale function associated with the SDE $dR = R_{\tau} d\tau + \sqrt{R^{2}_{\tau} - 1} dw$.  To motivate the scale function, observe that if $u$ is harmonic in the sense that it is annihilated by the generator, that is, $r u'(r) + \frac{1}{2} (r^{2} - 1) u''(r) = 0$, in some interval $(a,b) \subseteq (1,\infty)$ containing $R_{0}$, and if $T_{a,b}$ denotes the first time that $R$ hits the boundary $\{a,b\}$, then, by \eqref{E: decomposition} and \eqref{E: R quadratic variation}, the process $u(R_{\tau \wedge T_{a,b}})$ is a local martingale and a standard argument shows that 
  \begin{equation} \label{E: hitting prob}
\mathbb{P}\{R_{T_{a,b}} = a\} = \frac{u(b) - u(R_{0})}{u(b) - u(a)}.
 \end{equation}
At the same time, we can easily derive an explicit representation for $u'$:
 \begin{equation*}
\frac{d}{dr} \{ \ln u' \} = \frac{2 r}{r^{2} - 1}, \quad \text{hence} \quad u' = \frac{\text{const}}{r^{2} - 1}.
 \end{equation*}
Integration of this last expression motivates the introduction of the scale function $u(r) = \ln ( \frac{r - 1}{r + 1} )$, a particular solution of the ODE.
Since $u(1^{+}) = - \infty$ and $u(+\infty) < +\infty$, we easily read off from \eqref{E: hitting prob} that $R$ will escape to infinity without ever hitting one.
 \end{proof}



\begin{thebibliography}{99}

\bibitem{ABK24}
	S. Armstrong, A. Bou-Rabee, and T. Kuusi.
	\newblock{Superdiffusive central limit theorem for a Brownian particle in a critically-correlated incompressible random drift.}
	\newblock{\emph{arXiv preprint} arXiv:2404.01115 (2024).}
	
\bibitem{ArmstrongVicol23}
	S. Armstrong, and V. Vicol.
	\newblock{Anomalous diffusion by fractal homogenization.}
	\newblock{\emph{arXiv preprint} arXiv:2305.05048 (2023).}

\bibitem{Arnold}
    L. Arnold.
    \newblock{\emph{Random dynamical systems.}}
    \newblock{Springer-Verlag Berlin Heidelberg, 2003.}
	
\bibitem{BG}
	N. Barashkov, and M. Gubinelli.
	\newblock{A variational method for $ \Phi^4_3 $.}
	\newblock{\emph{Duke Math. J.} 169 (17) 3339 - 3415, 15 November 2020.}

\bibitem{Bouchaud}
	J.P. Bouchaud, A. Comtet, A. Georges, and P. Le Doussal.
	\newblock{Anomalous diffusion in random media of any dimensionality.}
	\newblock{\emph{J. Phys.} 48.9 (1987): 1445-1450.}

\bibitem{bougerol_lacroix}
	P. Bougerol, and J. Lacroix.
	\newblock{\emph{Products of random matrices with applications to Schr\"{o}dinger operators.}}
	\newblock{Birkh\"{a}user, 1985.}

\bibitem{BricmontKupiainen91}
	J. Bricmont, and A. Kupiainen.
	\newblock{Random walks in asymmetric random environments.}
	\newblock{\emph{Comm. Math. Phys.} 142 (1991): 345-420.}
	
\bibitem{BurczakSzekelyhidiWu23}
	J. Burczak, L. Székelyhidi Jr., and B. Wu.
	\newblock{Anomalous dissipation and Euler flows.}
	\newblock{\emph{arXiv preprent} arXiv:2310.02934 (2023).}



\bibitem{cannizzaro_giles}
  G. Cannizzaro, and H. Giles.
  \newblock{An invariance principle for the 2d weakly self-repelling Brownian polymer.}
  \newblock{\emph{Probab. Theory Related Fields} (2025), 1-73.}

\bibitem{cannizzaro_erhard_toninelli}
	G. Cannizzaro, D. Erhard, and F. Toninelli.
	\newblock{The stationary AKPZ equation: logarithmic superdiffusivity.}
	\newblock{\emph{Comm. Pure Appl. Math.} 76 (2023), no.11, 3044–3103.}


\bibitem{CannizzaroHaunschmidSibitzToninelli}
	G. Cannizzaro, L. Haunschmid-Sibitz, and F. Toninelli.
	\newblock{$ \log t $-superdiffusivity for a Brownian particle in the curl of the 2d GFF.}
	\newblock{\emph{Ann. Probab.} 50.6 (2022): 2475-2498.}

\bibitem{CaravennaSunZygouras}
    F. Caravenna, R. Sun, and N. Zygouras. 
    \newblock{The critical 2d stochastic heat flow.} \newblock{\emph{Invent. math.}} 23 (2023), no. 1, 325-460.

\bibitem{CMOW}
	G. Chatzigeorgiou, P. Morfe, F. Otto, and L. Wang.
	\newblock{The Gaussian free-field as a stream function: asymptotics of effective diffusivity in infra-red cut-off.}
	\newblock{\emph{Ann. Probab.} 53.4 (2025): 1510-1536.}

\bibitem{cannizzaro_moulard_toninelli}
	G. Cannizzaro, Q. Moulard, and F. Toninelli.
	\newblock{Superdiffusive central limit theorem for the stochastic Burgers equation at the critical dimension.}
	\newblock{arXiv preprint arXiv:2501.00344 (2024).}


\bibitem{chung-walsh}
  K.L. Chung, and J.B. Walsh.
  \newblock{\emph{Markov processes, Brownian motion, and time symmetry.}}
  \newblock{Vol. 249. Springer Science \& Business Media, 2005.}


\bibitem{DeLimaFeltesWeber24}
	G. de Lima Feltes, and H. Weber.
	\newblock{Brownian particle in the curl of 2-d stochastic heat equations.}
	\newblock{\emph{J. Stat. Phys.} 191.2 (2024).}

\bibitem{DunlapGu}
	A. Dunlap, and Y. Gu.
	\newblock{A forward-backward SDE from the 2d nonlinear stochastic heat equation.}
	\newblock{\emph{Ann. Probab.} 50.3 (2022): 1204-1253.}
	
\bibitem{Fannjiang98}
	A. Fannjiang.
	\newblock{Anomalous diffusion in random flows. In \emph{Mathematics of multiscale materials}.}
	\newblock{(Minneapolis, MN, 1995–1996), volume 99 of \emph{IMA Vol. Math. Appl.}, pages 81–99. Springer, New York, 1998.}

\bibitem{Fehrman23}
	B. Fehrman.
	\newblock{Large-scale regularity in stochastic homogenization with divergence-free drift.}
	\newblock{\emph{Ann. Probab.} 33.4 (2023): 2559-2599.}


\bibitem{FisherEtAl}
	D.S. Fisher, D. Friedan, Z. Qiu, S.J. Shenker, and S.H. Shenker.
	\newblock{Random walks in two-dimensional random environments with constrained drift forces.}
	\newblock{\emph{Phys. Rev. A.} 31.6 (1985): 3841-3845.}

\bibitem{GoodmanWallach}
	R. Goodman, and N. R. Wallach
	\newblock{\emph{Symmetry, Representations, and Invariants.}}
	\newblock{Springer Dordrecht Heidelberg London New York, 2009.}

\bibitem{GM}
	M. Gubinelli, and S.-J. Meyer.
	\newblock{The FBSDE approach to sine-Gordon up to $ 6\pi $.}
	\newblock{\emph{arXiv preprint} arXiv:2401.13648 (2024).}
	

\bibitem{gubinelli-perkowski}
	M. Gubinelli, and N. Perkowski.
	\newblock{The infinitesimal generator of the stochastic Burgers equation.}
	\newblock{\emph{Probab. Theory Related Fields} 178 (2020), no.3-4, 1067–1124.}


\bibitem{LeJan}
	J. Franchi, and Y. Le Jan.
	\newblock{\emph{Hyperbolic dynamics and Brownian motion: an introduction.}}
	\newblock{Oxford University Press, 2012.}

\bibitem{furstenberg-kesten}
  H. Furstenberg, and H. Kesten.
  \newblock{Products of random matrices.}
  \newblock{\emph{Ann. Math. Stat.} 31.2 (1960): 457--469.}


\bibitem{karatzas-shreve}
  I. Karatzas, and S. Shreve.
  \newblock{\emph{Brownian motion and stochastic calculus.}}
  \newblock{Vol. 113. Springer Science \& Business Media, 1991.}

\bibitem{KB}
	D. L. Koch, and J. F. Brady.
	\newblock{Anomalous diffusion due to long-range velocity fluctuations in the absence of a mean flow.}
	\newblock{\emph{Phys. Fluids A.} 1.1 (1989): 47-51.}

\bibitem{KLO}
	T. Komorowski, C. Landim, and S. Olla.
	\newblock{\emph{Fluctuations in Markov processes: time symmetry and martingale approximation.}}
	\newblock{Vol. 345. Springer Science \& Business Media, 2012.}


\bibitem{KozmaToth17}
	G. Kozma, and B. Tóth.
	\newblock{Central limit theorem for random walks in doubly stochastic random environment: $\mathscr{H}_{-1}$ suffices.}
	\newblock{\emph{Ann. Probab.} 2017, Vol. 45, No. 6B, 4307–4347.}

\bibitem{Krengel}
    U. Krengel.
    \newblock{\emph{Ergodic theorems.}}
    \newblock{Vol. 6. Walter de Gruyter, 2011.}
 
\bibitem{KMOW25}
	Ş. Kuzgun, P. Morfe, F. Otto, and C. Wagner.
	\newblock{Intermittency of geometric Brownian motion on $ \textbf{SL} ( n ) $.}
	\newblock{\emph{in preparation.}}

\bibitem{LeGall}
	J.-F. Le Gall,
	\newblock{\emph{Brownian motion, martingales, and stochastic calculus.}}
	\newblock{Springer-Verlag Berlin Heidelberg, 2013.}


\bibitem{MOW24}
	P. Morfe, F. Otto, and C. Wagner.
	\newblock{The Gaussian free-field as a stream function: continuum version of the scale-by-scale homogenization result.}
	\newblock{\emph{arXiv preprint} arXiv:2404.00709 (2024).}

\bibitem{MOW24Pt2}
	P. Morfe, F. Otto, and C. Wagner.
	\newblock{A critical drift-diffusion equation: connections to the diffusion on $ \textbf{SL}(2) $.}
	\newblock{\emph{arXiv preprint} arXiv:2410.15983 (2024).}
	
\bibitem{Oelschlaeger88}
	K. Oelschl\"{a}ger.
	\newblock{Homogenization of a diffusion process in a divergence-free random field.}
	\newblock{\emph{Ann. Probab.} 16 (3) 1084 - 1126, July, 1988.}


\bibitem{Oseledets}
    V.I. Oseledets.
    \newblock{A multiplicative ergodic theorem. Characteristic Ljapunov exponents of dynamical systems.}
    \newblock{\emph{Trudy Moskov. Mat. Ob\v s\v c.} 19 (1968): 179--210.}

\bibitem{OW24}
	F. Otto, and C. Wagner.
	\newblock{A critical drift-diffusion equation: intermittent behavior.}
	\newblock{\emph{arXiv preprint} arXiv:2404.13641 (2024).}


\bibitem{SznitmanZeitouni06}
	A.-S. Sznitman, and O. Zeitouni.
	\newblock{An invariance principle for isotropic diffusions in random environment.}
	\newblock{\emph{Invent. math.} 164.3 (2006): 455-567.}

\bibitem{TothValko}
	B. Tóth, and B. Valkó.
	\newblock{Superdiffusive bounds on self-repellent Brownian polymers and diffusion in the curl of the Gaussian free field in $d=2$.}
	\newblock{\emph{J. Stat. Phys.} 147 (2012): 113-131.}


\bibitem{Toth18}
	B. Tóth.
	\newblock{Quenched central limit theorem for random walks in doubly stochastic random environment.}
	\newblock{\emph{Ann. Probab.} 2018, Vol. 46, No. 6, 3558–3577}.


\bibitem{Urakawa}
	H. Urakawa.
	\newblock{On the least positive eigenvalue of the Laplacian for compact group manifolds.}
	\newblock{\emph{J. Math. Soc. Japan}, 31(1):209–226, 1979.}

\end{thebibliography}
\end{document}